\newsavebox{\MyBox}
\newlength{\fighskip} \fighskip=2pt
\newlength{\figvskip} \figvskip=3pt
\newcommand*{\figbox}[2]{{
  \def\figscale{#1}
  \def\arraystretch{0.8}
  \arraycolsep=0pt
  \begin{array}{c}
    \vbox{\vskip\figscale\figvskip
      \hbox{\hskip\figscale\fighskip
        \includegraphics[scale=\figscale]{#2}}}
  \end{array}}}
\newcommand*{\wideboxed}[1]{\setlength{\fboxsep}{1ex}%
  \fbox{\m@th$\displaystyle#1$}}
\def\ubrace#1_#2{%
  \underbrace{#1}_{\hb@xt@\z@{\hss$\scriptstyle#2$\hss}}}
\newcommand{\arXiv}[1]{\href{http://arxiv.org/abs/#1}{\texttt{arXiv:#1}}}
\newcommand{\RR}{\mathbb{R}}
\newcommand{\CC}{\mathbb{C}}
\newcommand{\calA}{\mathcal{A}}
\newcommand{\calB}{\mathcal{B}}
\newcommand{\calC}{\mathcal{C}}
\newcommand{\calE}{\mathcal{E}}
\newcommand{\calF}{\mathcal{F}}
\newcommand{\calG}{\mathcal{G}}
\newcommand{\calH}{\mathcal{H}}
\newcommand{\calK}{\mathcal{K}}
\newcommand{\calL}{\mathcal{L}}
\newcommand{\calM}{\mathcal{M}}
\newcommand{\calS}{\mathcal{S}}
\newcommand{\calU}{\mathcal{U}}
\newcommand{\ts}{\mkern1mu}
\newcommand{\const}{\text{const}}
\newcommand{\bydef}{:=}
\newcommand{\trans}{{\mathpalette\@trans{}}}
\newcommand*{\@trans}[2]{\raisebox{\depth}{$\m@th#1\intercal$}}
\newcommand{\hotimes}{\mathbin{\hat{\otimes}}}
\newcommand{\comm}{\mathrm{comm}}
\DeclareMathOperator{\Ker}{Ker}
\DeclareMathOperator{\Img}{Im}
\DeclareMathOperator{\Tr}{Tr}
\DeclareMathOperator{\sgn}{sgn}
\DeclareMathOperator{\spec}{spec}
\DeclareMathOperator{\ind}{ind}
\DeclareMathOperator{\Ta}{T}
\DeclareMathOperator{\Fix}{Fix}
\newcommand{\UU}{\mathrm{U}}
\newcommand{\cb}{\mathrm{cb}}
\newcommand{\id}{\mathrm{id}}
\newcommand{\Ba}{\mathrm{B}}
\newcommand{\Bo}{\mathbf{B}}
\newcommand{\La}{\mathrm{L}}
\newcommand{\Ra}{\mathrm{R}}
\newcommand{\Co}{\mathrm{C}}
\newcommand{\Ha}{\mathrm{H}}
\newcommand{\Sw}{\mathrm{SWAP}}
\newcommand{\Enc}{\mathrm{Enc}}
\newcommand{\Dec}{\mathrm{Dec}}
\newcommand{\Euc}{\mathrm{E}}
\newcommand{\wt}{\widetilde}
\newcommand*{\Ma}[1]{\mathbf{M}_{#1}}
\newcommand{\blangle}{\bigl\langle}
\newcommand{\brangle}{\bigr\rangle}
\newcommand*{\braket}[2]{\langle{#1},{#2}\rangle}
\newcommand*{\bbraket}[2]{\blangle{#1},\mkern1mu{#2}\brangle}
\newcommand{\eps}{\varepsilon}
\newcommand{\ph}{\varphi}
\renewcommand{\le}{\leqslant}
\renewcommand{\ge}{\geqslant}
\let\oldRe\Re
\let\Re\relax
\DeclareMathOperator{\Re}{\oldRe}
\let\oldIm\Im
\let\Im\relax
\DeclareMathOperator{\Im}{\oldIm}
\tikzset{
  baseline = -0.6ex,
  every picture/.style = {
    semithick},
  bdot/.style = {
    circle,
    draw=none,
    fill=black,
    minimum size=4pt,
    inner sep=0pt,
    outer sep=0pt},
  circ/.style = {
    circle,
    draw=black,
    fill=none,
    minimum size=10pt,
    inner sep=0.8pt},
  rect/.style = {
    rectangle,
    draw=black,
    fill=none,
    minimum size=0.15cm,
    inner xsep=1.2pt,
    inner ysep=2pt},
  base rect/.style = {
    rectangle,
    draw=black,
    fill=none,
    minimum size=0.15cm,
    anchor=base,
    inner xsep=1.2pt,
    inner ysep=1mm},
  Choi/.style = {
    row sep={0.4cm,between origins},
    column sep=0.4cm,
    inner sep=0pt,
    label distance=0.8pt},
  bridge height/.default=0.4cm,
}    
\newtheorem{Theorem}{Theorem}[section]
\newtheorem{Proposition}[Theorem]{Proposition}
\newtheorem{Lemma}[Theorem]{Lemma}
\newtheorem{Corollary}[Theorem]{Corollary}
\newtheorem[definition]{Definition}[Theorem]{Definition}
\newtheorem[remark]{Example}[Theorem]{Example}
\newtheorem[remark]{Remark}[Theorem]{Remark}
\newtheorem[remark]{Question}[Theorem]{Open question}
\newtheorem*[definition]{definition}{Definition}
\newtheorem*[remark]{example}{Example}
\newtheorem*[remark]{remark}{Remark}
\newtheorem*[remark]{question}{Open question}
\numberwithin{equation}{section}
\title{Almost-idempotent quantum channels and approximate $C^*$-algebras}
\author{Alexei Kitaev\\
\normalsize\it California Institute of Technology, Pasadena, CA 91125, U.S.A.}
\date{February 3, 2025}
\begin{document}

\maketitle

\begin{abstract}
Let $\Phi$ be a unital completely positive (UCP) map on the space of operators on some Hilbert space. We assume that $\Phi$ is $\eta$-idempotent, namely, $\|\Phi^2-\Phi\|_{\mathrm{cb}} \le\eta$, and construct an associated $\varepsilon$-$C^*$ algebra (of almost-invariant observables) for $\varepsilon=O(\eta)$. This type of structure has the axioms of a unital $C^*$ algebra but the associativity and other axioms involving the multiplication and the unit hold up to $\varepsilon$. We prove that any finite-dimensional $\varepsilon$-$C^*$ algebra $\mathcal{A}$ is $O(\varepsilon)$-isomorphic to some genuine $C^*$ algebra $\mathcal{B}$. These bounds are universal, i.e.\ do not depend on the dimensionality or other parameters. When $\mathcal{A}$ comes from a finite-dimensional $\eta$-idempotent UCP map $\Phi$, the $O(\eta)$-isomorphism and its inverse can be realized by UCP maps. This gives an approximate factorization of the quantum channel $\Phi^*$ into a decoding channel, producing a state on $\mathcal{B}$, and an encoding channel.
\end{abstract}

\setcounter{tocdepth}{2}
\tableofcontents

\paragraph*{Some common notation.} The Hermitian inner product $\braket{{}\cdot{}}{{}\cdot{}}$ is conjugate linear in the first argument and linear in the second argument. The Hermitian conjugation and similar operations are denoted by a dagger. The unit element of an algebra $\calA$ is denoted by $I_\calA$, whereas $1_\calL$ stands for the identity map on a vector space $\calL$. The space of bounded linear maps $\calL\to\calM$ between Banach spaces is denoted by $\Bo(\calL,\calM)$. In particular, $\Bo(\calL)$ is the Banach (or $C^*$) algebra of operators acting on a Banach (or Hilbert) space $\calL\not=0$. (We consider only unital algebras with $I\not=0$.) For example, $\Ma{m,k}=\Bo(\CC^k,\CC^m)$ and $\Ma{n}=\Bo(\CC^n)$ (for $n>0$) are the matrix spaces and algebras. This notation should not be confused with $\Ba_r(x)$ and $\bar{\Ba}_r(x)$, which refer to the open and closed balls of radius $r$ with center at $x$ in some metric space. 
\medskip

\section{Motivation: How to recognize an encoded subsystem?}\label{sec_motivation}

Under favorable circumstances, some physical systems encode states and observables of smaller, ``logical'' systems. This may be arranged by design or happen naturally, for example, in so-called topological phases of matter that feature protected degrees of freedom. In practice, the encoding is always approximate and may also be implicit (especially if the system is natural rather than engineered). The informal goal of this paper is to reconstruct the logical algebra of observables in such a situation. This kind of task is common in quantum many-body physics, where identifying ``effective degrees of freedom'' is necessary for the understanding of complex systems. Recent work~\cite{SKK19} provides a rigorous reconstruction of topological degrees of freedom in spin systems on a 2D lattice under specific conditions, which involve only exact equalities. We will formulate and solve the approximate reconstruction problem in an abstract form, in hope that the results could be applied broadly. However, let us begin with the ideal case, where everything is explicit and exact.

\subsection{Encoding and decoding}

In general, the logical algebra of observables is a finite-dimensional $C^*$ algebra. As such, it is isomorphic to an algebra of the form $\calA=\bigoplus_{j=1}^{m}\Bo(\calL_j)$. (In general, $\Bo(\calL)$ denotes the algebra of bounded linear operators on a nonzero Hilbert space $\calL$. In the finite-dimensional case, all linear operators are bounded.) A purely quantum system is described by a single space, whereas for a classical system, all spaces $\calL_j$ are one-dimensional. Logical observables are represented by their action on a code subspace $\calM$ of the physical Hilbert space $\calH$ (also finite-dimensional for simplicity) via an inclusion of unital $C^*$ algebras $w\colon\calA\to\Bo(\calM)$.

\begin{Proposition}\label{prop_hom_structure}
Let $\calL_1,\dots,\calL_m$ and $\calM$ be finite-dimensional Hilbert spaces, $\calA=\bigoplus_{j=1}^{m}\Bo(\calL_j)$, and let $w\colon\calA\to\Bo(\calM)$ be a $*$-homomorphism. Then there exist auxiliary Hilbert spaces $\calE_j$ and a unitary map $W$,
\begin{equation}\label{partial_dec}
W=(W_1,\dots,W_m)\,\colon\,
\calM\to\bigoplus_{j=1}^{m}\calL_j\otimes\calE_j,\qquad\:
W_j\colon\calM\to\calL_j\otimes\calE_j.
\end{equation}
such that
\begin{equation}
w(A_1,\dots,A_m)=\sum_{j=1}^{m}W_j^\dag(A_j\otimes 1_{\calE_j})W_j\quad\:
\text{for all }\, A=(A_1,\dots,A_m)\in\calA.
\end{equation}
The homomorphism $w$ is injective if and only if $\calE_j\not=0$ for all $j$.
\end{Proposition}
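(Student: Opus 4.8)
The plan is to use the standard structure theory of finite-dimensional $C^*$ algebras and their representations, organized so that the unitary $W$ emerges from a multiplicity-space decomposition. First I would reduce to the case $m=1$, i.e.\ $\calA=\Bo(\calL)$ with $\calL$ finite-dimensional: since $\calA=\bigoplus_j\Bo(\calL_j)$, the minimal central projections $z_j\in\calA$ (the unit of the $j$-th summand) are sent by $w$ to mutually orthogonal projections $P_j=w(z_j)\in\Bo(\calM)$ summing to $1_\calM$ (using unitality of $w$), so $\calM=\bigoplus_j\calM_j$ with $\calM_j=P_j\calM$, and $w$ restricts to a $*$-homomorphism $w_j\colon\Bo(\calL_j)\to\Bo(\calM_j)$. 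Treating each summand separately and then taking the direct sum of the resulting data reassembles the full statement, so it suffices to build $\calE$ and $W\colon\calM\to\calL\otimes\calE$ for a single $*$-homomorphism $w\colon\Bo(\calL)\to\Bo(\calM)$.

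For the single-summand case, the key step is to exhibit $\calM$ as $\calL\otimes\calE$ in a way compatible with $w$. I would pick a rank-one projection $p\in\Bo(\calL)$ (e.g.\ $p=\ketbra{e}{e}$ for a unit vector $e\in\calL$) and set $\calE\bydef w(p)\calM=\Img w(p)$. Fixing an orthonormal basis $e=e_1,\dots,e_d$ of $\calL$ and letting $E_{ij}\in\Bo(\calL)$ be the matrix units (so $E_{11}=p$), the operators $w(E_{i1})$ map $\calE=w(E_{11})\calM$ isometrically onto $w(E_{ii})\calM$ (isometry on that subspace because $w(E_{1i})w(E_{i1})=w(E_{11})$ acts as the identity on $\calE$), and the subspaces $w(E_{ii})\calM$ are mutually orthogonal and sum to $\calM$ (as $\sum_i E_{ii}=I_\calL$ and $w$ is unital). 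Hence the map $W^\dag\colon\calL\otimes\calE\to\calM$ defined by $W^\dag(e_i\otimes\xi)=w(E_{i1})\xi$ is a well-defined unitary, and a direct computation using $E_{i1}A E_{j1}^\dag=E_{i1}AE_{1j}=A_{ij}E_{11}$ (where $A_{ij}=\braket{e_i}{Ae_j}$, and noting $w(A_{ij}E_{11})=A_{ij}w(E_{11})$ acts as multiplication by the scalar $A_{ij}$ on $\calE$) shows that $W w(A) W^\dag = A\otimes 1_\calE$, i.e.\ $w(A)=W^\dag(A\otimes 1_\calE)W$ as required.

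Reassembling, with $W_j\colon\calM_j\hookrightarrow\calM \to \calL_j\otimes\calE_j$ given by the single-summand unitaries (precomposed with the inclusion $\calM_j\hookrightarrow\calM$, or equivalently $W_j = (\text{single-summand unitary})\circ P_j$ viewed as a map out of $\calM$), we get $w(A_1,\dots,A_m)=\sum_j w_j(A_j)=\sum_j W_j^\dag(A_j\otimes 1_{\calE_j})W_j$, and $W=(W_1,\dots,W_m)$ is unitary because the ranges $\calM_j$ are orthogonal and exhaust $\calM$. For the last claim: if some $\calE_j=0$ then $z_j\in\Ker w$, so $w$ is not injective; conversely, if all $\calE_j\not=0$, then on each summand $A_j\mapsto W_j^\dag(A_j\otimes 1_{\calE_j})W_j$ is injective (the map $A_j\mapsto A_j\otimes 1_{\calE_j}$ is injective when $\calE_j\neq0$, and $W_j$ restricted to $\calM_j$ is a unitary), and since the images lie in the orthogonal subspaces $\Bo(\calM_j)$, the direct sum $w$ is injective too.

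I do not expect a serious obstacle here — this is essentially the classification of representations of finite-dimensional $C^*$ algebras — but the point requiring the most care is verifying that $W^\dag$ is genuinely unitary and genuinely intertwines the two actions: one must check that the partial isometries $w(E_{i1})$ have the right source and target projections, that they are compatible under composition so $W^\dag$ is well defined and norm-preserving, and that the scalars $A_{ij}$ come out correctly so that $Ww(A)W^\dag$ has matrix entries $A_{ij}$ on $\calL$ and acts trivially on $\calE$. Keeping the bookkeeping of matrix units and their images under $w$ straight is the only real work.
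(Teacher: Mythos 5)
Your proof is correct and reaches the same result, but the route is noticeably more concrete than the paper's. The paper's own argument is a one-paragraph appeal to complete reducibility: it observes that the orthogonal complement of an invariant subspace of a $*$-representation is again invariant, concludes that $\calM$ decomposes into irreducibles (namely, copies of the $\calL_j$), sets $\calE_j=\CC^{n_j}$ where $n_j$ is the multiplicity, and declares the rest straightforward. You instead first split $\calM$ via the images of the minimal central projections $z_j$ (which implicitly uses the same orthogonal-complement fact, since $P_j=w(z_j)$ being a self-adjoint projection gives the orthogonal decomposition $\calM=\bigoplus_j\calM_j$), and then within each summand you build the unitary by hand from matrix units: $\calE_j\bydef w(E_{11}^{(j)})\calM$ and $W_j^\dag(e_i\otimes\xi)=w(E_{i1}^{(j)})\xi$. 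What this buys is an explicit, checkable formula for $W$ and $\calE_j$ (the multiplicity space appears canonically as the range of $w$ of any minimal projection) in place of an existence claim; what the paper's phrasing buys is brevity and an argument that generalizes more immediately to settings where matrix units are less convenient. One small caveat applies equally to both: the statement only holds as written if $w$ is assumed unital, since otherwise $\sum_j W_j^\dag W_j=w(I)\neq 1_\calM$; you correctly flag your use of unitality, and the paper's convention ("we consider only unital algebras") is best read as making the same tacit assumption.
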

\begin{proof}
By definition, a $*$-homomorphism $w\colon\calA\to\Bo(\calM)$ makes $\calM$ into a $*$-representation of $\calA$. Such representations have the property that the orthogonal complement of any invariant subspace is also invariant. Thus, $\calM$ splits into irreducible representations of $\calA$, i.e.\ copies of $\Bo(\calL_j)$. If $\Bo(\calL_j)$ occurs $n_j$ times, we set $\calE_j=\CC^{n_j}$. The rest of the argument is straightforward.
\end{proof}

States on an arbitrary $C^*$ algebra $\calB$ belong to the space of linear functionals $\calB^*$. Physically realizable transformations of states are known as \emph{quantum channels}; they are dual to unital completely positive (UCP) maps of the corresponding algebras of observables. For example, dual to the inclusion $w$ is state decoding $w^*\colon\Bo(\calM)^*\to\calA^*$. If $\calB$ is finite-dimensional, then $\calB^*\cong\calB$, and therefore, states are represented by (block-diagonal) density matrices. In particular, a state on the algebra $\calA$ is given by a collection of positive Hermitian operators $\rho_j\in\Bo(\calL_j)$ such that $\sum_{j=1}^{m}\Tr\rho_j=1$. Quantum channels are represented by completely positive trace-preserving (CPTP) maps of density matrices. Thus, the decoding map takes a density matrix $\rho\in\Bo(\calM)$ to $w^*(\rho)=(\rho_1,\dots,\rho_m)$, where $\rho_j=\Tr_{\calE_j}(W_j\rho W_j^\dag)$. 

State encoding, as well as the decoding of states beyond the code subspace, involves some arbitrary choices. The encoding map has the form
\begin{equation}\label{Enc}
\Enc\colon\calA^*\to\Bo(\calH)^*,\qquad
\Enc(\rho_1,\dots,\rho_m)
= \sum_{j=1}^{m} J_\calM W_j^\dag(\rho_j\otimes\gamma_j)W_j J_\calM^\dag,
\end{equation}
where $\gamma_j$ is some density matrix on $\calE_j$ and $J_\calM\colon\calM\to\calH$ is the inclusion map. We now define the decoding of density matrices $\rho\in\Bo(\calH)$. Let $\calM^\perp$ be the orthogonal complement of $\calM$ and $J_{\calM^\perp}\colon\calM^\perp\to\calH$ the corresponding inclusion. Choose an arbitrary CPTP map $S\colon\Bo(\calM^\perp)^*\to\calA^*$, which can be represented by its components, i.e.\ completely positive maps $S_j\colon\Bo(\calM^\perp)^*\to\Bo(\calL_j)^*$. Then the decoding map is also defined in terms of components:
\begin{equation}\label{Dec}
\Dec\colon\Bo(\calH)^*\to\calA^*,\qquad
\Dec_j(\rho)=\Tr_{\calE_j}\bigl(W_j J_\calM^\dag\rho J_\calM W_j^\dag\bigr)
+S_j\bigl(J_{\calM^\perp}^\dag\rho J_{\calM^\perp}\bigr).
\end{equation}
One can readily see that $\Dec\,\Enc=1_{\calA^*}$. Therefore, the quantum channel $T=\Enc\,\Dec$ is idempotent, i.e.\ $T^2=T$. Such a channel may be interpreted as a recovery from a correctable error, which has affected only the additional spaces $\calE_j$ rather than the information stored in $\calL_j$. Indeed, $T$ keeps the encoded information intact but resets $\calE_j$ to fixed states $\gamma_j$ (thus preventing future errors from compounding the past errors); it also pushes the out-of-bounds state portion $\rho^\perp=J_{\calM^\perp}^\dag\rho J_{\calM^\perp}$ to the code subspace. We will see that any idempotent channel $T\colon\Bo(\calH)^*\to\Bo(\calH)^*$ can be represented in the form $\Enc\,\Dec$. Thus, \emph{an idempotent channel implicitly defines a logical algebra of observables}.

\begin{figure}
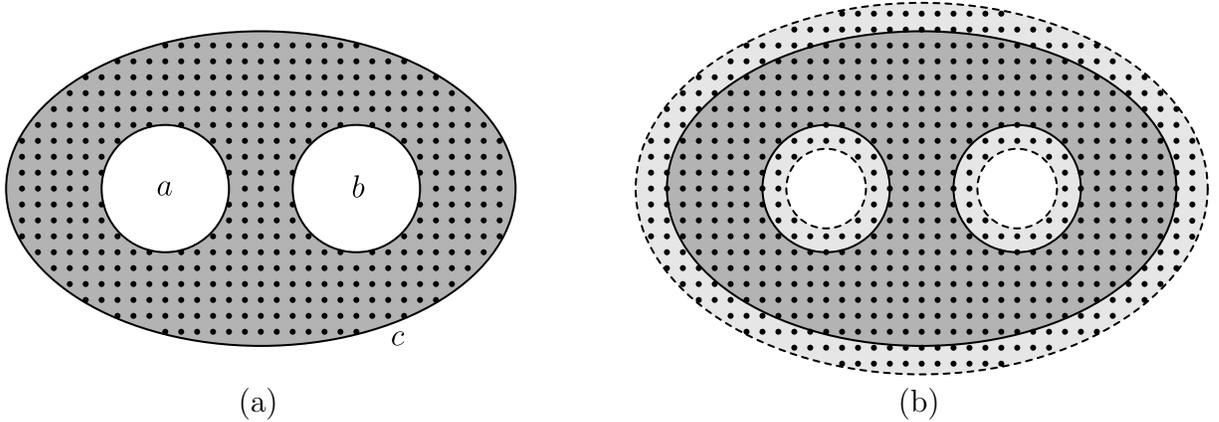

\centering
\(\displaystyle
\begin{array}{c@{\hspace{1.5cm}}c}
\figbox{1.0}{two_holes-1} & \figbox{1.0}{two_holes-2}\\[3pt]
\text{(a)} & \text{(b)}
\end{array}
\)
\caption{(a)~A system of spins (depicted as dots) on a topological disk with two holes; (b)~Expanding the system by adding extra spins along the boundary.}
\label{fig_two_holes}
\end{figure}

\begin{example}[Physical example.]
Low-energy states of some local Hamiltonians in two spatial dimensions are described by unitary modular tensor categories (UMTCs). In particular, consider a spin system on a topological disk with two holes, see Figure~\ref{fig_two_holes}a. Let $\calH$ be the full Hilbert space and $\calM$ the subspace of low-energy states. (It is beyond the scope of this paper to try to make this construction rigorous.) Then
\begin{equation}
\calM\cong \bigoplus_{a,b,c}V^{ab}_{c}\otimes\calE_{ab}^{c},\qquad
V^{ab}_{c}=\calC(c,a\otimes b),
\end{equation}
where $a,b,c$ run over the equivalence classes of simple objects of some UMTC $\calC$. In physics, these are called ``superselection sectors''; $a$ and $b$ are associated with the holes, $c$ with the outer boundary, and $V^{ab}_{c}$ is the corresponding ``fusion space''. The logical algebra of observables is $\calA=\bigoplus_{a,b,c}\Bo(V^{ab}_{c})$, whereas the spaces $\calE_{ab}^{c}=\calE_a\otimes\calE_b\otimes\calE^c$ describe non-universal boundary modes. To reconstruct $\calA$, one may define an idempotent quantum channel $T=T_{-}T_{+}$, where $T_{+}$ corresponds to expanding the system as shown in Figure~\ref{fig_two_holes}b using the methods of Ref.~\cite{SKK19}, and $T_{-}$ is the operation of tracing out the added spins. The channel $T$ resets the boundary states but does not affect the fusion spaces.
\end{example}

\subsection{Idempotent and almost idempotent UCP maps}\label{sec_ch_intro}

We will work with the duals of quantum channels, i.e.\ UCP maps $\Phi\colon\Bo(\calH)\to\Bo(\calH)$. Such $\Phi$ is idempotent if and only if $T=\Phi^*$ is idempotent. The following structure theorem will be proved in Section~\ref{sec_idemp_structure}. It is related to the characterization of operators that are invariant under an arbitrary finite-dimensional UCP map \cite[Theorems~6.12 and Lemma~6.4]{Wolf} and of quantum channels stabilizing a given set of density matrices \cite{KoIm01}, \cite[Theorem~10]{HJPW03}.

\begin{Theorem}\label{th_idemp_structure}
Let $\calH$ be a finite-dimensional Hilbert space and $\Phi\colon\Bo(\calH)\to\Bo(\calH)$ an idempotent UCP map. Then there exist a subspace $\calM\subseteq\calH$ with the inclusion map $J_\calM\colon\calM\to\calH$ and a $C^*$ algebra $\calA$ with UCP maps $\Delta$ and $\Gamma$ as described by the diagram
\begin{equation}
\begin{tikzcd}[ampersand replacement=\&]
\& \Bo(\calH) \arrow[d,"\Co_\calM"] \\
\calA \arrow[ur,"\Delta"] \& \Bo(\calM) \arrow[l,"\Gamma"]
\end{tikzcd}\qquad\quad
\Co_\calM\colon X\mapsto J_\calM^\dag XJ_\calM,
\end{equation}
such that
\begin{equation}\label{Gamma_C_Delta}
\Gamma\Co_\calM\Delta=1_\calA,\qquad \Delta\Gamma\Co_\calM=\Phi.
\end{equation}
Furthermore, $w=\Co_\calM\Delta\colon\calA\to\Bo(\calM)$ is a homomorphism of $C^*$ algebras, and operators in the image of $\Delta$ are block-diagonal with respect to the decomposition $\calH=\calM\oplus\calM^\perp$.
\end{Theorem}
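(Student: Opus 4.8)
The plan is to dualise, restrict attention to the subspace carrying a faithful invariant state, apply the multiplicative-domain calculus there, and then reassemble the data.

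First, pass to the idempotent channel $T=\Phi^*$ and put $\sigma:=T(1_\calH)$. Every density matrix $\rho$ on $\calH$ satisfies $\rho\le 1_\calH$, and $T$ is positive, so $T(\rho)\le\sigma$; hence the support of $T(\rho)$ lies in the support $\calM$ of $\sigma$, and since $\Img T$ is spanned by operators of this form, $\Img T\subseteq J_\calM\Bo(\calM)J_\calM^\dag$, where $J_\calM\colon\calM\to\calH$ is the inclusion. Applying $T$ to pure states shows that each Kraus operator $K_k$ of $T$ maps $\calH$ into $\calM$, i.e.\ $K_k=J_\calM\tilde K_k$; substituting into $\Phi(X)=\sum_k K_k^\dag X K_k$ gives
\[
\Phi=\Psi\,\Co_\calM,\qquad \Psi\colon\Bo(\calM)\to\Bo(\calH),\quad \Psi(Y)=\sum_k\tilde K_k^\dag Y\tilde K_k,
\]
with $\Psi$ a UCP map; thus $\Phi(X)$ depends on $X$ only through the corner $\Co_\calM X$.

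Next, set $\Lambda:=\Co_\calM\Psi\colon\Bo(\calM)\to\Bo(\calM)$, a UCP map. From $\Phi^2=\Phi$ and the surjectivity of $\Co_\calM$ one gets $\Psi\Co_\calM\Psi=\Psi$, hence $\Lambda^2=\Lambda$; moreover $\bar\sigma:=J_\calM^\dag\sigma J_\calM$ is a faithful state on $\calM$ fixed by $\Lambda^*$ (since $\sigma\in\Fix T=\Img T$). For such $\Lambda$ the usual fixed-point argument applies: for $Y\in\Fix\Lambda$ the Schwarz inequality gives $\Lambda(Y^\dag Y)\ge\Lambda(Y)^\dag\Lambda(Y)=Y^\dag Y$, while $\Tr\bigl((\Lambda(Y^\dag Y)-Y^\dag Y)\bar\sigma\bigr)=0$, and faithfulness of $\bar\sigma$ then forces $\Lambda(Y^\dag Y)=Y^\dag Y$ and likewise $\Lambda(YY^\dag)=YY^\dag$. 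So $\Fix\Lambda$ lies in the multiplicative domain of $\Lambda$, which makes $\calA:=\Fix\Lambda$ a genuine unital $C^*$-subalgebra of $\Bo(\calM)$ on which $\Lambda$ restricts to the identity. (Applying Proposition~\ref{prop_hom_structure} to $\calA\hookrightarrow\Bo(\calM)$ would yield $\calM=\bigoplus_j\calL_j\otimes\calE_j$ with $\calA=\bigoplus_j\Bo(\calL_j)\otimes 1_{\calE_j}$, but this refinement is not needed.)

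It remains to assemble and check the maps. Take $\Gamma:=\Lambda$ regarded as a UCP map onto $\calA$, and $\Delta:=\Psi|_\calA$, the composition of the $*$-homomorphism $\iota\colon\calA\hookrightarrow\Bo(\calM)$ with $\Psi$ (hence UCP). Since $\Lambda$ fixes $\calA$ pointwise, $w=\Co_\calM\Delta=\Lambda|_\calA=\iota$ is a $*$-homomorphism, $\Gamma\Co_\calM\Delta=\Gamma\iota=1_\calA$, and $\Delta\Gamma\Co_\calM=\Psi\Lambda\Co_\calM=\Psi\Co_\calM=\Phi$ (using $\Psi\Lambda=\Psi$). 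The one genuinely nontrivial point — and where I expect the real obstacle — is block-diagonality of the operators in $\Img\Delta=\Psi(\calA)$ with respect to $\calH=\calM\oplus\calM^\perp$. For this I would take a Stinespring form $\Psi(Y)=W^\dag(Y\otimes 1_\calF)W$ with $W\colon\calH\to\calM\otimes\calF$ an isometry and split $W=(W_\calM\;\;W_{\calM^\perp})$ along $\calH=\calM\oplus\calM^\perp$; then $W^\dag W=1_\calH$ says $W_\calM,W_{\calM^\perp}$ are isometries with orthogonal ranges, $W_\calM^\dag W_{\calM^\perp}=0$. Since $\Lambda(Y)=W_\calM^\dag(Y\otimes 1_\calF)W_\calM$, membership of $\calA$ in the multiplicative domain gives the intertwiner $(Y\otimes 1_\calF)W_\calM=W_\calM Y$ for $Y\in\calA$, so (taking adjoints, $\calA$ being $\dag$-closed) $W_\calM^\dag(Y\otimes 1_\calF)=YW_\calM^\dag$; hence the off-diagonal block of $\Psi(Y)$ is $W_\calM^\dag(Y\otimes 1_\calF)W_{\calM^\perp}=Y\,W_\calM^\dag W_{\calM^\perp}=0$. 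The conceptual difficulty to keep in mind is that the Choi--Effros $C^*$-structure one gets for free on $\Img\Phi$ is only an abstract one with a deformed product; realising it concretely with the honest operator product is precisely what forces the passage to $\calM$ and the use of the multiplicative domain.
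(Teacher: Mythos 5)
Your proof is correct, and it takes a genuinely different route from the paper's. Both arguments ultimately single out the same $C^*$-subalgebra of $\Bo(\calM)$ --- the paper's $\Img w = \Fix(\Co_\calM\Lambda)$ coincides with your $\Fix\Lambda$, up to a notational swap (your $\Psi$ is the paper's $\Lambda\colon X\mapsto\Phi(J_\calM X J_\calM^\dag)$, and your $\Lambda$ is the paper's $\Psi=\Co_\calM\Lambda$) --- but they certify its $C^*$-structure and the block-diagonality by different means. The paper works directly with the Choi isometry $V$ and proves Lemma~\ref{lem_idemp} by explicit diagrammatic manipulation, repeatedly inserting $\Phi$'s and exploiting $\Phi^2=\Phi$; that lemma delivers both the block-diagonality of $\Img\Phi$ and the agreement of the Choi--Effros product with the honest operator product on the carrier, after which the theorem is soft bookkeeping. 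You instead take $\calM$ to be the support of $\sigma=T(1_\calH)$ (the same carrier, by Lemma~\ref{lem_carrier}), use the Schwarz inequality together with faithfulness of $\bar\sigma$ (a $\Lambda^*$-fixed full-rank positive operator; it should be normalized to make it a literal state, but the argument only needs positivity and full rank) to place $\Fix\Lambda$ in the multiplicative domain of $\Lambda$, and read off block-diagonality from the Stinespring intertwiner $(Y\otimes 1_\calF)W_\calM=W_\calM Y$ together with $W_\calM^\dag W_{\calM^\perp}=0$. Your route is shorter and leans on the standard fixed-point machinery for quantum channels --- exactly the circle of ideas the paper itself cites as ``related'' (\cite{Wolf}, \cite{KoIm01}, \cite{HJPW03}) in the remark preceding Theorem~\ref{th_idemp_structure}. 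What the paper's longer route buys is not this theorem per se but the diagrammatic toolkit, which it reuses essentially verbatim in Section~\ref{sec_assoc_ecsa} to prove the $O(\eta)$-associativity estimates \eqref{Phi_assoc1}--\eqref{Phi_assoc2} for merely $\eta$-idempotent $\Phi$, a setting where no invariant state or multiplicative-domain shortcut is available.
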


It will also be shown that $\Enc=(\Gamma\Co_\calM)^*$ and $\Dec=\Delta^*$ have the form \eqref{Enc} and \eqref{Dec}, respectively. For now, let us discuss more general corollaries of Theorem~\ref{th_idemp_structure}. First, equations \eqref{Gamma_C_Delta} imply that $\Delta$ is injective and that $\Img\Delta=\Img\Phi$, so $\calA$ may be identified with the vector space $\Img\Phi$. For example, $A,B,AB\in\calA$ are identified, respectively, with $X=\Delta(A)$,\, $Y=\Delta(B)$, and $Z=\Delta(AB)$. To represent the latter in a convenient form, we will use the second part of the theorem. It implies that $\Co_\calM(Z)=\Co_\calM(X)\,\Co_\calM(Y)=\Co_\calM(XY)$, and therefore,
\begin{equation}
Z=\Delta 1_\calA(AB)=\Delta\Gamma\Co_\calM\Delta(AB)
=\Delta\Gamma\Co_\calM(Z)=\Delta\Gamma\Co_\calM(XY)=\Phi(XY).
\end{equation}

The last equation means that upon the identification of $\calA$ with $\Img\Phi$, the multiplication in $\calA$ becomes the \emph{Choi-Effros product},
\begin{equation}
X\star Y=\Phi(XY).
\end{equation}
Choi and Effros showed that for any idempotent UCP map $\Phi\colon\Bo(\calH)\to\Bo(\calH)$ and any underlying Hilbert space $\calH$ (not only a finite-dimensional one), the subspace $\calA=\Img\Phi =\Ker(1-\Phi)\subseteq\Bo(\calH)$ equipped with this product, together with the norm and the involution $X\mapsto X^\dag$ inherited from $\Bo(\calH)$, satisfies all axioms of a $C^*$ algebra \cite[Theorem~3.1]{ChEf77}.

Physical applications require tolerance to small errors, so one should consider UCP maps that are only approximately idempotent. Moreover, meaningful properties do not change when the physical system is extended with new parts, as long as they do not interact with it. The second requirement suggests the use of the following definition~\cite{Paulsen}. A linear map $\Lambda\colon\calA'\to\calA''$ between two $C^*$ algebras is called \emph{completely bounded} if this \emph{completely bounded norm} is finite:
\begin{equation}
\|\Lambda\|_\cb=\sup_{n}\ts\sup_{X\not=0}
\frac{\|(1_{\Ma{n}}\otimes\Lambda)(X)\|}{\|X\|}\qquad
(X\in\Ma{n}\otimes\calA'),
\end{equation}
where $\Ma{n}=\Bo(\CC^n)$ is the algebra of $n\times n$ matrices. The completely bounded norm in finite dimensions is efficiently computable using semidefinite programming \cite[Chapter~3]{Watrous}. In quantum information literature, the dual norm for maps of linear functionals is used; it is denoted by $|\kern-0.6pt|\kern-0.6pt|\cdot|\kern-0.6pt|\kern-0.6pt|_1$ or $\|\cdot\|_{\Diamond}$.

A UCP map $\Phi\colon\Bo(\calH)\to\Bo(\calH)$ is called \emph{$\eta$-idempotent} if $\|\Phi^2-\Phi\|_\cb\le\eta$. A straightforward approach to such maps seems to be the approximation by idempotent maps. To this end, one could try to employ the function calculus for the Banach algebra of completely bounded linear maps on $\Bo(\calH)$ with the norm $\|\cdot\|_\cb$. Specifically, let\vspace{-3pt}
\begin{equation}
\wt{\Phi}=\theta(2\Phi-1),\qquad \text{where}\quad
\theta(x)=\begin{cases}1 & \text{if } x>0,\\ 0 & \text{if } x<0.\end{cases}
\end{equation}
The basic properties of $\wt{\Phi}$ follow from Proposition~\ref{prop_P} and the fact that $\|\Phi\|_\cb=1$. Specifically, $\wt{\Phi}$ is well-defined if $\eta<1/4$; it is idempotent and satisfies the bound
\begin{equation}
\|\wt{\Phi}-\Phi\|_\cb\le O(\eta).
\end{equation}
Unfortunately, $\wt{\Phi}$ is not guaranteed to be completely positive.

\begin{Example}
Let $\eta$ be a sufficiently small positive number. Consider the UCP map
\begin{gather}
\Phi\colon\Bo(\CC^2)\to\Bo(\CC^2),\qquad
\Phi(X)= P_0\Tr(\gamma_0 X)+P_1\Tr(\gamma_1 X),
\\[3pt]
P_0 = \begin{pmatrix} 1 & 0\\ 0 & 0\end{pmatrix},\quad\:
P_1 = \begin{pmatrix} 0 & 0\\ 0 & 1\end{pmatrix},\qquad
\gamma_0 =\begin{pmatrix}
1-\eta & \sqrt{\eta(1-\eta)}\\ \sqrt{\eta(1-\eta)} & \eta
\end{pmatrix},\quad\:
\gamma_1 =\begin{pmatrix} 0 & 0\\ 0 & 1\end{pmatrix}.
\end{gather}
One can check that $\|\Phi^2-\Phi\|_\cb=\eta\sqrt{1-\eta}$ and the map $\wt{\Phi}=\theta(2\Phi-1)$ acts as follows:
\begin{equation}
\wt{\Phi}(X)=P_0\Tr(\gamma X)+P_1\Tr(\gamma_1 X)
\end{equation}
where\vspace{-3pt}
\begin{equation}
\gamma = (1-\eta)^{-1} (\gamma_0 - \eta\gamma_1)
= \begin{pmatrix} 1 & \sqrt{\eta/(1-\eta)}\\ \sqrt{\eta/(1-\eta)} & 0\end{pmatrix}
\end{equation}
is not positive. On the other hand, there is an idempotent UCP map that is $O(\sqrt{\eta})$-close to $\Phi$.
\end{Example}

Is it possible to approximate all $\eta$-idempotent UCP maps by idempotent ones with accuracy $O(\sqrt{\eta})$ or some other function of $\eta$ that does not depend on the space dimensionality or other parameters? This seems to be an open problem; at least, I do not know of a solution. Nonetheless, one can use $\wt{\Phi}=\theta(2\Phi-1)$ to construct an approximate algebra of observables. As a vector space, this algebra is
\begin{equation}
\calA=\Img\wt{\Phi}=\Ker(1-\wt{\Phi})\subseteq\Bo(\calH).
\end{equation}
The multiplication is defined by the equation
\begin{equation}
X\star Y=\wt{\Phi}(XY)\qquad (X,Y\in\calA),
\end{equation}
whereas the unit, the norm and the involution $X\mapsto X^\dag$ are inherited from $\Bo(\calH)$. The axioms for such ``$\eps$-$C^*$ algebras'' are formulated in the next section. It will be shown in Section~\ref{sec_assoc_ecsa} that this particular algebra $\calA$ satisfies the axioms for $\eps=O(\eta)$. Since the hypothesis about $\Phi$ involves the completely bounded norm, we are able to prove an extended version of the previous statement, namely, that $\Ma{n}\otimes\calA$ satisfies the $\eps$-$C^*$ axioms uniformly in $n$.

The main result of this paper is that any finite-dimensional $\eps$-$C^*$ algebra $\calA$ is $\delta$-isomorphic to some $C^*$ algebra $\calB$ for $\delta=O(\eps)$. We will also show that if the norms are consistently defined for all algebras $\Ma{n}\otimes\calA$ and the uniform $\eps$-$C^*$ axioms hold, then there is a linear map $v\colon\calB\to\calA$ such that all maps $1_{\Ma{n}}\otimes v\colon \Ma{n}\otimes\calB\to\Ma{n}\otimes\calA$ are $\delta$-isomorphisms (where $\delta$ depends only on $\eps$, see Section~\ref{sec_tens_ext}). Applying this extended result to the previously defined $\calA$, we will prove (in Section~\ref{sec_approx_fact}) the existence of a $C^*$ algebra $\calB$ and UCP maps $\Delta\colon\calB\to\Bo(\calH)$ and $\Upsilon\colon\Bo(\calH)\to\calB$ such that $\|\Upsilon\Delta-1_{\mathcal{B}}\|_{\mathrm{cb}} \le O(\eta)$ and $\|\Delta\Upsilon-\Phi\|_{\mathrm{cb}} \le O(\eta)$. The dual maps $\Delta^*$ and $\Upsilon^*$ may be viewed as approximate variants of a decoding and an encoding, respectively.

\section{Main theorem on \texorpdfstring{$\eps$-$C^*$}{epsilon-C*} algebras and the proof strategy}\label{sec_main_thm}

Let $\eps$ be a sufficiently small nonnegative number. Individual definitions and statements will require (in many cases, implicitly) that $\eps$ be bounded by certain constants. For example, condition \eqref{ax_C*} below makes sense if $\eps<1$.

\begin{Definition}
An \emph{$\eps$-Banach algebra} is a Banach space $\calA$ endowed with a bilinear multiplication map $\calA\times \calA\to \calA$ such that
\begin{alignat}{2}
\label{ax_prodnorm}
\|XY\| &\le (1+\eps)\ts\|X\|\ts\|Y\|\qquad &&(X,Y\in \calA),\\[2pt]
\label{ax_assoc}
\|(XY)Z-X(YZ)\| &\le \eps\ts\|X\|\ts\|Y\|\ts\|Z\|\qquad &&(X,Y,Z\in \calA).
\end{alignat}
Such an algebra is called \emph{$\eps'$-commutative} if
\begin{equation}
\label{ax_comm}
\|XY-YX\|\le \eps'\ts\|X\|\ts\|Y\|\qquad (X,Y\in \calA).
\end{equation}
A \emph{$*\eps$-Banach algebra} is a complex $\eps$-Banach algebra with a conjugate linear involution $X\mapsto X^\dag$ satisfying the equations
\begin{equation}
\label{ax_*}
\|X^\dag\|=\|X\|,\qquad (XY)^\dag=Y^\dag X^\dag\qquad (X,Y\in \calA).
\end{equation}
An \emph{$\eps$-$C^*$ algebra} is one with this additional property:
\begin{equation}
\label{ax_C*}
\|X^{\dag}X\|\ge (1-\eps)\ts\|X\|^{2}\qquad (X\in \calA).
\end{equation}
(A bound from the other side, $\|X^{\dag}X\|\le (1+\eps)\ts\|X\|^{2}$, follows from \eqref{ax_prodnorm} and \eqref{ax_*}.) We assume that all algebras are unital. The unit element $I\in\calA$ should satisfy the approximate or exact conditions
\begin{alignat}{4}
\label{ax_eps_unit}
\|XI&-X\|\le\eps\ts\|X\|,\qquad &\|IX&-X\|\le\eps\ts\|X\|,\qquad
&\bigl|\|I\|&-1\bigr|\le\eps\qquad &&\text{(by default)},\quad \text{or}\\[2pt]
\label{ax_exact_unit}
XI&=X,\qquad &IX&=X,\qquad
&\|I\|&=1\qquad &&\text{(if specified)}.
\end{alignat}
If the involution is defined, one also requires that $I^\dag=I$.
\end{Definition}
(The exact unit condition \eqref{ax_exact_unit} can be achieved by an $O(\eps)$-change of both the unit and the multiplication, see Proposition~\ref{prop_unit}.)

\begin{Definition}
A \emph{$\delta$-homomorphism} from an $\eps'$-Banach algebra $\calA'$ to an $\eps''$-Banach algebra $\calA''$ is a bounded linear map $v\colon \calA'\to\calA''$ that almost preserves the unit and the multiplication:
\begin{align}
\label{hom_unit}
\|v(I)-I\|&\le \delta,\\[2pt]
\label{hom_mult}
\|v(XY)-v(X)v(Y)\|&\le \delta\ts\|X\|\ts\|Y\|\qquad (X,Y\in \calA').
\end{align}
A \emph{non-unital $\delta$-homomorphism} is defined by imposing only condition \eqref{hom_mult}. In the $*$-algebra setting, it is also required that $v(X^\dag)=v(X)^\dag$. A \emph{$\delta$-inclusion} is a $\delta$-homomorphism such that
\begin{equation}
(1-\delta)\ts\|X\|\le \|v(X)\|\le (1+\delta)\ts\|X\|\qquad (X\in \calA').
\end{equation}
A \emph{$\delta$-isomorphism} is a bijective $\delta$-inclusion.
\end{Definition}

Note that the inverse of a $\delta$-isomorphism is a ($\delta+O(\delta^2)$)-isomorphism. Here and in general, each instance of big-$O$ or similar notation stands for a concrete function, not depending on any additional data.

\begin{Theorem}\label{th_main}
Any finite-dimensional $\eps$-$C^*$ algebra $\calA$ is $O(\eps)$-isomorphic to some $C^*$ algebra $\calB$. (Importantly, the implicit constant in $O(\eps)$ does not depend on $\calA$ or its dimensionality.)\medskip
\end{Theorem}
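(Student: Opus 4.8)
The plan is to reduce the approximate statement to the exact classification of finite-dimensional $C^*$ algebras by a bootstrapping/perturbation argument. First I would use Proposition~\ref{prop_unit} to replace $\calA$ by an $O(\eps)$-isomorphic $\eps'$-$C^*$ algebra with an \emph{exact} unit ($\eps'=O(\eps)$), so that $I$ is a genuine idempotent of norm $1$. Next I would build an approximate spectral/functional calculus: for a self-adjoint $X\in\calA$ with $\|X\|\le 1$, the powers $X^n$ are almost well-behaved by \eqref{ax_assoc} and \eqref{ax_prodnorm}, so polynomials act almost like they would in a real $C^*$ algebra, and one can define $f(X)$ for Lipschitz $f$ with an error $O(\eps)\,\mathrm{Lip}(f)$. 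The key use of this is to produce approximate minimal central projections: the center (elements that $\eps$-commute with everything — one should first symmetrize to get an honest approximate center) carries an almost-commutative, almost-$C^*$ structure, and Gelfand-type reasoning for $\eps'$-commutative $\eps'$-$C^*$ algebras (this is presumably handled in an earlier section, or proved here by the functional-calculus estimates) yields a partition of the exact unit into approximately orthogonal approximate projections $p_1,\dots,p_m$ with $\sum p_j \approx I$ and $p_j\star p_k\approx \delta_{jk}p_j$, $\delta=O(\eps)$.

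The second stage is to recognize each "block" $p_j\star\calA\star p_j$ as an approximate full matrix algebra. Working inside one block (after a further $O(\eps)$-adjustment making $p_j$ an exact unit of that block, again via the unit-normalization proposition applied to the corner), I would locate an approximate system of matrix units: choose a maximal family of approximately orthogonal minimal approximate projections $e_{11},\dots,e_{kk}$ summing to $p_j$, then for each pair pick $e_{i1}$ with $e_{i1}^\dag\star e_{i1}\approx e_{11}$, $e_{i1}\star e_{i1}^\dag\approx e_{ii}$ (existence follows because in an $\eps$-$C^*$ algebra the relation "there is a partial isometry between two minimal projections" is robust — this is exactly where \eqref{ax_C*} earns its keep, controlling polar-decomposition-type estimates), and set $e_{i\ell}=e_{i1}\star e_{1\ell}$. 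One checks $\|e_{i\ell}\star e_{\ell' \ell''}-\delta_{\ell\ell'}e_{i\ell''}\|=O(\eps)$ and $e_{i\ell}^\dag\approx e_{\ell i}$. Mapping the standard matrix units of $\Ma{k}$ to the $e_{i\ell}$ extends linearly to a candidate map $\Ma{k}\to p_j\star\calA\star p_j$; then $\calB:=\bigoplus_j \Ma{k_j}$ with the obvious inclusions assembled over $j$ gives the comparison map $v\colon\calB\to\calA$.

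The final stage is to show $v$ is an $O(\eps)$-isomorphism. Surjectivity up to $O(\eps)$ is the content of "the $e_{i\ell}$ span $\calA$ approximately": any $A\in\calA$ is $O(\eps)$-close to $\sum_{j,i,\ell} (\text{coeff})\,e^{(j)}_{i\ell}$ because $A\approx\sum_j p_j\star A\star p_j$ (off-diagonal corners are $O(\eps)$) and within a block the coefficients are read off by $e_{1i}\star A\star e_{\ell 1}$-type expressions; one then needs that the matrix-unit expansion is norm-faithful, i.e.\ $\|v(M)\|=(1+O(\eps))\|M\|$, which follows from the approximate $C^*$ identity together with the fact that in a genuine $C^*$ algebra the norm of a block-diagonal matrix is the max of block norms, each computed from $\|M^\dag M\|$. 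The injectivity/lower-bound estimate $\|v(M)\|\ge(1-O(\eps))\|M\|$ comes from \eqref{ax_C*} applied repeatedly: $\|v(M)\|^2\ge(1-\eps)\|v(M)^\dag\star v(M)\|\ge(1-O(\eps))\|v(M^\dag M)\|$ and then induct on the power to reach the spectral radius, as in the standard proof that $*$-homomorphisms of $C^*$ algebras are isometric.

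I expect the main obstacle to be stage two: producing the approximate partial isometries $e_{i1}$ with uniformly $O(\eps)$ (not $O(\sqrt\eps)$, and not dimension-dependent) error. Naively one takes an element $x$ with $x^\dag\star x\approx e_{11}$, $x\star x^\dag\approx e_{ii}$ and "polar-decomposes," but the functional calculus for $t\mapsto t^{-1/2}$ is only Lipschitz away from $0$, so one must first argue that the relevant spectrum is bounded away from $0$ by an absolute constant — this requires care because a priori $x^\dag\star x$ could have small spurious spectral weight near $0$ coming from the associativity defect. The fix is to cut that weight off with a spectral projection (itself built by functional calculus) and show the cutoff changes $x$ by only $O(\eps)$; keeping every such estimate free of hidden dimension factors, and making sure the finitely many successive $O(\eps)$-adjustments compose to a single $O(\eps)$ bound, is the delicate bookkeeping at the heart of the argument.
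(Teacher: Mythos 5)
You take a genuinely different route from the paper, but it has gaps that are not merely "delicate bookkeeping"; several of the stages you describe rest on tools that the paper shows, or at least strongly suggests, do not exist.

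\textbf{The functional calculus step is unsound.} You propose defining $f(X)$ for Lipschitz $f$ inside the $\eps$-$C^*$ algebra with error $O(\eps)\,\mathrm{Lip}(f)$. But in an $\eps$-associative algebra the ambiguity in a degree-$n$ polynomial expression (different parenthesizations of $X^n$) is of order $n\eps$, and Lipschitz functions require high degree, so the error is not uniformly $O(\eps)$. The paper addresses exactly this point in Remark~\ref{rem_X2}: even the single equation $Y^2 = I$ may have no solution near a Hermitian $X$ with $\|X^2-I\|=O(\eps)$, so Proposition~\ref{prop_P} (the $\theta$ construction, i.e.\ spectral projections) \emph{does not} carry over to the $\eps$-Banach or $\eps$-$C^*$ setting. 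The paper's own use of functional calculus is always in a surrounding \emph{genuine} Banach algebra — typically the algebra of bounded linear operators on $\calA$ (as in the construction of $\Co_{P,Q}=\theta(\La_P\Ra_Q+\Ra_Q\La_P-1)$) — precisely to sidestep the compounding associativity defect. Your plan applies it inside $\calA$ itself, which is what Remark~\ref{rem_X2} warns against.

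\textbf{The central projections and the minimal projections are where the real work is, and you assume them.} You write that minimal central approximate projections ``follow from Gelfand-type reasoning'' and that partial isometries between minimal projections exist ``because the relation is robust — this is exactly where \eqref{ax_C*} earns its keep.'' In fact the paper devotes Sections~\ref{sec_unitaries} and \ref{sec_projection} to establishing just the \emph{first} nontrivial approximate projection, by constructing the approximate unitary group $\calU$ as a $C^1$ manifold and applying the Lefschetz–Hopf theorem together with the Hopf structure theorem for the cohomology of H-spaces. There is no known direct, dimension-independent Gelfand-type argument; the obstruction is the same as above — one cannot do spectral theory inside the approximate algebra without losing control.

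\textbf{Accumulation of errors is not ``bookkeeping'' and your plan does not control it.} You build a full system of $\sum_j k_j^2$ matrix units by a chain of adjustments, and hope they compose to $O(\eps)$. Naively they compose to $O(\eps\dim\calA)$, exactly the failure mode the paper flags when discussing why the diagonal $\int dU\,(U^\dag\otimes U)$ cannot be constructed directly in the approximate algebra. The paper's cure is the error-reduction Corollary~\ref{cor_improvement}: after each enlargement of the finite-dimensional $C^*$ algebra $\calB$ and $\delta$-inclusion $v\colon\calB\to\calA$, one re-contracts the error back to $c_0\eps$ using the fact that $\calB$ itself (being an exact finite-dimensional $C^*$ algebra) has a norm-one diagonal, so the cohomological perturbation of Lemma~\ref{lem_approx} applies. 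Your plan has no analogue of this mechanism, and without it the incremental construction diverges as the dimension grows.

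In short, the stages where you wave at a functional calculus, at Gelfand theory for the approximate center, and at dimension-free accumulation of errors are not details but are exactly the content of Sections~\ref{sec_unitaries}, \ref{sec_projection}, and Corollary~\ref{cor_improvement} in the paper, and the paper's replacements (topological existence of a nontrivial projection; cohomological error reduction via diagonals of finite-dimensional $C^*$ algebras) are not interchangeable with what you propose.
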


The rest of this section is informal. A natural approach to proving theorem~\ref{th_main} would be to show that $\calA$ has trivial cohomology, or just that $H^3(\calA,\calA)=0$. Here we are referring to the Hochschild cohomology of associative algebras or its Banach version~\cite{Joh-coh}, somehow adapted to $\eps$-associativity. To see how it might work, consider the \emph{associativity defect} of $\calA$:
\begin{equation}
h\colon \calA^3\to\calA,\qquad
h(X,Y,Z)=(XY)Z-X(YZ).
\end{equation}
It satisfies the 3-cocycle equation,
\begin{equation}
W h(X,Y,Z) - h(WX,Y,Z) + h(W,XY,Z) - h(W,X,YZ) + h(W,X,Y) Z =0.
\end{equation}
Note that $h(X,Y,Z)$ is small, being bounded by $\eps\ts\|X\|\ts\|Y\|\ts\|Z\|$. If it could be represented as a coboundary with $O(\eps)$ relative accuracy,
\begin{equation}
h(X,Y,Z) = X g(Y,Z) - g(XY,Z) + g(X,YZ) - g(X,Y) Z
+O(\eps^2)\ts\|X\|\ts\|Y\|\ts\|Z\|,
\end{equation}
then the modified multiplication $X\cdot Y=XY+g(X,Y)$ would be $O(\eps^2)$-associative. The iteration of this procedure would provide exact associativity.

An associative algebra $\calA$ has trivial Hochschild cohomology if there is a \emph{diagonal}, i.e.\ an element $D=\sum_{j}A_j\otimes B_j\in\calA\otimes\calA$ satisfying the equations $\sum_{j}A_j\otimes B_jX=\sum_{j}XA_j\otimes B_j$ for all $X\in\calA$ and $\sum_{j}A_jB_j=I$. Indeed, in this case, an arbitrary cocycle $h$ is the coboundary of this $g$:
\begin{equation}
g(X,Y)=\sum_j A_{j}h(B_j,X,Y).
\end{equation}
For finite-dimensional $C^*$ algebras, a diagonal can be obtain as $D=\int dU\, (U^\dag\otimes U)$, where the integral is taken with respect to the Haar measure on the unitary group. Unfortunately, naive constructions of the Haar measure (or just the diagonal) in the $\eps$-associative setting have error bounds proportional to $n=\dim\calA$. So the outlined procedure of fixing the multiplication works only if $\eps<cn^{-1}$ for some constant $c$.

To actually prove the theorem, we take a different approach, but a cohomological argument is still used. We construct an $O(\eps)$-isomorphism between some $C^*$ algebra $\calB$ and the $\eps$-$C^*$ algebra $\calA$ incrementally. Before the process is complete, the map $v\colon \calB\to\calA$ is an $O(\eps)$-inclusion, and we enlarge $\calB$ and modify $v$ in little steps. This involves solving two problems. To even make the first step, from $\calB_0=\CC$ to $\calB_1=\CC\oplus\CC$, we need to find a \emph{nontrivial projection} in $\calA$. An approximate projection, satisfying the conditions $P^\dag=P$ and $\|P^2-P\|\le\delta$ for a suitable constant $\delta$, is sufficient as long as $P$ is not too close to $\pm I$. The existence of such a projection (assuming that $\dim\calA>1$) is the first important lemma. We prove it by first constructing (in Section~\ref{sec_unitaries}) an approximate unitary group $\calU\subseteq\calA$ as a $C^1$ manifold using standard analytic methods such as the implicit function theorem. The group laws are satisfied with $O(\eps)$ accuracy, and therefore, they also hold up to homotopy. Now, $P$ is a projection if and only if $U=2P-I$ is Hermitian and $U^2=I$. In other words, $U$ is a fixed point of the inversion map $\sigma\colon U\mapsto U^{-1}$ acting on $\calU$. We show that $\sigma$ has sufficiently many (i.e.\ more than two) fixed points using the Lefschetz-Hopf theorem and certain properties of H-spaces (see Section~\ref{sec_projection}).

Once the lemma is proved, we can keep building the finite-dimensional $C^*$ algebra $\calB$ in a similar fashion. At each step, $\calB$ is described by a partitioned index set and the basis $\{E_{jk}\}$ with $j$ and $k$ in the same part. The multiplication rules are $E_{jk}E_{lm}=\delta_{kl}E_{jm}$. At the beginning of the incremental process, the parts are singletons so that all basis elements are diagonal and $\calB$ is commutative. Each step involves replacing some $j$ and the corresponding $E_{jj}$ with $\{j',j''\}$ and $E_{j'j'}+E_{j''j''}$, respectively. When that is no longer possible, we add off-diagonal elements, $E_{jk}$ with $j\not=k$, until $v\colon \calB\to\calA$ becomes bijective. The detailed argument is structured in terms of \emph{merging} (Corollary~\ref{cor_merge_sum}) and \emph{extension} (Lemma~\ref{lem_extension}) procedures.

The second problem is to control errors. We must make sure that $v$ is a $\delta_0$-inclusion for some fixed $\delta_0=O(\eps)$. Simply extending the current $v$ to a new, larger $\calB$ gives a $\delta$-inclusion for $\delta$ greater than $\delta_0$. Thus, we need the following \emph{error reduction} result (Corollary~\ref{cor_improvement}): if there exists a $\delta$-inclusion of a finite-dimensional $C^*$ algebra $\calB$ into an $\eps$-$C^*$ algebra $\calA$ for $\delta$ less than a certain constant, then there is also an $\delta_0$-inclusion, where $\delta_0=O(\eps)$ does not depend on $\delta$. This can be shown by cohomological methods because $\calB$ does have a diagonal. Similar results are found in the literature; they are concerned with the existence of a $C^*$ subalgebra near an approximate $C^*$ subalgebra~\cite{Chr80} and a homomorphism of $C^*$ algebras near an approximately multiplicative map~\cite{Joh88}. Our argument is similar to the proof of Theorem~3.1 in~\cite{Joh88} but actually simpler due to the finite dimensionality condition.

\section{Some analytic tools}

The study of approximate algebras naturally involves ordinary (real or complex) Banach algebras. For many purposes, the coefficient field does not matter. Indeed, any complex Banach algebra can be considered with real coefficients. Conversely, an arbitrary real Banach algebra $\calA$ can be extended to a complex Banach algebra, namely, $\calB=\CC\otimes_\RR\calA$ with the projective tensor norm,
\begin{equation}
\|Z\|=\inf\biggl\{\sum_{j}|c_j|\ts\|X_j\|:\,
\sum_{j}c_j\otimes X_j=Z\biggr\}\qquad
(c_j\in\CC,\,\: X_j\in\calA).
\end{equation}
Note that the inclusion map $\calA\to\calB$ preserves the norm, and if $X\in\calA$ is invertible in $\calB$, then $X^{-1}\in\calA$.

Let $\calB$ be a Banach algebra and $f$ a function of one complex variable represented by its Taylor series $f(x)=\sum_{n=0}^{\infty}a_n(x-x_0)^n$ with the convergence radius $\ge r$. Then $f(X)$ is well-defined, commutes with $X$, and satisfies a certain bound for all elements $X\in\calB$ sufficiently close to $x_0I$:
\begin{gather}
\label{Taylor_simple}
f(X)=a_0I+\sum_{n=1}^{\infty}a_n(X-x_0I)^n\quad\:
\text{if }\, \|X-x_0I\|<r,\\
\label{Taylor_simple_bound}
\|f(X)-f(x_0I)\|\le \sum_{n=1}^{\infty}|a_n|\ts\|X-x_0I\|^n.
\end{gather}
For example, let $\alpha$ be an arbitrary complex number, $x_0$ a positive number, and let $f(x)=x^{\alpha}$. Then $f(X)$ is well-defined if $\|X-x_0I\|<x_0$. Expanding around an arbitrary element $X_0$ is more complex, but $X^{-1}=X_0^{-1}(I+(X-X_0)X_0^{-1})^{-1}$ exists if $\|X-X_0\|<\|X_0^{-1}\|^{-1}$.

These are some interesting and useful functions:
\begin{equation}\label{abs_sgn}
|X|=(X^2)^{1/2},\qquad \sgn(X)=X(X^2)^{-1/2}\qquad\quad
\bigl(\|X^2-x_0^2I\|<x_0^2\bigr).
\end{equation}
If $X$ is an approximate solution of the equation $X^2=I$, then a nearby exact solution, commuting with $X$, is given by $\sgn(X)$. Indeed, $\sgn(X)^2=I$ for all $X$, and
\begin{equation}
\|\sgn(X)-X\|\le \|X\|\,O(\|X^2-I\|)\quad\:
\text{if }\, \|X^2-I\|<1.
\end{equation}

\begin{Proposition}\label{prop_P}
Let $P$ be an element of a Banach algebra such that $\|P^2-P\|\le\delta<1/4$, and let
\begin{equation}
\wt{P}=\theta(2P-I),\qquad
\text{where}\quad \theta(X)=\frac{1}{2}\bigl(I+\sgn(X)\bigr).
\end{equation}
Then $\wt{P}^2=\wt{P}$,\, $\wt{P}$ commutes with $P$, and $\|\wt{P}-P\|\le \|2P-I\|\ts O(\delta)$.
\end{Proposition}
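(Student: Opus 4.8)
The plan is to reduce everything to the behavior of the scalar function $\theta$ applied to $2P-I$, using the holomorphic functional calculus recalled in \eqref{Taylor_simple}–\eqref{abs_sgn}. Write $X = 2P-I$, so that $P = \tfrac12(I+X)$ and $P^2-P = \tfrac14(X^2-I)$. Hence the hypothesis $\|P^2-P\|\le\delta$ is exactly $\|X^2-I\|\le 4\delta < 1$, which is precisely the condition under which $|X|=(X^2)^{1/2}$ and $\sgn(X)=X(X^2)^{-1/2}$ are defined by the convergent series \eqref{abs_sgn} (expanding $(X^2)^{\pm 1/2}$ around $I$ via the binomial series, whose radius of convergence is $1$). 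Thus $\theta(X)=\tfrac12(I+\sgn(X))$ makes sense, and $\wt P := \theta(2P-I)$ is well-defined whenever $\delta < 1/4$.

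Next I would verify the three claimed properties. \emph{Commutation:} every function of $X$ built from the functional calculus commutes with $X$ (each series in $X-x_0I$, or in $X^2-I$, commutes with $X$), hence $\wt P$ commutes with $X$ and therefore with $P=\tfrac12(I+X)$. \emph{Idempotency:} since $\sgn(X)^2 = X(X^2)^{-1/2}X(X^2)^{-1/2} = X^2(X^2)^{-1} = I$ — using that $X$, $(X^2)^{1/2}$ and $(X^2)^{-1/2}$ all commute — we get $\wt P^2 = \tfrac14(I+\sgn X)^2 = \tfrac14(2I + 2\sgn X) = \tfrac12(I+\sgn X) = \wt P$. \emph{The norm estimate:} from the displayed bound just before the statement, $\|\sgn(X)-X\| \le \|X\|\,O(\|X^2-I\|)$ when $\|X^2-I\|<1$; since $\|X^2-I\|=4\delta$, this gives $\|\sgn(X)-X\|\le\|X\|\,O(\delta)$. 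Then $\wt P - P = \tfrac12(I+\sgn X) - \tfrac12(I+X) = \tfrac12(\sgn X - X)$, so $\|\wt P - P\| \le \tfrac12\|X\|\,O(\delta) = \|2P-I\|\,O(\delta)$, as claimed.

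The only point requiring a little care — and the one I would treat as the main (modest) obstacle — is justifying the estimate $\|\sgn(X)-X\|\le\|X\|\,O(\|X^2-I\|)$ cleanly and tracking that the implicit constant is genuinely universal. Writing $Y = X^2 - I$, one has $(X^2)^{-1/2} = (I+Y)^{-1/2} = \sum_{n\ge0}\binom{-1/2}{n}Y^n$, so $\sgn(X) - X = X\bigl((I+Y)^{-1/2}-I\bigr)$ and $\|(I+Y)^{-1/2}-I\| \le \sum_{n\ge1}\bigl|\binom{-1/2}{n}\bigr|\,\|Y\|^n$. For $\|Y\|\le\tfrac12$ (which holds once $\delta\le1/8$; the borderline range $1/8<\delta<1/4$ can be absorbed into the $O$-constant separately, or one simply notes the series still converges for $\|Y\|<1$ and the tail is $O(\|Y\|)$ with a constant depending only on how close $\|Y\|$ is allowed to get to $1$, which here is bounded away from $1$ by $4\delta<1$), the geometric-type tail is bounded by $C\|Y\|$ with $C$ an absolute constant. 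Multiplying by $\|X\|$ gives the stated bound. Everything here is elementary series manipulation in a Banach algebra; no spectral theory or positivity is needed, which is why the argument works verbatim for an arbitrary (not necessarily $C^*$) Banach algebra.
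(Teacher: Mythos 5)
Your proof is correct and follows exactly the same route as the paper's: the paper's proof is the one-line remark that the statement ``follows from the preceding discussion applied to $X=2P-I$, which satisfies $\|X^2-I\|\le 4\delta<1$,'' and what you have done is flesh out precisely that discussion (well-definedness of $\sgn$ via the binomial series for $(X^2)^{-1/2}$ around $I$, $\sgn(X)^2=I$ by commutativity, commutation of $\wt P$ with $X$ and hence with $P$, and the estimate $\|\sgn X - X\|\le\|X\|\,O(\|X^2-I\|)$ giving $\|\wt P-P\|=\tfrac12\|\sgn X-X\|\le\|2P-I\|\,O(\delta)$). Your closing caveat about the $O$-constant as $4\delta\to 1$ is a real but minor point, consistent with the paper's convention that big-$O$ denotes a concrete function that may grow near the endpoint of the allowed parameter range.
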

\begin{proof} The statement follow from the preceding discussion applied to $X=2P-I$, which satisfies the inequality $\|X^2-I\|\le 4\delta<1$.
\end{proof}

For a more complete picture, let us mention the spectrum and the holomorphic function calculus~\cite[chapter~3]{KaRi}. (We have not found use of these methods due to their fragility in the approximate setting and the lack of a $*$-representation, which could alleviate it.) The \emph{spectrum} of an element $X\in\calB$, 
\begin{equation}
\spec(X)=\spec_{\calB}(X)=\{\lambda\in\CC:\, X-\lambda I
\text{ is not invertible in }\calB\},
\end{equation}
is a nonempty closed subset of $\CC$ contained in the disk $\{\lambda\in\CC:\,|\lambda|\le\|X\|\}$. Note that even in the algebra of operators acting in a finite-dimensional Hilbert space, $\spec(X)$ can be very sensitive to perturbations (unless we assume that $X$ is normal). For example, consider this $X$, where $t$ is small:
\begin{equation}
X=\begin{pmatrix}0&1&0\\ 0&0&1\\ t&0&0 \end{pmatrix},\qquad
\spec(X)=\{t^{1/3},e^{2\pi i/3}t^{1/3},e^{-2\pi i/3}t^{1/3}\}.
\end{equation}

Any function $f$ holomorphic on an open set $S$ can be applied to Banach algebra elements whose spectrum is contained in $S$. Specifically,
\begin{equation}
f(X)=\frac{1}{2\pi i}\int_{C}(zI-X)^{-1}f(z)\,dz,
\end{equation}
where $C$ is the union of closed oriented curves enclosing $\spec(X)$ and contained in $S$. The spectral mapping theorem asserts that $\spec(f(x))=f(\spec(x))$. For example $|X|$ and $\sgn(X)$ are defined for all $X$ such that $\spec(X) \subseteq\{\lambda\in\CC:\,\Re\lambda\not=0\}$, or equivalently, $\spec(X^2) \subseteq\CC\setminus(-\infty,0]$.

If an element $X_0$ with $\spec(X_0)\subseteq S$ is fixed, then $f\mapsto f(X_0)$ is a homomorphism of the algebra of functions holomorphic on $S$ to the closed subalgebra of $\calB$ generated by $X_0$. This subalgebra is obviously commutative. If both $f$ and $X_0$ are fixed, then $f(X)$ admits a Taylor expansion in $X-X_0$, provided $\|X-X_0\|<\delta$, where $\delta$ is sufficiently small. (The terms in that Taylor series are general degree $n$ functions, not just $a_n(X-X_0)^n$.) One may set $\delta=\min_{z\in C}\|(zI-X_0)^{-1}\|^{-1}$, but this number is often difficult to estimate.\medskip

Let us now discuss more general functions on Banach spaces. We will use the notation
\begin{equation}
\Ba^M_r(x_0)=\{x\in M:\,d(x_0,x)<r\},\qquad
\bar{\Ba}^M_r(x_0)=\{x\in M:\,d(x_0,x)\le r\}\qquad
(x_0\in M)
\end{equation}
for open and closed balls in a metric space $M$. Let $\calA$ and $\calB$ be Banach spaces, and $f\colon \Ba^\calA_r(x_0)\to\calB$ a continuously differentiable map. The Frechet derivative of $f$ at point $x$ is denoted by $\partial_{x}f(x)$ or $\frac{\partial f(x)}{\partial x}$; it is a linear map from $\calA$ to $\calB$. Functions of multiple variables are understood as maps of direct sums of Banach spaces, equipped with an arbitrary norm between the maximum and the sum norms.

The inverse function theorem asserts that if $\frac{\partial f(x)}{\partial x}\big|_{x=x_0}$ is bijective, then $f$ has a continuously differentiable inverse in some neighborhood of $f(x_0)$. An explicit bound on the size of that neighborhood is given by the next lemma, which implies the theorem and is useful on its own right. The implicit function theorem is derived from the inverse function theorem by the standard argument.

\begin{Lemma}\label{lem_invfun}
Let $V\colon \calA\to\calB$ be a linear isomorphism of Banach spaces and $f\colon \Ba^\calA_r(x_0)\to\calB$ a continuously differentiable function such that\,\footnote{Here $1=1_\calA$ stands for the identity operator on $\calA$. (We use $I$ for the units of abstract algebras, which are not automatically defined and need to be specified.)}
\[
\|V^{-1}\partial_{x}f(x)-1\|\le c\qquad (x\in\Ba^\calA_r(x_0)),
\]
where $0\le c<1$. Then
\begin{enumerate}
\item $f$ is injective and satisfies the inequality
\[
(1-c)\|x_1-x_2\|\le \|V^{-1}(f(x_1)-f(x_2))\|\le (1+c)\|x_1-x_2\|\qquad
(x_1,x_2\in\Ba^\calA_r(x_0));
\]
\item The image of $f$ includes $f(x_0)+V(\Ba^\calA_{(1-c)r}(0))$.
\end{enumerate}
\end{Lemma}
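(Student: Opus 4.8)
The plan is to reduce to the case $V=1_\calA$ and then combine a mean-value estimate (for the two-sided Lipschitz bound and injectivity) with a Banach contraction argument (for the image statement). Put $g=V^{-1}\circ f\colon\Ba^\calA_r(x_0)\to\calA$, so that the hypothesis becomes $\|\partial_x g(x)-1\|\le c$ for all $x\in\Ba^\calA_r(x_0)$, and set $h(x)=g(x)-x$, a continuously differentiable map with $\|\partial_x h(x)\|\le c$ throughout the ball.

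\emph{Two-sided bound and injectivity.} Since $\Ba^\calA_r(x_0)$ is convex, the segment joining any $x_1,x_2\in\Ba^\calA_r(x_0)$ stays in the domain, and continuous differentiability gives the integral representation $h(x_1)-h(x_2)=\int_0^1\partial_x h\bigl(x_2+t(x_1-x_2)\bigr)(x_1-x_2)\,dt$; taking norms yields $\|h(x_1)-h(x_2)\|\le c\ts\|x_1-x_2\|$. Writing $g(x_1)-g(x_2)=(x_1-x_2)+\bigl(h(x_1)-h(x_2)\bigr)$ and applying the triangle inequality on both sides gives $(1-c)\|x_1-x_2\|\le\|g(x_1)-g(x_2)\|\le(1+c)\|x_1-x_2\|$, which is item~1 after substituting $g=V^{-1}f$; injectivity of $f$ is immediate from the lower bound because $c<1$.

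\emph{Image.} Given $z\in\calA$ with $\|z\|<(1-c)r$, put $y=g(x_0)+z$, so $\|y-g(x_0)\|<(1-c)r$, and set $\rho=\|y-g(x_0)\|/(1-c)<r$. Consider the map $T(x)=x+\bigl(y-g(x)\bigr)=y-h(x)$, whose fixed points in $\Ba^\calA_r(x_0)$ are exactly the solutions of $g(x)=y$. The bound on $h$ gives $\|T(x_1)-T(x_2)\|\le c\ts\|x_1-x_2\|$, and for $x\in\bar{\Ba}^\calA_\rho(x_0)$ one has (using $x_0=g(x_0)-h(x_0)$)
\[
\|T(x)-x_0\|\le\|y-g(x_0)\|+\|h(x)-h(x_0)\|\le(1-c)\rho+c\rho=\rho,
\]
so $T$ maps the complete metric space $\bar{\Ba}^\calA_\rho(x_0)$ into itself. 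By the contraction mapping principle it has a fixed point $x^*\in\bar{\Ba}^\calA_\rho(x_0)\subseteq\Ba^\calA_r(x_0)$, and then $f(x^*)=Vg(x^*)=Vy=f(x_0)+Vz$. Letting $z$ range over $\Ba^\calA_{(1-c)r}(0)$ shows that the image of $f$ contains $f(x_0)+V\bigl(\Ba^\calA_{(1-c)r}(0)\bigr)$, which is item~2.

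\emph{Main obstacle.} There is no deep difficulty; the only point that needs care is the bookkeeping in the image step, namely choosing $\rho$ so that $T$ self-maps a closed ball lying \emph{strictly} inside the open domain $\Ba^\calA_r(x_0)$. This is exactly what forces the factor $(1-c)$ in the conclusion and guarantees $x^*\in\Ba^\calA_r(x_0)$ rather than merely in its closure. I would also note that continuous differentiability is used only to justify the integral representation of $h(x_1)-h(x_2)$ (equivalently, a Hahn--Banach reduction to the scalar mean value theorem), so differentiability with a uniformly bounded derivative would already suffice; the continuously differentiable inverse asserted by the inverse and implicit function theorems then follows by the standard argument, checking that $x^*$ depends $C^1$ on $y$.
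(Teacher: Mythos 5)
Your proof is correct and follows essentially the same strategy as the paper's: both deduce the Lipschitz estimate $\|h(x_1)-h(x_2)\|\le c\|x_1-x_2\|$ (equivalently the contraction property of $g_y$) from the derivative bound, obtain item~1 by the triangle inequality, and obtain item~2 by a fixed-point iteration. The only cosmetic difference is that the paper iterates $x_n=g_y(x_{n-1})$ starting at $x_0$ and checks directly that the Cauchy sequence stays in the open ball, whereas you restrict to the closed sub-ball $\bar{\Ba}^\calA_\rho(x_0)$ with $\rho<r$ and invoke the Banach fixed-point theorem; both are valid ways to keep the fixed point strictly inside the domain.
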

\begin{proof}
For each given $y$, the equation $f(x)=y$ is equivalent to $x$ being a fixed point of the function
\[
g_y(x)=x+V^{-1}(y-f(x))\qquad (x\in\Ba^\calA_r(x_0)).
\]
Due to the hypothesis, the norm of the derivative of $g_y$ is at most $c$. Therefore, $\|g_y(x_1)-g_y(x_2)\|\le c\|x_1-x_2\|$, which can be written as $\|a-b\|\le c\|a\|$, where $a=x_1-x_2$ and $b=V^{-1}(f(x_1)-f(x_2))$. It follows that $(1-c)\|a\|\le\|b\|\le(1+c)\|a\|$, which is exactly the inequality in part~1 of the lemma.

Part~2 asserts that the equation $f(x)=y$ has a solution for any $y$ such that $\|V^{-1}(y-f(x_0))\|<(1-c)r$. To show this, we note that if $\|x-x_0\|<r$, then
\[
\|g_y(x)-x_0\| \le \|g_y(x)-g_y(x_0)\|+\|g_y(x_0)-x_0\|
< c\|x-x_0\|+(1-c)r \le r.
\]
Hence, $g_y$ maps the ball $\Ba^\calA_r(x_0)$ to itself. The fixed point can be obtained as the limit of the sequence $x_n=g_y(x_{n-1})$. It is a Cauchy sequence because $\|x_{n}-x_{n-1}\|<rc^{n-1}$.
\end{proof}

For an illustration of the outlined methods, let us calculate $\partial_X\sgn(X)$ in a Banach algebra $\calB$. To this end, we consider the map
\begin{equation}
f\colon \calB\times\calB\times\calB\to\calB\times\calB,\qquad
f(X,Y_1,Y_2)=(Y_1Y_2-I,\,XY_2-Y_1X).
\end{equation}
For each given $X$ such that $\sgn(X)$ is well-defined, the equation $f(X,Y_1,Y_2)=0$ has a solution $Y_1=Y_2=\sgn(X)$. By the implicit function theorem, this solution can be continued to nearby values of $X$ if $\frac{\partial f(X,Y_1,Y_2)}{\partial(Y_1,Y_2)}$ is nondegenerate, i.e.\ maps $\calB\times\calB$ bijectively onto itself. It is evident that
\[
\frac{\partial f(X,Y_1,Y_2)}{\partial(Y_1,Y_2)}
=\begin{pmatrix}
\partial_{Y_1}(Y_1Y_2-I) & \partial_{Y_2}(Y_1Y_2-I)\\
\partial_{Y_1}(XY_2-Y_1X) & \partial_{Y_2}(XY_2-Y_1X)
\end{pmatrix}
=\begin{pmatrix}\Ra_{Y_2}&\La_{Y_1}\\ -\Ra_{X}&\La_{X}\end{pmatrix},
\]
where $\La_{A}$ and $\Ra_{A}$ denote the operators of multiplication by $A$ on the left and on the right, respectively. If $Y_1=Y_2=\sgn(X)$, then all elements of the matrix commute, and its inverse is computed easily:
\[
\left(\frac{\partial f(X,Y_1,Y_2)}{\partial(Y_1,Y_2)}
\bigg|_{Y_1=Y_2=\sgn(X)}\right)^{-1}
=\begin{pmatrix}\La_{X}&-\La_{\sgn(X)}\\ \Ra_{X}&\Ra_{\sgn(X)}\end{pmatrix}
(\La_X\Ra_{\sgn(X)}+\La_{\sgn(X)}\Ra_{X})^{-1}.
\]
The operator  $\La_X\Ra_{\sgn(X)}+\La_{\sgn(X)}\Ra_{X} =\La_{\sgn(X)}(\La_{|X|}+\Ra_{|X|})\Ra_{\sgn(X)}$ is invertible if $\|X^2-I\|<1$ because this condition implies (via a special case of inequality \eqref{Taylor_simple_bound}, $\|Z^{1/2}-I\|\le 1-(1-\|Z-I\|)^{1/2}$ with $Z=X^2$) that $\bigl\||X|-I\bigr\|<1$. Now that the solution of the equation $f(X,Y_1,Y_2)=0$ is given by an implicit function, $(Y_1,Y_2)=g(X)=(\sgn(X),\sgn(X))$, we can compute its derivative:
\[
\frac{\partial g(X)}{\partial X}
=\biggl(-\left(\frac{\partial f(X,Y_1,Y_2)}{\partial(Y_1,Y_2)}\right)^{-1}\,
\frac{\partial f(X,Y_1,Y_2)}{\partial X}\biggr)\bigg|_{(Y_1,Y_2)=g(X)}.
\]
Thus,
\begin{equation}
\partial_X\sgn(X)=\frac{1-\La_{\sgn(X)}\Ra_{\sgn(X)}}{\La_{|X|}+\Ra_{|X|}}\qquad
\text{if}\quad \|X^2-I\|<1.
\vspace{8pt}
\end{equation}

\begin{Remark}\label{rem_X2}
The system of equations $Y_1Y_2=I$,\, $XY_2=Y_1X$ has a unique solution with $Y_1$ and $Y_2$ close to $X$ in any $\eps$-Banach algebra, provided $\|X^2-I\|$ is sufficiently small. However, in general, $Y_1\not=Y_2$. Even the single equation $Y^2=I$ may not have a solution sufficiently close to $X$. Indeed, suppose that the norm of $X$ is bounded by a constant, $\|X^2-I\|\le O(\eps)$, and there is a $Y$ such that $Y^2=I$ and $\|Y-X\|\le O(\eps)$. Representing $X$ as $Y+Z$ with $\|Z\|\le O(\eps)$, we readily see that $\|XX^2-X^2X\|\le O(\eps^2)$. But such a bound may not hold in an arbitrary $\eps$-Banach algebra. For this reason, Proposition~\ref{prop_P} does not generalize to the $\eps$-Banach or even the $\eps$-$C^*$ setting, and we will have to work with approximate projections.
\end{Remark}

\section{Basic properties of \texorpdfstring{$\eps$}{epsilon}-Banach algebras}

For each element $X$ of an $\eps$-Banach algebra $\calA$, the operators of left and right multiplication by $X$ are defined in the standard way:
\begin{equation}
\La_X(Z)=XZ,\qquad \Ra_X(Z)=ZX\qquad (Z\in\calA).
\end{equation}
Since $\calA$ is a Banach space, linear operators on it have a standard norm,
\begin{equation}
\|w\|=\sup_{\|Z\|\le 1}\|w(Z)\|,\qquad w\colon \calA\to\calA,
\end{equation}
and bounded linear operators form a Banach algebra. It follows from equation \eqref{ax_prodnorm} that
\begin{equation}
\|\La_X\|\le (1+\eps)\|X\|,\qquad \|\Ra_X\|\le (1+\eps)\|X\|.
\end{equation}
Applying $\La_X$ and $\Ra_X$ to the unit element gives these inequalities:
\begin{alignat}{3}
\|\La_X\|&\ge (1-O(\eps))\ts\|X\|,\qquad
&\|\Ra_X\|&\ge (1-O(\eps))\ts\|X\|\qquad &
&(\text{in general}),\\[2pt]
\|\La_X\|&\ge \|X\|,\qquad
&\|\Ra_X\|&\ge \|X\|\qquad &
&(\text{if the unit is exact}).
\end{alignat}
Furthermore, the approximate associativity axiom \eqref{ax_assoc} implies that for all $X,Y\in\calA$,
\begin{gather}
\|\La_X\La_Y-\La_{XY}\|\le \eps\ts\|X\|\ts\|Y\|,\qquad
\|\Ra_X\Ra_Y-\Ra_{YX}\|\le \eps\ts\|X\|\ts\|Y\|,\\[2pt]
\|\La_X\Ra_Y-\Ra_Y\La_X\|\le \eps\ts\|X\|\ts\|Y\|.
\end{gather}
Note that the operator $\La_X$ (and similarly, $\Ra_X$) is invertible if and only if the equation $XY=Z$ (respectively, $YX=Z$) has a unique solution $Y$ for all $Z\in\calA$.

We will also need a bound on the Frechet derivative $\partial_X(X^2-X)=\La_X+\Ra_X-1$ for $X$ in a neighborhood of the unit element. Since $\|IY-Y\|\le\eps\|Y\|$ for all $Y$, we have $\|\La_I-1\|\le \eps$. It follows that if $\|X-I\|\le\delta$, then
\[
\|\La_X-1\| \le \|\La_{X-I}\|+\|\La_I-1\|\le (1+\eps)\ts\|X-I\|+\eps\le O(\delta+\eps).
\]
Similarly, $\|\Ra_X-1\|\le O(\delta+\eps)$. Thus,
\begin{equation}\label{d_X2X}
\|\partial_X(X^2-X)-1\|\le O(\delta+\eps)\qquad (X\in\bar{\Ba}_{\delta}(I)).
\end{equation}

\begin{Proposition}\label{prop_unit}
Let $\calA$ be an $\eps$-Banach algebra with unit $I$. Then there exist a new unit $J\in\calA$ and a new multiplication denoted by a dot that make $\calA$ into an $O(\eps)$-Banach algebra with exact unit while being $O(\eps)$-close to the original unit and multiplication:
\begin{equation}
\|J-I\|\le O(\eps),\qquad \|X\cdot Y-XY\|\le O(\eps)\ts\|X\|\ts\|Y\|\qquad
(X,Y\in\calA).
\end{equation} 
This transformation respects the involution if one is present, i.e.\ $J^\dag=J$ and $(X\cdot Y)^\dag=Y^\dag\cdot X^\dag$.
\end{Proposition}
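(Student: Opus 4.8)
The plan is to first construct the candidate new unit $J$ as an approximate idempotent-type fixed point, then to \emph{twist} the multiplication by the invertible operators $\La_J$ and $\Ra_J$ so that $J$ becomes an exact two-sided identity, and finally to re-normalize so that $\|J\|=1$. The starting observation is that, although $I$ satisfies the unit axioms only up to $\eps$, the element $I$ is $O(\eps)$-close to being idempotent: from \eqref{ax_eps_unit} we get $\|I\cdot I - I\| = \|I I - I\| \le \eps\ts\|I\| \le O(\eps)$, and $\|I^\dag - I\| = \|I - I\| = 0$ (we are given $I^\dag = I$). Using the bound \eqref{d_X2X} on $\partial_X(X^2-X)$ near $I$, Lemma~\ref{lem_invfun} applied to the map $X\mapsto X^2 - X$ shows that the equation $X^2 - X = y$ is solvable for small $y$; more directly, one can run a Newton-type iteration $J_{0}=I$, $J_{n+1} = J_n - (\partial_X(X^2-X)|_{J_n})^{-1}(J_n^2 - J_n)$, which converges because each linear operator $\partial_X(X^2-X)$ is $O(\eps+\delta)$-close to the identity on the relevant ball and the residual $\|J_n^2 - J_n\|$ contracts quadratically. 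The limit $J$ satisfies $J^2 = J$ exactly, $\|J - I\| \le O(\eps)$, and — since the whole iteration is performed inside the commutative closed subalgebra generated by $I$, which is stable under the dagger — also $J^\dag = J$. One further needs $J$ to be invertible from both sides in $\calA$; but $\La_J$ and $\Ra_J$ are $O(\eps)$-close to $1_\calA$ by the argument preceding \eqref{d_X2X} (with $\delta = O(\eps)$), hence invertible by Neumann series, so $J$ is both left- and right-invertible and $J^2=J$ then forces nothing further — it simply means $\La_J$ and $\Ra_J$ are idempotent \emph{and} invertible, so in fact $\La_J = \Ra_J = 1_\calA$ would follow if associativity were exact; in the $\eps$-setting we only get $\|\La_J - 1_\calA\|, \|\Ra_J - 1_\calA\| \le O(\eps)$, which is what we need.

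With $J$ in hand, define the new multiplication by $X\cdot Y := \La_J^{-1}\bigl(X\,(\Ra_J^{-1}Y)\bigr)$, or more symmetrically $X\cdot Y := \La_J^{-1}\Ra_J^{-1}\!\bigl((\La_J^{-1}? )\cdots\bigr)$ — the precise bookkeeping is routine, but the idea is to precompose each factor with the correcting operators so that $J\cdot Y = \La_J^{-1}(J\,(\Ra_J^{-1}Y))$. A cleaner route: since $J^2 = J$ and $J$ is invertible on both sides, consider the \emph{new product} $X\cdot Y := X J^{-1} Y$ where $J^{-1}$ is the two-sided inverse; then $J\cdot Y = J J^{-1} Y = Y$ up to the associativity defect, and $X\cdot J = X J^{-1} J = X$ similarly, with the defects controlled by \eqref{ax_assoc} and $\|J - I\|, \|J^{-1}-I\| \le O(\eps)$. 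Because $\|J^{-1} - I\| \le O(\eps)$ we get $\|X\cdot Y - XY\| = \|X J^{-1} Y - X I Y\| \le O(\eps)\ts\|X\|\ts\|Y\| + (\text{assoc. defects}) \le O(\eps)\ts\|X\|\ts\|Y\|$, which is the closeness bound; the submultiplicativity \eqref{ax_prodnorm}, approximate associativity \eqref{ax_assoc}, and (if present) the $*$-identities \eqref{ax_*} for the new product follow from those for the old product together with the $O(\eps)$ perturbation, using that $(XJ^{-1}Y)^\dag = Y^\dag (J^{-1})^\dag X^\dag = Y^\dag J^{-1} X^\dag$ since $J^\dag = J$ implies $(J^{-1})^\dag = J^{-1}$. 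Finally replace $J$ by $J/\|J\|$ and rescale the product by $\|J\|$ accordingly so that the unit has norm exactly $1$; since $\bigl|\|J\| - 1\bigr| \le \bigl|\|J\|-\|I\|\bigr| + \bigl|\|I\|-1\bigr| \le \|J - I\| + \eps \le O(\eps)$, this rescaling changes everything by only $O(\eps)$.

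The main obstacle — and the only place needing genuine care rather than bookkeeping — is verifying that the exact-unit axioms $J\cdot X = X$ and $X\cdot J = X$ hold \emph{on the nose}, not just up to $O(\eps)$, after the twist. With the naive product $XJ^{-1}Y$ this fails: $J\cdot X = JJ^{-1}X$ equals $X$ only modulo the associativity defect of the triple $(J, J^{-1}, X)$, which is $O(\eps)$, not zero. The fix is to define the new product more carefully using the \emph{operators} $\La_J, \Ra_J$ rather than the element $J^{-1}$: set $X\cdot Y := \La_J^{-1}\bigl(\Ra_J^{-1}(XY)\bigr)$ is still not quite right either, so one instead argues as in the standard "unitization cleanup": let $e := J$ and define $X\cdot Y$ to be the unique solution $Z$ of a fixed-point equation forcing $e\cdot X = X$ and $X\cdot e = X$ identically, then check this $Z$ is $O(\eps)$-close to $XY$ and inherits the algebra axioms. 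Concretely, one can take $X \cdot Y := \Ra_e^{-1} X \cdot_0 \La_e^{-1} Y$ re-bracketed appropriately — I expect the honest construction to be: replace $\calA$ by its image under the map $X \mapsto e X e$ (a vector-space projection that is $O(\eps)$-close to the identity and onto a subspace on which $e$ acts as a genuine two-sided unit after transporting the product), which is exactly the finite-dimensional analogue of the argument in Johnson~\cite{Joh88} and requires only that $\La_e, \Ra_e$ be invertible, which we have established. Checking that this projection is a bijection onto a closed subspace carrying an $O(\eps)$-$C^*$ structure with exact unit, and that it respects $\dagger$, is then a finite and routine verification.
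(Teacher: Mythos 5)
You correctly identify the two-stage strategy: construct an exact idempotent $J$ near $I$ (your Newton iteration is essentially equivalent to the paper's use of Lemma~\ref{lem_invfun}), then twist the multiplication so that $J$ becomes an exact two-sided unit. You also correctly diagnose that the naive twist $X\cdot Y := XJ^{-1}Y$ fails because of the associativity defect. Unfortunately, none of the formulas you actually write down works, and you never arrive at the correct one.

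Two specific problems. First, the element $J^{-1}$ you invoke cannot exist as a two-sided inverse: $J^2=J$ together with $J$ two-sided invertible would force $J=I$, which is generally false (that is the whole point of replacing $I$ by $J$). What one actually has, and what the paper uses, is invertibility of the \emph{operators} $\La_J,\Ra_J\colon\calA\to\calA$ — both $O(\eps)$-close to $1_\calA$ and hence invertible — which is a different and weaker statement than invertibility of the element $J$. Second, neither of your candidate twists makes $J$ an exact unit: for $X\cdot Y=\La_J^{-1}\bigl(X\,(\Ra_J^{-1}Y)\bigr)$ one gets $J\cdot Y=\La_J^{-1}\bigl(\La_J(\Ra_J^{-1}Y)\bigr)=\Ra_J^{-1}(Y)\ne Y$, and you yourself concede that $\La_J^{-1}\Ra_J^{-1}(XY)$ is ``still not quite right.'' Your final fallback to a Johnson-style projection onto the image of $X\mapsto eXe$ is not what the paper does, is considerably heavier machinery, and is left as a sketch rather than carried out.

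The paper's definition is $X\cdot Y := \Ra_J^{-1}(X)\,\La_J^{-1}(Y)$: apply $\Ra_J^{-1}$ to the \emph{left} factor and $\La_J^{-1}$ to the \emph{right} factor, then take the ambient product. This works precisely because of $J^2=J$: one has $\La_J(J)=\Ra_J(J)=J^2=J$, so $\La_J^{-1}(J)=\Ra_J^{-1}(J)=J$, whence $J\cdot Y=J\,\La_J^{-1}(Y)=\La_J\La_J^{-1}(Y)=Y$ and $X\cdot J=\Ra_J^{-1}(X)\,J=\Ra_J\Ra_J^{-1}(X)=X$ exactly, with no associativity defect appearing. The $O(\eps)$-closeness and the remaining axioms then follow directly from $\|\La_J^{-1}-1_\calA\|,\|\Ra_J^{-1}-1_\calA\|\le O(\eps)$. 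This piece of bookkeeping is the crux of the proof, not a routine verification, and it is the part your proposal is missing.
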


\begin{proof}
Inequality \eqref{d_X2X} and Lemma~\ref{lem_invfun} imply that if $\delta>2\eps$, then the image of $\Ba^\calA_{\delta}(I)$ under the map $f\colon X\mapsto X^2-X$ contains $0$. Thus, there is some $J\in\calA$ such that $\|J-I\|\le O(\eps)$ and $J^2=J$. Let us define the new multiplication as follows:
\begin{equation}
X\cdot Y= \Ra_{J}^{-1}(X)\,\La_{J}^{-1}(Y).
\end{equation}
It is evident that $\La_{J}^{-1}(J) =J =\Ra_{J}^{-1}(J)$; hence, $X\cdot J=X=J\cdot X$. All other requirements are also met.
\end{proof}

\section{Approximate unitary group}\label{sec_unitaries}

Let $\calA$ be an $\eps$-$C^*$ algebra with exact unit. The sets of Hermitian and anti-Hermitian elements in $\calA$ are denoted by $\calH$ and $i\calH$, respectively. An element $U\in\calA$ is called \emph{unitary} if $U^{\dag}U=I$ and $U$ also has a right inverse. The set of unitary elements is denoted by $\calU$. Clearly, $\calU=\bar{\calU}_0$, where
\begin{equation}\label{U_delta}
\bar{\calU}_\delta=\bigl\{X\in\calA:\, \|X^\dag X-I\|\le 2\delta\,
\text{ and $X$ has a right inverse}\bigr\}.
\end{equation}
(A sufficient condition for the existence of a right inverse is that $\La_X$ is invertible.) The analogous set $\calU_\delta$ is defined by the inequality $\|X^{\dag}X-I\|<2\delta$. We assume that $\eps$ and $\delta$ are bounded by suitable positive constants, $\eps_{\max}$ and $\delta_{\max}$, and will show that $\calU$ is a $C^1$ manifold with a tubular neighborhood $\calU_{\delta_1}$, where $0<\delta_1\le\delta_{\max}$ (see Proposition~\ref{prop_polar}).

\begin{Lemma}\label{lem_U_delta}
If $X\in\bar{\calU}_\delta$, then $\|X\|\le 1+O(\delta+\eps)$. Furthermore, the operator $\La_X$ is invertible, and
\begin{equation}
\label{u_inv_bound}
\|\La_X^{-1}\|\le 1+O(\delta+\eps),\qquad
\|\La_X^{-1}-\La_{X^\dag}\| \le O(\delta+\eps).
\end{equation}
\end{Lemma}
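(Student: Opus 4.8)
The plan is to estimate all three quantities by expanding around the unit and using the $\eps$-$C^*$ axioms together with the submultiplicativity of the operator norm on $\Bo(\calA)$. Since $X\in\bar{\calU}_\delta$ means $\|X^\dag X-I\|\le 2\delta$, the axiom \eqref{ax_C*} gives $(1-\eps)\|X\|^2\le\|X^\dag X\|\le 1+2\delta$, hence $\|X\|\le (1+2\delta)^{1/2}(1-\eps)^{-1/2}=1+O(\delta+\eps)$. This is the easy part. For the statements about $\La_X$, the idea is to write $\La_X^\dag\La_X$ (adjoint in $\Bo(\calA)$ is not available, so instead) — more precisely, to use $\La_{X^\dag}\La_X$. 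By the bound $\|\La_A\La_B-\La_{AB}\|\le\eps\|A\|\|B\|$ from Section~4, we have $\|\La_{X^\dag}\La_X-\La_{X^\dag X}\|\le\eps\|X^\dag\|\|X\|=\eps(1+O(\delta+\eps))$, and $\|\La_{X^\dag X}-\La_I\|\le(1+\eps)\|X^\dag X-I\|\le(1+\eps)2\delta$, while $\|\La_I-1\|\le\eps$. Combining, $\|\La_{X^\dag}\La_X-1\|\le O(\delta+\eps)$. Provided $\eps,\delta$ are below suitable constants so that this is $<1$, the operator $\La_{X^\dag}\La_X$ is invertible in $\Bo(\calA)$ by a Neumann series, with inverse of norm $\le 1+O(\delta+\eps)$; hence $\La_X$ has a left inverse, namely $(\La_{X^\dag}\La_X)^{-1}\La_{X^\dag}$.

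To upgrade this to genuine invertibility of $\La_X$, I would invoke the hypothesis that $X$ has a right inverse (built into the definition of $\bar\calU_\delta$), which makes $\La_X$ surjective: if $XY'=I$ then for any $Z$, $\La_X(Y'Z)$ is within $\eps\|Y'\|\|Z\|$ of... — actually this needs the associativity defect. A cleaner route: apply Lemma~\ref{lem_invfun} to the linear map $\La_X$ itself, or rather observe that a bounded operator on a Banach space that has a bounded left inverse and dense range is invertible; the right inverse of $X$ gives a candidate preimage of $I$, and iterating corrections (a fixed-point argument as in Lemma~\ref{lem_invfun}) produces exact preimages of everything. Once $\La_X$ is invertible, $\|\La_X^{-1}\|=\|(\La_{X^\dag}\La_X)^{-1}\La_{X^\dag}\|\le(1+O(\delta+\eps))\|X^\dag\|\le 1+O(\delta+\eps)$, giving the first inequality in \eqref{u_inv_bound}.

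For the second inequality in \eqref{u_inv_bound}, I would write $\La_X^{-1}-\La_{X^\dag}=\La_X^{-1}(1-\La_X\La_{X^\dag})$. Now $\|\La_X\La_{X^\dag}-\La_{XX^\dag}\|\le\eps\|X\|\|X^\dag\|=O(\eps)$; but I need $\|XX^\dag-I\|$ small, whereas the hypothesis only controls $X^\dag X$. Here is where I expect the main obstacle: in an $\eps$-$C^*$ algebra $X^\dag X\approx I$ does not immediately give $XX^\dag\approx I$. However, since $X$ has a right inverse $Y'$ with $XY'=I$, and $X^\dag X\approx I$ forces (via the left-inverse just constructed) $Y'\approx X^\dag$ up to $O(\delta+\eps)$, one gets $XX^\dag\approx XY'=I$ with the same order of error. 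With $\|XX^\dag-I\|\le O(\delta+\eps)$ in hand, $\|\La_X\La_{X^\dag}-1\|\le O(\delta+\eps)$, so $\|\La_X^{-1}-\La_{X^\dag}\|\le\|\La_X^{-1}\|\cdot O(\delta+\eps)=O(\delta+\eps)$, completing the proof. The delicate points are all of ``$\delta+\eps$ small enough'' type — keeping the Neumann-series denominators bounded away from zero — and the one genuinely $\eps$-$C^*$-specific step of transferring the near-isometry condition from $X^\dag X$ to $XX^\dag$ via the right inverse.
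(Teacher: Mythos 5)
Your ideas track the paper's closely. The bound on $\|X\|$, the estimate $\|\La_{X^\dag}\La_X-1\|\le O(\delta+\eps)$, and the left inverse $(\La_{X^\dag}\La_X)^{-1}\La_{X^\dag}$ are exactly how the paper proceeds, and you correctly identify the crux: the $\eps$-$C^*$ axioms only control $X^\dag X$, so one must pass to $XX^\dag$ (or equivalently to a bound on the right inverse $Y'$ of $X$) by a separate argument.

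There is, however, an ordering gap. Your surjectivity step --- making $\La_X$ invertible via fixed-point iteration, or via $\La_{Y'}$ being an approximate right inverse of $\La_X$ --- requires an a priori bound $\|Y'\|\le O(1)$ to make the relevant quantity ($\eps\|X\|\,\|Y'\|$, or the contraction factor) less than $1$. You only establish $\|Y'\|\le 1+O(\delta+\eps)$ (via $Y'\approx X^\dag$ through the left inverse) later, in service of the second inequality; it has to come first. The paper bounds $\|Y\|$ without appealing to the left inverse: expand $\|X^\dag-Y\|=\|X^\dag(XY)-Y\|\le\|(X^{\dag}X-I)Y\|+O(\eps)\|X\|^2\|Y\|\le(2\delta+O(\eps))\|Y\|$ and rearrange to get $\|Y\|\le 1+O(\delta+\eps)$; then $\|\La_X\La_Y-1\|=\|\La_X\La_Y-\La_{XY}\|\le\eps\|X\|\,\|Y\|<1$ supplies a right inverse of $\La_X$. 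Your alternative (reading $\|Y'\|$ off the left inverse $L(I)=Y'$) is perfectly valid, but the bound must precede, not follow, the surjectivity claim.

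For $\|\La_X^{-1}-\La_{X^\dag}\|\le O(\delta+\eps)$, your decomposition $\La_X^{-1}(1-\La_X\La_{X^\dag})$ forces the extra step of proving $\|XX^\dag-I\|\le O(\delta+\eps)$. The paper instead writes $\La_X^{-1}-\La_{X^\dag}=\bigl((\La_{X^\dag}\La_X)^{-1}-1\bigr)\La_{X^\dag}$, whose norm is at most $O(\delta+\eps)\cdot\|\La_{X^\dag}\|=O(\delta+\eps)$ immediately --- no need to control $XX^\dag$ at all. Both routes reach the conclusion; the paper's factorization is a notch shorter and sidesteps the $\eps$-$C^*$-specific transfer you flag as the main obstacle.
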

\begin{proof}
The condition $\|X^{\dag}X-I\|\le 2\delta$ implies that $\|X^{\dag}X\|\le 1+2\delta$. On the other hand, $\calA$ is an $\eps$-$C^*$ algebra, so $\|X^{\dag}X\|\ge (1-\eps)\|X\|^2$. It follows that $\|X\|\le ((1+2\delta)/(1-\eps))^{1/2}\le 1+O(\delta+\eps)$.

To show that $\La_X$ is invertible, let us first use the fact that $X$ has a right inverse, i.e.\ that $XY=I$ for some $Y$. We have
\[
\|X^\dag-Y\|=\|X^{\dag}(XY)-Y\|\le \|(X^{\dag}X-I)Y\|+O(\eps)\|X\|^2\|Y\|
\le (2\delta+O(\eps))\|Y\|,
\]
and therefore, $\|X^\dag\|\ge (1-2\delta-O(\eps))\|Y\|$. But $\|X^\dag\|=\|X\|\le 1+O(\delta+\eps)$, which gives the bound $\|Y\|\le 1+O(\delta+\eps)$. The latter implies that
\[
\|\La_X\La_Y-1\|=\|\La_X\La_Y-\La_{XY}\|\le \eps\ts\|X\|\ts\|Y\| <1,
\]
provided $\eps$ and $\delta$ are small enough. Thus, $\La_X\La_Y$ is invertible, and so $\La_Y(\La_X\La_Y)^{-1}$ is a right inverse of $\La_X$. Similarly, $(\La_{X^\dag}\La_X)^{-1}\La_{X^\dag}$ is a left inverse of $\La_X$, where the existence of $(\La_{X^\dag}\La_X)^{-1}$ follows from the inequality
\[
\|\La_{X^\dag}\La_X-1\|\le
\|\La_{X^\dag}\La_X-\La_{X^{\dag}X}\|+\|\La_{X^{\dag}X}-\La_{I}\|
\le \eps\|X\|^2+(1+\eps)\|X^{\dag}X-I\| \le 2\delta+O(\eps).
\]
We conclude that $\La_X^{-1}=(\La_{X^\dag}\La_X)^{-1}\La_{X^\dag}$ and that $\|(\La_{X^\dag}\La_X)^{-1}-1\|\le O(\delta+\eps)$, from which inequalities \eqref{u_inv_bound} are obtained easily.
\end{proof}

Let us introduce local coordinates around each element $V\in\calU_{\delta_{\max}}$:
\begin{equation}
\ph_V(X)=\La_V^{-1}(X-V)\qquad (\ph_V\colon \calA\to\calA),\qquad\quad
\ph_V^{-1}(A)=V(I+A).
\end{equation}
The corresponding transition functions,
\begin{equation}
\Phi_{V,W}=\ph_{V}\ph_{W}^{-1},\qquad
\Phi_{V,W}(A)=\La_V^{-1}(W(I+A)-V),
\end{equation}
have derivative close to the identity, provided $V$ and $W$ are close to each other:\footnote{Here, $O(\eta)$ stands for some operator of norm $\le O(\eta)$. We will use similar notation for matrices of partial derivatives. As long as errors are estimated up to a constant factor, it does not matter what matrix norm is used.}
\begin{equation}
\partial_A\Phi_{V,W}(A)=\La_V^{-1}\La_W=1+O(\|W-V\|).
\end{equation}
In some situations, it is convenient to separate the tangential and normal  coordinates:
\begin{equation}
A=A^\parallel+A^\perp\qquad(A^\parallel\in i\calH,\:\,A^\perp\in\calH),\qquad
\text{i.e.}\quad
A^\parallel=\frac{1}{2}(A-A^\dag),\quad A^\perp=\frac{1}{2}(A+A^\dag).
\end{equation}
If $A=\ph_V(X)$, the corresponding $A^\parallel$ and $A^\perp$ are denoted by $\ph_V^\parallel(X)$ and $\ph_V^\perp(X)$, respectively. We will parametrize unitary elements $U$ in a neighborhood of $V$ by $\ph_V^\parallel(U)$. The condition that $U$ is unitary is expressed by the equation $f(U)=0$, where
\begin{equation}
f(X)=\frac{1}{2}(X^{\dag}X-I),\qquad f\colon \calA\to\calH.
\end{equation}
In local coordinates, the function $f$ takes the form
\begin{equation}
f_V(A)\bydef f(\ph_V^{-1}(A))
=\frac{1}{2}\Bigl((I+A^\dag)V^\dag)(V(I+A))-I\Bigr).
\end{equation}

\begin{Lemma}\label{lem_gV}
Let $V\in\bar{\calU}_\delta$, and let $A^\parallel$ and $A^\perp$ range over all anti-Hermitian and Hermitian elements, respectively, of norm $<2\delta$. Then for each $A^\parallel$, the equation $f_V(A^\parallel+A^\perp)=0$ has a unique solution $A^\perp=g_V(A^\parallel)$. The function $g_V$ is continuously differentiable and satisfies the bounds
\begin{equation}
g_V(A^\parallel)=-f(V)+O(\eps\delta+\delta^2),\qquad
\frac{\partial g_V(A^\parallel)}{\partial A^\parallel} = O(\delta+\eps).
\end{equation}
\end{Lemma}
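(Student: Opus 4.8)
The plan is to apply the implicit function theorem (Lemma~\ref{lem_invfun}) to the map $A^\perp\mapsto f_V(A^\parallel+A^\perp)$ for fixed $A^\parallel$. The key point is that $f_V$ lands in $\calH$ and the derivative in the normal directions $A^\perp\in\calH$ is, to leading order, the identity on $\calH$. First I would expand $f_V$ explicitly: writing $B=I+A$, we have $f_V(A)=\tfrac12(B^\dag V^\dag VB-I)$, and since $V\in\bar\calU_\delta$ means $V^\dag V=I+2f(V)$ with $\|f(V)\|\le\delta$, this becomes $f_V(A)=f(V)+\tfrac12(A+A^\dag)+\bigl(\text{terms bilinear or higher in }A,\,f(V)\bigr)$, where I would track that the remainder is $O(\eps\delta+\delta^2)$ when $\|A\|<2\delta$ (the $\eps$ appearing only through the approximate associativity used to manipulate the triple products $B^\dag V^\dag VB$, and the adjoint axioms \eqref{ax_*} to see the output is Hermitian up to $O(\eps\delta)$; one then projects onto $\calH$). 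Setting $A=A^\parallel+A^\perp$ and noting $A^\parallel+(A^\parallel)^\dag=0$, $A^\perp+(A^\perp)^\dag=2A^\perp$, the linear part in $A^\perp$ is exactly $A^\perp$.

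Next I would verify the hypotheses of Lemma~\ref{lem_invfun} with $\calA$ replaced by $\calH$, $\calB$ by $\calH$, $V$ (the linear isomorphism) equal to $1_\calH$, and $x_0=0$: the computation above gives $\partial_{A^\perp}f_V(A^\parallel+A^\perp)=1_\calH+O(\delta+\eps)$ uniformly for $\|A^\parallel\|,\|A^\perp\|<2\delta$, so the constant $c=O(\delta+\eps)<1$ once $\eps_{\max},\delta_{\max}$ are small enough. By part~2 of Lemma~\ref{lem_invfun}, the image of the $A^\perp$-ball of radius $2\delta$ under $f_V(A^\parallel+\cdot\,)$ contains a ball of radius $(1-c)2\delta$ around $f_V(A^\parallel+0)=f(V)+O(\eps\delta+\delta^2)$, whose norm is $\le\delta+O(\eps\delta+\delta^2)<(1-c)2\delta$; hence $0$ is in the image and there is a solution $A^\perp=g_V(A^\parallel)$, unique by part~1 (injectivity). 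Continuous differentiability of $g_V$ follows from the inverse function theorem applied to the $C^1$ map $A^\perp\mapsto f_V(A^\parallel+A^\perp)$, jointly in $A^\parallel$ via the implicit function theorem.

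For the two quantitative bounds: substituting $A^\perp=g_V(A^\parallel)$ into $0=f_V(A^\parallel+A^\perp)=f(V)+g_V(A^\parallel)+O(\eps\delta+\delta^2)$ gives immediately $g_V(A^\parallel)=-f(V)+O(\eps\delta+\delta^2)$. For the derivative, differentiate the identity $f_V(A^\parallel+g_V(A^\parallel))\equiv0$ in $A^\parallel$: $\partial_{A^\parallel}f_V+\partial_{A^\perp}f_V\cdot\partial g_V=0$, so $\partial g_V=-(\partial_{A^\perp}f_V)^{-1}\,\partial_{A^\parallel}f_V$; since $\partial_{A^\perp}f_V=1_\calH+O(\delta+\eps)$ is invertible with inverse of norm $1+O(\delta+\eps)$, it remains to check $\partial_{A^\parallel}f_V=O(\delta+\eps)$. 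From the expansion, the $A^\parallel$-linear part of $f_V$ is $\tfrac12(A^\parallel+(A^\parallel)^\dag)=0$ plus cross terms with $f(V)$ and with $A^\perp$, each of size $O(\delta+\eps)$, giving the claim. The main obstacle, I expect, is the bookkeeping in the first step — carefully isolating which error terms carry a factor of $\eps$ (coming from nonassociativity and the inexactness of the $*$-axioms in the $\eps$-$C^*$ setting) versus which carry a factor of $\delta$ (coming from $\|A\|,\|f(V)\|\le O(\delta)$), so as to land the sharp bound $O(\eps\delta+\delta^2)$ rather than a cruder $O(\delta+\eps)$ bound on $g_V+f(V)$; everything else is a routine application of the inverse/implicit function machinery already set up in Lemma~\ref{lem_invfun}.
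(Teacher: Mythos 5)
Your proposal matches the paper's proof essentially step for step: expand $f_V$ using $\eps$-associativity to isolate the leading term $f(V)+A^\perp$ with $O(\eps\delta+\delta^2)$ remainder, invoke Lemma~\ref{lem_invfun} (with linear isomorphism $1_\calH$, center $A^\perp=0$, ball of radius $2\delta$) for existence and uniqueness, and read off the derivative bound from the implicit-function formula $\partial g_V=-(\partial_{A^\perp}f_V)^{-1}\partial_{A^\parallel}f_V$. One small inaccuracy: $f_V(A)=\tfrac12(X^\dag X-I)$ with $X=\ph_V^{-1}(A)$ is \emph{exactly} Hermitian, since the $*$-axioms $(XY)^\dag=Y^\dag X^\dag$ and $X^{\dag\dag}=X$ hold exactly in an $\eps$-$C^*$ algebra, so no $O(\eps\delta)$ Hermiticity defect arises and no projection onto $\calH$ is needed.
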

\begin{proof}
Using approximate associativity, we obtain the estimate
\[
f_V(A)=\frac{1}{2}\Bigl(V^{\dag}V-I
+(V^{\dag}V)A+A^{\dag}(V^{\dag}V)+A^{\dag}(V^{\dag}V)A+O(\eps\delta)\Bigr)
=\frac{1}{2}(V^{\dag}V-I)+A^\perp+O(\eps\delta+\delta^2),
\]
which implies that if $f_V(A)=0$, then $A^\perp=-f(V)+O(\eps\delta+\delta^2)$. By a similar calculation,
\[
\partial_{A}f_V(A):\: Z\,\to\, \frac{1}{2}
\Bigl(((I+A^\dag)V^{\dag})(VZ)+(Z^\dag V^\dag)(V(I+A))\Bigr)
=\frac{1}{2}(Z+Z^\dag)+O(\delta+\eps)\ts\|Z\|,
\]
and hence,
\[
\frac{\partial f_V(A^\parallel+A^\perp)}{\partial(A^\parallel,A^\perp)}
=(0,1)+O(\delta+\eps)\qquad
(A^\parallel\in\Ba^{i\calH}_{2\delta}(0),\:\,
A^\perp\in\Ba^{\calH}_{2\delta}(0)),
\]
where $(0,1)$ stands for the row vector of partial derivatives. By Lemma~\ref{lem_invfun}, for each given $A^\parallel$, the map $A^\perp\mapsto f_V(A^\parallel+A^\perp)$ is injective and its image includes the set $f_V(A^\parallel)+\Ba^\calH_{2\delta(1-O(\delta+\eps))}(0)$. Since $\|f_V(A^\parallel)\|\le \delta+O(\eps\delta+\delta^2)$, this set contains $0$, and so the equation $f_V(A^\parallel+A^\perp)=0$ has a unique solution, denoted by $g_V(A^\parallel)$. The bound on its derivative follows from the formula
\[
\frac{\partial g_V(A^\parallel)}{\partial A^\parallel}
=\biggl(-\left(\frac{\partial f(A^\parallel+A^\perp)}
{\partial A^\perp}\right)^{-1}\,
\frac{\partial f(A^\parallel+A^\perp)}
{\partial A^\parallel}\biggr)\bigg|_{A^\perp=g_V(A^\parallel)}.
\]
\end{proof}

The tangent space $\Ta_U\calU$ of the manifold $\calU$ at point $U$ is the image of the following map, which preserves the norm up to a $1+O(\eps)$ factor:
\begin{equation}
\frac{\partial\,\ph_U^{-1}(A^\parallel+g_U(A^\parallel))}{\partial A^\parallel}
=\La_U\left(1+\frac{\partial g_U(A^\parallel)}{\partial A^\parallel}\right):\:
i\calH\,\to\,\calA.
\end{equation}
Thus, $\Ta_U\calU$ is a closed subspace of $\calA$, which is almost-isometrically identified with $i\calH$ by an analogue of the Maurer-Cartan form, i.e.\ the inverse of the previous map:
\begin{equation}\label{MC_form}
\omega_U=\frac{\partial\,\ph_U^\parallel(X)}{\partial X}\bigg|_{X=U}
\colon\: Z\,\mapsto\, (\La_U^{-1}(Z))^\parallel,\qquad
\omega_U\colon \Ta_U\calU\to i\calH.
\end{equation}

\begin{Proposition}\label{prop_polar}
The polar map $(U,H)\mapsto UH$ diffeomorphically maps $\calU\times\Ba^\calH_{\delta}(I)$ onto some $S\subseteq\calA$ such that $\calU_{\delta-O(\eps\delta+\delta^2)}\subseteq S\subseteq \calU_{\delta+O(\eps\delta+\delta^2)}$.
\end{Proposition}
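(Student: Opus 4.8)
The plan is to verify that the polar map $p\colon(U,H)\mapsto UH$ is a local $C^1$ diffeomorphism at every point of $\calU\times\Ba^\calH_\delta(I)$ (a $C^1$ manifold, since $\calU$ carries the atlas supplied by Lemma~\ref{lem_gV} and its transition functions, and since $p$ is the restriction of the bilinear multiplication), then to upgrade this to a global diffeomorphism onto $S\bydef\Img p$ --- necessarily an open subset of $\calA$ --- by noting that any two preimages of a point are $O(\delta)$-close and hence lie in a common chart, and finally to locate $S$ between the two copies of $\calU_\delta$: the inclusion $S\subseteq\calU_{\delta+O(\eps\delta+\delta^2)}$ comes from a direct estimate of $X^\dag X$ for $X=UH$, and $\calU_{\delta-O(\eps\delta+\delta^2)}\subseteq S$ from the radial projection of Lemma~\ref{lem_gV} combined with the quantitative surjectivity in part~2 of Lemma~\ref{lem_invfun}.

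The key step is the local computation. Fix $V\in\calU$; since $f(V)=0$ we have $g_V(0)=0$, so $A^\parallel\mapsto\ph_V^{-1}(A^\parallel+g_V(A^\parallel))=V(I+A^\parallel+g_V(A^\parallel))$ is a chart on $\calU$ centered at $V$, which together with $H=I+B$ gives a chart near $(V,I)$; it is enough to use it on the convex domain $\{\|A^\parallel\|<3\delta,\ \|B\|<\delta\}$ and cover $\calU\times\Ba^\calH_\delta(I)$ by many such charts. Put $C=A^\parallel+g_V(A^\parallel)$, so $\|C\|=O(\delta)$ by Lemma~\ref{lem_gV}. The exact unit gives $(I+C)(I+B)=I+C+B+CB$ with no error, and approximate associativity together with $h(V,I+C,I)=0=h(V,I,B)$ (again exact unit, $h$ being the associativity defect, linear in each slot) gives $UH=V(I+C+B+CB)+h(V,C,B)$ with $\|h(V,C,B)\|\le\eps\|V\|\,\|C\|\,\|B\|=O(\eps\delta^2)$. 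Hence in the coordinate $\ph_V$ on the target,
\[
\ph_V(UH)=\La_V^{-1}(UH-V)=C+B+CB+\La_V^{-1}h(V,C,B)=A^\parallel+B+O(\eps\delta+\delta^2),
\]
and, differentiating and using $\partial_{A^\parallel}g_V=O(\delta+\eps)$ (Lemma~\ref{lem_gV}) together with $\partial(CB),\ \partial\bigl(\La_V^{-1}h(V,C,B)\bigr)=O(\delta+\eps\delta)$, we get $\bigl\|\partial_{(A^\parallel,B)}\ph_V(UH)-1\bigr\|\le c$ for some $c=O(\eps+\delta)<1$, where $1$ is the identity of $\calA=i\calH\oplus\calH$. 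By Lemma~\ref{lem_invfun} --- injectivity on the convex domain, surjectivity on the inscribed ball $\Ba^\calA_\delta(0)$ --- $p$ restricted to the chart is an injective bi-Lipschitz $C^1$ map, a diffeomorphism onto an open set, and (using $p(V,I)=V$ and $\|\La_V^{\pm1}\|=1+O(\eps)$) this open set contains the ball $\Ba^\calA_{\delta-O(\eps\delta+\delta^2)}(V)$.

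For global injectivity, suppose $U_1H_1=U_2H_2=X$. Since $\|\La_{H_i}-1\|,\|\Ra_{H_i}-1\|\le(1+\eps)\|H_i-I\|<1$ and $\|X\|=O(1)$, we get $U_i=\Ra_{H_i}^{-1}(X)=X+O(\delta)$, so $\|U_1-U_2\|=O(\delta)$ and (for $\delta$ small) both $(U_i,H_i)$ lie in the chart centered at $(U_1,I)$, where $p$ is injective; hence $(U_1,H_1)=(U_2,H_2)$. Thus $p\colon\calU\times\Ba^\calH_\delta(I)\to S$ is a bijective local diffeomorphism, i.e.\ a $C^1$ diffeomorphism onto the open set $S$. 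Next, for $X=UH\in S$: reassociating $X^\dag X=(H^\dag U^\dag)(UH)$, using $U^\dag U=I$ exactly and discarding the $I$-parts of the defects as above, gives $X^\dag X=H^2+O(\eps\delta)$, so $\|X^\dag X-I\|\le\|2B+B^2\|+O(\eps\delta)<2\bigl(\delta+O(\eps\delta+\delta^2)\bigr)$; and $\La_X=\La_{UH}$ is invertible because it is $O(\eps)$-close to the invertible operator $\La_U\La_H$, so $X$ has a right inverse --- hence $S\subseteq\calU_{\delta+O(\eps\delta+\delta^2)}$. Conversely, given $X\in\calU_{\delta-O(\eps\delta+\delta^2)}$, Lemma~\ref{lem_gV} with base point $X$ produces $U_X\bydef X(I+g_X(0))\in\calU$ with $g_X(0)=-f(X)+O(\eps\delta+\delta^2)$, whence $\|X-U_X\|\le\|f(X)\|+O(\eps\delta+\delta^2)$, which is smaller than $\delta-O(\eps\delta+\delta^2)$ once the implied constant defining $\calU_{\delta-O(\eps\delta+\delta^2)}$ is taken large enough; then $X$ lies in the ball around $U_X$ from the previous paragraph, hence in $p\bigl(\text{chart at }(U_X,I)\bigr)\subseteq S$.

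I expect the main obstacle to be the bookkeeping of associativity defects so that errors come out as $O(\eps\delta+\delta^2)$ rather than $O(\eps)$: this rests on using the exact unit of $\calA$ to kill the ``$I$-part'' of each defect (as in $h(V,I,B)=0$, $h(I,U^\dag,UH)=0$), which is precisely why an exact unit was arranged earlier. A related point --- and the reason this is not a one-line polar decomposition --- is that the naive factors $H=(X^\dag X)^{1/2}$, $U=X(X^\dag X)^{-1/2}$ do \emph{not} satisfy $U^\dag U=I$ exactly in an $\eps$-algebra, so the inclusion $\calU_{\delta-O(\eps\delta+\delta^2)}\subseteq S$ has to be routed through the exactly-unitary radial projection of Lemma~\ref{lem_gV} plus an open-mapping estimate. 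The remaining constant juggling (chart radii, size of $\delta_{\max}$ relative to the tubular-neighborhood radius, the constant in $\calU_{\delta\mp O(\eps\delta+\delta^2)}$) is routine.
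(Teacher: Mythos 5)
Your approach is correct in outline and the computations check out, but it takes a genuinely different route from the paper. The paper's proof fixes a \emph{target} point $V$ and works in the chart $\ph_V$ centered there: any preimage $(U,Q)$ of $V$ under the polar map automatically satisfies $\|\ph_V(U)\|\le 1.5\delta+O(\eps\delta+\delta^2)<2\delta$, so it lies in the single chart where Lemma~\ref{lem_invfun} applies, and global injectivity and surjectivity fall out simultaneously by analyzing the map $p_V\colon(A^\parallel,Q)\mapsto(B^\parallel,B^\perp)$. You instead fix a \emph{source} point $(V,I)\in\calU\times\Ba^\calH_\delta(I)$, compute the forward map $\ph_V(UH)=A^\parallel+B+O(\eps\delta+\delta^2)$ in coordinates, and then need a separate patching-and-injectivity step. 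The direct forward computation via the identity $h(V,I+C,I+B)=h(V,C,B)$ (using the exact unit to kill the degree-$\le 1$ parts of the associativity defect) is a nice simplification; the paper's target-centered strategy buys you automatic global injectivity and tighter control of chart sizes.

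The one technical gap is chart-domain bookkeeping. You take the chart domain $\{\|A^\parallel\|<3\delta\}$, but Lemma~\ref{lem_gV} as stated only defines $g_V$ for $\|A^\parallel\|<2\delta$, so the chart map $A^\parallel\mapsto V(I+A^\parallel+g_V(A^\parallel))$ is not a priori defined on your domain. Moreover the global-injectivity estimate gives $\|\ph_{U_1}^\parallel(U_2)\|\le(1+O(\eps))\,\|U_1-U_2\|\le 2\delta(1+O(\eps))$, which can genuinely exceed $2\delta$, so shrinking to $\|A^\parallel\|<2\delta$ would not suffice. The fix is to invoke Lemma~\ref{lem_gV} with a rescaled parameter $\delta'=1.5\delta$ (legitimate since $\calU=\bar{\calU}_0\subseteq\bar{\calU}_{\delta'}$ for any $\delta'\ge0$ and $\delta'\le\delta_{\max}$ as long as $\delta\le\tfrac{2}{3}\delta_{\max}$), giving a chart defined for $\|A^\parallel\|<3\delta$ with the same-order bounds $g_V=O(\eps\delta+\delta^2)$ and $\partial g_V=O(\delta+\eps)$. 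This should be stated explicitly; without it the injectivity argument and the chart atlas are not self-consistent. A second, smaller bookkeeping point: Lemma~\ref{lem_invfun} part~2 requires a metric \emph{ball} as its domain, not merely a convex set, so the surjectivity step should be run on the inscribed max-norm ball of radius $\delta$ as you note, but part~1 (the Lipschitz/injectivity estimate) also needs the mean-value argument, which is fine on the product domain since it is convex.
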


\begin{proof}
Let us use a variant of the polar map, $(U,Q)\mapsto U(I+Q)$ for $U\in\calU$ and $Q\in\Ba^\calH_{1.5\delta}(0)$. First, we note that if $V=U(I+Q)$ for some $U$ and $Q$, then $\|V-U\|=\|UQ\|\le 1.5\delta+O(\eps\delta+\delta^2)$, or in local coordinates,
\[
\|\ph_V(U)\|=\|\La_V^{-1}(U-V)\|\le 1.5\delta+O(\eps\delta+\delta^2)<2\delta.
\]
Therefore, for the purpose of finding the inverse image of $V$ under the polar map, it is sufficient to consider all elements $U=\ph_V^{-1}(A)$ with $A=A^\parallel+A^\perp$ such that $A^\parallel\in\Ba^{i\calH}_{2\delta}(0)$ and $A^\perp\in\Ba^\calH_{2\delta}(0)$. By Lemma~\ref{lem_gV}, the condition $U^\dag U=I$ is equivalent to $A^\perp=g_V(A^\parallel)$, so $U$ is parametrized by $A^\parallel$:
\[
U=\ph_V^{-1}(A)=V(I+A),\qquad
\text{where}\quad A=A^\parallel+g_V(A^\parallel)\qquad
(A^\parallel\in\Ba^{i\calH}_{2\delta}(0)).
\]
It is also convenient to represent the value of the polar map in the same coordinates:
\[
B =\ph_V(U(I+Q))=\La_V^{-1}\bigl((V(I+A))(I+Q)-V\bigr)
= A+Q+\La_V^{-1}((VA)Q).
\]
The condition $V=U(I+Q)$ is equivalent to $B=0$.

As a function of $A$ and $Q$, the element $B$ has the following derivatives:
\[
\frac{\partial B}{\partial A}=1+\La_V^{-1}\Ra_Q\La_V=1+O(\delta),\qquad
\frac{\partial B}{\partial Q}=1+\La_V^{-1}\La_{VA}=1+O(\delta).
\]
Expressing $B$ as $B^\parallel+B^\perp$ and $A$ as $A^\parallel+g_V(A^\parallel)$, we may regard $(B^\parallel,B^\perp)$ as a function of $(A^\parallel,Q)$. Using Lemma~\ref{lem_gV}, we get these bounds:
\begin{equation}\label{der_B_AQ}
(B^\parallel,B^\perp)=(A^\parallel,\,-f(V)+Q)+O(\eps\delta+\delta^2),\qquad
\frac{\partial(B^\parallel,B^\perp)}{\partial(A^\parallel,Q)}
=\begin{pmatrix}1&0\\ 0&1\end{pmatrix}+O(\eps+\delta).
\end{equation}
The second bound implies that (i) the map $p_V\colon (A^\parallel,Q)\mapsto(B^\parallel,B^\perp)$ is a diffeomorphism. Due to both bounds and Lemma~\ref{lem_invfun}, the equation $p_V(A^\parallel,Q)=0$ has a solution if $\|f(V)\|$ is less than $1.5\delta$ by an $O(\eps\delta+\delta^2)$ margin; furthermore, (ii) if $\|f(V)\|<\delta_-$ for a certain $\delta_-=\delta-O(\eps\delta+\delta^2)$, then $\|Q\|<\delta$. Conversely, (iii) if $\|Q\|<\delta$, then $\|f(V)\|<\delta+O(\eps\delta+\delta^2)$. The statements (i)--(iii) are exactly what we needed to prove.
\end{proof}

The inverse of the polar map is a pair of functions $u\colon \calU_{\delta_{\max}}\to\calU$ and $h\colon \calU_{\delta_{\max}}\to\calH$ such that $X=u(X)h(X)$ for all $X$. Proposition~\ref{prop_polar} implies that if $X\in\bar{\calU}_\delta$, then
\begin{equation}
\|X-u(X)\|\le \delta+O(\eps\delta+\delta^2).
\end{equation}
The derivatives of $v$ and $h$ are easy to estimate in the local coordinates based at an arbitrary point $V$ in an $O(\delta)$-neighborhood of $U$. Let us reuse the calculation from the previous proof, where 
$B^\parallel=\ph_V^\parallel(X)$,\, $B^\perp=\ph_V^\perp(X)$,\, $A^\parallel=\ph_V^\parallel(u(X))$, and $Q=h(X)-I$. Inverting the derivative in equation \eqref{der_B_AQ}, we get
\begin{equation}
\frac{\partial\bigl(\ph_V^\parallel(u(X)),\,h(X)\bigr)}
{\partial\bigl(\ph_V^\parallel(X),\,\ph_V^\perp(X)\bigr)}
=\begin{pmatrix}1&0\\ 0&1\end{pmatrix}+O(\eps+\delta).
\end{equation}

We are now in a position to define an approximate group structure on $\calU$. The unit $e\in\calU$, the multiplication map $\mu\colon \calU\times\calU\to\calU$, and the inversion map $\sigma\colon \calU\to\calU$ are defined as follows:
\begin{equation}\label{Ugr_ops}
e=I,\qquad \mu(U,V) = u(UV),\qquad \sigma(U)=u(U^\dag).
\end{equation}
It is evident that if $U,V\in\calU$, then $UV\in\bar{\calU}_{O(\eps)}$. Similarly,
\[
UU^\dag =\La_U\La_{U^\dag}(I) =I-\La_U(\La_U^{-1}-\La_{U^\dag})(I)=I+O(\eps)
\]
due to the second inequality in \eqref{u_inv_bound}, so $U^\dag\in\bar{\calU}_{O(\eps)}$. To deal with the map $u$ in equation \eqref{Ugr_ops}, we may work in $\bar{\calU}_{\delta_0}$, where $\delta_0$ is chosen such that $\delta_0=O(\eps)$ and
\begin{equation}
UV\in\bar{\calU}_{\delta_0},\quad\: U^\dag\in\bar{\calU}_{\delta_0}\qquad
\text{for all }\, U,V\in\calU.
\end{equation}
Thus, we obtain the approximate group laws
\begin{gather}
d\bigl(\mu(\mu(U,V),W),\, \mu(U,\mu(V,W))\bigr)\le O(\eps),\\[2pt]
d\bigl(\mu(\sigma(U),U), e\bigr)\le O(\eps),\qquad
d\bigl(\mu(U,\sigma(U)), e\bigr)\le O(\eps),
\end{gather}
where $d(X,Y)=\|X-Y\|$ is the metric on $\calU$. And, of course,
\begin{equation}
\mu(e,U)=\mu(U,e)=U,\qquad \sigma(e)=e.
\end{equation}
Various derivatives are perturbed by $1+O(\eps)$ factors relative to what one would expect in a Lie group. Thus,
\begin{gather}
\label{der_gr_product}
\frac{\partial\,\ph_{\mu(U,V)}^\parallel(\mu(X,Y))}
{\partial\,\ph_U^\parallel(X)}
\bigg|_{X=U,\,Y=V} \!\!= \La_V^{-1}\Ra_V+O(\eps),\qquad
\frac{\partial\,\ph_{\mu(U,V)}^\parallel(\mu(X,Y))}
{\partial\,\ph_U^\parallel(X)}
\bigg|_{X=U,\,Y=V} \!\!= 1+O(\eps),\\[3pt]
\label{der_gr_inverse}
\frac{\partial\,\ph_{\sigma(U)}^\parallel(\sigma(X))}
{\partial\,\ph_U^\parallel(X)}
\bigg|_{X=U} \!= -\La_{U}\Ra_{U}^{-1}+O(\eps).
\end{gather}

From the topological perspective, $\calU$ is an up-to-homotopy group, or an associative H-space with an inversion map. Let us elaborate on that. By definition, an \emph{H-space} is a topological space $M$ with a basepoint $e$ and a continuous multiplication map $\mu\colon M\to M$ such that $\mu(e,e)=e$ and
\begin{equation}\label{H-unit}
\bigl(x\mapsto \mu(x,e)\bigr)
\,\sim\,\bigl(x\mapsto x\bigr)
\,\sim\,\bigl(x\mapsto \mu(e,x)\bigr),
\end{equation}
where ``$f\sim g$'' means that $f$ and $g$ are homotopic through basepoint-preserving maps. The multiplication on $\calU$ satisfies this condition because the maps in questions are equal. An H-space $M$ is called \emph{associative} if
\begin{equation}
\bigl((x,y,z)\mapsto \mu(\mu(x,y),z)\bigr)
\,\sim\,\bigl((x,y,z)\mapsto \mu(x,\mu(y,z))\bigr).
\end{equation}
In our case, the homotopy can be constructed by projecting the straight path between $\mu(\mu(x,y),z)$ and $\mu(x,\mu(y,z))$ onto $\calU$ using the map $u$. An \emph{inversion map} is a continuous basepoint-preserving map $\sigma\colon M\to M$ such that
\begin{equation}\label{homo_inverse}
\bigl(x\mapsto \mu(\sigma(x),x)\bigr)
\,\sim\,\bigl(x\mapsto e\bigr)
\,\sim\,\bigl(x\mapsto \mu(x,\sigma(x))\bigr).
\end{equation}
These properties also hold for $\calU$. Surprisingly, the topological argument in the next section does not require associativity, and of the two homotopies in \eqref{homo_inverse}, only the first is used. When only the existence of that homotopy is required, $\sigma$ is called a \emph{left inversion map}.

\section{The existence of a nontrivial projection}\label{sec_projection}

A \emph{projection} in an $\eps$-$C^*$ algebra $\calA$ is a Hermitian element $P$ such that $P^2=P$. Such elements are generally hard to come by, see Remark~\ref{rem_X2}, so will content ourselves with \emph{$\delta$-projections} for sufficiently small $\delta$. They are defined by the conditions
\begin{equation}\label{delta_P}
P^\dag=P,\qquad \|P^2-P\|\le\delta.
\end{equation}
It follows from the second condition that
\[
(1-\eps)\|P\|^2-\delta \le \|P\| \le (1+\eps)\|P\|^2+\delta,
\]
and hence,
\begin{equation}\label{P_alternatives}
\|P\|\le O(\delta)\quad \text{or}\quad \bigl|\|P\|-1\bigr|\le O(\delta+\eps).
\end{equation}
A $\delta$-projection is called \emph{nonvanishing} if the second alternative holds. If $P$ is a $\delta$-projection, then $I-P$ is a $\delta'$-projection, where $\delta'=\delta$ if $\calA$ has exact unit and $\delta'=\delta+O(\eps)$ in general. A $\delta$-projection $P$ is called \emph{nontrivial} if both $P$ and $I-P$ are nonvanishing.

\begin{Lemma}\label{lem_nontriv_projection} Any $\eps$-$C^*$ algebra $\calA$ such that\, $1<\dim\calA<\infty$\, has a nontrivial $O(\eps)$-projection.
\end{Lemma}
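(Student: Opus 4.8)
The plan is to exploit the approximate topological group structure on $\calU$ developed in Section~\ref{sec_unitaries} and apply a Lefschetz-type fixed-point count to the left inversion map $\sigma$. Recall that a $\delta$-projection $P$ (for a suitable fixed $\delta=O(\eps)$, within the regime where Proposition~\ref{prop_polar} applies) corresponds to an approximately-involutive Hermitian element, and more precisely that $U=2P-I$ should satisfy $U^\dag=U$ and $U^2\approx I$; conversely, given a genuine unitary $U\in\calU$ that is \emph{Hermitian}, the element $P=\tfrac12(I+U)$ is a genuine-looking $O(\eps)$-projection. A Hermitian unitary is exactly a fixed point of $\sigma\colon U\mapsto u(U^\dag)$, since $\sigma(U)=U$ forces $u(U^\dag)=U\in\calU$, hence (using the polar decomposition uniqueness and $\|U^\dag-u(U^\dag)\|=O(\eps)$) we get $U^\dag = U\,h$ with $h$ Hermitian and $O(\eps)$-close to $I$, and then Hermiticity of $U$ follows up to $O(\eps)$; after applying $\sgn$ or the function calculus one extracts a genuinely Hermitian element representing the same class. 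So it suffices to show $\sigma$ has \emph{more than two} fixed points on $\calU$: the trivial ones $U=\pm I$ give $P=I$ and $P=0$, and any third fixed point yields, after the $O(\eps)$ cleanup, a Hermitian element bounded away from $\pm I$, hence a nontrivial $O(\eps)$-projection (both $P$ and $I-P$ nonvanishing).

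To count fixed points I would set up the Lefschetz--Hopf theorem for the self-map $\sigma$ of the compact\footnote{Compactness of $\calU$: it is closed by \eqref{U_delta} and bounded by Lemma~\ref{lem_U_delta}, inside the finite-dimensional space $\calA$.} $C^1$ manifold $\calU$. The key input is that $\sigma$ is homotopic to a map whose Lefschetz number I can evaluate. Here the H-space structure enters: on any (path-connected) finite H-space the rational cohomology is a Hopf algebra, hence an exterior algebra $\Lambda(x_1,\dots,x_r)$ on odd-degree generators, so the Euler characteristic $\chi(\calU)=0$ unless $\calU$ is a point; in particular $L(\mathrm{id}_\calU)=\chi(\calU)=0$. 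Next, from the homotopy group laws \eqref{H-unit} and \eqref{homo_inverse}, the left inversion $\sigma$ induces on $H^*(\calU;\QQ)$ the standard antipode of the Hopf algebra, which on the odd primitive generators acts as $-1$; therefore the induced map on the exterior algebra has Lefschetz number $L(\sigma)=\prod_{i}(1-1)=\dots$ — this naive product vanishes when $r\ge1$, which would be useless, so instead I compute $L(\sigma)=\sum_k(-1)^k\Tr(\sigma^*|_{H^k})$ directly and find it equals $2^{?}$; the correct statement is that on $\Lambda(x_1,\dots,x_r)$ with $\sigma^*x_i=-x_i$ one gets $\Tr(\sigma^*|_{\Lambda})=\prod_i(1+(-1))=0$ again on the full alternating sum one must instead track degrees: $\sigma^*$ acts on the degree-$d$ part by $(-1)^{(\text{number of generators})}$, and the Lefschetz sum becomes $\prod_i\big(1-(\text{sign from }x_i)\big)$. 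To avoid getting this wrong I would instead argue more robustly: the diagonal $\Delta_\calU\subseteq\calU\times\calU$ and the graph of $\sigma$ are both sections of the H-space structure, and using the left-inverse homotopy \eqref{homo_inverse}, the map $x\mapsto\mu(\sigma(x),x)$ is homotopic to the constant $e$; this lets me compute the intersection number of $\mathrm{graph}(\sigma)$ with the diagonal via a degree argument on the map $x \mapsto \mu(\sigma(x),x)$ near $e$, landing on a value of $\pm2$ (the "number of square roots of unity" heuristic: in a torus $(S^1)^r$ the inversion has $2^r$ fixed points, but here we only know H-space structure so the robust invariant is the local contribution, which is $2$ whenever $\dim\calU\ge1$ because the relevant Jacobian $1-\La_U\Ra_U^{-1}$ from \eqref{der_gr_inverse}, restricted to $\Ta_U\calU\cong i\calH$, is nondegenerate at isolated Hermitian unitaries and contributes a sign).

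Let me restate the count in the form I would actually use. Perturb $\sigma$ to a $C^1$ map $\sigma'$, $C^0$-close and $C^1$-close, with only nondegenerate fixed points; by Lefschetz--Hopf the signed count equals $L(\sigma)=L(\mathrm{id})+(\text{correction})$. Since $\calU$ is an H-space, $\chi(\calU)=L(\mathrm{id}_\calU)=0$. The map $\sigma$ is a left inversion, so $x\mapsto\mu(\sigma(x),x)$ is null-homotopic; standard H-space homology (the antipode is a ring map inverting the primitives) gives $\sigma^*=(-1)$ on each odd generator $x_i\in H^{2n_i-1}$, whence on $H^d$ the trace is $(-1)^{k}\binom{\;}{\;}$-weighted, and the alternating sum telescopes to $L(\sigma)=\prod_{i=1}^r\big(1-(-1)^{\deg x_i}\big)=\prod_{i=1}^{r}2 = 2^{r}$ because each $\deg x_i$ is odd. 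Hence $L(\sigma)=2^r$ with $r\ge1$ when $\dim\calU\ge1$, i.e. when $\dim\calA>1$ (since then $i\calH\neq0$, so $\calU$ is a positive-dimensional manifold). A nonzero Lefschetz number forces a fixed point, and $L(\sigma)=2^r\ge4>2$ accounts for strictly more fixed points than the two trivial ones $\pm I$ — each nondegenerate fixed point contributes $\pm1$, so there are at least $2^r\ge 4$ of them counted with sign, hence (after collapsing the perturbation) at least one fixed point of $\sigma$ on $\calU$ other than $\pm I$. Cleaning that fixed point to a genuine Hermitian element via $\sgn$ and setting $P=\tfrac12(I+\sgn(\cdot))$ yields the nontrivial $O(\eps)$-projection.

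I expect the main obstacle to be rigor in two places. First, the Lefschetz computation: $\calU$ is only an \emph{approximate} H-space with an \emph{approximate} left inversion, so I must verify that the homotopies \eqref{H-unit}, \eqref{homo_inverse} are genuine (they are, since the defects are $O(\eps)$ and $\calU_{\delta_0}$ retracts onto $\calU$ via $u$), and I must handle the possibility that $\calU$ is disconnected — but the component of $I$ is itself an H-space, and $-I$ lies in it (connect by $e^{i\pi t}I$ through $\bar\calU_{O(\eps)}$, then project by $u$), so I can work inside one H-space and the $\sigma$-fixed points I produce need not be in the identity component only because $\sigma$ may permute components; a degree-$r$ argument still gives $\ge 2^r\ge4$ total. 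Second, and more delicately, I must confirm that a new fixed point is genuinely \emph{bounded away from $\pm I$}: a priori the extra fixed points guaranteed by $L(\sigma)=2^r$ could cluster near $\pm I$. This is where I would use that the fixed-point count is stable and that the local index at $\pm I$ is exactly $1$ each (from the nondegeneracy of $1-\La_U\Ra_U^{-1}$ on $\Ta_{\pm I}\calU$, which at $U=\pm I$ is $1-1=0$ — so in fact $\pm I$ are \emph{degenerate} and I must perturb, splitting each into its local index, still summing to something $\ge4$); a careful accounting of local indices near $\pm I$, plus the total being $2^r$, forces a fixed point at distance $\ge c>0$ from $\{\pm I\}$ for a universal $c$. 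Pinning down that universal separation constant, independent of $\dim\calA$, is the crux and will require the uniform derivative bounds from Section~\ref{sec_unitaries}.
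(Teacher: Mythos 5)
Your strategy is the same as the paper's in spirit — the Lefschetz--Hopf theorem applied to the left inversion $\sigma$ on the approximate unitary group, with Hopf's theorem on H-spaces supplying the cohomological input — but there is a genuine gap in the count. You correctly obtain, via Proposition~\ref{prop_H-group}, that $L(\sigma)=\sum_k\dim H^k(\calU_e;\RR)=2^r$ where $H^*(\calU_e;\RR)\cong\Lambda(x_1,\dots,x_r)$ on odd generators, and that $\pm I$ are each nondegenerate fixed points of index $+1$ (your later claim that these are degenerate is a miscomputation: by \eqref{der_gr_inverse} the derivative of $\sigma$ in local coordinates is $-\La_U\Ra_U^{-1}+O(\eps)$, so at $U=\pm I$ one has $1-\sigma'(\pm I)=1-(-1)+O(\eps)=2+O(\eps)$, which is nondegenerate with positive determinant — there is nothing to perturb, and this also makes your closing worry about a ``universal separation constant'' a nonissue). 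But then you leap from ``$r\ge1$'' to ``$2^r\ge4$'' with no justification. If $r=1$, i.e.\ $\calU_e$ has the rational cohomology of an odd sphere, then $L(\sigma)=2$ and the two trivial fixed points account for it exactly; no third fixed point is forced. For an $\eps$-$C^*$ algebra $\calU_e$ is a priori just a compact H-space with a free $\UU(1)$-action, and excluding the $S^{2k-1}$-like cohomology type is not trivial.

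The paper closes precisely this gap by passing to the quotient $\breve{\calU}=\calU_e/\UU(1)$. There the two trivial fixed points $\pm I$ collapse to a single point $\breve{e}$ of index $1$, while the Lefschetz number of $\breve{\sigma}$ is still $\sum_k\dim H^k(\breve{\calU};\RR)\ge 2$, since $H^0(\breve\calU)$ and $H^{\dim\breve\calU}(\breve\calU)$ are both nonzero whenever $\dim\breve\calU=n-1>0$; that holds for \emph{every} $r\ge1$, so the contradiction is unconditional. (Incidentally, the quotient argument plus the Gysin sequence would also let you rule out $r=1$ for $\calU_e$, but that is effectively the same work.) To salvage your version you would either have to reproduce this quotient step or prove separately that $r\ge2$ whenever $\dim\calA>1$; as written, the case $r=1$ is simply unaddressed.
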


\begin{proof}[Proof by reduction to a topological statement.]
Without loss of generality, we may assume that $\calA$ has exact unit. An element $P\in\calA$ is a projection if and only if $X=2P-I$ is Hermitian and satisfies the equation $X^2=I$. Equivalently, $X$ is both Hermitian and unitary. An approximate version of these conditions,
\begin{equation}\label{approx_X2I}
\|U^\dag-U\|\le\delta\qquad (U\in\calU),
\end{equation}
implies that $P=\frac{1}{4}(2I+U+U^\dag)$ is an $O(\delta+\eps)$-projection. In particular, fixed points of the inversion map $\sigma$, i.e.\ solutions of the equation
\begin{equation}\label{X2I}
\sigma(U)=U\qquad (U\in\calU),
\end{equation}
satisfy inequality \eqref{approx_X2I} with $\delta=O(\eps)$. Note that the trivial fixed points $I$ and $-I$ are isolated because the derivative of $\sigma$ is $O(\eps)$-close to $-1$ in local coordinates in constant-size neighborhoods of $\pm I$. It follows that any other fixed point is at least constant distance away from $\pm I$, and therefore, corresponds to a nontrivial $O(\eps)$-projection. It suffices to show that such a fixed point exists.

To simplify the problem a bit, let us consider the manifold $\breve{\calU}=\calU_e/\UU(1)$, where $\calU_e$ is the connected component of $\calU$ containing the unit element, and the quotient is taken with respect to the equivalence relation $U\equiv cU$ for $c\in\UU(1)=\{c\in\CC:\,|c|=1\}$. The maps $\mu$ and $\sigma$ preserve this equivalence relation, giving rise to some maps $\breve{\mu}$ and $\breve{\sigma}$ on $\breve{\calU}$. A point $\breve{U}\in\breve{\calU}$ is a fixed point of $\breve{\sigma}$ if and only if it is the equivalence class of some $U\in\calU_e$ such that $\sigma(U)=e^{i\phi}U$\, ($\phi\in\RR$). It corresponds to two fixed point of $U$, namely, $e^{i\phi/2}U$ and $-e^{i\phi/2}U$. We will show that if $n=\dim\calA$ is finite but greater that $1$, and thus, $0<\dim\breve{\calU}=n-1<\infty$, then $\breve{\sigma}$ has some fixed point $\breve{U}\not=\breve{e}$ (where $\breve{e}$ is the equivalence class of $e=I$, the unit of $\calU$ and the original algebra).\medskip

Let us list and explain the properties of $\breve{\calU}$ and $\breve{\sigma}$ that are needed to complete the proof:
\begin{enumerate}
\item $\breve{\calU}$ is a compact $C^1$ manifold. Indeed, $\calU$ is a $C^1$ manifold. Being a bounded closed subset of a finite-dimensional Banach space, it is compact, and so is its connected component $\calU_e$. The space $\breve{\calU}$ is the quotient of $\calU_e$ by a smooth free action of $\UU(1)$; this action is proper because $U(1)$ is compact. By the quotient manifold theorem \cite[theorem~21.10]{Lee-sm}, $\breve{\calU}$ is a $C^1$ manifold, and its compactness follows from the compactness of $\calU_e$.
\item $\breve{\calU}$ is orientable because the Maurer-Cartan form on $\calU$ trivializes the tangent bundles of both $\calU$ and $\breve{\calU}$.
\item $\breve{\sigma}\colon \breve{\calU}\to\breve{\calU}$ is a $C^1$ map with an isolated fixed point $\breve{e}$. (A fixed point of a map $f$ is called \emph{isolated} if $\det(1-f'(x))\not=0$, where $f'(x)$ is the tangent map, expressed as $\partial_yf(y)|_{y=x}$ in local coordinates.) Furthermore, $\det(1-\breve{\sigma}'(\breve{e}))>0$ because in local coordinates, $\partial_x\breve{\sigma}(x)|_{x=\breve{e}}=-1+O(\eps)$.
\item $\breve{\calU}$ is connected.
\item $\breve{\calU}$ is homeomorphic to a finite CW complex. This follows from the first property because every compact $C^1$ manifold admits a finite triangulation.
\item $\breve{\calU}$ is an H-space with the base point $\breve{e}$, the multiplication map $\breve{\mu}$, and a left inversion map $\breve{\sigma}$.
\end{enumerate}

In general, the number of fixed points of a smooth map $f$ on a compact orientable manifold $M$ can be estimated using the Lefschetz-Hopf theorem. It asserts that if $f$ has a finite number of fixed points and they are all isolated, then
\begin{equation}
\sum_{x\in\Fix(f)}\ind(f,x)=\Lambda(f).
\end{equation}
The \emph{fixed point index} $\ind(f,x)$ and the \emph{Lefschetz number} $\Lambda(f)$ are defined as follows:
\begin{alignat}{2}
\ind(f,x)&=\sgn\det(1-f'(x)),\qquad
& f'(x)\colon \Ta_xM&\to\Ta_xM,\\[2pt]
\Lambda(f)&=\sum_{k}(-1)^k\Tr f^{*k},\qquad
& f^{*k}\colon H^k(M;\RR)&\to H^k(M;\RR),
\end{alignat}
where $f^{*k}$ is the induced cohomology map in dimension $k$. Applying this theorem to $\breve{\sigma}\colon \breve{\calU}\to\breve{\calU}$, let us suppose for a moment that $\breve{e}$ is the only fixed point. It is isolated and has index $1$. On the other hand, Proposition~\ref{prop_H-group} below implies that the Lefschetz number $\Lambda(\breve{\sigma})$ equals the sum of the Betti numbers, $\sum_k\dim H^k(\breve{\calU};\RR)$. This sum is at least $2$ because $H^k(\breve{\calU};\RR)\not=0$ for $k=0$ and $k=\dim\breve{\calU}>0$. We have arrived at a contradiction, indicating that $\breve{\sigma}$ has another fixed point.
\end{proof}

\begin{Proposition}\label{prop_H-group}
Let $M$ be a connected CW complex such that $\dim H^*(M;\RR)<\infty$. If $M$ is an H-space with a left inversion map $\sigma$, then\, $\Tr\sigma^{*k}=(-1)^k\dim H^k(M;\RR)$.
\end{Proposition}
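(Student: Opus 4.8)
The plan is to use the structure of an H-space to pin down the action of $\sigma^*$ on rational cohomology. The key classical fact is that the rational cohomology of a connected H-space of finite type is a free graded-commutative algebra — a theorem of Hopf — so $H^*(M;\RR)\cong\Lambda(x_1,\dots,x_r)\otimes\RR[y_1,\dots,y_s]$ with the $x_i$ in odd degrees and the $y_j$ in even degrees. (Since $\dim H^*(M;\RR)<\infty$, there are in fact no polynomial generators, so $H^*(M;\RR)$ is an exterior algebra on odd-degree generators; but I will not need to use this immediately.) On such an algebra, the comultiplication $\mu^*\colon H^*(M)\to H^*(M)\otimes H^*(M)$ makes $H^*(M;\RR)$ into a connected Hopf algebra, and the generators can be chosen to be \emph{primitive}, i.e.\ $\mu^*(x) = x\otimes 1 + 1\otimes x$.

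The main point is then a computation on primitives. Let $\Delta\colon M\to M\times M$ be the diagonal and consider the composite $M\xrightarrow{(\sigma,\id)} M\times M\xrightarrow{\mu} M$, i.e.\ the map $x\mapsto\mu(\sigma(x),x)$. The left inversion property says this map is basepoint-preservingly homotopic to the constant map at $e$, so on reduced cohomology $\wt H^*(M;\RR)$ it induces the zero map. On the other hand, the induced map on cohomology is the composite $\wt H^*(M)\xrightarrow{\mu^*}\wt H^*(M\times M)\xrightarrow{(\sigma^*,\id^*)}\wt H^*(M)$, and for a \emph{primitive} class $x$ this composite sends $x\mapsto x\otimes 1 + 1\otimes x\mapsto \sigma^*(x) + x$. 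Hence $\sigma^*(x) = -x$ for every primitive generator $x$. First I would verify that the primitives can be taken to be a set of algebra generators (this is where Hopf's structure theorem and the finite-type hypothesis enter), and that $\sigma^*$ is an algebra homomorphism, so knowing it on generators determines it everywhere.

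From $\sigma^*(x_i) = -x_i$ on odd-degree generators it follows that $\sigma^*$ acts on a monomial $x_{i_1}\cdots x_{i_k}$ (a basis element of $H^k$, since $H^*$ is exterior on odd generators of degrees summing to $k$, each such monomial has $k$ odd — hence I should be slightly careful: a product of $k$ generators of odd degrees $d_1,\dots,d_k$ lies in degree $\sum d_j$, not $k$) by the sign $(-1)^{(\text{number of factors})}$. To get the clean statement $\Tr\sigma^{*k} = (-1)^k\dim H^k$, the cleanest route is: since every generator has odd degree, a basis monomial of total degree $k$ is a product of an odd number of odd-degree generators exactly when $k$ is odd, and an even number exactly when $k$ is even — so the sign $(-1)^{\#\text{factors}}$ equals $(-1)^k$ on all of $H^k$. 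Therefore $\sigma^*$ acts on $H^k(M;\RR)$ as multiplication by $(-1)^k$, which gives $\Tr\sigma^{*k} = (-1)^k\dim H^k(M;\RR)$, and summing with signs yields $\Lambda(\sigma) = \sum_k \dim H^k(M;\RR)$ as used in the lemma.

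The step I expect to be the main obstacle is the clean invocation of the Hopf structure theorem over $\RR$ for a \emph{homotopy}-associative H-space with only a one-sided (left) inversion, and the choice of primitive generators: the standard references assume a group or at least two-sided homotopy inverse, whereas here $\mu$ is only homotopy-associative and $\sigma$ is only a left inverse. I would handle this by noting that the relevant structure on $H^*(M;\RR)$ — a connected, graded-commutative Hopf algebra of finite type — needs only the existence of $\mu$ together with the unit homotopies \eqref{H-unit} (homotopy-associativity makes the coproduct coassociative but coassociativity is not even needed for the primitive-generation statement), and that the antipode is not required as an input: Hopf's theorem produces the structure, and the left-inversion homotopy is used only at the very end to compute $\sigma^*$ on primitives. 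If one wants to avoid even citing coassociativity, one can instead work generator-by-generator in increasing degree, using the unit condition to split off the primitive part of each new generator, exactly as in Milnor–Moore.
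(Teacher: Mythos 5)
Your approach is close in spirit to the paper's, but there is a genuine gap precisely at the step you flagged. You want to compute $\sigma^*$ exactly on each monomial by arranging $\sigma^*(x_i) = -x_i$ on a set of \emph{primitive} generators. For a general H-space the induced comultiplication on $H^*(M;\RR)$ need not be coassociative (the paper states this explicitly), and the claim that one can nonetheless find primitive algebra generators ``exactly as in Milnor--Moore'' is not substantiated: Milnor--Moore's primitive-generation result is exactly where coassociativity is used, and the degree-by-degree argument you sketch uses the reduced coproduct being a cocycle, which again is a coassociativity statement. The Proposition is stated for an arbitrary H-space with only a left inversion, not a homotopy-associative one, so you cannot fall back on coassociativity here.

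The paper resolves this by never asking for primitive generators. The left-inversion relation together with the connectedness formula $\Delta(a) = a\otimes I + I\otimes a + \sum_j a_j'\otimes a_j''$ (with $a_j',a_j''\in A^+$) gives only $\sigma^*(x_l)\equiv -x_l \pmod{(A^+)^2}$ for generators $x_l$, which holds for \emph{any} bialgebra in the paper's (non-coassociative) sense. Then, because $\sigma^*$ is a ring homomorphism, it preserves the augmentation filtration $F^{p,q}=(A^+)^p\cap A^{p+q}$, and on the associated graded pieces $E^{p,q}$ it acts as $(-1)^p\,\id=(-1)^{p+q}\,\id$; the trace formula follows by summing over $p+q=k$. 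In effect, the paper proves $\Tr\sigma^{*k}=(-1)^k\dim H^k$ by passing to the associated graded, which is strictly weaker than your claim that $\sigma^{*k}=(-1)^k\,\id$ on the nose, and that weakening is exactly what allows the argument to avoid primitivity and coassociativity. Your parity count (odd-degree generators, so the number of factors and the total degree agree mod $2$) is the same as the paper's; if you replace the ``primitive generators'' step with the filtration argument, your proof becomes the paper's.
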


To make a long story short, $\sigma^{*k}$ is represented by a triangular matrix with $(-1)^k$ on the diagonal using Hopf's structure theorem and the augmentation filtration. The long story begins with some background information. A homogeneous element $a$ of the cohomology space $A^k=H^k(M;\RR)$ is said to have dimension $|a|=k$. The graded space  $A=\bigoplus_{k}A^k$ is a ring (or an $\RR$-algebra) with respect to the cup product, which has the property $a\smile b=(-1)^{|a|\,|b|}b\smile a$. The sign in this formula should be attributed to the swap map of the graded tensor product,
\begin{equation}
\Sw\colon A\otimes A\to A\otimes A,\qquad
a\otimes b\mapsto (-1)^{|a|\,|b|}b\otimes a.
\end{equation}
With this understanding, the cup product is commutative because ${\smile} = {\smile}\circ\Sw$, where the second expression means the swap map followed by
\begin{equation}
{\smile}\colon A\otimes A\to A.
\end{equation}
The swap map is used in many other cases. For example, to give $A\otimes A$ the structure of an algebra, one has to swap $b$ and $c$ in the expression $(a\otimes b)\smile(c\otimes d)$. The result is $(-1)^{|b|\,|c|}(a\smile c)\otimes(b\smile d)$.

Another important operation is the cross product ${\times}\colon H^*(M;\RR)\otimes H^*(M;\RR)\to H^*(M\times M;\RR)$. In fact, the cup product is obtained as the composition of the cross product with the cohomology map induced by the diagonal, $x\mapsto(x,x)\colon M\to M\times M$. The cross product is a homomorphism of graded algebras; it is an isomorphism if $M$ is a CW complex and $H^k(M;\RR)$ is finite-dimensional for each $k$. (The last statement is a variant of the Kunneth theorem, see Theorem~3.16 in~\cite{Hat-at}.) If $M$ is an H-space with the multiplication map $\mu$, there is also the induced cohomology map $\mu^*\colon H^*(M;\RR)\to H^*(M\times M;\RR)$. Combining it with the inverse of the cross product gives a \emph{comultiplication}
\begin{equation}
\Delta\colon A\to A\otimes A.
\end{equation}
There is also a \emph{counit} $\tau\colon A\to\RR$, which is induced by the map of an abstract point (with the cohomology space $\RR$) to the basepoint $e\in M$. The axiom \eqref{H-unit} of an H-space implies that\footnote{In this discussion, we denote identity maps by ``$\id$'' to avoid any confusion.}
\begin{equation}\label{coalg_prop}
(\id\otimes\tau)\circ\Delta = \id = (\tau\otimes\id)\circ\Delta.
\end{equation}
This property is dual to that of an algebra unit, in particular, of $I\in A$:
\begin{equation}\label{alg_prop}
I\smile a = a = a\smile I.
\end{equation}
The comultiplication is compatible with the cup product and the unit; namely, it is a homomorphism between graded algebras $A$ and $A\otimes A$:
\begin{equation}\label{comult_prop}
\Delta\circ{\smile} =({\smile}\otimes{\smile}) \circ (\id\otimes\Sw\otimes\id)
\circ (\Delta\otimes\Delta),\qquad \Delta(I)=I\otimes I.
\end{equation}
(In simpler terms, the first equation reads: $\Delta(a\smile b)=\Delta(a)\smile\Delta(b)$ for all $a,b\in A$.) The counit is also compatible, being an augmentation of the algebra $A$:
\begin{equation}\label{counit_prop}
\tau\circ{\smile}=\tau\otimes\tau,\qquad \tau(I)=I.
\end{equation}
The ``$\smile$'' symbol is often omitted for brevity, i.e.\ one writes $ab$ instead of $a\smile b$. Note that while the cup product is associative and commutative, the comultiplication may not be coassociative or cocommutative. It is coassociative (cocommutative) if the map $\mu$ is associative (resp.\ commutative) up to homotopy. Let us call a vector space (over $\RR$ or some other field) with a unit, a multiplication, a counit, and a comultiplication satisfying equations \eqref{coalg_prop}--\eqref{counit_prop} a \emph{bialgebra}.\footnote{We depart from the convention that bialgebras are both associative and coassociative. Milnor and Moore in their comprehensive study of graded Hopf algebras~\cite{MiMo65} called the more general algebraic structure ``quasi-Hopf algebra''. Unfortunately, both this term and ``quasi-bialgebra'' have received much more specific meanings.}

If the $H$-space $M$ is connected, the corresponding bialgebra splits (as a graded $\RR$-space) as $A=\RR\oplus A^+$, where $A^+=\bigoplus_{k>0}A^k$. Furthermore, the unit of $A$ coincides with the unit of $\RR$, and the counit acts as the identity map on $\RR$ and vanishes on $A^+$. We say that in this situation, $A$ is a \emph{connected commutative associative bialgebra}.\footnote{Hatcher \cite[section~3.C]{Hat-at} uses the term ``Hopf algebra''. The structure in question meets the standard definition of a Hopf algebra (including the existence of an antipode~\cite{MiMo65}) if the comultiplication is coassociative.} The connectivity together with equation \eqref{counit_prop} has an important corollary for the comultiplication. Specifically, applying the maps $\id\otimes\tau$ and $\tau\otimes\id$ to $\Delta(a)$ for $a\in A^+$, one finds that
\begin{equation}\label{comult_structure}
\Delta(a)=a\otimes I+I\otimes a+\sum_{j}a_j'\otimes a_j''\qquad
(a,a_j',a_j''\in A^+).
\end{equation}
A fundamental theorem of Hopf (Theorem~3C.4 in~\cite{Hat-at}) asserts that every connected commutative associative bialgebra over a field of characteristic $0$ is isomorphic, as a graded algebra, to the free commutative associative algebra on some homogeneous generators $x_j$. As a vector space, it is spanned by monomials $x_{j_1}^{p_1}\cdots x_{j_s}^{p_s}$ with $p_r\in\{1,2,\ldots\}$ if $|x_{j_r}|$ is even and $p_r=1$ if $|x_{j_r}|$ is odd, because the square of an odd-dimensional element is $0$. Finally, we take into account the condition $\dim A<\infty$. It excludes the existence of even-dimensional generators. 

Thus, if $M$ is a connected CW complex and an H-space, and the cohomology algebra $A=H^*(M;\RR)$ is finite-dimensional, then $A$ is isomorphic as a graded algebra to the exterior algebra on some odd-dimensional generators $x_1,\dots,x_m$. Note that this property does not completely determine the comultiplication or the map $\sigma^*$ we are interested in. To proceed, we need another standard construction, the \emph{augmentation filtration} $A\supseteq A^+\supseteq  (A^+)^2\supseteq  (A^+)^3\supseteq\cdots$, where $(A^+)^p$ is the ideal of $A$ generated by products of $p$ homogeneous elements of positive dimension. Similarly, the algebra $A\otimes A$, is filtered by the ideal $(A\otimes A)^+=\sum_{k'+k''>0}A^{k'}\otimes A^{k''}$ and its powers. In this notation, $\Delta(x_j)$ is congruent to $x_j\otimes I+I\otimes x_j$ modulo $((A\otimes A)^+)^2$. One can also determine $\Delta(x_1^{p_1}\cdots x_m^{p_m})$ modulo $((A\otimes A)^+)^{\sum_lp_l+1}$. For each $k$, we define the filtration
\begin{equation}\label{aug_filtration}
 A^k=F^{0,k}\supseteq F^{1,k-1}\supseteq\cdots\supseteq
F^{k,0}\supseteq F^{k+1,-1}=0,\qquad
\text{where}\quad F^{p,q}=(A^+)^p\cap A^{p+q}.
\end{equation}

\begin{proof}[Proof of Proposition~\ref{prop_H-group}.]
On general grounds, the cohomology map $\sigma^*$ induced by $\sigma\colon M\to M$ is an endomorphism of $A=H^*(M;\RR)$. Therefore, the obvious inclusion $\sigma^*(A^+)\subseteq A^+$ implies that $\sigma^*((A^+)^p)\subseteq(A^+)^p$ for all $p$. It follows that each individual map $\sigma^{*k}\colon A^k\to A^k$ preserves the filtration \eqref{aug_filtration}, and so there are well-defined maps of the quotient spaces:
\begin{equation}
\sigma^{*(p,q)}\colon E^{p,q}\to E^{p,q},\qquad
\text{where}\quad E^{p,q}=F^{p,q}/F^{p+1,q-1}.
\end{equation}
In these terms, the trace can be expressed as follows:
\begin{equation}\label{Tr_pq}
\Tr\sigma^{*k}=\sum_{p+q=k}\Tr\sigma^{*(p,q)}.
\end{equation}

Now, the condition that $\sigma$ is a left inversion map means that $x\mapsto\mu(\sigma(x),x)$ is homotopic to $x\mapsto e$ through basepoint-preserving maps. The corresponding cohomology maps are equal. Note that $x\mapsto\mu(\sigma(x),x)$ is the composition of three maps: $x\mapsto(x,x)$,\, $\sigma\times\id$, and $\mu$. Therefore,
\begin{equation}
{\smile}\circ(\sigma^*\otimes\id)\circ\Delta = I\circ\tau,
\end{equation}
where $I\in A$ on the right-hand side is interpreted as a map from $\RR$ to $A$. Combining the last equation with \eqref{comult_structure}, we get
\begin{equation}
\sigma^*(a)+a+\sum_{j}\sigma^*(a_j')a_j'' = 0\qquad (a,a_j',a_j''\in A^+).
\end{equation}
In particular, if $a=x_l$ is one of the generators of $A$, then $\sigma^*(a)$ is congruent to $-a$ modulo $(A^+)^2$. This condition determines $\sigma^*(a)$ for all basis elements of $A$ modulo a suitable ideal:
\begin{equation}
\sigma^*(x_1^{p_1}\cdots x_m^{p_m})
\equiv (-1)^{p}x_1^{p_1}\cdots x_m^{p_m}\quad
\bigl(\mathrm{mod}\:(A^+)^{p+1}\bigr),\qquad
p_1,\ldots,p_m\in\{0,1\},\quad p=\sum_{l}p_l.
\end{equation}
Note that $(-1)^{p}=(-1)^{|x_1^{p_1}\cdots x_m^{p_m}|}$ because all generators are odd-dimensional. On the other hand, the elements $x_1^{p_1}\cdots x_m^{p_m}\bmod (A^+)^{p+1}$ form a basis of the bigraded algebra $\bigoplus_{p,q}E^{p,q}$. Thus, $\sigma^{*(p,q)}=(-1)^{p+q}\ts\id$. In combination with \eqref{Tr_pq}, this implies that $\Tr\sigma^{*k}=(-1)^k\dim A^k$.
\end{proof}

\section{Subspaces associated with projections}\label{sec_subspaces}

To each pair of projections $(P,Q)$ in a $C^*$ algebra $\calB$, one assigns the vector space $\calS_{P,Q}=\{PXQ:\,X\in\calB\}$. In this section, we discuss an approximate version of this construction. Recall that a $\delta$-projection in an $\eps$-$C^*$ algebra $\calA$ is a Hermitian element $P$ such that $\|P^2-P\|\le\delta$. For each pair $(P,Q)$ of $\delta$-projections, there are two slightly different maps that could be called ``compressions'': $\La_P\Ra_Q\colon X\mapsto P(XQ)$ and $\Ra_Q\La_P\colon X\mapsto (PX)Q$. We will use their symmetric combination, $\frac{1}{2}(\La_P\Ra_Q+\Ra_Q\La_P)$. It is an $O(\delta+\eps)$-idempotent element of the algebra of operators acting on $\calA$, and therefore, can be approximated by an idempotent using Proposition~\ref{prop_P}. The \emph{compression map} thus defined,
\begin{equation}
\Co_{P,Q}\colon\calA\to\calA,\qquad
\Co_{P,Q}=\theta(\La_P\Ra_Q+\Ra_Q\La_P-1),
\end{equation}
satisfies the equations
\begin{equation}
\Co_{P,Q}^2=\Co_{P,Q},\qquad\:
\Co_{P,Q}(X)^\dag=\Co_{Q,P}(X^\dag)\quad\: (X\in\calA),
\end{equation}
and both $\|\La_P\Ra_Q-\Co_{P,Q}\|$ and $\|\Ra_Q\La_P-\Co_{P,Q}\|$ are bounded by $O(\delta+\eps)$. The image of this map, $\calS_{P,Q}=\Img\Co_{P,Q}=\Ker(1-\Co_{P,Q})$, is a closed linear subspace of $\calA$. There is a variant of this construction where only $\La_P$ or only $\Ra_Q$ is applied, and one writes $\Co_{P,1},\calS_{P,1}$ or $\Co_{1,Q},\calS_{1,Q}$, respectively. (This is the same as $\Co_{P,I}$, etc.\ if the unit is exact.)

When $P=Q$, the abbreviations $\Co_{P}$, $\calS_P$ are used. It is clear that $\calS_P=0$ if and only if $P$ is sufficiently close to $0$ as described by the first alternative in \eqref{P_alternatives}. (Recall that in the opposite case, we call $P$ ``nonvanishing''.) Similarly, $\Co_P=1$ and $\calS_P=\calA$ if and only if $P$ is close to $I$. If $\dim\calS_P=1$, we say that $P$ is a \emph{one-dimensional $\delta$-projection}.

If $P_1,P,Q_1,Q$ are $\delta$-projections such that $\|P_1P-P_1\|\le\delta$ and $\|Q_1Q-Q_1\|\le\delta$, then  $\calS_{P_1,Q_1}$ is almost contained in $\calS_{P,Q}$. Indeed, if $X=\Co_{P_1,Q_1}(X)$, there is a chain of approximate equalities with $O(\delta+\eps)\|X\|$ accuracy:
\[
X= \Co_{P_1,Q_1}(X)\approx P_1(XQ_1)\approx (PP_1)(X(Q_1Q))\approx
P\bigl(\bigl(P_1(XQ_1)\bigr)Q\bigr)\approx P(XQ)\approx \Co_{P,Q}(X).
\]
Thus,
\begin{equation}\label{almost_inc}
\|\Co_{P,Q}(X)-X\|\le O(\delta+\eps)\|X\|\qquad (X\in\calS_{P_1,Q_1}).
\end{equation}
For any triple $(P,Q,R)$ of $\delta$-projections, one defines the \emph{compressed product} between elements of the corresponding subspaces:
\begin{equation}\label{compr_prod}
(X,Y)\mapsto X\cdot Y\,\colon\,
\calS_{P,Q}\times\calS_{Q,R}\to \calS_{P,R},\qquad X\cdot Y=\Co_{P,R}(XY).
\end{equation}
It is close to the ambient product in $\calA$, namely, $\|X\cdot Y-XY\|\le O(\delta+\eps)\|X\|\ts\|Y\|$. If $P$ is a nonvanishing projection, the compressed product turns the Banach space $\calS_P$ (which is closed under the involution $X\mapsto X^\dag$) into an $O(\delta+\eps)$-$C^*$ algebra with unit $\wt{P}=\Co_{P}(P)$.\medskip

The proof of the main theorem involves breaking a given $\eps$-$C^*$ algebra into pieces and putting them back together. Let us begin with a basic property: if $P_1$ and $P_2$ are $\delta$-projections and $\|P_1P_2\|\le\delta$, then $P=P_1+P_2$ is an $O(\delta)$-projection and $\|P_jP-P_j\|\le O(\delta)$ for $j=1,2$. The next lemma is meant to construct a certain bijection and estimate its norm up to a constant factor. Some of the assumptions and other details are ultimately not important. In fact, the lemma will be used for $p,q\le 2$, so all reasonable variants of the inequalities involved differ only by constant factors.

\begin{Lemma}\label{lem_alpha}
Let $P,\dots,P_p$ and $Q_1,\dots,Q_q$ be Hermitian elements of an $\eps$-$C^*$ algebra $\calA$ such that
\[
\|P_jP_l-\delta_{jl}P_l\|\le\delta,\qquad \|Q_kQ_m-\delta_{km}Q_m\|\le\delta.
\]
Suppose that $P=\sum_jP_j$ and $Q=\sum_kQ_k$ are $\delta$-projections satisfying the additional conditions $\|P_jP-P_j\|\le\delta$ and $\|Q_kQ-Q_k\|\le\delta$, and that $pq(\delta+\eps)$ is less that a certain constant. Let
\begin{equation}\label{alpha_jk}
\alpha_{jk}\colon X\mapsto\Co_{P,Q}(X)\colon\calS_{P_j,Q_k}\to\calS_{P,Q},\qquad
\alpha=\sum_{j,k}\alpha_{jk}\colon \bigoplus_{j,k}\calS_{P_j,Q_k}\to\calS_{P,Q}.
\end{equation}
Then $\alpha$ is a linear bijection satisfying the bounds
\begin{equation}
\|\alpha\|\le pq+O(pq(\delta+\eps)),\qquad
\|\alpha^{-1}\|\le 1+O(pq(\delta+\eps)),
\end{equation}
where\, $\bigoplus_{j,k}\calS_{P_j,Q_k}$ is equipped with the maximum norm, $\|(X_{11},\dots,X_{pq})\|=\max_{j,k}\|X_{jk}\|$.
\end{Lemma}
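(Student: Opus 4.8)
\textbf{Proof plan for Lemma~\ref{lem_alpha}.}

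The plan is to prove surjectivity and the lower bound on $\|\alpha^{-1}\|^{-1}$ by exhibiting an explicit approximate inverse, and then to deduce injectivity and bijectivity from the norm estimates. First I would write down the candidate inverse: for $X\in\calS_{P,Q}$, set $\beta(X)=(\beta_{jk}(X))_{j,k}$ where $\beta_{jk}(X)=\Co_{P_j,Q_k}(X)\in\calS_{P_j,Q_k}$. The upper bound $\|\alpha\|\le pq+O(pq(\delta+\eps))$ is immediate: each $\alpha_{jk}$ has norm $\le 1+O(\delta+\eps)$ because $\Co_{P,Q}$ does, and summing $pq$ of them (with the maximum norm on the domain) gives the claimed bound.

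Next I would show $\alpha\beta$ is close to $1_{\calS_{P,Q}}$. The key computational input is the near-orthogonality relations: since $\sum_j P_j=P$ acts approximately as the identity on $\calS_{P,Q}$ from the left (because $\|P_jP-P_j\|\le\delta$ and $\|P\widetilde P-\widetilde P\|$-type estimates hold), and similarly $\sum_k Q_k$ from the right, one gets for $X\in\calS_{P,Q}$ that
\begin{equation}
\sum_{j,k}\alpha_{jk}\beta_{jk}(X)\approx \sum_{j,k}\Co_{P,Q}\bigl(P_j(XQ_k)\bigr)\approx \Co_{P,Q}\Bigl(\bigl(\textstyle\sum_j P_j\bigr)X\bigl(\sum_k Q_k\bigr)\Bigr)\approx \Co_{P,Q}(PXQ)\approx X,
\end{equation}
with cumulative error $O(pq(\delta+\eps))\|X\|$ (the factor $pq$ arises from summing $pq$ terms each carrying an $O(\delta+\eps)$ error, together with the hypothesis that $pq(\delta+\eps)$ is small so that all intermediate elements stay in the regime where \eqref{almost_inc} and the compressed-product estimates apply). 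Hence $\|\alpha\beta-1\|\le O(pq(\delta+\eps))<1$, so $\alpha\beta$ is invertible and $\alpha$ is surjective.

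For injectivity I would compute $\beta\alpha$ on $\bigoplus_{j,k}\calS_{P_j,Q_k}$. Given $(X_{jk})$, the $(l,m)$ component of $\beta\alpha$ is $\sum_{j,k}\Co_{P_l,Q_m}\Co_{P,Q}(X_{jk})$; using $\|P_lP_j-\delta_{lj}P_j\|\le\delta$, $\|Q_mQ_k-\delta_{mk}Q_k\|\le\delta$, and \eqref{almost_inc} (which gives $\Co_{P,Q}(X_{jk})\approx X_{jk}$ and $\Co_{P_l,Q_m}(X_{jk})\approx 0$ unless $(l,m)=(j,k)$), one finds that the off-diagonal terms are $O(\delta+\eps)\|X_{jk}\|$ and the diagonal term is $X_{lm}+O(\delta+\eps)\|X_{lm}\|$. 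Summing, $\|(\beta\alpha)_{lm}-X_{lm}\|\le O(pq(\delta+\eps))\max_{j,k}\|X_{jk}\|$, so $\|\beta\alpha-1\|\le O(pq(\delta+\eps))<1$ in the maximum norm. Therefore $\beta\alpha$ is invertible, $\alpha$ is injective, hence bijective, and $\alpha^{-1}=(\beta\alpha)^{-1}\beta$. Finally $\|\alpha^{-1}\|\le\|(\beta\alpha)^{-1}\|\,\|\beta\|\le(1+O(pq(\delta+\eps)))(1+O(\delta+\eps))=1+O(pq(\delta+\eps))$, since $\|\beta\|\le 1+O(\delta+\eps)$ (each $\beta_{jk}$ has norm $\le 1+O(\delta+\eps)$, and the codomain carries the maximum norm so there is no factor of $pq$).

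I expect the main obstacle to be purely bookkeeping: tracking that every chain of $\approx$'s stays within the validity range of the approximate-inclusion estimate \eqref{almost_inc} and the compressed-product bounds, and that the accumulated error is genuinely $O(pq(\delta+\eps))$ rather than something worse like $O((pq)^2(\delta+\eps))$ — this is exactly why the hypothesis bundles $pq(\delta+\eps)$ below a fixed constant. The underlying idea, that $\alpha$ is an honest isomorphism because in a genuine $C^*$ algebra $\calS_{P,Q}=\bigoplus_{j,k}\calS_{P_j,Q_k}$ when $P=\sum P_j$, $Q=\sum Q_k$ are orthogonal decompositions, is clean; the work is confined to the perturbative estimates.
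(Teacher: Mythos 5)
Your proof is correct and follows essentially the same route as the paper's: you define the same candidate inverse $\beta=(\Co_{P_j,Q_k})_{j,k}$, show $\|\alpha\beta-1\|$ and $\|\beta\alpha-1\|$ are both $O(pq(\delta+\eps))$ by expanding $\alpha_{jk}\beta_{jk}$ and $\beta_{jk}\alpha_{lm}$ and using the near-orthogonality of the $P_j$'s and $Q_k$'s, and conclude bijectivity with the stated norm bound on $\alpha^{-1}$. The paper organizes the two estimates in the opposite order (first $\beta\alpha$, then $\alpha\beta$) and phrases the key term-by-term bound as $\|\alpha_{jk}\beta_{jk}(X)-(P_jX)Q_k\|\le O(\delta+\eps)\|X\|$ followed by the exact bilinearity collapse $\sum_{j,k}(P_jX)Q_k=(PX)Q$, but the content is the same.
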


\begin{proof}
Let $(X_{11},\dots,X_{pq})\in\bigoplus_{j,k}\calS_{P_j,Q_k}$. The conditions on $P$ and $Q$ imply $\|\Co_{P,Q}(X_{jk})-X_{jk}\|\le O(\delta+\eps)\|X_{jk}\|$. Since $\alpha(X_{11},\dots,X_{pq})=\sum_{j,k}\Co_{P,Q}(X_{jk})$, we get the required bound on $\|\alpha\|$.

Now, let
\[
\beta_{jk}\colon X\mapsto\Co_{P_j,Q_j}(X)
\colon\calS_{P,Q}\to\calS_{P_j,Q_k},\qquad
\beta=\sum_{j,k}\beta_{jk}\colon
\calS_{P,Q}\to\bigoplus_{j,k}\calS_{P_j,Q_k}.
\]
Then $\beta_{jk}\alpha_{lm}(X_{lm}) =\delta_{jl}\delta_{km}X_{lm}+O(\delta+\eps)\|X_{lm}\|$. Hence, $\beta\alpha_{lm}(X_{lm})$ is approximately equal to $(\ldots,0,X_{lm},0,\ldots)$, with the error bounded by $O(\delta+\eps)\|X_{lm}\|$. It follows that $\beta\alpha=1+\gamma$, where $1$ stands for the identity map on $\bigoplus_{j,k}\calS_{P_j,Q_k}$ and $\|\gamma\|\le pq\ts O(\delta+\eps)$. The last number is less than $1$ by one of the hypotheses, so $\beta\alpha$ is invertible and $(\beta\alpha)^{-1}\beta$ is a left inverse of $\alpha$. As a separate consideration,
\[
\|\alpha_{jk}\beta_{jk}(X)-(P_jX)Q_k\|\le O(\delta+\eps)\|X\|,
\]
and hence, $\|\alpha\beta-1\|\le pq\ts O(\delta+\eps)$, where $1$ is the identity map on $\calS_{P,Q}$. Similarly to the previous argument, $\alpha\beta$ is invertible and $\beta(\alpha\beta)^{-1}$ is a right inverse of $\alpha$. Of course, the two inverses are equal to each other. Thus, $\|\alpha^{-1}\|\le (1+O(pq(\delta+\eps)))\ts\|\beta\|$, and it is evident that $\|\beta\|\le 1+O(\delta+\eps)$. This gives the required bound on $\|\alpha^{-1}\|$.
\end{proof}

The rest of this section is concerned with special properties of one-dimensional projections.

\begin{Lemma}\label{lem_PQ_Hilb}
Let $P$ and $Q$ be $\delta$-projections in an $\eps$-$C^*$ algebra. If $Q$ is one-dimensional, then $\calS_{P,Q}$ is a Hilbert space with the inner product $\braket{\cdot}{\cdot}$ defined by the equation
\begin{equation}\label{1dQ_ip}
Y^\dag\cdot X=\braket{Y}{X}\,\wt{Q}\qquad (X,Y\in\calS_{P,Q}),
\end{equation}
where $Y^\dag\cdot X=\Co_{Q}(Y^{\dag}X)$ and $\wt{Q}=\Co_Q(Q)$. Furthermore,
\begin{equation}\label{1dQ_ip_norm}
\bigl|\braket{X}{X}-\|X\|^2\bigr|\le O(\delta+\eps)\ts\|X\|^2\qquad
(X\in\calS_{P,Q}).
\end{equation}
\end{Lemma}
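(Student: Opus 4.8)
The plan is to define $\braket{Y}{X}$ via equation~\eqref{1dQ_ip} and verify that this is well-posed and yields a genuine inner product with the stated norm comparison. The first task is to show that $\Co_Q(Y^\dag X)$ is always a scalar multiple of $\wt{Q}$. Since $Q$ is one-dimensional, $\calS_Q=\Img\Co_Q$ is a one-dimensional subspace of $\calA$; it is spanned by $\wt{Q}=\Co_Q(Q)$, which is nonzero because $Q$ is nonvanishing (indeed $\|\wt{Q}-Q\|\le O(\delta+\eps)$ and $\|Q\|\ge 1-O(\delta+\eps)$ by \eqref{P_alternatives}, so $\|\wt{Q}\|=1+O(\delta+\eps)$, after rescaling if desired). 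Now $Y^\dag\cdot X=\Co_Q(Y^\dag X)\in\calS_Q$, so it equals $\braket{Y}{X}\wt{Q}$ for a unique scalar $\braket{Y}{X}\in\CC$; this defines the form, and sesquilinearity is immediate from conjugate-linearity of $Y\mapsto Y^\dag$, linearity of $X\mapsto X$, bilinearity of the ambient product, and linearity of $\Co_Q$.

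Next I would prove the norm estimate \eqref{1dQ_ip_norm}, which simultaneously gives positivity (hence the Hermitian/positive-definite axioms). Take $X\in\calS_{P,Q}$ and compute $X^\dag\cdot X=\Co_Q(X^\dag X)$. Using that $\|\Co_Q-\Ra_Q\La_Q\|\le O(\delta+\eps)$ and $\|\Co_Q-\La_Q\Ra_Q\|\le O(\delta+\eps)$ (from Section~\ref{sec_subspaces}), together with approximate associativity and $\|Q^2-Q\|\le\delta$, one gets
\[
X^\dag\cdot X = \Co_Q(X^\dag X) = X^\dag X + O(\delta+\eps)\|X\|^2
\]
on the one hand (since $X\in\calS_{P,Q}$ implies $X=\Co_{P,Q}(X)\approx PXQ$, so $X^\dag X\approx QX^\dag\cdots XQ$ is already nearly in $\calS_Q$, up to $O(\delta+\eps)\|X\|^2$). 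Taking norms and using the $\eps$-$C^*$ axiom \eqref{ax_C*}, $\|X^\dag X\| = \|X\|^2 + O(\eps)\|X\|^2$, so $\|X^\dag\cdot X\| = \|X\|^2 + O(\delta+\eps)\|X\|^2$. On the other hand $\|X^\dag\cdot X\| = |\braket{X}{X}|\cdot\|\wt{Q}\|$, and $\|\wt{Q}\| = 1 + O(\delta+\eps)$; moreover $\braket{X}{X}$ is real and nonnegative because $\Co_Q(X^\dag X)$ is a positive-semidefinite-like element aligned with $\wt{Q}$ — more precisely, since $\Co_Q(X^\dag X)\approx QX^\dag XQ \succeq 0$ and $\wt{Q}\approx Q\succeq 0$ span the same line, the proportionality constant is $\ge 0$ up to the understanding that in the abstract $\eps$-$C^*$ setting one argues via the norm: $\braket{X}{X}\wt{Q} + \braket{-X^\dag}{\text{stuff}}$... cleaner is to note $\|\braket{X}{X}\wt Q\| = |\braket{X}{X}|(1+O(\delta+\eps))$ forces $|\braket{X}{X}| = \|X\|^2(1+O(\delta+\eps))$, and a separate short argument (comparing with the ambient Hilbert-space structure, or using that $X\mapsto\braket{X}{X}$ is continuous and positive on a dense set of ``unitary-like'' elements) pins the sign to $+$. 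This establishes \eqref{1dQ_ip_norm}, which in turn shows $\braket{\cdot}{\cdot}$ is positive-definite, hence a genuine inner product, and $\calS_{P,Q}$ — being finite-dimensional, or at least a closed subspace of a Banach space with a norm equivalent to a Hilbert norm — is complete, i.e.\ a Hilbert space.

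The main obstacle I anticipate is the sign/positivity issue: \emph{a priori}, in an abstract $\eps$-$C^*$ algebra one only controls norms, not an order structure, so showing $\braket{X}{X}>0$ rather than merely $|\braket{X}{X}|\approx\|X\|^2$ requires care. I expect the clean route is to exhibit $\braket{X}{X}$ as $\|X\|^2$ plus a small \emph{real} correction by writing $\braket{X}{X} = \tau(\wt Q)^{-1}\tau(X^\dag\cdot X)$ for a suitable norm-one functional $\tau$ (e.g.\ the one dual to $\wt Q$ in $\calS_Q$), and then noting that for the specific element $X^\dag X$ (a genuine square in the ambient $C^*$-algebra $\Bo(\calH)$, or, in the fully abstract case, controlled via the chain $X^\dag\cdot X\approx QX^\dag XQ$) one has the lower bound $\tau(X^\dag\cdot X)\ge (1-O(\delta+\eps))\|X\|^2>0$. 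Everything else — sesquilinearity, the alignment with $\wt Q$, and the norm comparison — is routine given the compression estimates already assembled in Section~\ref{sec_subspaces}.
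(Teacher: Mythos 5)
Your proposal correctly handles sesquilinearity, the observation that $\Co_Q(Y^\dag X)$ lies in the one-dimensional space $\calS_Q$ and hence is a scalar multiple of $\wt Q$, and the estimate $|\braket{X}{X}|=\|X\|^2(1+O(\delta+\eps))$. These parts match the paper. But you explicitly flag the positivity of $\braket{X}{X}$ as the obstacle, and none of the three escape routes you sketch actually closes it in the abstract setting. The lemma is stated for an \emph{abstract} $\eps$-$C^*$ algebra, so there is no ambient $\Bo(\calH)$ to borrow an order structure from, and there is no a priori notion of a positive element or positive functional. Your proposed ``functional $\tau$ dual to $\wt Q$'' is precisely the coordinate functional defining $\braket{X}{X}$, so ``$\tau(X^\dag\cdot X)\ge 0$'' is exactly what needs to be proved, not an input; and ``continuous and positive on a dense set of unitary-like elements'' is asserted, not demonstrated.

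The paper closes this gap with a short intermediate-value argument that you do not anticipate. Having established $\bigl||\braket{X}{X}|-\|X\|^2\bigr|\le O(\delta+\eps)\|X\|^2$ (which in particular shows $\braket{X}{X}=0\Rightarrow X=0$), suppose $\braket{X}{X}<0$ for some $X\in\calS_{1,Q}$. Take $Q'=\Co_{1,Q}(Q)$, which satisfies $\braket{Q'}{Q'}>0$. The function $f(t)=\braket{X+tQ'}{X+tQ'}$ is real-valued (by the Hermitian symmetry $\overline{\braket{Y}{X}}=\braket{X}{Y}$), continuous in $t\ge 0$, negative at $t=0$, and positive for large $t$ (its leading term is $t^2\braket{Q'}{Q'}$). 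Hence $f$ vanishes at some $t>0$; by the nondegeneracy just noted this forces $X+tQ'=0$, i.e.\ $X=-tQ'$, which gives $\braket{X}{X}=t^2\braket{Q'}{Q'}>0$, a contradiction. This is the missing idea; the rest of your outline aligns with the paper's proof (including the reduction from $\calS_{P,Q}$ to $\calS_{1,Q}$ via $\|\Co_{1,Q}(X)-X\|\le O(\delta+\eps)\|X\|$, which you implement slightly differently but equivalently via $X\approx PXQ$).
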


\begin{proof}
The inner product $\braket{Y}{X}$ defined by \eqref{1dQ_ip} is linear in $X$ and conjugate linear in $Y$, and the complex conjugate of $\braket{Y}{X}$ equals $\braket{X}{Y}$. The positivity and the completeness axioms of a Hilbert space follow from \eqref{1dQ_ip_norm}, so it is sufficient to prove the latter. For this purpose, we may replace the space $\calS_{P,Q}$ with $\calS_{1,Q}$ because the former is almost contained in the latter, namely, $\|\Co_{1,Q}(X)-X\|\le O(\delta+\eps)\|X\|$ for all $X\in\calS_{P,Q}$.

It is evident that $\|X^{\dag}\cdot X\|$ and $\|\wt{Q}\|$ are $O(\eps+\delta)$-close to $\|X\|^2$ and $1$, respectively; therefore,
\begin{equation}\label{1dQ_norm1}
\bigl||\braket{X}{X}|-\|X\|^2\bigr|\le O(\delta+\eps)\ts\|X\|^2\qquad
(X\in\calS_{1,Q}).
\end{equation}
This inequality implies that if $\braket{X}{X}=0$, then $X=0$. To show the positivity, let us assume the opposite, i.e.\ that $\braket{X}{X}<0$ for some $X\in\calS_{1,Q}$. Note that $\braket{Q'}{Q'}>0$ for $Q'=\Co_{1,Q}(Q)$. Now, consider the expression $f(t)=\braket{X+tQ'}{X+tQ'}$ for $t\in[0,\infty)$. The function $f$ is continuous and changes sign from negative at $t=0$ to positive at sufficiently large $t$; hence, it vanishes at some $t>0$. It follows that $X=-tQ'$, and therefore, $\braket{X}{X}>0$, in contradiction with our previous assumption. Thus, $\braket{X}{X}\ge 0$ for all $X$, and so \eqref{1dQ_ip_norm} follows from \eqref{1dQ_norm1}.
\end{proof}

Let $Q$ be a one-dimensional $\delta$-projection in an $\eps$-$C^*$ algebra $\calA$. For any $\delta$-projections $P,R\in\calA$, we define a map $\Ha^{Q}_{P,R}\colon\calS_{P,R}\to\Bo(\calS_{R,Q},\calS_{P,Q})$. (Here $\Bo(\calS_{R,Q},\calS_{P,Q})$ is the space of bounded linear maps from $\calS_{R,Q}$ to $\calS_{P,Q}$ with the norm induced by the Euclidean norm $\|X\|_\Euc=\sqrt{\braket{X}{X}}$. Due to inequality \eqref{1dQ_ip_norm}, the latter is equal to $\|X\|$ up to a $1\pm O(\eps+\delta)$ factor.) For each $Z\in\calS_{P,R}$ and $X\in\calS_{R,Q}$, the element $\Ha^{Q}_{P,R}(Z)(X)\in\calS_{P,Q}$ is defined by the condition
\begin{equation}\label{Ha_def}
(Y^\dag\cdot Z)\cdot X+Y^\dag\cdot(Z\cdot X)
=2\,\bbraket{Y}{\Ha^{Q}_{P,R}(Z)(X)}\,\wt{Q}\quad \text{for all }\,Y\in\calS_{P,Q}.
\end{equation}
This symmetric definition enjoys the exact equality
\begin{equation}\label{Ha_dag}
\Ha^{Q}_{R,P}(Z^\dag)=\Ha^{Q}_{P,R}(Z)^\dag.
\end{equation}
On the other hand, $\|\Ha^{Q}_{P,R}(Z)(X)-Z\cdot X\|_\Euc\le O(\delta+\eps)\ts\|Z\|\ts\|X\|_\Euc$. Thus,
\begin{equation}\label{Ha_prod}
\|\Ha^{Q}_{P,R}(Z\cdot W)-\Ha^{Q}_{P,S}(Z)\ts\Ha^{Q}_{S,R}(W)\|
\le O(\delta+\eps)\ts\|Z\|\ts\|W\|,
\end{equation}
where the norm on the left is induced by the Euclidean norm. Let us consider some special cases. The map $\Ha^{Q}_{P,P}\colon\calS_{P}\to\Bo(\calS_{P,Q})$ is an $O(\delta+\eps)$-homomorphism due to \eqref{Ha_dag}, \eqref{Ha_prod}, and the inequality $\|\Ha^{Q}_{P,P}(\wt{P})-1_{\calS_{P,Q}}\|\le O(\delta+\eps)$. The domain of $\Ha^{Q}_{P,Q}$ is $\calS_{P,Q}\subseteq\calA$ with the ambient norm, whereas the target space is, effectively, $\calS_{P,Q}$ with the Euclidean norm because $\calS_{Q,Q}=\CC\ts\wt{Q}\cong\CC$. Furthermore, $\Ha^{Q}_{P,Q}$ is $O(\delta+\eps)$-close to the identity map, and so is $\Ha^{Q}_{Q,P}\colon\calS_{Q,P}\to\calS_{Q,P}$.

\begin{Lemma}\label{lem_PQR}
Let $P$, $Q$, $R$ be $\delta$-projections in an $\eps$-$C^*$ algebra. If $Q$ is one-dimensional, then
\[
\bigl|\|X\cdot Y\|-\|X\|\ts\|Y\|\bigr|\le O(\delta+\eps)\ts\|X\|\ts\|Y\|\qquad
(X\in\calS_{P,Q},\:\, Y\in\calS_{Q,R}).
\]
\end{Lemma}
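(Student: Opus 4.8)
The upper bound $\|X\cdot Y\|\le(1+O(\delta+\eps))\ts\|X\|\ts\|Y\|$ holds with no hypothesis on $Q$: the compressed product is $O(\delta+\eps)$-close to the ambient one, so $\|X\cdot Y\|\le\|XY\|+O(\delta+\eps)\|X\|\ts\|Y\|$, and $\|XY\|\le(1+\eps)\|X\|\ts\|Y\|$ by \eqref{ax_prodnorm}. All the content of the lemma is therefore the matching \emph{lower} bound, and this is where one-dimensionality of $Q$ is used.

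The plan is to recover $X$ from $X\cdot Y$ by ``multiplying on the right by $Y^\dag$''. Since $Q$ is one-dimensional, $\calS_{Q}=\CC\ts\wt Q$ with $\wt Q=\Co_Q(Q)$, so the compressed product $Y\cdot Y^\dag=\Co_Q(YY^\dag)$ is a scalar multiple of $\wt Q$; by Lemma~\ref{lem_PQ_Hilb} applied to the pair $(R,Q)$ this scalar is $\braket{Y^\dag}{Y^\dag}$, which equals $\|Y\|^2$ up to a $1+O(\delta+\eps)$ factor by \eqref{1dQ_ip_norm} together with $\|Y^\dag\|=\|Y\|$. Next I would record two routine approximate identities, both obtained by replacing every compressed product with the ambient product at cost $O(\delta+\eps)$ times the obvious norms: (i) approximate associativity, $(X\cdot Y)\cdot Y^\dag=X\cdot(Y\cdot Y^\dag)+O(\delta+\eps)\|X\|\ts\|Y\|^2$, which after the replacements reduces to \eqref{ax_assoc} with $Z=Y^\dag$; and (ii) $\wt Q$ is an approximate right unit on $\calS_{P,Q}$, i.e.\ $X\cdot\wt Q=X+O(\delta+\eps)\|X\|$ (use $\wt Q=Q+O(\delta+\eps)$ and $X=\Co_{P,Q}(X)=(PX)Q+O(\delta+\eps)\|X\|$). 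Chaining (i), the value of $Y\cdot Y^\dag$, and (ii) gives
\[
(X\cdot Y)\cdot Y^\dag=\|Y\|^2 X+O(\delta+\eps)\ts\|X\|\ts\|Y\|^2.
\]
Taking norms, using $\|Y^\dag\|=\|Y\|$ and the already-established upper bound on the compressed product, one gets $\|Y\|^2\|X\|\le(1+O(\delta+\eps))\ts\|X\cdot Y\|\ts\|Y\|+O(\delta+\eps)\|X\|\ts\|Y\|^2$; dividing by $\|Y\|$ (the case $Y=0$ being trivial) yields $\|X\cdot Y\|\ge(1-O(\delta+\eps))\|X\|\ts\|Y\|$, which together with the upper bound is the claim.

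I do not expect a real obstacle here; the only thing to watch is the bookkeeping of the $O(\delta+\eps)$ errors through the associativity and unit manipulations, which is harmless because the compressed product differs from the $\eps$-associative ambient product by only $O(\delta+\eps)$. As an alternative one can run the same computation through the maps $\Ha^{Q}$: by \eqref{Ha_prod}, $\Ha^{Q}_{P,R}(X\cdot Y)$ is $O(\delta+\eps)\|X\|\ts\|Y\|$-close to the composition $\Ha^{Q}_{P,Q}(X)\ts\Ha^{Q}_{Q,R}(Y)$, which factors through the one-dimensional space $\calS_{Q,Q}$ and hence has operator norm equal to the product of the two factors' norms, namely $\|X\|$ and $\|Y\|$ up to $1+O(\delta+\eps)$; combined with $\|\Ha^{Q}_{P,R}(Z)\|\le(1+O(\delta+\eps))\|Z\|$ this again gives the lower bound on $\|X\cdot Y\|$.
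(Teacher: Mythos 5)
Your argument is correct, and both variants you give (the direct one and the $\Ha^Q$ one) work. The paper's proof is a little more symmetric and shorter: it uses the approximate $C^*$ identity $\|X\cdot Y\|\approx\|XY\|\approx\sqrt{\|(XY)^\dag(XY)\|}$ (accuracy $O(\delta+\eps)\|X\|\ts\|Y\|$), then computes $(XY)^\dag(XY)=Y^\dag((X^\dag X)Y)+O(\eps)\|X\|^2\|Y\|^2\approx\|X\|^2\,Y^\dag Y$ using $X^\dag\cdot X=\braket{X}{X}\,\wt Q\approx\|X\|^2\wt Q$ (Lemma~\ref{lem_PQ_Hilb} applied to $\calS_{P,Q}$), and finishes with one more application of the $C^*$ bound to $\|Y^\dag Y\|$. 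You instead right-multiply $X\cdot Y$ by $Y^\dag$, exploit the dual scalar identity $Y\cdot Y^\dag\approx\|Y\|^2\wt Q$ (Lemma~\ref{lem_PQ_Hilb} applied to $\calS_{R,Q}$), chain through approximate associativity and the right-unit property of $\wt Q$ to get $(X\cdot Y)\cdot Y^\dag\approx\|Y\|^2X$, and divide by $\|Y\|$. Both approaches hinge on the same structural fact—compressing $Z^\dag Z$ (or $ZZ^\dag$) into the one-dimensional algebra $\calS_Q$ produces $\approx\|Z\|^2\wt Q$—applied on the $X$-side in the paper and the $Y$-side in your proof. The paper's route avoids the separate handling of $Y=0$ and the division step by relying directly on the $C^*$ axiom; your $\Ha^Q$ alternative is pleasant because it reduces the whole thing to a statement about rank-one operators between Hilbert spaces, and is in keeping with how those maps are used later in Lemma~\ref{lem_extension}.
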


\begin{proof}
Note that $\|X\cdot Y\|\approx\|XY\|\approx\sqrt{\|(XY)^{\dag}(XY)\|}$ with $O(\delta+\eps)\ts\|X\|\|Y\|$ accuracy. Now,
\[
(XY)^{\dag}(XY)=Y^\dag((X^{\dag}X)Y)+O(\eps)\ts\|X\|^2\|Y\|^2
=\|X\|^2\ts Y^{\dag}Y+O(\delta+\eps)\ts\|X\|^2\|Y\|^2
\]
because $X^{\dag}X\approx X^\dag\cdot X =\braket{X}{X}\wt{Q}\approx \|X\|^2\wt{Q}$ with $O(\delta+\eps)\ts\|X\|^2$ accuracy.
\end{proof}

\begin{Lemma}\label{lem_1d_proj}
If $P$ and $Q$ are one-dimensional $\delta$-projections in an $\eps$-$C^*$ algebra, then $\dim\calS_{P,Q}\le 1$.
\end{Lemma}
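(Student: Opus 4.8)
The plan is to turn the statement into a fact about a sesquilinear form on a Hilbert space. Since $Q$ is one-dimensional, Lemma~\ref{lem_PQ_Hilb} makes $\calS_{P,Q}$ a Hilbert space, with Euclidean norm $\|\cdot\|_\Euc=\sqrt{\braket{\cdot}{\cdot}}$ agreeing with the ambient norm up to a $1+O(\delta+\eps)$ factor by \eqref{1dQ_ip_norm}. Since $P$ is one-dimensional, $\calS_{P,P}=\calS_P=\CC\ts\wt{P}$ with $\|\wt{P}\|=1+O(\delta+\eps)$ ($P$ is nonvanishing and $\wt{P}=\Co_P(P)\approx P$). Hence, for $X,Y\in\calS_{P,Q}$ — noting that $Y^\dag\in\calS_{Q,P}$ — the compressed product $X\cdot Y^\dag$ lies in $\calS_{P,P}$ and can be written as $X\cdot Y^\dag=B(X,Y)\ts\wt{P}$; this defines a function $B\colon\calS_{P,Q}\times\calS_{P,Q}\to\CC$ that is linear in $X$ and conjugate linear in $Y$, since the involution is conjugate linear and the compressed product is bilinear.

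First I would extract the size of $B$. Applying Lemma~\ref{lem_PQR} to the triple $(P,Q,P)$ — legitimate because the middle projection $Q$ is one-dimensional — gives $\bigl|\,\|X\cdot Y^\dag\|-\|X\|\ts\|Y\|\,\bigr|\le O(\delta+\eps)\ts\|X\|\ts\|Y\|$, and dividing by $\|\wt{P}\|$ yields $|B(X,Y)|=(1+O(\delta+\eps))\ts\|X\|\ts\|Y\|$. By the norm comparison above, the same estimate holds with $\|\cdot\|_\Euc$ in place of $\|\cdot\|$ throughout.

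Then I would finish by Riesz representation on the finite-dimensional Hilbert space $\calS_{P,Q}$: write $B(X,Y)=\bbraket{Y}{SX}$ for a bounded linear operator $S\colon\calS_{P,Q}\to\calS_{P,Q}$. Suppose toward a contradiction that $\dim\calS_{P,Q}\ge 2$. Choose a unit vector $X_0$, so that $SX_0$ is a fixed vector, and a unit vector $Y_0$ orthogonal to $SX_0$; this is possible precisely because the dimension is at least $2$ (and if $SX_0=0$ any unit $Y_0$ works). Then $B(X_0,Y_0)=\bbraket{Y_0}{SX_0}=0$, whereas the estimate above forces $|B(X_0,Y_0)|\ge 1-O(\delta+\eps)>0$ once $\delta+\eps$ is below the relevant constant — a contradiction. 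Hence $\dim\calS_{P,Q}\le 1$.

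I do not expect a genuine obstacle here; the only points needing care are that all error terms stay uniformly $O(\delta+\eps)$, that $\calS_P$ is genuinely one-dimensional with $\|\wt{P}\|$ bounded away from $0$ so the coefficient $B(X,Y)$ is well-defined, and the clean passage between the ambient and Euclidean norms via \eqref{1dQ_ip_norm}. (If one preferred to avoid the Riesz step, an equivalent route is: take orthonormal $X,Y\in\calS_{P,Q}$, write $X\cdot Y^\dag=c\ts\wt{P}$, and evaluate $(X\cdot Y^\dag)\cdot Y$ two ways — by approximate associativity of the compressed product it is $\approx X\cdot(Y^\dag\cdot Y)=X\cdot\wt{Q}\approx X$, while directly it is $c\ts\wt{P}\cdot Y\approx c\ts Y$ — so $X\approx c\ts Y$, contradicting orthonormality.)
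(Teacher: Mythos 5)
Your proof is correct, but it takes a slightly longer route than the paper's. The paper uses the defining relation of the inner product from Lemma~\ref{lem_PQ_Hilb}: if $X,Y\in\calS_{P,Q}$ are orthonormal in that inner product (which exists iff $\dim\calS_{P,Q}\ge 2$), then $Y^\dag\cdot X=\braket{Y}{X}\,\wt{Q}=0$ exactly, while Lemma~\ref{lem_PQR} applied to the triple $(Q,P,Q)$ --- using that the middle projection $P$ is one-dimensional --- forces $\|Y^\dag\cdot X\|\approx\|Y^\dag\|\ts\|X\|\approx 1$, a contradiction. You instead look at the other compressed product $X\cdot Y^\dag\in\calS_P\cong\CC\ts\wt{P}$ (using $P$ one-dimensional to get a scalar-valued form $B$), bound $|B|$ from below via Lemma~\ref{lem_PQR} for the triple $(P,Q,P)$ (now $Q$ is the middle projection), and then need a Riesz-representation step to manufacture a pair on which $B$ vanishes. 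Both arguments turn on a single application of Lemma~\ref{lem_PQR}, but with the roles of the two one-dimensionality hypotheses swapped, and the paper's version is shorter because orthogonality in the Lemma~\ref{lem_PQ_Hilb} inner product already literally asserts $Y^\dag\cdot X=0$, so no Riesz step is needed. Your parenthetical alternative at the end is closest to the paper's proof in spirit, but substitutes approximate associativity of the compressed product for Lemma~\ref{lem_PQR}.
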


\begin{proof}
Let us suppose the contrary. Since $Q$ is one-dimensional, there exist two elements $X,Y\in\calS_{P,Q}$ that are orthogonal and have unit length with respect to the inner product \eqref{1dQ_ip}. Thus, $Y^\dag\cdot X=0$ while $\|X\|$ and $\|Y\|$ being $O(\delta+\eps)$-close to $1$. But this contradicts Lemma~\ref{lem_PQR} (for the projections $Q,P,Q$) because $P$ is one-dimensional.
\end{proof}

One-dimensional $\delta$-projections $P$ and $Q$ are called \emph{equivalent} if $\dim\calS_{P,Q}=1$. This is indeed an equivalence relation; in particular, it is transitive due to Lemma~\ref{lem_PQR}.

\section{Properties and approximation of \texorpdfstring{$\delta$}{delta}-homomorphisms}

We first consider general properties of $\delta$-homomorphisms between $\eps$-$C^*$ algebras. The approximation of $\delta$-homomorphisms by $O(\eps)$-homomorphisms is discussed in the case where the domain is a finite-dimensional $C^*$ algebra. As before, all statements are implicitly quantified like this: ``There exist some positive constants $\eps_{\max}$ and $\delta_{\max}$ such that for all $\eps\le\eps_{\max}$ and $\delta\le\delta_{\max}$ \ldots'' Only in Corollary~\ref{cor_improvement}, which is directly used in the proof of the main theorem, the constants are explicit.

\begin{Proposition}\label{prop_delta_hominc}
Let $v\colon \calA'\to\calA''$ be a non-unital $\delta$-homomorphism of $\eps$-$C^*$ algebras. Then in general, $\|v\|\le 1+O(\delta+\eps)$. If $\|v(X)\|\ge\eta\|X\|$ for some $\eta>2\delta$ and all $X$, then $\|v(X)\|\ge a\|X\|$ for $a=1-O(\delta+\eps)$ and all $X$. Furthermore, if $\|v(I_{\calA'})-I_{\calA''}\|$ is less than a certain positive constant, then $\|v(I_{\calA'})-I_{\calA''}\|\le O(\delta+\eps)$.
\end{Proposition}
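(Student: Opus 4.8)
The plan is to derive all three bounds from a single inequality obtained by playing the $\eps$-$C^*$ axiom off against approximate multiplicativity. Applying \eqref{ax_C*} in $\calA''$ to $v(X)$, using that $v$ preserves the involution (hence $v(X)^\dag v(X)=v(X^\dag)v(X)$), invoking \eqref{hom_mult}, and using the complementary bound $\|v(X)^\dag v(X)\|\le(1+\eps)\|v(X)\|^2$ coming from \eqref{ax_prodnorm} and \eqref{ax_*}, I would first record that for all $X\in\calA'$
\begin{equation}\label{plan_ineq}
(1-\eps)\ts\|v(X)\|^2-\delta\ts\|X\|^2\;\le\;\|v(X^\dag X)\|\;\le\;(1+\eps)\ts\|v(X)\|^2+\delta\ts\|X\|^2 .
\end{equation}
Because $X^\dag X$ is Hermitian with $\|X^\dag X\|$ between $(1-\eps)\|X\|^2$ and $(1+\eps)\|X\|^2$, the middle term is controlled by whatever bound on $v$ is currently available, applied to the single element $X^\dag X$; this self-reference is what powers the bootstrap. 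For the norm estimate I would set $M=\|v\|<\infty$ (finite since $v$ is bounded), normalize $\|X\|=1$, substitute $\|v(X^\dag X)\|\le(1+\eps)M$ into the left half of \eqref{plan_ineq}, and take the supremum over $X$ to get $(1-\eps)M^2\le(1+\eps)M+\delta$; solving this quadratic inequality gives $M\le 1+O(\eps+\delta)$.

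For the lower bound I would let $b=\inf_{X\neq 0}\|v(X)\|/\|X\|$, so that $b\ge\eta>2\delta$ by hypothesis, and feed $\|v(X^\dag X)\|\ge b\ts\|X^\dag X\|\ge b(1-\eps)\|X\|^2$ into the right half of \eqref{plan_ineq} with $\|X\|=1$; taking the infimum yields $(1+\eps)b^2-(1-\eps)b+\delta\ge 0$. This upward parabola has two real roots $t_-<t_+$ (for $\eps,\delta$ small enough), with $t_-<2\delta$ and $t_+=1-O(\eps+\delta)$. Since $b$ must lie outside the open interval $(t_-,t_+)$ while $b\ge\eta>2\delta>t_-$, I conclude $b\ge t_+$, so $a=t_+$ works; the hypothesis $\eta>2\delta$ is exactly what forbids the small root.

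For the unit claim I would put $P=v(I_{\calA'})$, which is Hermitian since $I_{\calA'}^\dag=I_{\calA'}$ and $v$ is $*$-preserving. Using \eqref{hom_mult}, the approximate-unit axiom \eqref{ax_eps_unit} for $I_{\calA'}$ (so $\|I_{\calA'}I_{\calA'}-I_{\calA'}\|\le\eps\|I_{\calA'}\|$ and $\|I_{\calA'}\|\le 1+\eps$), and the norm bound $\|v\|\le 1+O(\eps+\delta)$ just proved, I would estimate $\|P^2-P\|\le\delta\ts\|I_{\calA'}\|^2+\|v\|\cdot\|I_{\calA'}I_{\calA'}-I_{\calA'}\|\le O(\eps+\delta)$. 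Hence $P$ is a Hermitian $O(\eps+\delta)$-projection, and so is $Q=I_{\calA''}-P$ (as in the discussion around \eqref{P_alternatives}). The hypothesis says $\|Q\|=\|v(I_{\calA'})-I_{\calA''}\|$ is below a fixed constant; choosing that constant small enough excludes the second alternative $\bigl|\|Q\|-1\bigr|\le O(\eps+\delta)$ in \eqref{P_alternatives}, leaving only $\|Q\|\le O(\eps+\delta)$, which is the assertion.

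I expect the one non-routine point to be the dichotomy in the lower-bound step --- checking that $\eta>2\delta$ really does push $b$ onto the large root $t_+\approx 1$ rather than onto $t_-\approx\delta$ --- together with the related bookkeeping: one has to fix absolute upper bounds on $\eps$, on $\delta$, and on the ``fixed constant'' in the unit step so that the two root estimates and all the implicit $O(\cdot)$ constants are simultaneously valid. Everything else is a direct computation with the axioms.
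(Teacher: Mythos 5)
Your proof is correct. For the first two assertions (the upper bound $\|v\|\le 1+O(\delta+\eps)$ and the lower bound on $\|v(X)\|/\|X\|$), your route is essentially identical to the paper's: inequality \eqref{plan_ineq} is a rearrangement of the paper's $\sqrt{(\|v(X^\dag X)\|\pm\delta\|X\|^2)/(1\mp\eps)}$ bracketing of $\|v(X)\|$, and the bootstrap via sup/inf and the resulting quadratic in $a$ and $b$ is the same; your observation that $\eta>2\delta$ forces $b$ onto the large root is exactly the paper's remark that the inequality ``is valid if $a>2\delta$.''

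For the unit claim you take a genuinely different route. The paper applies Lemma~\ref{lem_invfun} to the map $X\mapsto X^2-X$ near $I_{\calA''}$, using the derivative estimate \eqref{d_X2X} to obtain the lower bound $\|(X^2-X)-(I^2-I)\|\ge(1-O(\delta_0+\eps))\|X-I_{\calA''}\|$ and then bounding $\|v(I_{\calA'})^2-v(I_{\calA'})\|$ directly. You instead observe that $P=v(I_{\calA'})$ is a Hermitian $O(\delta+\eps)$-projection, hence so is $Q=I_{\calA''}-P$ (using $\|I^2-I\|,\|IP-P\|,\|PI-P\|\le O(\eps)$), and then invoke the norm dichotomy \eqref{P_alternatives}: an approximate projection has norm either $O(\delta)$ or $1-O(\delta+\eps)$, and the hypothesis that $\|Q\|$ is below a small constant kills the second branch. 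Both arguments are elementary and correct; yours avoids Lemma~\ref{lem_invfun} entirely but leans on the projection dichotomy from Section~\ref{sec_projection}, which in the paper's ordering is established earlier (so there is no circularity). The inverse-function approach is perhaps more robust in that it does not need $P$ to be Hermitian, but for this proposition ($v$ is $*$-preserving by definition) that makes no difference. Your bookkeeping remarks about fixing $\eps_{\max}$, $\delta_{\max}$, and the ``fixed constant'' consistently are exactly the right caveats, and they are easy to discharge once the explicit roots of the two quadratics are written down.
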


\begin{proof}
Let
\[
a=\inf\{\|v(X)\|/\|X\|:\,X\not=0\},\qquad
b=\|v\|=\sup\{\|v(X)\|/\|X\|:\,X\not=0\}.
\]
Due to the $C^*$ property and the $\delta$-homomorphism condition $\|v(X^{\dag}X)-v(X)v(X)^\dag\|\le\delta\|X\|^2$, we have
{\setlength\abovedisplayskip{-3pt}\begin{gather*}
(1-\eps)\|X\|^2\le\|X^{\dag}X\|\le(1+\eps)\|X\|^2,\\[2pt]
\sqrt{(\|v(X^{\dag}X)\|-\delta\|X\|^2)/(1+\eps)}
\le\|v(X)\|\le \sqrt{(\|v(X^{\dag}X)\|+\delta\|X\|^2)/(1-\eps)}.
\end{gather*}}%
From these inequalities and the fact that $a\|X^{\dag}X\|\le \|v(X^{\dag}X)\|\le b\|X^{\dag}X\|$,\smallskip\ we obtain the bound $\sqrt{((1-\eps)a-\delta)/(1+\eps)}\ts\|X\|\le \|v(X)\| \le\sqrt{((1+\eps)b+\delta)/(1-\eps)}\ts\|X\|$. Thus,
\[
a\ge\sqrt{((1-\eps)a-\delta)/(1+\eps)},\qquad
b\le\sqrt{((1+\eps)b+\delta)/(1-\eps)}.
\]
The second inequality makes sense if $\eps<1$ and implies that $b\le 1+O(\delta+\eps)$. Similarly, the first inequality is valid if $a>2\delta$ and $\eps<1/2$, and in this case, $a\ge 1-O(\delta+\eps)$.

Now, suppose that $\|v(I_{\calA'})-I_{\calA''}\|<\delta_0$, where $\delta_0$ is a sufficiently small positive constant. For all $X\in\Ba_{\delta_0}(I_{\calA''})$, we have the estimate \eqref{d_X2X}, namely, $\|\partial_{X}(X^2-X)-1_{\calA''}\|\le O(\delta_0+\eps)$. Together with Lemma~\ref{lem_invfun}, it implies the bound
\begin{equation}\label{X2X_I2I}
\|(X^2-X)-(I_{\calA''}^2-I_{\calA''})\|\ge(1-O(\delta_0+\eps))\ts\|X-I_{\calA''}\|.
\end{equation}
Let $X=v(I_{\calA'})$; then $\|X^2-v(I_{\calA'}^2)\|\le O(\delta)$ and $\|v(I_{\calA'}^2)-X\|\le\|v\|\ts\|I_{\calA'}^2-I_{\calA'}\|\le O(\eps)$. Hence, $\|X^2-X\|\le O(\delta+\eps)$, and so the bound $\|X-I_{\calA''}\|\le O(\delta+\eps)$ follows from inequality \eqref{X2X_I2I}.
\end{proof}

\begin{Lemma}\label{lem_approx}
For any $\delta$-homomorphism $v$ from a finite-dimensional $C^*$ algebra $\calB$ to an $\eps$-$C^*$ algebra $\calA$, there is an $O(\eps)$-homomorphism $\tilde{v}\colon \calB\to\calA$ such that $\|\tilde{v}(X)-v(X)\|\le O(\delta)\|X\|$ for all $X\in\calB$.
\end{Lemma}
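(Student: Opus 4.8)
The plan is to correct $v$ by an iterated Hochschild-type coboundary argument; the one nonstandard ingredient is that the \emph{diagonal} of the finite-dimensional $C^*$ algebra $\calB$ must be used in its Haar-integral form, so that all estimates come out independent of $\dim\calB$. One may as well assume that $\calA$ has an exact unit (Proposition~\ref{prop_unit}); $\calB$ already does. Write $\omega(X,Y)=v(XY)-v(X)v(Y)$ for the \emph{multiplicativity defect}, so that $\|\omega(X,Y)\|\le\delta\,\|X\|\,\|Y\|$. Because $\calB$ is genuinely associative, expanding everything out gives the exact identity
$$v(W)\omega(X,Y)-\omega(WX,Y)+\omega(W,XY)-\omega(W,X)v(Y)=\bigl(v(W)v(X)\bigr)v(Y)-v(W)\bigl(v(X)v(Y)\bigr),$$
and the right-hand side is bounded by $\eps\,\|v(W)\|\,\|v(X)\|\,\|v(Y)\|=O(\eps)\,\|W\|\,\|X\|\,\|Y\|$ by axiom~\eqref{ax_assoc} together with $\|v\|\le 1+O(\delta+\eps)$ (Proposition~\ref{prop_delta_hominc}). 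Thus $\omega$ is an $O(\eps)$-approximate $2$-cocycle.

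Next I would invert it. Take the diagonal $D=\int_{\UU(\calB)}U\otimes U^{\dag}\,dU\in\calB\otimes\calB$, the integral being with respect to Haar probability measure on the unitary group $\UU(\calB)$ of $\calB$. Left-invariance of $dU$ (substitute $U\mapsto X^{\dag}U$ for unitary $X$, then extend by linearity, using that the $\calB\otimes\calB$-valued integrals are well defined in finite dimensions) gives $\int(XU)\otimes U^{\dag}\,dU=\int U\otimes(U^{\dag}X)\,dU$ for all $X\in\calB$, while $\int UU^{\dag}\,dU=I$. Define
$$g(X)=\int_{\UU(\calB)}v(U)\,\omega(U^{\dag},X)\,dU\qquad(X\in\calB).$$
Since $\|U\|=\|U^{\dag}\|=1$, axiom~\eqref{ax_prodnorm} gives $\|g(X)\|\le O(\delta)\,\|X\|$, with a constant independent of $\dim\calB$ — this is the point of using the Haar diagonal rather than expanding $D$ in matrix units, which would introduce a factor $\sum_j\|A_j\|\,\|B_j\|$ of order $\dim\calB$. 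Running the usual homotopy computation — put $W=U^{\dag}$ in the approximate cocycle identity, integrate against $v(U)$, apply the diagonal identity, and collect the errors coming from the $\eps$-associativity of $\calA$ and from $v(XU)=v(X)v(U)+\omega(X,U)$ — yields
$$\bigl\|v(X)g(Y)-g(XY)+g(X)v(Y)-\omega(X,Y)\bigr\|\le O(\eps+\delta^{2})\,\|X\|\,\|Y\|.$$
Replacing $g$ by $X\mapsto\tfrac12\bigl(g(X)+g(X^\dag)^\dag\bigr)$ preserves both bounds (using $\omega(A,B)^{\dag}=\omega(B^{\dag},A^{\dag})$) and makes $g$ commute with the involution. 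Then $v_1=v+g$ is $*$-linear, $\|v_1(X)-v(X)\|\le O(\delta)\,\|X\|$, and a short computation ($\omega_1$ equals $\omega$ minus the coboundary of $g$ minus $g(X)g(Y)$) shows its multiplicativity defect satisfies $\|\omega_1(X,Y)\|\le\delta_1\,\|X\|\,\|Y\|$ with $\delta_1=C(\eps+\delta^{2})$ for an absolute constant $C$.

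Then I would iterate: with $v_0=v$, $\delta_0=\delta$, each step gives $v_{n+1}=v_n+g_n$ with $\|g_n(X)\|\le O(\delta_n)\,\|X\|$ and $\delta_{n+1}=C(\eps+\delta_n^{2})$. If $\eps_{\max},\delta_{\max}$ are small enough, $\delta_n$ shrinks by at least a factor $3/4$ per step while $\delta_n>4C\eps$; stop at the first $N$ with $\delta_N\le 4C\eps$. Then $v_N$ has multiplicativity defect $O(\eps)$, and $\|v_N(X)-v(X)\|\le\sum_{n<N}\|g_n(X)\|\le O\!\bigl(\sum_n\delta_n\bigr)\|X\|=O(\delta)\,\|X\|$ since the $\delta_n$ are dominated by a geometric series. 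Finally $\|g_n(I_\calB)\|\le O(\delta_n+\eps)$ and $N=O(\log(1/\eps))$, so $\|v_N(I_\calB)-I_\calA\|$ stays below the threshold of Proposition~\ref{prop_delta_hominc}; applying that proposition to the non-unital $O(\eps)$-homomorphism $v_N$ upgrades this to $\|v_N(I_\calB)-I_\calA\|\le O(\eps)$. Hence $\tilde v=v_N$ is the desired $O(\eps)$-homomorphism.

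The step I expect to be most delicate is exactly the one just flagged: obtaining constants independent of $\dim\calB$. Everything hinges on representing the diagonal as an average over unitaries of norm one, so that $g$ and each error term are controlled uniformly; the naive matrix-unit diagonal fails here, which is why this lemma goes through while the global fixing of associativity discussed in Section~\ref{sec_main_thm} does not. The rest — the approximate cocycle identity, tracking the $\eps$-associativity corrections through the homotopy, the symmetrization, and the convergence of the iteration — is routine bookkeeping. This is a finite-dimensional streamlining of the argument behind Theorem~3.1 in~\cite{Joh88}.
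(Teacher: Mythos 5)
Your proposal matches the paper's proof in all essentials: the same approximate $2$-cocycle identity, the same coboundary $g(X)=\sum_j v(A_j)\,\omega(B_j,X)$ built from the norm-one diagonal of $\calB$, the same symmetrization to restore $*$-compatibility, the same Newton-style iteration, and the same appeal to Proposition~\ref{prop_delta_hominc} to handle the unit. The only cosmetic difference is that you keep the diagonal as the Haar integral $\int U^\dag\otimes U\,dU$, whereas the paper replaces it (via Caratheodory) by a finite convex combination $\sum_j p_j\ts U_j^\dag\otimes U_j$ of norm-one unitaries with $\sum_j p_j=1$ — both yield the same dimension-independent estimate, so the distinction you draw is between a norm-one diagonal and a badly-scaled matrix-unit one, not between the integral and its discretization.
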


The proof involves the concept of a diagonal. Let us give its general definition for completeness, and then specialize to the finite-dimensional case. For arbitrary Banach algebras $\calA$ and $\calB$, the projective tensor product $\calA\hotimes\calB$ is also a Banach algebra. In more detail, $\calA\hotimes\calB$ is the completion of $\calA\otimes\calB$ endowed with the projective tensor norm,
\begin{equation}
\|C\|=\inf\biggl\{\sum_{j}\|A_j\|\ts\|B_j\|:\,
\sum_{j}A_j\otimes B_j=C\biggr\}\qquad (A_j\in\calA,\,\: B_j\in\calB),
\end{equation}
which has the property $\|C'C''\|\le\|C'\|\ts\|C''\|$. For arbitrary $\tilde{D}=\sum_{j}A_j\otimes B_j\in\calB\otimes\calB$ and $X\in\calB$, we write
\begin{equation}
X\tilde{D}=\sum_{j}XA_j\otimes B_j,\qquad
\tilde{D}X=\sum_{j}A_j\otimes B_jX,\qquad
\pi(\tilde{D})=\sum_{j}A_jB_j.
\end{equation}
These operations extend from $\calB\otimes\calB$ to $\calB\hotimes\calB$. An element $D\in\calB\hotimes\calB$ is called a \emph{diagonal} if
\begin{equation}
XD=DX\quad\text{for all }\, X\in\calB,\qquad \pi(D)=I_\calB.
\end{equation}
(Johnson used a weaker concept of approximate diagonal~\cite[definition~1.1]{Joh72}, which is better suited for the infinite-dimensional setting.)

Every finite-dimensional $C^*$ algebra has a standard diagonal, $D=\int dU\, (U^\dag\otimes U)$, where the integral is taken with respect to the Haar measure on the unitary group. Note that $\|D\|=1$ because the integral can be approximated by finite sums, i.e.\ convex combinations of $U^\dag\otimes U$. Due to Caratheodory's theorem and the compactness of the unitary group, $D$ itself is representable as such a convex combination. Thus,
\begin{equation}\label{finite_diag}
D=\sum_jA_j\otimes B_j,\qquad \sum_j\|A_j\|\ts\|B_j\|=1.
\end{equation}
One can show that $D$ is the unique norm $1$ diagonal and that $A_j\otimes B_j=p_j\ts U_j^\dag\otimes U_j$, where $p_j\ge 0$,\, $\sum_jp_j=1$, and $U_j$ is unitary. For the algebra of operators acting in a $d$-dimensional Hilbert space, a representation of $D$ in this form is given by generalized Pauli operators $S_{jk}$:
\begin{equation}\label{Pauli_diag}
D=\sum_{j=0}^{d-1}\sum_{k=0}^{d-1}d^{-2}S_{jk}^\dag\otimes S_{jk},\qquad
\braket{e_l}{S_{jk}e_m}=e^{2\pi ikm/d}\,\delta_{l,\,m+j\bmod d},
\end{equation}
where $\braket{e_l}{S_{jk}e_m}$ are the matrix elements of $S_{jk}$ in some orthonormal basis $\{e_0,\dots,e_{d-1}\}$. The diagonal of $\bigoplus_{l=1}^{m}\Bo(\CC^{d_l})$ is obtained by combining the component diagonals $D_l=\sum_j p_{lj}\ts U_{lj}^\dag\otimes U_{lj}$ into a sum over $j=(j_1,\dots,j_m)$ with $p_{j_1,\dots,j_m}=p_{1j_1}\cdots p_{mj_m}$ and $U_{j_1,\dots,j_m}=U_{1j_1}\oplus\cdots\oplus U_{mj_m}$.

\begin{proof}[Proof of Lemma~\ref{lem_approx}.] The \emph{multiplicativity defect} of a linear map $u\colon\calB\to\calA$ is defined as follows:
\begin{equation}
G_u\colon \calB\times\calB\to\calA,\qquad G_u(X,Y)=u(XY)-u(X)u(Y).
\end{equation}
Let $g=G_v$ be the multiplicativity defect of the $\delta$-homomorphism $v$. By a simple calculation using the associativity of $\calB$, the $\eps$-associativity of $\calA$, and the bound $\|v\|\le\const$ (which follows from Proposition~\ref{prop_delta_hominc}), we obtain an approximate 2-cocycle equation:
\begin{equation}
\begin{aligned}
\hspace{1.5cm}&\hspace{-1.5cm} v(X)g(Y,Z)-g(XY,Z)+g(X,YZ)-g(X,Y)v(Z)\\
&= -v\bigl((XY)Z-X(YZ)\bigr)+\bigl(v(X)v(Y)\bigr)v(Z)-v(X)\bigl(v(Y)v(Z)\bigr)\\
&= O(\eps)\ts\|X\|\ts\|Y\|\ts\|Z\|.
\end{aligned}
\end{equation}
We will consider linear maps $u\colon\calB\to\calA$ of the general form $u=v+w$ such that $\|w\|\le O(\delta)$. The multiplicativity defect of such a map is
\begin{equation}
G_{v+w}(X,Y)=g(X,Y)-F_w(X,Y)+O(\delta^2)\ts\|X\|\ts\|Y\|,
\end{equation}
where
\begin{equation}
F_w(X,Y)=v(X)w(Y)-w(XY)+w(X)v(Y).
\end{equation}

First, let us try this version of $w$, which uses a diagonal of the form \eqref{finite_diag}:
\begin{equation}
w'(X) = \sum_{j}v(A_j)g(B_j,X).
\end{equation}
We have $\|w'(X)\|\le O(\delta)\ts\|X\|$ since $v$ is a $\delta$-homomorphism and $\sum_{j}\|A_j\|\ts\|B_j\|=1$. In the calculation of $F_{w'}$, we will also use a corollary of the equation $\sum_{j}XA_j\otimes B_j=\sum_{j}A_j\otimes B_jX$, namely, $\sum_{j}v(XA_j)g(B_j,Y) =\sum_{j}v(A_j)g(B_jX,Y)$:
\begin{align}\label{g1_calc}
F_{w'}(X,Y) &=\sum\nolimits_j
\bigl(v(X)(v(A_j)g(B_j,Y))-v(A_j)g(B_j,XY)+(v(A_j)g(B_j,X))v(Y)\bigr)
\nonumber\\[2pt]
&= \begin{aligned}[t]
&\sum\nolimits_j
\bigl(v(XA_j)g(B_j,Y)-v(A_j)g(B_j,XY)+v(A_j)(g(B_j,X)v(Y))\bigr)\\
&+O(\eps\delta+\delta^2)\ts\|X\|\ts\|Y\|\quad
\textstyle (\text{by $\eps$-associativity and because $v$ is a $\delta$-homomorphism})
\end{aligned}
\nonumber\\[2pt]
&= \begin{aligned}[t]
&\sum\nolimits_{j}v(A_j)\bigl(g(B_jX,Y)-g(B_j,XY)+g(B_j,X)v(Y)\bigr)
+O(\delta\eps+\delta^2)\ts\|X\|\ts\|Y\|\\
&\textstyle (\text{because } \sum_{j}v(XA_j)g(B_j,Y)=\sum_{j}v(A_j)g(B_jX,Y))
\end{aligned}
\nonumber\\[2pt]
&= \sum\nolimits_{j}v(A_j)\bigl(v(B_j)g(X,Y)\bigr)+O(\delta^2+\eps)\ts\|X\|\ts\|Y\|\quad\:
(\text{due to the 2-cocycle equation})
\nonumber\\[2pt]
&=g(X,Y)+O(\delta^2+\eps)\ts\|X\|\ts\|Y\|\quad\:
{\textstyle (\text{because } \sum\nolimits_jA_jB_j=I_\calB)}.
\end{align}
Thus, $G_{v+w'}(X,Y)\le O(\delta^2+\eps)\ts\|X\|\ts\|Y\|$. In general, $w'$ does not commute with the involution, so let us also consider $w''(X)=w'(X^\dag)^\dag$. It is easy to see that
\[
g(Y^\dag,X^\dag)^\dag=g(X,Y),\qquad 
F_{w''}(X,Y)=F_{w'}(Y^\dag,X^\dag)^\dag
=g(X,Y)+O(\delta^2+\eps)\ts\|X\|\ts\|Y\|.
\]
Hence, $v^{(1)}=v+\frac{1}{2}(w'+w'')$ has the multiplicativity defect bounded by $O(\delta^2+\eps)\ts\|X\|\ts\|Y\|$ as well as commuting with the involution, which implies that $v^{(1)}$ is an $O(\delta^2+\eps)$-homomorphism. (The approximate preservation of the unit follows from the last part of Proposition~\ref{prop_delta_hominc}.)

Iterating the procedure as in Newton's method, we obtain some $v^{(s)}$ for $s=1,2,\ldots$. If $\eps>0$, a suitable $\tilde{v}=v^{(s)}$ (with the corresponding $\delta_s$ bounded by $O(\eps)$) is obtained after a finite number of steps. If $\eps=0$, then $\tilde{v}=\lim_{s\to\infty}v^{(s)}$ is a homomorphism of $C^*$ algebras. (This is a special case of Theorem~3.1 from~\cite{Joh88}.)
\end{proof}

In conclusion, let us formulate a straightforward corollary of Lemma~\ref{lem_approx} and Proposition~\ref{prop_delta_hominc}, where all constants are mentioned explicitly.
\begin{Corollary}[Error reduction]\label{cor_improvement}
There exist some positive constants $\eps_{\max}$, $\delta_{\max}$, and $c_0$ such that for all $\eps<\eps_{\max}$, if a finite-dimensional $C^*$ algebra $\calB$ is $\delta_{\max}$-included into an $\eps$-$C^*$ algebra $\calA$, there is also a $c_0\eps$-inclusion. If the original inclusion is bijective, then so is the new inclusion.
\end{Corollary}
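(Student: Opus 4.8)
The plan is to obtain the improved inclusion directly as the $O(\eps)$-homomorphism furnished by Lemma~\ref{lem_approx}, and then to sharpen its norm estimates from $1\pm O(\delta_{\max})$ to $1\pm O(\eps)$ by feeding it into Proposition~\ref{prop_delta_hominc}. In other words, the statement is essentially a repackaging of those two results, and the only real work is bookkeeping of constants and the order in which they are fixed.

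First I would take the given $\delta_{\max}$-inclusion $v\colon\calB\to\calA$ and regard it as a $\delta_{\max}$-homomorphism. Provided $\delta_{\max}$ lies below the threshold of Lemma~\ref{lem_approx}, that lemma produces a $c_1\eps$-homomorphism $\tilde v\colon\calB\to\calA$ (with $c_1$ universal) satisfying $\|\tilde v(X)-v(X)\|\le c_2\ts\delta_{\max}\ts\|X\|$ for all $X$, where $c_2$ is the constant implicit in its ``$O(\delta)$''. Since $v$ is an inclusion, $\|v(X)\|\ge(1-\delta_{\max})\ts\|X\|$, so $\|\tilde v(X)\|\ge\bigl(1-(1+c_2)\delta_{\max}\bigr)\ts\|X\|$; after shrinking $\delta_{\max}$ once more this is at least $\tfrac12\ts\|X\|$. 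This crude lower bound is all that is needed to invoke the next step.

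Next I would apply Proposition~\ref{prop_delta_hominc} to $\tilde v$, whose homomorphism parameter is $c_1\eps$. Choosing $\eps_{\max}$ small enough (in terms of $c_1$) that $2c_1\eps_{\max}<\tfrac12$, the hypothesis ``$\|\tilde v(X)\|\ge\eta\ts\|X\|$ for some $\eta>2\delta$'' is met with $\eta=\tfrac12$ and $\delta=c_1\eps$, so the proposition yields $\|\tilde v(X)\|\ge(1-O(\eps))\ts\|X\|$; its first clause gives $\|\tilde v\|\le 1+O(\eps)$, hence the matching upper bound $\|\tilde v(X)\|\le(1+O(\eps))\ts\|X\|$. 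The unit condition comes for free, since $\tilde v$ is a $c_1\eps$-homomorphism and therefore $\|\tilde v(I_\calB)-I_\calA\|\le c_1\eps$. Collecting the bounds, $\tilde v$ is a $c_0\eps$-inclusion for an explicit $c_0$ depending only on $c_1$ and $c_2$. For the bijective case: if $v$ is bijective then $\calA$ and $\calB$ have the same finite dimension, and the estimate $\|\tilde v(X)\|\ge(1-O(\eps))\ts\|X\|>0$ for $X\not=0$ makes $\tilde v$ injective, hence bijective; equivalently, $\tilde v=v\circ\bigl(1_\calB+v^{-1}(\tilde v-v)\bigr)$ with $\|v^{-1}(\tilde v-v)\|\le\|v^{-1}\|\ts c_2\ts\delta_{\max}<1$.

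I do not anticipate a substantive obstacle; the statement is designed as a corollary. The one point requiring care is the ordering of the constants: $\delta_{\max}$ must be fixed first --- small enough both for Lemma~\ref{lem_approx} to apply and for the bound $\|\tilde v(X)\|\ge\tfrac12\ts\|X\|$ to hold --- and only then is $\eps_{\max}$ chosen (as a function of $\delta_{\max}$ and $c_1$, and no larger than the thresholds of the two cited results) so that the explicit hypothesis $\eta>2\delta$ of Proposition~\ref{prop_delta_hominc} is genuinely satisfied. The fact that $\eps_{\max}$, $\delta_{\max}$, and $c_0$ do not depend on $\calA$ or its dimension is inherited verbatim from the corresponding independence in Lemma~\ref{lem_approx} and Proposition~\ref{prop_delta_hominc}.
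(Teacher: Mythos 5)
Your proposal is correct and is exactly what the paper intends: the paper itself offers no argument beyond calling the corollary ``a straightforward corollary of Lemma~\ref{lem_approx} and Proposition~\ref{prop_delta_hominc},'' and your two-step argument (first Lemma~\ref{lem_approx} to produce an $O(\eps)$-homomorphism $O(\delta_{\max})$-close to $v$, then Proposition~\ref{prop_delta_hominc} to upgrade the crude lower norm bound $\ge\tfrac12$ to $1-O(\eps)$, with careful ordering of the constants) is precisely that. The bijectivity clause via dimension counting is also the right observation.
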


\section{Proof of the main theorem}\label{sec_proof_main}

The proof of Theorem~\ref{th_main} involves finding nontrivial projections in algebras of the form $\calS_{P}$ defined at the beginning of Section~\ref{sec_subspaces}. We also need some merging and extension lemmas.

\begin{Lemma}\label{lem_merging}
Let $\Pi_1,\Pi_2$ be projections in a $C^*$ algebra $\calB$ such that $\Pi_1+\Pi_2=I$, and similarly, let $P_1,P_2$ be $\delta$-projections in an $\eps$-$C^*$ algebra $\calA$ such that $\|P_1+P_2-I\|\le\delta$. Consider some linear maps $\gamma_{jk}\colon\calS_{\Pi_j,\Pi_k}\to\calS_{P_j,P_k}$ (for $j,k\in\{1,2\}$) satisfying the following conditions:
\begin{align}
\label{merging0}
\gamma_{kj}(X^\dag)&=\gamma_{jk}(X)^\dag,\\[2pt]
\label{merging1}
\|\gamma_{jl}(XY)-\gamma_{jk}(X)\cdot\gamma_{kl}(Y)\|
&\le\delta\ts\|X\|\ts\|Y\|,\\[2pt]
\label{merging2}
\|\gamma_{jj}(\Pi_j)-P_j\|&\le\delta,\\[2pt]
\label{merging3}
(1-\delta)\ts\|X\|\le\|\gamma_{jk}(X)\| &\le(1+\delta)\ts\|X\|.
\end{align}
(The dot in \eqref{merging1} denotes the compressed product \eqref{compr_prod}.) Then the combined map
\begin{equation}
\gamma\colon
\begin{pmatrix}X_{11} & X_{12}\\ X_{21} & X_{22}\end{pmatrix}
\mapsto\sum_{j,k}\gamma_{jk}(X_{jk})\,\colon\,\calB\to\calA\qquad
(X_{jk}\in\calS_{\Pi_j,\Pi_k})
\end{equation}
is an $O(\delta+\eps)$-inclusion. If all maps $\gamma_{jk}$ are bijective, then $\gamma$ is also bijective.
\end{Lemma}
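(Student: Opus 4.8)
The plan is to identify $\gamma$ with an honest composition
$\calB \xrightarrow{\ \beta\ } \bigoplus_{j,k}\calS_{\Pi_j,\Pi_k}
\xrightarrow{\ \oplus\gamma_{jk}\ } \bigoplus_{j,k}\calS_{P_j,P_k}
\xrightarrow{\ \alpha\ } \calA$,
where $\beta\colon X\mapsto(\Pi_jX\Pi_k)_{j,k}$ is the (exact) block decomposition of the $C^*$ algebra $\calB$, and $\alpha$ is the map of Lemma~\ref{lem_alpha} for $P=Q=P_1+P_2$ with $p=q=2$ and $P_j=Q_j$. First I would set up this factorization. From $\|P_1+P_2-I\|\le\delta$ and the fact that the $P_j$ are $\delta$-projections one extracts $\|P_1P_2\|,\|P_2P_1\|\le O(\delta+\eps)$ (expand $P_1(I-P_2)$ and use $I-P_2\approx P_1$, $P_1^2\approx P_1$, $P_1I\approx P_1$), so $P=P_1+P_2$ is an $O(\delta+\eps)$-projection with $\|P-I_\calA\|\le\delta$; then $\Co_{P,P}$ is idempotent and lies within $O(\delta+\eps)<1$ of $1_\calA$, hence equals $1_\calA$, so $\calS_{P,P}=\calA$ and $\alpha$ is simply $(Z_{jk})\mapsto\sum_{j,k}Z_{jk}$. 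The hypotheses of Lemma~\ref{lem_alpha} hold with $O(\delta+\eps)$ in place of $\delta$ (harmless for $p=q=2$), so $\alpha$ is a bijection onto $\calA$ with $\|\alpha^{-1}\|\le1+O(\delta+\eps)$; and since each $\gamma_{jk}(\Pi_jX\Pi_k)$ lies in $\calS_{P_j,P_k}\subseteq\calA$ and $\Co_{P,P}=1_\calA$, the composition above equals $\gamma$ exactly.

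Next I would show $\gamma$ is a $*$-preserving, almost-unital, $O(\delta+\eps)$-multiplicative map. Preservation of the involution is immediate from \eqref{merging0} together with $\sum_k\Pi_k=I_\calB$. Almost-unitality: $\gamma(I_\calB)=\gamma_{11}(\Pi_1)+\gamma_{22}(\Pi_2)$ is within $O(\delta)$ of $P_1+P_2$ by \eqref{merging2}, hence within $O(\delta)$ of $I_\calA$. For multiplicativity, expand $\gamma(X)\gamma(Y)=\sum_{j,k,k',l}\gamma_{jk}(X_{jk})\,\gamma_{k'l}(Y_{k'l})$, a sum of boundedly many terms with factor-norms $O(\|X\|)$, $O(\|Y\|)$. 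In the terms with $k\neq k'$, the element $\gamma_{jk}(X_{jk})$ is, up to $O(\delta+\eps)\|X\|$, of the form $(P_j\,\cdot\,)P_k$ and $\gamma_{k'l}(Y_{k'l})$ of the form $P_{k'}(\,\cdot\,P_l)$, so using approximate associativity a bounded number of times and $\|P_kP_{k'}\|\le O(\delta+\eps)$ each such term is $O(\delta+\eps)\|X\|\,\|Y\|$. In the terms with $k=k'$, $\gamma_{jk}(X_{jk})\,\gamma_{kl}(Y_{kl})$ equals the compressed product $\gamma_{jk}(X_{jk})\cdot\gamma_{kl}(Y_{kl})$ up to $O(\delta+\eps)\|X\|\,\|Y\|$, which by \eqref{merging1} is within $O(\delta)\|X\|\,\|Y\|$ of $\gamma_{jl}(X_{jk}Y_{kl})$; summing over $k$ and using $\sum_k\Pi_k=I_\calB$ recovers $\gamma(XY)$. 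Thus $\gamma$ is a $*$-preserving $O(\delta+\eps)$-homomorphism, and Proposition~\ref{prop_delta_hominc} gives $\|\gamma\|\le1+O(\delta+\eps)$.

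It remains to bound $\|\gamma(X)\|$ from below, and the key point is that only a crude dimension-independent constant bound is needed: from the factorization, $\|\gamma(X)\|=\|\alpha\bigl((\gamma_{jk}(X_{jk}))_{j,k}\bigr)\|\ge\|\alpha^{-1}\|^{-1}\max_{j,k}\|\gamma_{jk}(X_{jk})\|\ge(1-O(\delta+\eps))\max_{j,k}\|X_{jk}\|$ by \eqref{merging3}, while $\|X\|=\|\sum_{j,k}\Pi_jX\Pi_k\|\le4\max_{j,k}\|X_{jk}\|$ in the $C^*$ algebra $\calB$; hence $\|\gamma(X)\|\ge\tfrac15\|X\|$ for $\eps,\delta$ small. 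Since $\tfrac15$ exceeds $2\cdot O(\delta+\eps)$, Proposition~\ref{prop_delta_hominc} upgrades this to $\|\gamma(X)\|\ge(1-O(\delta+\eps))\|X\|$; together with the upper bound and almost-unitality, $\gamma$ is an $O(\delta+\eps)$-inclusion. Finally, if all $\gamma_{jk}$ are bijective then $\oplus\gamma_{jk}$ is bijective, $\alpha$ is bijective (Lemma~\ref{lem_alpha}), and $\beta$ is bijective, so $\gamma$ is bijective onto $\calS_{P,P}=\calA$.

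The main obstacle is the norm lower bound: a direct triangle-inequality estimate of $\|\sum_{j,k}\gamma_{jk}(X_{jk})\|$ only yields a non-optimal constant since the subspaces $\calS_{P_j,P_k}$ are not orthogonal in any exact sense, so the resolution is the two-step scheme above—use Lemma~\ref{lem_alpha} for a fixed constant bound, then let the self-improvement in Proposition~\ref{prop_delta_hominc} do the rest. A secondary technical point is that $\|P_1P_2\|\le O(\delta+\eps)$ must first be extracted from $\|P_1+P_2-I\|\le\delta$ before the off-diagonal cross-terms in the multiplicativity estimate can be discarded.
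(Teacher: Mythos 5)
Your proof is correct and takes essentially the same route as the paper's: factor $\gamma$ as $\alpha\circ(\bigoplus_{j,k}\gamma_{jk})\circ\beta$ through the block decomposition of $\calB$, use Lemma~\ref{lem_alpha} (for $P=Q=P_1+P_2$) to control $\alpha$ and get a crude constant lower bound on $\|\gamma(X)\|$, verify from \eqref{merging0}--\eqref{merging2} that $\gamma$ is an $O(\delta+\eps)$-homomorphism, and then invoke Proposition~\ref{prop_delta_hominc} to upgrade the lower bound to $1-O(\delta+\eps)$. The paper's own proof is more terse — it states the factorization and the homomorphism bound without spelling out the cross-term estimate $\|P_kP_{k'}\|\le O(\delta+\eps)$ or noting that $\Co_{P,P}=1_\calA$ (which makes your factorization exact rather than merely approximate) — but the logic and the sequence of lemmas invoked coincide with yours.
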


\begin{proof}
Let $\mu$ be the canonical bijection from $\calB$ to $\bigoplus_{j,k}\calS_{\Pi_j,\Pi_k}$ equipped with the maximum norm. In this notation, $\gamma=\alpha\bigl(\sum_{j,k}\gamma_{jk}\bigr)\mu$, where $\alpha$ is defined by equation \eqref{alpha_jk}. Equations \eqref{merging0}--\eqref{merging2} imply that $\gamma$ is an $O(\delta)$-homomorphism. It follows from equation \eqref{merging3} and Lemma~\ref{lem_alpha} that $a\|X\|\le\|\gamma(X)\|\le b\|X\|$ for all $X\in\calB$, where $a$ and $b$ are some positive constants. Thus, $\gamma$ is an $O(\delta+\eps)$-inclusion due to Proposition~\ref{prop_delta_hominc}. If all maps $\gamma_{jk}$ are bijective, then the direct sum $\sum_{j,k}\gamma_{jk}$ is also bijective, and so is $\gamma$ because both $\alpha$ and $\mu$ are linear bijections.
\end{proof}

\begin{Corollary}[Merging]\label{cor_merge_sum}
Let $\calB_1$ and $\calB_2$ be $C^*$ algebras, and let $P_1,P_2$ be $\delta$-projections in an $\eps$-$C^*$ algebra $\calA$ such that $\|P_1+P_2-I\|\le\delta$. Consider some $\delta$-inclusions $v_j\colon\calB_j\to\calS_{P_j}$ (for $j=1,2$). Then the combined map
\begin{equation}
v\colon\, (X_1,X_2)\mapsto v_1(X_1)+v_2(X_2)\,\colon\,
\calB_1\oplus\calB_2\to\calA
\end{equation}
is an $O(\delta+\eps)$-inclusion. If $v_1,v_2$ are bijective and $\calS_{P_1,P_2}=0$, then $v$ is also bijective.
\end{Corollary}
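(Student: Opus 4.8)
The plan is to obtain this as an immediate instance of Lemma~\ref{lem_merging}. First I would take the $C^*$ algebra $\calB=\calB_1\oplus\calB_2$ together with its two canonical central projections $\Pi_1=I_{\calB_1}\oplus 0$ and $\Pi_2=0\oplus I_{\calB_2}$; these are genuine projections with $\Pi_1+\Pi_2=I$. For this $\calB$ one has $\calS_{\Pi_1,\Pi_1}=\calB_1$, $\calS_{\Pi_2,\Pi_2}=\calB_2$, and $\calS_{\Pi_1,\Pi_2}=\calS_{\Pi_2,\Pi_1}=0$, so the decomposition $\bigoplus_{j,k}\calS_{\Pi_j,\Pi_k}$ reduces to the two diagonal summands and the canonical identification with $\calB$ is the obvious one. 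I would then feed Lemma~\ref{lem_merging} the maps $\gamma_{11}=v_1$, $\gamma_{22}=v_2$, $\gamma_{12}=\gamma_{21}=0$; the combined map $\gamma$ produced by the lemma is then literally $v$.

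Next I would check that hypotheses \eqref{merging0}--\eqref{merging3} hold with $\delta$ replaced by some $\delta'=O(\delta+\eps)$; Lemma~\ref{lem_merging} then yields that $v$ is an $O(\delta'+\eps)=O(\delta+\eps)$-inclusion. For the off-diagonal indices every condition is vacuous, since the relevant domain $\calS_{\Pi_j,\Pi_k}$ is zero. For $j=k$, conditions \eqref{merging0} and \eqref{merging1} are exactly the statements that $v_j\colon\calB_j\to\calS_{P_j}$ commutes with the involution and is a $\delta$-homomorphism for the compressed product on $\calS_{P_j}$ (which is the dot appearing in \eqref{merging1}; recall $\calS_{P_j}$ is an $O(\delta+\eps)$-$C^*$ algebra under it), while \eqref{merging3} is the $\delta$-inclusion bound on $v_j$. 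The only estimate needing a line of computation is \eqref{merging2}: $v_j$ carries $I_{\calB_j}$ to within $\delta$ of the unit $\wt{P_j}=\Co_{P_j}(P_j)$ of $\calS_{P_j}$, and $\|\wt{P_j}-P_j\|\le O(\delta+\eps)$ because $\Co_{P_j}(P_j)$ is $O(\delta+\eps)$-close to $P_jP_j$, which is in turn $O(\delta)$-close to $P_j$. Hence $\|\gamma_{jj}(\Pi_j)-P_j\|\le O(\delta+\eps)$, as required.

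For the bijectivity clause, I would observe that $\gamma_{11}=v_1$ and $\gamma_{22}=v_2$ are bijective by hypothesis, while $\gamma_{12}$ and $\gamma_{21}$ are zero maps out of the zero spaces $\calS_{\Pi_1,\Pi_2}$, $\calS_{\Pi_2,\Pi_1}$, so each is bijective exactly when its target vanishes. The assumption $\calS_{P_1,P_2}=0$ also forces $\calS_{P_2,P_1}=0$, since by the identity $\Co_{P,Q}(X)^\dag=\Co_{Q,P}(X^\dag)$ the space $\calS_{P_2,P_1}$ is the image of $\calS_{P_1,P_2}$ under $X\mapsto X^\dag$. Thus all four $\gamma_{jk}$ are bijective and the final clause of Lemma~\ref{lem_merging} gives that $v$ is bijective.

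I do not expect a genuine obstacle here: the mathematical substance lives in Lemma~\ref{lem_merging} (and, behind it, Lemma~\ref{lem_alpha} and Proposition~\ref{prop_delta_hominc}), and what remains is only the verification above. The two mild points of care are the bookkeeping in passing from $\delta$ to $O(\delta+\eps)$ in \eqref{merging2} --- harmless, because enlarging the common constant in \eqref{merging0}--\eqref{merging3} to $O(\delta+\eps)$ still leaves the conclusion at $O(\delta+\eps)$ --- and the observation that vanishing of $\calS_{P_1,P_2}$ is equivalent to vanishing of $\calS_{P_2,P_1}$.
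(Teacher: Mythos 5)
Your proof is correct and is essentially the intended derivation: the paper states the Corollary as a direct consequence of Lemma~\ref{lem_merging} without spelling out a proof, and your instantiation $\calB=\calB_1\oplus\calB_2$, $\Pi_j$ the central units, $\gamma_{jj}=v_j$, $\gamma_{12}=\gamma_{21}=0$ is exactly the right specialization. The verification of \eqref{merging0}--\eqref{merging3} with $\delta'=O(\delta+\eps)$, including the short estimate $\|\wt{P}_j-P_j\|\le O(\delta+\eps)$ needed for \eqref{merging2}, and the observation that $\calS_{P_2,P_1}=\calS_{P_1,P_2}^\dag$ for the bijectivity clause, are all accurate.
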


The second application of Lemma~\ref{lem_merging} involves an auxiliary statement about $\delta$-projections $P_1,\dots,P_p,Q_1,\dots,Q_q$ satisfying certain conditions, which will be formulated in terms of inclusions of finite-dimensional commutative $C^*$ algebras. Such an algebra $\calB$ is described by a \emph{projection basis} $\{\Pi_1,\dots,\Pi_n\}\subseteq\calB$ such that $\Pi_j^\dag=\Pi_j$,\, $\Pi_j\Pi_k=\delta_{jk}\Pi_k$, and $\sum_{j=1}^{n}\Pi_j=I$.

\begin{Lemma}\label{lem_add_dim}
Let $\calB$ and $\calC$ be commutative $C^*$ algebras with projection bases $\{\Pi_1,\dots,\Pi_p\}\subseteq\calB$ and $\{\Sigma_1,\dots,\Sigma_p\}\subseteq\calC$, respectively. Consider non-unital $\delta$-inclusions $v\colon\calB\to\calA$ and $w\colon\calC\to\calA$, where $\calA$ is an $\eps$-$C^*$ algebra. Let $P_j=v(\Pi_j)$,\, $Q_k=w(\Sigma_k)$, and also $P=v(I_\calB)$, $Q=w(I_\calC)$. Then there is a bounded linear bijection between $\calS_{P,Q}$ and $\bigoplus_{j,k}\calS_{P_j,Q_k}$; in particular, $\dim\calS_{P,Q}=\sum_{j,k}\dim\calS_{P_j,Q_k}$.
\end{Lemma}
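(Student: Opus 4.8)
The plan is to deduce the statement from Lemma~\ref{lem_alpha}. Put $P_j=v(\Pi_j)$, $Q_k=w(\Sigma_k)$, $P=v(I_\calB)$ and $Q=w(I_\calC)$. Since $v$ and $w$ are non-unital $\delta$-homomorphisms commuting with the involution, and $\{\Pi_j\}$, $\{\Sigma_k\}$ are projection bases, each $P_j$ and $Q_k$ is Hermitian and
\[
\|P_jP_l-\delta_{jl}P_l\|=\|v(\Pi_j)v(\Pi_l)-v(\Pi_j\Pi_l)\|
\le\delta\ts\|\Pi_j\|\ts\|\Pi_l\|\le\delta
\]
(using $\|\Pi_j\|\le 1$ in the $C^*$ algebra $\calB$), and likewise for the $Q_k$. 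Taking $j=l$ shows each $P_j$ is a $\delta$-projection, and by linearity $P=\sum_jP_j$, $Q=\sum_kQ_k$; moreover $P$ is a $\delta$-projection since $\|P^2-P\|=\|v(I_\calB)^2-v(I_\calB^2)\|\le\delta$ and $P^\dag=v(I_\calB^\dag)=P$, and symmetrically for $Q$.

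The one point that needs care is the hypothesis $\|P_jP-P_j\|\le\delta$ of Lemma~\ref{lem_alpha} (and its analogue for $Q$): expanding $P=\sum_lP_l$ and summing the bounds above would introduce a factor equal to the number of projections. Instead I would use that the unit of the genuine $C^*$ algebra $\calB$ acts \emph{exactly}, i.e.\ $\Pi_jI_\calB=\Pi_j$, so that
\[
\|P_jP-P_j\|=\|v(\Pi_j)v(I_\calB)-v(\Pi_jI_\calB)\|\le\delta\ts\|\Pi_j\|\ts\|I_\calB\|\le\delta,
\]
with no dimensional dependence, and similarly $\|Q_kQ-Q_k\|\le\delta$. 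All hypotheses of Lemma~\ref{lem_alpha} then hold (for $\eps$ and $\delta$ below a constant depending on the numbers of projections in the two bases), so the map $\alpha=\sum_{j,k}\alpha_{jk}$, with $\alpha_{jk}=\Co_{P,Q}|_{\calS_{P_j,Q_k}}$, is a bounded linear bijection from $\bigoplus_{j,k}\calS_{P_j,Q_k}$ onto $\calS_{P,Q}$. This is precisely the asserted bijection, and comparing dimensions of the (finitely many) summands gives $\dim\calS_{P,Q}=\sum_{j,k}\dim\calS_{P_j,Q_k}$.

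I expect the only real subtlety to be the one flagged above — obtaining the cross-compatibility bounds $\|P_jP-P_j\|\le\delta$ and $\|Q_kQ-Q_k\|\le\delta$ without a constant growing with $\dim\calB$ or $\dim\calC$ — which is exactly what forces us to exploit that $\calB$ and $\calC$ are honest, not merely approximate, $C^*$ algebras. Everything else is routine bookkeeping: the well-definedness of $\Co_{P_j,Q_k}$ and $\Co_{P,Q}$ for $\delta$-projections, the bounds $\|\Pi_j\|\le 1$, $\|I_\calB\|=1$, and the bijectivity and norm estimates for $\alpha$ are all quoted directly from Sections~\ref{sec_subspaces} and from Lemma~\ref{lem_alpha}.
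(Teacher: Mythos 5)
Your cross-compatibility trick (using the exact unit of $\calB$ to get $\|P_jP-P_j\|\le\delta$ without a factor of $p$) is correct and is a nice observation, but you have overlooked the more serious source of dimension dependence: the hypothesis of Lemma~\ref{lem_alpha} itself. That lemma requires $pq(\delta+\eps)$ to be less than a fixed constant, and its proof genuinely needs this (it is used to show $\beta\alpha=1+\gamma$ with $\|\gamma\|<1$, so that $\alpha$ is invertible). You acknowledge this parenthetically — ``for $\eps$ and $\delta$ below a constant depending on the numbers of projections in the two bases'' — but that concession is fatal. The paper's standing convention, and the way Lemma~\ref{lem_add_dim} is invoked in the proofs of Lemma~\ref{lem_extension} and of Theorem~\ref{th_main}, require that $\eps_{\max}$ and $\delta_{\max}$ be universal constants, independent of the dimension of $\calA$ and hence of $p$ and $q$ (which can be as large as $\dim\calA$). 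A single-shot application of Lemma~\ref{lem_alpha} to all $p$-by-$q$ blocks at once simply does not produce the bijection when $pq(\delta+\eps)$ is not small.

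The paper avoids this by applying Lemma~\ref{lem_alpha} iteratively, always with at most two projections on each side: first splitting off $Q_k$ from $Q_{[1,k]}=Q_{[1,k-1]}+Q_k$ for each $k$ (so $p=1$, $q=2$), which yields a bijection between $\calS_{P_j,Q}$ and $\bigoplus_k\calS_{P_j,Q_k}$ after composing $q-1$ two-term bijections, and then analogously splitting off $P_j$ from $P_{[1,j]}$ (so $p=2$, $q=1$) to obtain the final bijection with $\calS_{P,Q}$. Each individual application only needs the universal smallness of $\delta+\eps$, and since the lemma only asserts the \emph{existence} of a linear bijection (and the ensuing dimension count), not a norm bound on it, the accumulation of constants across the composition is harmless. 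Your argument should be restructured along these lines; otherwise the lemma would only hold under a hypothesis $\delta,\eps\lesssim 1/(pq)$, which is precisely the kind of dimension-dependent constraint the whole paper is working to avoid.
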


\begin{proof}
The non-unital $\delta$-inclusion condition implies that $P_j$ and $P_k$, as well as $P_{[1,j]}=\sum_{r=1}^{j}P_r$ and $Q_{[1,k]}=\sum_{s=1}^{k}Q_s$, are $\delta$-projections. Furthermore, $\|P_{[1,j-1]}P_j\|\le\delta$ and $\|Q_{[1,k-1]}Q_k\|\le\delta$. By Lemma~\ref{lem_alpha}, there is a bounded linear bijection between $\calS_{P_j,Q_{[1,k]}}$ and $\calS_{P_j,Q_{[1,k-1]}}\oplus\calS_{P_j,Q_k}$ for each $k$. The composition of these bijections yields a bijection between $\calS_{P_j,Q}$ and $\bigoplus_{k}\calS_{P_j,Q_k}$. Similarly, a bijection exists between $\calS_{P_{[1,j]},Q}$ and $\calS_{P_{[1,j-1]},Q}\oplus\calS_{P_j,Q}$ for each $j$, and thus, between $\calS_{P,Q}$ and $\bigoplus_{j}\calS_{P_j,Q}$. This immediately implies the required statement.
\end{proof}

Let us recall that $\Ma{n}=\Bo(\CC^n)$ stands for the $C^*$ algebra of complex $n\times n$ matrices. More generally, $\Ma{n,k}=\Bo(\CC^k,\CC^n)$ is the space of $n\times k$ matrices, i.e.\ linear maps from $\CC^k$ to $\CC^n$. We consider higher-dimensional spaces as extensions of lower-dimensional spaces: $\CC^1\subset\CC^2\subset\cdots$. Thus, $\Ma{1}\subset \Ma{2}\subset\cdots$. The algebra $\Ma{n}$ has a standard basis $\{E_{jk}:\,j,k=1,\dots,n\}$ with the relations
\begin{equation}
E_{jk}E_{lm}=\delta_{kl}E_{jm},\qquad E_{jk}^\dag=E_{kj},\qquad
I=\sum_{j}E_{jj}.
\end{equation}
It contains an $n$-dimensional commutative subalgebra, which consists of diagonal matrices and has a projection basis $\{E_{jj}:\,j=1,\dots,n\}$.

\begin{Lemma}[Extension]\label{lem_extension}
Let $\calA$ be an $\eps$-$C^*$ algebra, and let $P,Q\in\calA$ be $\delta$-projections such that $\|P+Q-I\|\le\delta$. Suppose that $v\colon \Ma{n}\to\calS_{P}$ is a $\delta$-isomorphism, $\dim\calS_Q=1$, and $\calS_{P,Q}\not=0$. Then $v$ can be extended to an $O(\delta+\eps)$-isomorphism $v_+\colon\Ma{n+1}\to\calA$.
\end{Lemma}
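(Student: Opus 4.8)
The plan is to build $v_+$ by the merging procedure of Lemma~\ref{lem_merging}, using the two-part decomposition $\Ma{n+1}=\bigoplus_{j,k\in\{1,2\}}\calS_{\Pi_j,\Pi_k}$ determined by $\Pi_1=\sum_{j=1}^{n}E_{jj}$ and $\Pi_2=E_{n+1,n+1}$, and on the target side the $\delta$-projections $P_1=P$, $P_2=Q$ (which satisfy $\|P_1+P_2-I\|\le\delta$ by hypothesis). Here $\calS_{\Pi_1,\Pi_1}=\Ma n$, $\calS_{\Pi_2,\Pi_2}=\CC E_{n+1,n+1}$, and $\calS_{\Pi_1,\Pi_2}$, $\calS_{\Pi_2,\Pi_1}$ are spanned by the ``column'' and ``row'' matrices $E_{j,n+1}$ and $E_{n+1,j}$. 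I take $\gamma_{11}=v$ and $\gamma_{22}\colon E_{n+1,n+1}\mapsto\wt Q=\Co_Q(Q)$, which are visibly bijective; the work is to produce $\gamma_{12},\gamma_{21}$ satisfying \eqref{merging0}--\eqref{merging3}, after which $v_+:=\gamma$ is the combined map (and $\gamma$ restricts to $v$ on $\Ma n$ up to the harmless factor $\Co_{P+Q}\approx 1_\calA$, which is $O(\delta+\eps)$-close to the identity since $P+Q\approx I$). Throughout, $\approx$ denotes equality up to $O(\delta+\eps)$ times the relevant norms; the point is that every estimate below chains only an $n$-independent number of compressed products, so the constants do not depend on $n$.

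First I record the approximate matrix units $e_{jk}=v(E_{jk})\in\calS_P$, which satisfy $e_{jk}\cdot e_{lm}\approx\delta_{kl}e_{jm}$, $e_{jk}^\dag\approx e_{kj}$, $\sum_j e_{jj}\approx\wt P\approx P$, and $v(I_{\Ma n})\approx P$. Each $p_j:=e_{jj}$ is a $\delta$-projection; applying Lemma~\ref{lem_add_dim} to the diagonal subalgebra $\CC^n\subseteq\Ma n$ (with $\calB=\calC=\CC^n$, $w=v$) gives $\sum_{j,k}\dim\calS_{p_j,p_k}=\dim\calS_P=n^2$, and since each $\calS_{p_j,p_k}$ contains $e_{jk}\neq0$, we conclude $\dim\calS_{p_j,p_k}=1$ for all $j,k$. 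In particular each $p_j$ is a one-dimensional $\delta$-projection and the $p_j$ are mutually equivalent, the $e_{jk}$ witnessing $\dim\calS_{p_j,p_k}=1$.

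The crux is the identity $\dim\calS_{P,Q}=n$. Because $\calS_{P,Q}\neq0$ and $\wt P$ is an approximate left unit, some $p_j\cdot W\neq0$ for a nonzero $W\in\calS_{P,Q}$; then $\calS_{p_j,Q}\neq0$, and since both $p_j$ and $Q$ are one-dimensional, Lemma~\ref{lem_1d_proj} forces $\dim\calS_{p_j,Q}=1$, i.e.\ $p_j$ is equivalent to $Q$. By transitivity of equivalence (via Lemma~\ref{lem_PQR}) every $p_j$ is equivalent to $Q$, so Lemma~\ref{lem_add_dim} yields $\dim\calS_{P,Q}=\sum_j\dim\calS_{p_j,Q}=n$ (and similarly $\dim\calS_{Q,P}=n$, $\dim\calS_Q=1$, so $\dim\calA=(n+1)^2$). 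Now fix a unit vector $f_n$ in the one-dimensional Hilbert space $\calS_{p_n,Q}$ of Lemma~\ref{lem_PQ_Hilb}, and set $f_j=\Co_{P,Q}(e_{jn}\cdot f_n)\in\calS_{P,Q}$. A short computation gives $e_{jk}\cdot f_k\approx f_j$; moreover $f_j^\dag\approx\Co_{Q,P}(f_n^\dag\cdot e_{nj})$, and $f_n^\dag\cdot f_n\approx\wt Q$, $f_n\cdot f_n^\dag\approx\wt{p_n}$ (using Lemma~\ref{lem_PQR} with the one-dimensional middle projection $Q$), whence $f_j^\dag\cdot f_k\approx\delta_{jk}\wt Q$ and $f_j\cdot f_k^\dag\approx e_{jk}$. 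Thus $\{f_1,\dots,f_n\}$ is an approximately orthonormal family of $n$ vectors in the $n$-dimensional space $\calS_{P,Q}$, hence a basis; I define $\gamma_{12}(E_{j,n+1})=f_j$ and $\gamma_{21}(E_{n+1,j})=f_j^\dag$, which are linear bijections compatible with the involution via $\Co_{Q,P}(X^\dag)=\Co_{P,Q}(X)^\dag$.

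It remains to check the hypotheses of Lemma~\ref{lem_merging}. Condition \eqref{merging0} holds by construction; \eqref{merging2} because $v(I_{\Ma n})\approx P$ and $\wt Q\approx Q$; \eqref{merging3} because $v$ is a $\delta$-isomorphism, $\gamma_{22}$ is an isometry up to $O(\delta+\eps)$, and the $f_j$ are approximately orthonormal, the norm on $\calS_{\Pi_1,\Pi_2}$ being Euclidean and that on $\calS_{P,Q}$ being Euclidean up to a $1\pm O(\delta+\eps)$ factor by Lemma~\ref{lem_PQ_Hilb}. The multiplicativity bound \eqref{merging1} splits into the finitely many combinations of block indices: the all-diagonal case is the $\delta$-homomorphism property of $v$ (and the trivial product on $\CC$), and every mixed case reduces, via $\eps$-associativity and only boundedly many rearrangements, to the relations $e_{jk}\cdot e_{lm}\approx\delta_{kl}e_{jm}$, $e_{jk}\cdot f_k\approx f_j$, $f_j\cdot f_k^\dag\approx e_{jk}$, $f_j^\dag\cdot f_k\approx\delta_{jk}\wt Q$. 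Since all $\gamma_{jk}$ are bijective, Lemma~\ref{lem_merging} delivers an $O(\delta+\eps)$-isomorphism $v_+\colon\Ma{n+1}\to\calA$ extending $v$. The main obstacle is the third paragraph: establishing $\dim\calS_{P,Q}=n$ and constructing $\{f_j\}$ with the correct multiplicative relations and with all error bounds uniform in $n$; this leans on the one-dimensionality of $Q$ through Lemmas~\ref{lem_1d_proj}, \ref{lem_PQ_Hilb}, and~\ref{lem_PQR}, and on never chaining more than $O(1)$ compressed products at a time.
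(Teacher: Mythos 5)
Your construction takes a genuinely different and, in spirit, more elementary route than the paper's. Where the paper builds an exact unitary $U_1\colon\CC^n\to\calS_{P,Q}$ by passing the $O(\delta+\eps)$-homomorphism $h_{11}v$ (defined through the $\Ha^Q$-machinery of Section~\ref{sec_subspaces}) to a genuine $C^*$-isomorphism $\mu_{11}$ via Lemma~\ref{lem_approx}, and then sets $\gamma_{12}(A_{12})=U_1(A_{12})$, you instead construct the column vectors $f_j$ directly from the approximate matrix units $e_{jk}=v(E_{jk})$. Your first two paragraphs --- the $\gamma_{11},\gamma_{22}$ setup, the equivalence of the one-dimensional projections $p_j$, and the dimension count $\dim\calS_{P,Q}=n$ via Lemmas~\ref{lem_add_dim}, \ref{lem_PQR}, \ref{lem_1d_proj} --- are correct and in fact agree with the paper's first step. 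The definition $f_j=\Co_{P,Q}(e_{jn}\cdot f_n)$ and the relations $e_{jk}\cdot f_k\approx f_j$, $f_j\cdot f_k^\dag\approx e_{jk}$, $f_j^\dag\cdot f_k\approx\delta_{jk}\wt Q$ all hold with $O(\delta+\eps)$ error, each being a chain of $O(1)$ compressed products.

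The gap is in the final paragraph, where you derive \eqref{merging3} for $\gamma_{12}$ from ``the $f_j$ are approximately orthonormal'' and \eqref{merging1} for the mixed blocks from the basis-element relations. The pointwise bound $\braket{f_j}{f_k}=\delta_{jk}+O(\delta+\eps)$ only gives $\bigl|\,\|\sum_j c_j f_j\|_\Euc^2-\sum_j|c_j|^2\,\bigr|\le O\bigl(n(\delta+\eps)\bigr)\sum_j|c_j|^2$ in the worst case, since the Gram-matrix error is controlled entrywise but not in operator norm; the constant in \eqref{merging3} therefore becomes $n$-dependent, which is precisely what the theorem must rule out. The same accumulation occurs for \eqref{merging1} when you expand $\gamma_{11}(A)\cdot\gamma_{12}(B)$ for a general $A\in\Ma n$ and column $B$ into basis elements. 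The construction is salvageable: by linearity $\gamma_{12}(B)\approx\Co_{P,Q}\bigl(v(\bar B)\ts f_n\bigr)$ where $\bar B=\sum_k[B]_{k}E_{kn}\in\Ma n$ satisfies $\|\bar B\|=\|B\|$, and now the $\delta$-isomorphism property of $v$ together with Lemma~\ref{lem_PQR} (with middle one-dimensional projection $p_n$) gives $\|\gamma_{12}(B)\|\approx\|B\|$ in one stroke, and the analogous ``global'' identities verify \eqref{merging1} without entrywise error accumulation. As written, though, you claim $n$-independence up front but then derive the key norm and multiplicativity bounds from entrywise relations that do not deliver it. The paper avoids the issue entirely: $\gamma_{12}(A_{12})=U_1(A_{12})$ is an exact isometry, and the $\delta$-isomorphism property of $v$ enters only through a single estimate $\|h_{11}-\mu_{11}v^{-1}\|\le O(\delta+\eps)$.
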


\begin{proof}
The vector space $\calS_{P,Q}$ will play a key role; let us determine its dimensionality. Since $v\colon \Ma{n}\to\calS_{P}$ is a $\delta$-isomorphism, the elements $P_j=v(E_{jj})$ are one-dimensional $\delta$-projections in $\calS_P$, which are equivalent to each other. Hence, $P_1,\dots,P_n$ are equivalent one-dimensional $O(\delta+\eps)$-projections in $\calA$, implying that either $\dim\calS_{P_j,Q}=0$ for all $j$ or $\dim\calS_{P_j,Q}=1$ for all $j$. Using Lemma~\ref{lem_add_dim}, we conclude that $\dim\calS_{P,Q}$ is either $0$ or $n$. The first possibility contradicts the hypothesis of the Lemma; thus, $\dim\calS_{P,Q}=n$.

In what follows, we use the notation
\begin{equation}
P_1=P,\quad P_2=Q,\qquad
h_{jk}=\Ha^{Q}_{P_j,P_k}\colon\calS_{P_j,P_k}\to\Bo(\calS_{P_k,Q},\calS_{P_j,Q}).
\end{equation}
Here $\calS_{P_j,Q}$ is regarded as a Hilbert space with the Hermitian inner product given by Lemma~\ref{lem_PQ_Hilb}; for the definition of $\Ha^{Q}_{P,R}$ and its properties, see \eqref{Ha_def}, \eqref{Ha_dag}, \eqref{Ha_prod}, and the subsequent comments. In particular, $h_{11}\colon\calS_{P}\to\Bo(\calS_{P,Q})$ is an $O(\delta+\eps)$-homomorphism, and $h_{12}$, $h_{22}$, and $h_{21}$ are $O(\delta+\eps)$-close to identity maps. The composition $h_{11}v\colon\Ma{n}\to \Bo(\calS_{P,Q})$ is an $O(\delta+\eps)$-homomorphism of $C^*$ algebras. By Lemma~\ref{lem_approx}, it can be approximated by a homomorphism $\mu_{11}\colon\Ma{n}\to \Bo(\calS_{P,Q})$ with $O(\delta+\eps)$ accuracy. Since $\Ma{n}$ has no nontrivial ideals, $\mu_{11}$ is injective. It is actually an isomorphism because both $\Ma{n}$ and $\Bo(\calS_{P,Q})$ are $n^2$-dimensional. Therefore, $h_{11}$, which is $O(\delta+\eps)$-close to $\mu_{11}v^{-1}$, is an $O(\delta+\eps)$-isomorphism. We conclude that the maps $h_{jk}^{-1}\colon\Bo(\calS_{P_k,Q},\calS_{P_j,Q})\to\calS_{P_j,P_k}$ exist for all $j$ and $k$ and satisfy the conditions
\begin{align}
\label{merging0h}
h_{kj}^{-1}(X^\dag)&=h^{-1}_{jk}(X)^\dag,\\[2pt]
\label{merging1h}
\|h^{-1}_{jl}(XY)-h^{-1}_{jk}(X)\cdot h^{-1}_{kl}(Y)\|
&\le O(\delta+\eps)\ts\|X\|\ts\|Y\|,\\[2pt]
\label{merging2h}
\|h^{-1}_{jj}(\Pi_j)-P_j\|&\le O(\delta+\eps),\\[2pt]
\label{merging3h}
(1-O(\delta+\eps))\ts\|X\|\le\|h^{-1}_{jk}(X)\| &\le(1+O(\delta+\eps))\ts\|X\|.
\end{align}

Being an isomorphism of finite-dimensional $C^*$ algebras, $\mu_{11}$ has the form $\mu_{11}(A)=U_1AU_1^\dag$ for some unitary map $U_1\colon\CC^n\to\calS_{P,Q}$. We also consider the obvious map $U_2\colon c\mapsto c\wt{Q}$ from $\CC$ to $\calS_{Q,Q}$. There is a total of four maps $\mu_{jk}\colon\Ma{n_j,n_k} \to\Bo(\calS_{P_k,Q},\calS_{P_j,Q})$ defined by the equation $\mu_{jk}(A)=U_jAU_k^\dag$, where $n_1=n$ and $n_2=1$. Together, they constitute an isomorphism of $C^*$ algebras $\Ma{n+1}\to\Bo(\calS_{P,Q}\oplus\calS_{Q,Q})$. Let us now define some maps $\gamma_{jk}\colon\Ma{n_j,n_k}\to\calS_{P_j,P_k}$:
\begin{equation}
\gamma_{11}(A_{11})=v(A_{11}),\quad\:
\gamma_{12}(A_{12})= U_1(A_{12}),\quad\:
\gamma_{21}(A_{21})=(U_1(A_{21}^\dag))^\dag,\quad\:
\gamma_{22}(A_{22})=A_{22}\wt{Q},
\end{equation}
where $A_{11}$ is an $n\times n$ matrix, $A_{12}\in\Ma{n,1}=\CC^n$ is a column vector, $A_{21}\in\Ma{1,n}=(\CC^n)^*$ is a row vector, and $A_{22}$ is a complex number. It follows from the definitions of $\gamma_{jk}$ and $\mu_{jk}$ that $\|h_{jk}\gamma_{jk}-\mu_{jk}\|\le O(\delta+\eps)$. Thus, equations \eqref{merging0}--\eqref{merging3} from the hypothesis of Lemma~\ref{lem_merging} (with $\delta$ replaced by $\delta'=O(\delta+\eps)$) follow from \eqref{merging0h}--\eqref{merging3h}. Applying the lemma, we obtain the required $O(\delta+\eps)$-isomorphism $v_+\colon\Ma{n+1}\to\calA$, namely, $v_+\colon\left(\begin{smallmatrix}A_{11} & A_{12}\\ A_{21} & A_{22}\end{smallmatrix}\right) \mapsto\sum_{j,k}\gamma_{jk}(A_{jk})$.
\end{proof}

\begin{proof}[Proof of Theorem~\ref{th_main}.]
Let $c_0$ be the constant from Corollary~\ref{cor_improvement}. We will construct a $c_0\eps$-isomorphism $v$ from some $C^*$ algebra $\calB$ to the $\eps$-$C^*$ algebra $\calA$ in three stages. The first stage yields a $c_0\eps$-inclusion $v_\comm\colon\calB_\comm\to\calA$, where $\calB_\comm$ is a commutative $C^*$ algebra. The second stage involves the parallel construction of $c_0\eps'$-isomorphisms from some matrix algebras to approximate direct summands of $\calA$. At the third stage, those algebras and isomorphisms are merged.\medskip

\noindent\textbf{Stage 1:} Let $v_\comm\colon\calB_\comm\to\calA$ be a $c_0\eps$-inclusion of a commutative $C^*$ algebra of maximum dimensionality into $\calA$. Then $\calB$ has a projections basis $\{\Pi_1,\dots,\Pi_m\}$. It is evident that each $P_j=v_\comm(\Pi_j)$ is an $O(\eps)$-projection. We now show that these projections are one-dimensional.

Let us suppose the opposite. Without loss of generality, we may assume that the last   projection fails to be one-dimensional, i.e.\ $\dim\calS_{P_m}>1$. Recall that, $\calS_{P_m}$ is an $O(\eps)$-$C^*$ algebra with unit $\wt{P}_m=\Co_{P_m}(P_m)$. By Lemma~\ref{lem_nontriv_projection}, there exists an $O(\eps)$-projection $P'\in\calS_{P_m}$ such that both $P'$ and $P''=\wt{P}_m-P'$ are nonvanishing, i.e.\ sufficiently far from $0$. Note that $P'+P''=\wt{P}_m$ is $O(\eps)$-close to $P_m$. Let $\calB_\comm^{(1)}$ and $\calB_\comm^{(2)}$ be the commutative $C^*$ algebras with projection bases $\{\Pi_1,\dots,\Pi_{m-1}\}$ and $\{\Pi',\Pi''\}$, and let $P_{[1,m-1]}=\sum_{j=1}^{m-1}P_j$. We now $O(\eps)$-include $\calB_\comm^{(1)}$ and $\calB_\comm^{(2)}$ into $\calS_{P_{[1,m-1]}}$ and $\calS_{P_m}$, respectively:
\begin{gather*}
v_\comm^{(1)}(\Pi_j)=\Co_{P_{[1,m-1]}}(P_j)\in\calS_{P_{[1,m-1]}}\quad
\text{for }\, j=1,\dots,m-1,\\[2pt]
v_\comm^{(2)}(\Pi')=P'\in\calS_{P_m},\qquad
v_\comm^{(2)}(\Pi'')=P''\in\calS_{P_m}.
\end{gather*}
Merging $v_\comm^{(1)}$ and $v_\comm^{(2)}$ using Corollary~\ref{cor_merge_sum}, we obtain an $O(\eps)$-inclusion $v_\comm^+\colon\calB_\comm^+\to\calA$, where $\calB_\comm^+=\calB_\comm^{(1)}\oplus\calB_\comm^{(2)}$. Due to Corollary~\ref{cor_improvement}, there also exists a $c_0\eps$-inclusion of $\calB_\comm^+$ into $\calA$. But this contradicts the maximum dimensionality assumption.\medskip

\noindent\textbf{Setup for stages 2 and 3:} Because $v_\comm$ is a $c_0\eps$-inclusion, the sum of $P_j=v_\comm(\Pi_j)$ over an arbitrary subset of $\{1,\dots,m\}$ is a $c_0\eps$-projection. Let $\eps'=O(\eps)$ denote a suitable upper bound such that $\eps'\ge c_0\eps$, and moreover, $\calS_P$ for any $c_0\eps$-projection $P$ is an $\eps'$-$C^*$ algebra. The indices $j$ of the one-dimensional projections $P_j$ are subdivided into equivalence classes such that $\dim\calS_{P_j,P_k}$ is $1$ if $j$ and $k$ are in the same class and $0$ otherwise. Associated with each equivalence class $C$ are the $c_0\eps$-projection $P_C=\sum_{j\in C}P_j$ and the $\eps'$-$C^*$ algebra $\calS_{P_C}$.\medskip

\noindent\textbf{Stage 2:} Each equivalence class $C$ is considered separately. Without loss of generality, we may assume that $C=\{1,\dots,s\}$. Let
\[
\Pi_{[1,r]}=\sum_{j=1}^{r}\Pi_j,\qquad P_{[1,r]}=\sum_{j=1}^{r}P_j,\qquad
\calA_r=\calS_{P_{[1,r]}}\qquad (r=1,\dots,s).
\]
Note that $P_{[1,r]}$ is a $c_0\eps$-projection, $\calA_r$ is an $\eps'$-$C^*$ algebra, and $\calA_s=\calS_{P_C}$. We construct $c_0\eps'$-isomorphisms $v_r\colon\Ma{r}\to\calA_r$ for $r=1,\dots,s$ inductively. To obtain $v_r$ from $v_{r-1}$, we first fit $v_{r-1}$, $P_{[1,r-1]}$, and $P_r$ into $\calA_r$ by applying the compression map $\Co_{P_{[1,r]}}$:
\[
v_{r-1}'\colon X\mapsto\Co_{P_{[1,r]}}(v(X)),\qquad
P_{[1,r-1]}'=\Co_{P_{[1,r]}}(P_{[1,r-1]}),\qquad
P_r'=\Co_{P_{[1,r]}}(P_r).
\]
Then we extend the $O(\eps)$-isomorphism $v_{r-1}'\colon \Ma{r-1}\to\calA_{r-1}'$ to an $O(\eps)$-isomorphism $v_{r-1}^+\colon \Ma{r}\to\calA_{r}$ using Lemma~\ref{lem_extension}. Finally, we use Corollary~\ref{cor_improvement} to replace $v_{r-1}^+$ with a $c_0\eps'$-isomorphism $v_r$.\medskip

\noindent\textbf{Stage 3.} At this point, we have constructed $c_0\eps'$-isomorphisms $v_C\colon\Ma{|C|}\to\calS_{P_C}$ for all equivalence classes $C$. Note that by Lemma~\ref{lem_add_dim}, $\calS_{P_C,P_D}=0$ if the classes $C$ and $D$ are distinct. This allows us to successively merge the isomorphisms $v_C$ for different $C$. Each step includes the application of Corollary~\ref{cor_merge_sum} followed by the use of Corollary~\ref{cor_improvement} to reduce the errors.
\end{proof}

\section{Tensor extensions}\label{sec_tens_ext}

In this section, we show that under suitable conditions, not only is the previously constructed map $v\colon\calB\to\calA$ a $\delta$-isomorphism, but also all maps $1_{\Ma{n}}\otimes v\colon \Ma{n}\otimes\calB\to\Ma{n}\otimes\calA$ are  $\delta$-isomorphisms for the same $\delta=O(\eps)$. Since $\calB$ is a $C^*$ algebra, $\Ma{n}\otimes\calB$ is also a $C^*$ algebra. (The multiplication, the unit, and the involution on $\Ma{n}\otimes\calB$ are defined in a straightforward way. To define the norm, one can first represent $\calB$ as a closed $*$-subalgebra of $\Bo(\calH)$ for some Hilbert space $\calH$. Then $\Ma{n}\otimes\calB\subseteq\Bo(\CC^n\otimes\calH)$ is equipped with the obvious norm. Being a $C^*$ norm, it is unique.) The multiplication, the unit, and the involution on the $\eps$-$C^*$ algebra $\calA$ are also automatically extended to $\Ma{n}\otimes\calA$, but the norm is not. Therefore, we have to assume the existence of separate norms $\|\cdot\|_n$ subject to some consistency conditions. This structure is called an (abstract) operator space~\cite[Chapter 13]{Paulsen}. We proceed with specific definitions.\smallskip

The unit $n\times n$ matrix $I_{\Ma{n}}$ will be denoted by $I_n$ for brevity, and the zero $n\times k$ matrix (with elements of any type) is denoted by $0_{n,k}$. Let $\calL$ be a complex vector space; then any $X\in\Ma{n,k}\otimes\calL$ may be regarded as an $n\times k$ matrix with elements $[X]_{pq}\in\calL$ for $p\in\{1,\dots,n\}$ and $q\in\{1,\dots,k\}$. A multiplication, i.e.\ a bilinear map $\calL\times\calL\to\calL$, extends to such matrices in the standard way, $[XY]_{pq}=\sum_l[X]_{pl}[Y]_{lq}$. An arbitrary element $I\in\calL$ extends to $I_n\otimes I$. A conjugate linear involution $X\mapsto X^\dag$ on $\calL$ extends to conjugate linear maps $\Ma{n,k}\otimes\calL\to\Ma{k,n}\otimes\calL$ using the rule $[X^\dag]_{pq}=([X]_{qp})^\dag$. 

\begin{Definition}\label{def:opspace}
A complex vector space $\calL$ is called an \emph{operator space} if each space $\Ma{n}\otimes\calL$ (for $n=1,2,\ldots$) is equipped with a norm $\|\cdot\|_n$ satisfying the following axioms:
\begin{alignat}{2}
\label{ax_R1}
\|AXB\|_n &\le\|A\|\ts\|X\|_k\ts\|B\|\qquad &
&(A\in\Ma{n,k},\quad B\in\Ma{k,n},\quad X\in\Ma{k}\otimes\calL),\\[3pt]
\label{ax_R2}
\left\|\begin{pmatrix}X&0\\ 0&Y\end{pmatrix}\right\|_{k+n} &=\max\bigl\{\|X\|_k,\|Y\|_n\bigr\}\qquad &
& (X\in\Ma{k}\otimes\calL,\quad Y\in\Ma{n}\otimes\calL).
\end{alignat}
The norm on $\calL$ itself is defined by identifying $\calL$ with $\Ma{1}\otimes\calL$. An operator space is called \emph{self-adjoint} if it is equipped with a conjugate linear involution $\dagger$ that preserves all norms $\|\cdot\|_n$.
\end{Definition}
In particular, all linear subspaces of $C^*$ algebras satisfy axioms \eqref{ax_R1}, \eqref{ax_R2}. Conversely, an arbitrary operator space $\calL$ can be identified with a subspace of $\Bo(\calH)$ for some Hilbert space $\calH$ such that the norms agree \cite{Rua88}, \cite[Theorem~13.4]{Paulsen}. In the self-adjoint case, one can arrange that the involution on $\calL$ is represented by the Hermitian conjugation on $\Bo(\calH)$.\smallskip

For $n'\le n$, there is a standard inclusion of $\Ma{n'}\otimes\calL$ into $\Ma{n}\otimes\calL$:
\begin{equation}
X\,\mapsto\,\begin{pmatrix}X & 0_{n',n-n'}\\ 0_{n-n',n'} & 0_{n-n'}\end{pmatrix}
=AXA^\dag,\qquad \text{where}\quad
A=\begin{pmatrix}I_{n'}\\ 0_{n-n',n'}\end{pmatrix}.
\end{equation}
This inclusion is isometric due to axiom \eqref{ax_R1} applied to both the present map and its left inverse, which has the form $Y\mapsto A^{\dag}YA$. The norm $\|\cdot\|_{n',n''}$ on $\Ma{n',n''}\otimes\calL$ is defined by including this space in $\Ma{n}\otimes\calL$ for an arbitrary $n\ge\max\{n',n''\}$. Such norms satisfy the properties similar to \eqref{ax_R1} and \eqref{ax_R2}.

As another corollary of the axioms, if $C\in\Ma{n}$ and $X\in\calL$, then $\|C\otimes X\|_n=\|C\|\ts\|X\|$. To see this, let us use the singular value decomposition $C=ADB^{\dag}$, where $D\in\Ma{k}$ is diagonal and $A^{\dag}A=B^{\dag}B=I_k$. Then $\|C\otimes X\|_n=\|D\otimes X\|_k$ due to \eqref{ax_R1}, and $\|D\otimes X\|_k=\|D\|\ts\|X\|$ due to \eqref{ax_R2}. Thus, the map $X\mapsto I_n\otimes X$ is an isometric inclusion of $\calL$ into $\Ma{n}\otimes\calL$. It is also evident that if $\calL$ is complete as a normed space, then all spaces $\Ma{n}\otimes\calL$ are complete.

\begin{Definition}
An \emph{extended $\eps$-$C^*$ algebra} is a complete self-adjoint operator space $\calA$ with a multiplication and a unit that make each space $\Ma{n}\otimes\calA$ into an $\eps$-$C^*$ algebra. An \emph{extended $\delta$-homomorphism} is a linear map $v\colon\calA'\to\calA''$ such that for each $n$, the map $1_{\Ma{n}}\otimes v\colon \Ma{n}\otimes\calA'\to\Ma{n}\otimes\calA''$ is a $\delta$-homomorphism.
\end{Definition}

One may define an \emph{extended $\delta$-inclusion} as a map satisfying the hypothesis of the following variant of Proposition~\ref{prop_delta_hominc}. An \emph{extended $\delta$-isomorphism} is such a map that is also bijective.

\begin{Proposition}\label{prop_inc_ext}
Let $v\colon\calA'\to\calA''$ be an extended $\delta$-homomorphism of $\eps$-$C^*$ algebras. If $\|v(X)\|\ge\eta\|X\|$ for some $\eta>2\delta$ and all $X\in\calA'$, then $\|(1_{\Ma{n}}\otimes v)(X)\|_n\ge a\|X\|_n$ for $a=1-O(\delta+\eps)$ independent of $n$ and all $X\in\Ma{n}\otimes\calA'$.
\end{Proposition}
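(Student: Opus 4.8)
The plan is to run the argument of Proposition~\ref{prop_delta_hominc} simultaneously at every tensor level $n$, and to fill the gap this creates — controlling the level-$n$ bound in terms of lower levels — by an induction on $n$. Write
\[
a_n=\inf\bigl\{\|(1_{\Ma{n}}\otimes v)(X)\|_n/\|X\|_n:\,X\in\Ma{n}\otimes\calA',\ X\neq0\bigr\},
\]
so that the claim is $a_n\ge 1-O(\delta+\eps)$ with the implied constant independent of $n$. First I would apply Proposition~\ref{prop_delta_hominc} to $v$ itself (its hypothesis $\|v(X)\|\ge\eta\|X\|$ with $\eta>2\delta$ being exactly the present assumption), obtaining the level-$1$ almost-isometry $\|v(X)\|\ge(1-O(\delta+\eps))\|X\|$ for all $X\in\calA'$; in particular $a_1\ge1-O(\delta+\eps)$. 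By hypothesis each $1_{\Ma{n}}\otimes v\colon\Ma{n}\otimes\calA'\to\Ma{n}\otimes\calA''$ is a $\delta$-homomorphism of $\eps$-$C^*$ algebras, and this together with the level-$1$ almost-isometry is what the rest of the argument uses.

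The first step is the bootstrap dichotomy at each level. Exactly as in Proposition~\ref{prop_delta_hominc}, combining the $C^*$ axioms on $\Ma{n}\otimes\calA'$ and $\Ma{n}\otimes\calA''$ with the multiplicativity bound $\|(1\otimes v)(X^\dag X)-(1\otimes v)(X)^\dag(1\otimes v)(X)\|_n\le\delta\|X\|_n^2$ gives, for all $X$,
\[
\|(1\otimes v)(X)\|_n^2\ge\frac{(1-\eps)\,a_n-\delta}{1+\eps}\,\|X\|_n^2,
\qquad\text{hence}\qquad
(1+\eps)\,a_n^2-(1-\eps)\,a_n+\delta\ge0.
\]
This quadratic inequality forces $a_n\le a_-$ or $a_n\ge a_+$, where $a_\pm$ are its roots; for $\delta,\eps$ small one has $a_-=O(\delta)$ and $a_+=1-O(\delta+\eps)$, and both are independent of $n$. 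Since $2\delta>a_-$ once $\delta,\eps$ are small, it suffices to bound $a_n$ below by any fixed positive constant in order to conclude $a_n\ge a_+$.

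The second step is an induction on $n$ proving $a_n\ge a_+$, the case $n=1$ being already done. For $n\ge2$, assume $a_{n-1}\ge a_+$ and take $X\in\Ma{n}\otimes\calA'$ with $\|X\|_n=1$. Splitting off the last index, write $X=\left(\begin{smallmatrix}X'&\xi\\\eta&x\end{smallmatrix}\right)$ with $X'\in\Ma{n-1}\otimes\calA'$, $\xi$ a column, $\eta$ a row, and $x\in\calA'$; the operator-space axioms give $\|X\|_n\le\|X'\|+\|\xi\|+\|\eta\|+\|x\|$, so one of these four norms is $\ge\tfrac14$. If $\|X'\|\ge\tfrac14$, compression to the upper-left block gives $\|(1\otimes v)(X)\|_n\ge\|(1_{\Ma{n-1}}\otimes v)(X')\|_{n-1}\ge a_{n-1}\|X'\|\ge a_+/4$. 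If $\|x\|\ge\tfrac14$, then $\|(1\otimes v)(X)\|_n\ge\|v(x)\|\ge(1-O(\delta+\eps))/4$. If $\|\xi\|\ge\tfrac14$: the element $\xi^\dag\xi=\sum_pX_{pn}^\dag X_{pn}$ lies in $\calA'$, with $\|\xi^\dag\xi\|\ge(1-\eps)\|\xi\|^2$ by the $C^*$ axiom (after padding $\xi$ to a square matrix); applying the level-$n$ $\delta$-homomorphism condition to $\tilde\xi^\dag\tilde\xi$, where $\tilde\xi\in\Ma{n}\otimes\calA'$ is $\xi$ returned to its block, shows $v(\xi^\dag\xi)$ is within $\delta\|\xi\|^2$ of $(1\otimes v)(\xi)^\dag(1\otimes v)(\xi)$, both level-$1$ elements; combining with $\|v(\xi^\dag\xi)\|\ge(1-O(\delta+\eps))\|\xi^\dag\xi\|$ (level-$1$ almost-isometry) and $\|(1\otimes v)(\xi)\|^2\ge\tfrac1{1+\eps}\|(1\otimes v)(\xi)^\dag(1\otimes v)(\xi)\|$ yields $\|(1\otimes v)(\xi)\|\ge(1-O(\delta+\eps))\|\xi\|$, so $\|(1\otimes v)(X)\|_n\ge(1-O(\delta+\eps))/4$. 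The case $\|\eta\|\ge\tfrac14$ is symmetric, with $\eta\eta^\dag\in\calA'$ in place of $\xi^\dag\xi$. Thus in every case $\|(1\otimes v)(X)\|_n\ge\tfrac14(1-O(\delta+\eps))>a_-$, so $a_n>a_-$, and the dichotomy of the first step forces $a_n\ge a_+$. This closes the induction, and the proposition follows with $a=a_+=1-O(\delta+\eps)$.

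The step I expect to be the main obstacle is the off-diagonal cases of the induction: one has to notice that $\xi^\dag\xi$, $\eta\eta^\dag$, and $x$ are \emph{level-$1$} elements, so the full level-$1$ almost-isometry of $v$ — not merely a weak lower bound — applies to them, and that it is the \emph{level-$n$} $\delta$-homomorphism hypothesis (rather than a sum of $n$ level-$1$ estimates, which would cost a factor $n$) that keeps the multiplicativity error dimension-free. A secondary subtlety is that the factor $\tfrac14$ lost to the block splitting is harmless only because the bootstrap dichotomy is reapplied at each level to pull the bound back up from $\tfrac14(1-O(\delta+\eps))$ to $a_+$; without it the naive recursion would decay like $4^{-n}$.
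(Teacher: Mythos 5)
Your proof is correct, and it uses the paper's core mechanism: obtain a dimension-independent lower bound on each $a_n$ by a block decomposition, then invoke the quadratic dichotomy from Proposition~\ref{prop_delta_hominc} to lift that bound to $a_n \ge 1-O(\delta+\eps)$. Where you differ is in the decomposition step, and the paper's choice is slicker. The paper goes from $2n$ to $n$: write $X\in\Ma{2n}\otimes\calA'$ in $2\times2$ block form with four \emph{square} $n\times n$ blocks $X_{pq}$ and use $\|X\|_{2n}\le 2\max_{p,q}\|X_{pq}\|_n$ (axiom \eqref{ax_R2} for the diagonal part, plus a swap matrix and axiom \eqref{ax_R1} for the off-diagonal part). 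Because every block lies in $\Ma{n}\otimes\calA'$, this gives $a_{2n}\ge a_n/2$ immediately with no case analysis, and monotonicity of the sequence $(a_n)$ handles non-powers of two. Your $n\to n-1$ peeling is also valid but costs extra work in the corners, since $\xi$ and $\eta$ are rectangular; you route through $\xi^\dag\xi\in\calA'$ and the level-one estimate. You could have avoided this: pad $\xi$ with zero columns to a square element of $\Ma{n-1}\otimes\calA'$ (an isometry by the operator-space axioms) and conclude $\|(1\otimes v)(\xi)\|\ge a_{n-1}\|\xi\|$ directly, collapsing case 3 to the same one-liner as case 1 and, in the end, essentially reproducing the paper's uniform-square-block argument. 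The per-step loss of $1/4$ versus the paper's $1/2$ is, as you correctly observe, immaterial once the dichotomy resets the constant at every level.
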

\begin{proof}
Let us consider the numbers
\[
a_n=\inf\bigl\{\|(1_{\Ma{n}}\otimes v)(X)\|_n/\|X\|_n:\,
X\in\Ma{n}\otimes\calA',\,\, X\not=0\bigr\}\qquad
(a_1\ge a_2\ge a_3\ge\cdots\ge 0)
\]
and show that $a_{2n}\ge a_{n}/2$. For any $X=\left(\begin{smallmatrix}X_{11}&X_{12}\\ X_{21}&X_{22}\end{smallmatrix}\right)$ with $X_{pq}\in\Ma{n}\otimes\calA'$, we have
\[
\|X\|_{2n}\le 2\|X\|_{n,\max},\qquad
\text{where}\quad \|X\|_{n,\max}=\max_{p,q}\|X_{pq}\|_n.
\]
This follows from the operator space axioms and the formula $X=\left(\begin{smallmatrix}X_{11}&0\\ 0&X_{22}\end{smallmatrix}\right) +\left(\begin{smallmatrix}0&I_n\\ I_n&0\end{smallmatrix}\right) \left(\begin{smallmatrix}X_{21}&0\\ 0&X_{12}\end{smallmatrix}\right)$. On the other hand,
\[
\|(1_{\Ma{2n}}\otimes v)(X)\|_{2n}\ge \|(1_{\Ma{2n}}\otimes v)(X)\|_{n,\max}
\ge a_n\|X\|_{n,\max}.
\]
Therefore, $\|(1_{\Ma{2n}}\otimes v)(X)\|_{2n}\ge (a_n/2)\ts\|X\|_{2n}$ for all $X$, which means that $a_{2n}\ge a_{n}/2$.

By Proposition~\ref{prop_delta_hominc}, if $a_n>2\delta$, then $a_n\ge 1-\delta'$ for some $\delta'=O(\delta+\eps)$ that does not depend on $n$. We have $a_1\ge\eta>2\delta$, and hence, $a_1\ge 1-\delta'$. It follows by induction that $a_n\ge 1-\delta'$ for all $n$, provided $(1-\delta')/2>2\delta$. The last inequality holds under the standard assumption that $\delta$ and $\eps$ are sufficiently small.
\end{proof}

\begin{Lemma}\label{lem_approx_ext}
For any extended $\delta$-homomorphism $v$ from a finite-dimensional $C^*$ algebra $\calB$ to an $\eps$-$C^*$ algebra $\calA$, there is an extended $O(\eps)$-homomorphism $\tilde{v}\colon \calB\to\calA$ such that the corresponding maps $v_n=1_{\Ma{n}}\otimes v$ and $\tilde{v}_n=1_{\Ma{n}}\otimes\tilde{v}$ satisfy the inequality  $\bigl\|\tilde{v}_n(X)-v_n(X)\|\le O(\delta)\|X\|$ for all $X\in\Ma{n}\otimes\calB$.
\end{Lemma}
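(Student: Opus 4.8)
The plan is to re-run the Newton iteration from the proof of Lemma~\ref{lem_approx} using the \emph{same} correction terms, built from the standard norm-$1$ diagonal $D=\sum_j A_j\otimes B_j$ of the finite-dimensional $C^*$ algebra $\calB$, and to observe that every estimate in that argument is controlled by a universal constant, hence holds uniformly in $n$. Concretely, with $g=G_v$, I would set $w'(X)=\sum_j v(A_j)\ts g(B_j,X)$, $w''(X)=w'(X^\dag)^\dag$, and $v^{(1)}=v+\tfrac{1}{2}(w'+w'')\colon\calB\to\calA$, exactly as before, and iterate to obtain $v^{(2)},v^{(3)},\dots$; the output $\tilde v$ will be $v^{(s)}$ for a suitable finite $s$, or the limit $\lim_s v^{(s)}$ when $\eps=0$. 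The only thing requiring justification is that this \emph{fixed} sequence of maps $\calB\to\calA$ has small multiplicativity defect after tensoring: each $\tilde v_n=1_{\Ma{n}}\otimes\tilde v$ is an $O(\eps)$-homomorphism, with a constant not depending on $n$.

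The key observation is that, although $D_n:=\sum_j(I_n\otimes A_j)\otimes(I_n\otimes B_j)$ is \emph{not} a diagonal of $\Ma{n}\otimes\calB$ --- the relation $\hat XD_n=D_n\hat X$ fails because the matrix factors do not commute past each other --- it nevertheless satisfies the two weaker identities that the cohomological computation \eqref{g1_calc} actually uses. Writing $g_n=G_{v_n}$ and expanding an arbitrary element of $\Ma{n}\otimes\calB$ in simple tensors as $\hat X=\sum_l C_l\otimes X_l$, one checks directly that $g_n(I_n\otimes B,\hat X)=\sum_l C_l\otimes g(B,X_l)$ for $B\in\calB$, so that $w'_n:=1_{\Ma{n}}\otimes w'$ is given by $w'_n(\hat X)=\sum_j v_n(I_n\otimes A_j)\ts g_n(I_n\otimes B_j,\hat X)$; that is, $w'_n$ is precisely the correction that the proof of Lemma~\ref{lem_approx} attaches to $v_n$ via $D_n$. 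Moreover, the ``$XD=DX$''-type identity $\sum_j v_n\bigl(\hat X(I_n\otimes A_j)\bigr)g_n\bigl(I_n\otimes B_j,\hat Y\bigr)=\sum_j v_n(I_n\otimes A_j)\ts g_n\bigl((I_n\otimes B_j)\hat X,\hat Y\bigr)$ and the normalization $\sum_j(I_n\otimes A_j)(I_n\otimes B_j)=I_{\Ma{n}\otimes\calB}$ both follow, after the same expansion, from $\sum_j XA_j\otimes B_j=\sum_j A_j\otimes B_jX$ and $\sum_j A_jB_j=I_\calB$ in $\calB$.

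With these identities in hand, the computation \eqref{g1_calc} goes through verbatim with $(v,w',g,\calA)$ replaced by $(v_n,w'_n,g_n,\Ma{n}\otimes\calA)$. Indeed, $\Ma{n}\otimes\calA$ is an $\eps$-$C^*$ algebra by the definition of an extended $\eps$-$C^*$ algebra, so its associativity defect is $\le\eps$; each $v_n$ is a $\delta$-homomorphism of $\eps$-$C^*$ algebras, so $\|v_n\|\le 1+O(\delta+\eps)$ with a universal constant by Proposition~\ref{prop_delta_hominc}; and $\|I_n\otimes A_j\|_n=\|A_j\|$, $\|I_n\otimes B_j\|_n=\|B_j\|$ by the operator-space axioms, so $\sum_j\|I_n\otimes A_j\|_n\ts\|I_n\otimes B_j\|_n=\sum_j\|A_j\|\ts\|B_j\|=1$ and the resulting bound $\|w'_n\|\le O(\delta)$, together with every $O(\delta)$, $O(\delta^2)$, $O(\eps\delta)$ estimate in \eqref{g1_calc}, comes out with a constant independent of $n$. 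The same applies to $w''_n$, which satisfies $w''_n(\hat X)=\bigl(w'_n(\hat X^\dag)\bigr)^\dag$ and for which $g_n(\hat Y^\dag,\hat X^\dag)^\dag=g_n(\hat X,\hat Y)$ since $v_n$ commutes with the involution. Hence $v^{(1)}_n$ has multiplicativity defect $\le O(\delta^2+\eps)\ts\|\hat X\|\ts\|\hat Y\|$ and approximately preserves the unit (Proposition~\ref{prop_delta_hominc}), so $v^{(1)}$ is an extended $O(\delta^2+\eps)$-homomorphism with $\|v^{(1)}_n-v_n\|\le O(\delta)$ uniformly in $n$.

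Finally I would iterate this as in Lemma~\ref{lem_approx}: with $\delta_1=O(\delta^2+\eps)$ and $\delta_{s+1}=O(\delta_s^2+\eps)$ the sequence reaches $\delta_s=O(\eps)$ after finitely many steps when $\eps>0$, and $\tilde v=\lim_s v^{(s)}$ is an extended homomorphism when $\eps=0$. Telescoping the corrections yields $\|\tilde v_n(X)-v_n(X)\|\le\sum_s\|v^{(s+1)}_n(X)-v^{(s)}_n(X)\|\le O(\delta)\ts\|X\|$ for all $X\in\Ma{n}\otimes\calB$, with constant independent of $n$, which is what the lemma asserts. The main point of care --- and the only real obstacle --- is recognizing that the cohomological correction survives tensoring even though $D_n$ is not a genuine diagonal of $\Ma{n}\otimes\calB$: this works because \eqref{g1_calc} only ever invokes $\pi(D_n)=I$ and a one-sided ``$XD=DX$''-type identity, both of which descend from the diagonal of $\calB$, and everything else is bookkeeping to confirm uniformity in $n$.
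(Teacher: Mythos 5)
Your proposal is correct and takes essentially the same route as the paper: both use the diagonal $D=\sum_j A_j\otimes B_j$ of $\calB$ to define the correction $w'_n=1_{\Ma{n}}\otimes w'$ and verify (by matrix elements, equivalently by simple-tensor expansion) the one-sided identity $\sum_j v_n\bigl(X(I_n\otimes A_j)\bigr)g_n(I_n\otimes B_j,Y)=\sum_j v_n(I_n\otimes A_j)g_n\bigl((I_n\otimes B_j)X,Y\bigr)$ needed for \eqref{g1_calc} to go through uniformly in $n$. Your explicit remark that $D_n$ fails to be a diagonal of $\Ma{n}\otimes\calB$ yet the two weaker identities actually invoked in the cohomological computation survive tensoring is exactly the point the paper's proof rests on.
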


\begin{proof} Adapting the proof of Lemma~\ref{lem_approx}, we consider the multiplicativity defect of $v_n$:
\begin{equation}
g_n\colon (\Ma{n}\otimes\calB)\times(\Ma{n}\otimes\calB)
\to \Ma{n}\otimes\calA,\qquad g_n(X,Y)=v_n(XY)-v_n(X)v_n(Y),
\end{equation}
It satisfies the approximate 2-cocycle equation because $\Ma{n}\otimes\calA$ is an $\eps$-$C^*$ algebra. Using the diagonal $D=\sum_{j}A_j\otimes B_j \in\calB\otimes\calB$, we define the map $w_n'\colon \Ma{n}\otimes\calB \to\Ma{n}\otimes\calA$ as follows:
\begin{equation}
w_n'(X) = \sum_{j}v_n(I_n\otimes A_j)\, g_n(I_n\otimes B_j,X).
\end{equation}
To proceed, we need to extend the equation $\sum_{j}v(XA_j)g(B_j,Y)=\sum_{j}v(A_j)g(B_jX,Y)$ used in the calculation \eqref{g1_calc} from $X,Y\in\calB$ to $X,Y\in\Ma{n}\otimes\calB$. The more general equation,
\begin{equation}
\sum_{j}v_n\bigl(X(I_n\otimes A_j)\bigr)\, g_n(I_n\otimes B_j,Y)
=\sum_{j}v_n(I_n\otimes A_j)\, g_n\bigl((I_n\otimes B_j)X,Y\bigr),
\end{equation}
is derived by considering the matrix elements:
\begin{align*}
&\biggl[\sum_{j}v_n(X(I_n\otimes A_j))\,
g_n((I_n\otimes B_j),Y)\biggr]_{kl}
=\sum_{j}\sum_{p} v([X]_{kp}A_j)\, g(B_j,[Y]_{pl})\\
&\hspace{2cm}=\sum_{j}\sum_{p} v(A_j)\, g(B_j[X]_{kp},[Y]_{pl})
=\biggl[\sum_{j}v_n(I_n\otimes A_j)\,
g_n((I_n\otimes B_j)X,Y)\biggr]_{kl}.
\end{align*}
Here we have used the property of the diagonal $\sum_{j}ZA_j\otimes B_j=\sum_{j}A_j\otimes B_jZ$ for $Z=[X]_{kp}$. The rest of the proof is the same as for Lemma~\ref{lem_approx}.
\end{proof}

\begin{Theorem}\label{th_main_ext}
For any finite-dimensional extended $\eps$-$C^*$ algebra $\calA$, there exist a $C^*$ algebra $\calB$ and an extended $O(\eps)$-isomorphism $v\colon\calB\to\calA$. (The implicit constant in $O(\eps)$ does not depend on $\calA$ or its dimensionality.)
\end{Theorem}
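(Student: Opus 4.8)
The plan is to re-run the three-stage proof of Theorem~\ref{th_main} almost verbatim, with ``$\eps$-$C^*$ algebra'' replaced by ``extended $\eps$-$C^*$ algebra'' and every ``$\delta$-inclusion'' / ``$\delta$-isomorphism'' by its extended counterpart, supplying the extended form of each lemma from Sections~\ref{sec_projection}--\ref{sec_proof_main} that the argument invokes. The mechanism making this routine is one observation: if $P,Q\in\calA$ are $\delta$-projections, then $\Co_{P,Q}=\theta(\La_P\Ra_Q+\Ra_Q\La_P-1)$ is assembled from the multiplication operators $\La_P,\Ra_Q$ through the operator calculus, so its ampliation satisfies $1_{\Ma{m}}\otimes\Co_{P,Q}=\Co_{I_m\otimes P,\,I_m\otimes Q}$ on $\Ma{m}\otimes\calA$ (and $I_m\otimes P$ is a $\delta$-projection there, since $\|(I_m\otimes P)^2-I_m\otimes P\|_m=\|P^2-P\|$). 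Hence $\Ma{m}\otimes\calS_{P,Q}=\calS_{I_m\otimes P,\,I_m\otimes Q}$ with the norm inherited from $\Ma{m}\otimes\calA$, and the compressed product turns $\calS_P$ into an extended $\eps'$-$C^*$ algebra with $\eps'=O(\eps)$ uniform in $m$. Because the norm and bijectivity estimates in Lemmas~\ref{lem_alpha}, \ref{lem_PQ_Hilb}--\ref{lem_1d_proj}, \ref{lem_merging}--\ref{lem_extension} use only the $\eps$-$C^*$ axioms and the operator calculus, applying each of them to all $\Ma{m}\otimes\calA$ simultaneously yields its extended form with the same constants.

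I would then record extended error reduction. The extended analogues of Proposition~\ref{prop_delta_hominc} and Lemma~\ref{lem_approx} are exactly Proposition~\ref{prop_inc_ext} and Lemma~\ref{lem_approx_ext}; combining them gives an extended Corollary~\ref{cor_improvement}: an extended $\delta_{\max}$-inclusion of a finite-dimensional $C^*$ algebra $\calB$ into an extended $\eps$-$C^*$ algebra $\calA$ can be replaced by an extended $c_0\eps$-inclusion, bijective if the original was. Uniformity is automatic: Lemma~\ref{lem_approx_ext} first produces an extended $O(\eps)$-homomorphism $\tilde v$ with $\|\tilde v_m(X)-v_m(X)\|\le O(\delta)\|X\|$, which is therefore bounded below by $1-O(\delta_{\max}+\eps)$ already on $\Ma{1}\otimes\calB=\calB$; Proposition~\ref{prop_inc_ext} then propagates this bound to every $\Ma{m}\otimes\calB$ with an $m$-independent constant, while the upper bound $\|\tilde v_m\|\le 1+O(\eps)$ follows from applying Proposition~\ref{prop_delta_hominc} to each $\tilde v_m$.

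For the extension step (Lemma~\ref{lem_extension}) the one genuinely new point is the operator-space structure of $\calS_{P,Q}$ when $Q$ is one-dimensional: I claim it is $(1+O(\delta+\eps))$-isomorphic, as an operator space, to the column Hilbert space on the Hilbert space $(\calS_{P,Q},\braket{\cdot}{\cdot})$ of Lemma~\ref{lem_PQ_Hilb}. Indeed, for $X\in\Ma{m}\otimes\calS_{P,Q}=\calS_{I_m\otimes P,\,I_m\otimes Q}$ the compressed product $X^\dag\cdot X$ lies in $\Ma{m}\otimes\calS_{Q,Q}=\Ma{m}\otimes\CC\wt Q\cong\Ma{m}$ and equals, up to an $O(\delta+\eps)\|X\|^2$ error, the Gram matrix of the ``columns'' of $X$ with respect to $\braket{\cdot}{\cdot}$; hence $\|X\|_m^2=\|X^\dag\cdot X\|_m$ agrees up to a $1+O(\delta+\eps)$ factor with the operator norm of that Gram matrix. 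Granting this, in the proof of Lemma~\ref{lem_extension} the maps $\gamma_{12}=U_1$ and $\gamma_{21}=(U_1(\cdot^\dag))^\dag$ ampliate to complete isometries (both sides carry a Hilbert-space operator structure and $U_1$ is a Hilbert-space unitary), $\gamma_{22}\colon c\mapsto c\wt Q$ ampliates to $C\mapsto C\otimes\wt Q$ of norm $\|C\|\,\|\wt Q\|$, and $\gamma_{11}=v$ is extended by hypothesis; feeding these into the extended Lemma~\ref{lem_merging} (immediate, since the bijection $\alpha$ of Lemma~\ref{lem_alpha} is built from compression maps and hence ampliates correctly) produces an extended $O(\delta+\eps)$-isomorphism $v_+\colon\Ma{n+1}\to\calA$.

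The last step is to run the three stages of the proof of Theorem~\ref{th_main} with the extended lemmas. Stage~1 gives an extended $c_0\eps$-inclusion of a maximal commutative $C^*$ algebra $\calB_\comm$; one-dimensionality of the projections $P_j=v_\comm(\Pi_j)$ follows verbatim, the required nontrivial $O(\eps)$-projection being supplied by the \emph{ordinary} Lemma~\ref{lem_nontriv_projection} applied inside $\calS_{P_m}$ (whatever projection it returns is automatically a $\delta$-projection in the operator-space sense). Stages~2 and~3 use the extended extension and merging lemmas together with extended error reduction, giving the desired extended $c_0\eps$-isomorphism $v\colon\calB\to\calA$. I expect the column-Hilbert-space identification of the third paragraph to be the only place calling for a short independent argument; everything else reduces to checking that tensoring with $\Ma{m}$ commutes with each construction, which holds because the constructions are built from multiplication operators, the holomorphic calculus, and direct sums.
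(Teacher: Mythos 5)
Your proposal is correct and follows the paper's own route: observe that $\delta$-projections, the compression maps $\Co_{P,Q}$, the subspaces $\calS_{P,Q}$, and all the lemmas built from them ampliate to $\Ma{m}\otimes\calA$ with uniform constants, use Lemma~\ref{lem_approx_ext} and Proposition~\ref{prop_inc_ext} for extended error reduction, and then rerun the three stages of the proof of Theorem~\ref{th_main}. The one place where you add visible detail is the explicit column-Hilbert-space identification of $\calS_{P,Q}$ for $Q$ one-dimensional; this is exactly what the paper's equations \eqref{1dQ_ip_ext}--\eqref{1dQ_ip_norm_ext} encode (there $\|X\|_{n,1}$ should be read as $\|X\|_{n,1}^2$), and your Gram-matrix computation of $\|X\|_m$ via $X^\dag\cdot X\in\Ma{m}\otimes\CC\wt Q$ and the $\eps$-$C^*$ axiom is a clean way to justify it.
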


\begin{proof} We adapt the proof of the main theorem. All new objects, such as nontrivial $\delta$-projections, are constructed for the original algebra $\calA$, but we need to show that their tensor extensions have similar properties. In the case of $\delta$-projections, that is straightforward: if $P\in\calA$ satisfies the conditions $P^\dag=P$ and $\|P^2-P\|\le\delta$, then so does the element $I_n\otimes P\in\Ma{n}\otimes\calA$ because the map $P\mapsto I_n\otimes P$ commutes with the involution and the multiplication as well as preserving the norm. Once such properties are established, one can apply the original lemmas to the $\eps$-$C^*$ algebra $1_{\Ma{n}}\otimes\calA$. We now discuss the required extensions of the arguments in Section \ref{sec_subspaces}.\medskip

\noindent\textbf{Compression maps and subspaces associated with projections.} The map $\Co_{P,Q}\colon\calA\to\calA$ is extended to $1_{\Ma{n}}\otimes\Co_{P,Q}=\Co_{I_n\otimes P,\,I_n\otimes Q}$. This new map has the same properties as the original one because $I_n\otimes P$ and $I_n\otimes Q$ are $\delta$-projections in the $\eps$-$C^*$ algebra $1_{\Ma{n}}\otimes\calA$. The subspace $\calS_{P,Q}$ is extended to $\Ma{n}\otimes\calS_{P,Q}=\calS_{I_n\otimes P,\,I_n\otimes Q}$.\medskip

\noindent\textbf{One-dimensional projections and the corresponding Hilbert spaces.} Lemma~\ref{lem_PQ_Hilb} says that if $P,Q\in\calA$ are $\delta$-projections and $Q$ is one-dimensional, then the space $\calS_{P,Q}$ is equipped with the Hermitian inner product given by \eqref{1dQ_ip}. The extended version of this space, $\CC^n\otimes\calS_{P,Q}=\Ma{n,1}\otimes\calS_{P,Q}$, satisfies essentially the same equation:
\begin{equation}\label{1dQ_ip_ext}
Y^\dag\cdot X=\braket{Y}{X}\,\wt{Q}\qquad (X,Y\in\Ma{n,1}\otimes\calS_{P,Q}),
\end{equation}
where $\braket{Y}{X}=\sum_{l}\braket{[Y]_{l1}}{[X]_{l1}}$. It is still true that
\begin{equation}\label{1dQ_ip_norm_ext}
\bigl|\braket{X}{X}-\|X\|_{n,1}\bigr| \le O(\delta+\eps)\ts\|X\|_{n,1},\qquad
(X\in\Ma{n,1}\otimes\calS_{P,Q}),
\end{equation}
and hence, $\|X\|_\Euc=\sqrt{\braket{X}{X}}$ is equal to $\|X\|$ up to a $1\pm O(\eps+\delta)$ factor. The map $\Ha^{Q}_{P,R}$ defined by equation \eqref{Ha_def} extends to $1_{\Ma{n}}\otimes\Ha^{Q}_{P,R}\colon \Ma{n}\otimes\calS_{P,R}\to\Ma{n}\otimes\Bo(\calS_{R,Q},\calS_{P,Q})$, where the target space may be identified with $\Bo(\CC^n\otimes\calS_{R,Q},\CC^n\otimes\calS_{P,Q})$. Equations \eqref{Ha_dag}, \eqref{Ha_prod}, and the special properties of $\Ha^{Q}_{P,P}$, $\Ha^{Q}_{P,Q}$, and $\Ha^{Q}_{Q,P}$ are generalized in a straightforward way.\medskip

Corollary~\ref{cor_improvement} (error reduction) should be adapted to extended inclusions using Lemma~\ref{lem_approx_ext} and Proposition~\ref{prop_inc_ext}. The arguments in Section~\ref{sec_proof_main} require only trivial modifications, namely, one should use the norms $\|\cdot\|_n$ in certain places.
\end{proof}

\section{Details on unital completely positive  maps}\label{sec_channels}

Here we give some background on unital completely positive maps and derive the properties of idempotent UCP maps that were mentioned in Section~\ref{sec_ch_intro} without proofs. Our main focus is on the finite-dimensional case. Some results (in this and the next sections) can be extended to infinite dimensions with a bit of work, but we discuss such generalizations separately so as not to complicate the basic arguments.

\subsection{The Choi and Stinespring representations}

If $\calK$ and $\calH$ are finite-dimensional Hilbert spaces, then any UCP map $\Phi\colon\Bo(\calK)\to\Bo(\calH)$ admits a \emph{Choi representation}
\begin{equation}\label{Choi}
\Phi(X)=V^\dag(X\otimes 1_\calF)V,\qquad
V^{\dag}V=1_{\calH},\qquad
\text{where}\quad
V\colon\calH\to\calK\otimes\calF.
\end{equation}
It can be constructed in a standard way using the auxiliary space $\calF=\calK^*\otimes\calH$. (Choi's original construction~\cite{Cho75} was formulated in terms of matrices, but it does have the form $V^\dag(X\otimes 1_\calF)V$.) Equation \eqref{Choi} can also be represented graphically, using the vertical dimension for the tensor product:
\begin{equation}\label{Choi1}
\Phi(X)= \begin{tikzpicture}
\matrix[Choi]
{
  \coordinate (l); &
  \node[bdot] (Vd) {}; &
  \node[circ] (X) {$X$}; &
  \node[bdot] (V) {}; &
  \coordinate (r); \\
};
\draw (l) -- (Vd) -- (X) -- (V) -- (r);
\draw (l) -- (Vd) -- (X) -- (V) -- (r);
\draw (Vd) to[bridge] (V);
\end{tikzpicture}\,,\qquad
\begin{tikzpicture}
\matrix[Choi]
{
  \coordinate (l); &
  \node[bdot] (Vd) {}; &[0.3cm]
  \node[bdot] (V) {}; &
  \coordinate (r); \\
};
\draw (l) -- (Vd) -- (V) -- (r);
\draw (l) -- (Vd) -- (V) -- (r);
\draw (Vd) to[bridge=0.35cm] (V);
\end{tikzpicture}
=\begin{tikzpicture}
\draw (-0.4,0) -- (0.4,0);
\end{tikzpicture}\,,\qquad
\text{where}\quad
\begin{tikzpicture}
\matrix[Choi,matrix anchor=V.center]
{
  \coordinate[label=left:$\scriptstyle\calF$] (m1); & & \\[-0.5mm]
  \coordinate[label=left:$\scriptstyle\calK$] (m); &
  \node[bdot] (V) {}; &
  \coordinate[label=right:$\scriptstyle\calH$] (r); \\
};
\draw (m) -- (V) -- (r);
\draw (m1) to[ramp from] (V);
\end{tikzpicture} =V.
\end{equation}

In infinite dimensions, even special UCP maps $\Bo(\calK)\to\CC$, i.e.\ states on $\Bo(\calK)$, are not necessarily representable in that way. For example, a state may vanish on all compact operators, which precludes its representation by a unit vector $V\in\calK\hotimes\calF$. That said, any state can be represented by a unit vector in some Hilbert space via the Gelfand-Naimark-Segal (GNS) construction.

Let $\calA$ be a $C^*$ algebra, $I$ its unit, $\calH$ a Hilbert space, and $\Phi\colon\calA\to\Bo(\calH)$ a UCP map. A \emph{Stinespring representation} of $\Phi$ has the general form
\begin{equation}\label{Stinespring_def}
\Phi(X)=V^\dag\, u(X)\, V,
\end{equation}
where $u\colon\calA\to\Bo(\calG)$ is a $*$-homomorphism and $V\colon\calH\to\calG$ is an isometry. The \emph{canonical Stinespring representation} \cite{Sti55}, \cite[Theorem~4.1]{Paulsen} is a generalization of the GNS construction. It uses the space $\wt{\calG}=\calA\otimes\calH$ with the possibly degenerate inner product\footnote{On the right-hand side, vectors are interpreted as linear maps from $\CC$. Thus, $\xi\colon\CC\to\calH$ and $\eta^\dag\colon\calH\to\CC$.}
\begin{equation}
\braket{B\otimes\eta}{A\otimes\xi}=\eta^\dag\,\Phi(B^\dag A)\,\xi\qquad
(A,B\in\calA,\quad \xi,\eta\in\calH).
\end{equation}
The space $\calG$ is the completion of the quotient of $\wt{G}$ by the null subspace, and $u$ and $V$ are defined as follows:
\begin{equation}
u(C)\ts(A\otimes\xi)=CA\otimes\xi,\qquad V\xi=I\otimes\xi.
\end{equation}

\begin{Proposition}\label{prop_KLHG}
Let $J\colon\calL\to\calK$ be an isometry, $w\colon\Bo(\calL)\to\Bo(\calH)$ a $*$-homomorphism, and let $(\calG,u,V)$ be the canonical Stinespring representation of the UCP map $\Phi\colon X\mapsto w(J^\dag XJ)$. Then the following diagrams commute:
\begin{equation}
\begin{tikzcd}
\Bo(\calK) \arrow[d,swap,"X\mapsto J^\dag XJ\,"]
\arrow[r,"u"] \arrow[dr,"\Phi"] &
\Bo(\calG) \arrow[d,"\,X\mapsto V^\dag XV"] \\
\Bo(\calL) \arrow[r,"w"] & \Bo(\calH)
\end{tikzcd}
\qquad\quad
\begin{tikzcd}
\Bo(\calK) \arrow[d,leftarrow,swap,"Y\mapsto JYJ^\dag\,"] \arrow[r,"u"] &
\Bo(\calG) \arrow[d,leftarrow,"\,Y\mapsto VYV^\dag"] \\
\Bo(\calL) \arrow[r,"w"] & \Bo(\calH)
\end{tikzcd}
\end{equation}
\end{Proposition}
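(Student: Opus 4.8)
The plan is to argue directly from the explicit form of the canonical Stinespring triple $(\calG,u,V)$ recalled above: $\calG$ is the completion of $(\Bo(\calK)\otimes\calH)/\calN$, where $\calN$ is the null subspace of the sesquilinear form with $\braket{B\otimes\eta}{A\otimes\xi}=\eta^\dag\Phi(B^\dag A)\xi$, and, writing $[A\otimes\xi]$ for the class of $A\otimes\xi$, we have $u(C)[A\otimes\xi]=[CA\otimes\xi]$ and $V\xi=[1_\calK\otimes\xi]$. The classes $[A\otimes\xi]$ span a dense subspace of $\calG$ by construction, so operator identities on $\calG$ can be tested on these vectors, and equalities between such vectors can be tested by pairing against the family $[B\otimes\eta]$. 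A one-line computation gives the adjoint of $V$: from $\braket{\eta}{V^\dag[A\otimes\xi]}=\braket{V\eta}{[A\otimes\xi]}=\eta^\dag\Phi(A)\xi$ we get $V^\dag[A\otimes\xi]=\Phi(A)\xi=w(J^\dag AJ)\xi$; in particular $V^\dag u(X)V=\Phi(X)$ for all $X\in\Bo(\calK)$, which is the usual Stinespring property. Together with the definition $\Phi(X)=w(J^\dag XJ)$, this already settles the first diagram: the upper-right triangle reads $V^\dag u(X)V=\Phi(X)$ and the lower-left triangle reads $w(J^\dag XJ)=\Phi(X)$, so both triangles — and hence the square $V^\dag u(X)V=w(J^\dag XJ)$ — commute.

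For the second diagram the content is the identity $u(JYJ^\dag)=V\,w(Y)\,V^\dag$ on $\calG$, for every $Y\in\Bo(\calL)$. I would first prove the special case $Y=1_\calL$, i.e.\ $u(JJ^\dag)=VV^\dag$. Both sides are projections in $\Bo(\calG)$: $u(JJ^\dag)$ because $JJ^\dag$ is a Hermitian idempotent in $\Bo(\calK)$ (using $J^\dag J=1_\calL$) and $u$ is a $*$-homomorphism, and $VV^\dag$ because $V$ is an isometry. To see they agree, compute on a spanning vector: $u(JJ^\dag)[A\otimes\xi]=[JJ^\dag A\otimes\xi]$, while $VV^\dag[A\otimes\xi]=V\bigl(w(J^\dag AJ)\xi\bigr)=[1_\calK\otimes w(J^\dag AJ)\xi]$; pairing either one against $[B\otimes\eta]$ yields $\eta^\dag\,w(J^\dag B^\dag J)\,w(J^\dag AJ)\,\xi$, using only the definition of $\Phi$ and multiplicativity of $w$. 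Granting $u(JJ^\dag)=VV^\dag$, and writing $JYJ^\dag=(JJ^\dag)(JYJ^\dag)(JJ^\dag)$ (again from $J^\dag J=1_\calL$), I obtain
\begin{align*}
u(JYJ^\dag) &= u(JJ^\dag)\,u(JYJ^\dag)\,u(JJ^\dag)
= VV^\dag\,u(JYJ^\dag)\,VV^\dag\\
&= V\bigl(V^\dag u(JYJ^\dag)V\bigr)V^\dag
= V\,w(J^\dag JYJ^\dag J)\,V^\dag = V\,w(Y)\,V^\dag,
\end{align*}
where the penultimate equality is the Stinespring property applied to $X=JYJ^\dag$ and the last one is $J^\dag J=1_\calL$. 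This is precisely the commutativity of the square in the second diagram.

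I do not anticipate a genuine obstacle: everything reduces to the Stinespring relation $V^\dag u(X)V=\Phi(X)$ together with the isometry identity $J^\dag J=1_\calL$, and the intermediate fact $u(JJ^\dag)=VV^\dag$. The only points that take a little care are the derivation of the formula for $V^\dag$ and this intermediate identity (one could alternatively skip it and verify $u(JYJ^\dag)[A\otimes\xi]$ and $Vw(Y)V^\dag[A\otimes\xi]$ directly by pairing against every $[B\otimes\eta]$ — the same bookkeeping computation), plus the routine observation that the vectors $[A\otimes\xi]$ are dense in $\calG$, so checking the identities on them suffices.
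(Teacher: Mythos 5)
Your proof is correct, and for the second diagram it takes a genuinely different route from the paper. The paper defines $s'(Y)=u(JYJ^\dag)$ and $s''(Y)=Vw(Y)V^\dag$ and proves $s'=s''$ by computing both $\braket{B\otimes\eta}{u(JYJ^\dag)(A\otimes\xi)}$ and $\braket{B\otimes\eta}{VZV^\dag(A\otimes\xi)}$ for \emph{arbitrary} $Z\in\Bo(\calH)$; the second computation is done first for rank-one $Z$ and then extended to general $Z$ by taking a weak-operator limit of a net of finite-rank operators. Your proof isolates the single unital case $u(JJ^\dag)=VV^\dag$, which is verified by one pairing computation on the dense span of $[A\otimes\xi]$, and then derives the general identity $u(JYJ^\dag)=Vw(Y)V^\dag$ purely algebraically from the idempotent factorization $JYJ^\dag=(JJ^\dag)(JYJ^\dag)(JJ^\dag)$, the multiplicativity of $u$, the Stinespring relation $V^\dag u(X)V=\Phi(X)=w(J^\dag XJ)$, and $J^\dag J=1_\calL$. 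This bypasses the net argument entirely: you never need to evaluate $VZV^\dag$ for arbitrary $Z$, only $VV^\dag$ and $V^\dag u(\cdot)V$. In finite dimensions the limit step in the paper is harmless, but the proposition is stated at a level of generality (the paper even remarks the construction applies beyond finite dimensions) where your reduction is cleaner and slightly more elementary. One small remark: your pairing check establishes equality of two vectors in the (possibly degenerate) pre-Hilbert space by showing their difference is orthogonal to the total set $[B\otimes\eta]$; you flag this correctly, and since both $u(JJ^\dag)$ and $VV^\dag$ are bounded (indeed projections), density then gives equality of the operators.
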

The commutativity of the first diagram follows immediately from the hypothesis and the general form \eqref{Stinespring_def} of a Stinespring representation. The second diagram, specific to the canonical Stinespring representation, is not necessary to achieve our goals but offers some convenience.

\begin{proof}
The commutativity of the second diagram means that these maps are equal:
\begin{equation}
s',s''\colon \Bo(\calL)\to\Bo(\calG),\qquad\:
s'(Y)=u(JYJ^\dag),\qquad s''(Y)=V\ts w(Y)\ts V^\dag.
\end{equation}
To prove that $s'=s''$, it is sufficient to show that
\[
\bbraket{B\otimes\eta}{u(JYJ^\dag)\ts(A\otimes\xi)}
=\bbraket{B\otimes\eta}{V\ts w(Y)\ts V^\dag (A\otimes\xi)}
\]
for all $A,B\in\Bo(\calK)$,\,\, $\xi,\eta\in\calH$, and $Y\in\Bo(\calL)$. The expression with $u(JYJ^\dag)$ is easy to evaluate:
\begin{equation}\label{uJYJd}
\begin{aligned}
\bbraket{B\otimes\eta}{u(JYJ^\dag)\ts(A\otimes\xi)}
&=\eta^\dag\, \Phi(B^\dag JYJ^\dag A)\, \xi
=\eta^\dag\, w(J^\dag B^\dag JYJ^\dag A J)\, \xi\\[2pt]
&=\eta^\dag\, w(J^\dag B^\dag J)\, w(Y)\, w(J^\dag AJ)\, \xi
=\eta^\dag\, \Phi(B^\dag)\, w(Y)\, \Phi(A)\, \xi.
\end{aligned}
\end{equation}
To evaluate $\braket{B\otimes\eta}{VZV^\dag(A\otimes\xi)}$ for an arbitrary $Z\in\Bo(\calH)$, we represent $Z$ as the limit of some net $(Z_\alpha)$ in the weak operator topology such that each $Z_\alpha$ has finite rank. For a rank~$1$ operator, $Z=\mu\nu^\dag$, the calculation is straightforward:
\[
\bbraket{B\otimes\eta}{V\mu\nu^\dag V^\dag(A\otimes\xi)}
=\braket{B\otimes\eta}{I\otimes\mu}\,\braket{I\otimes\nu}{A\otimes\xi}
=\eta^\dag\,\Phi(B^\dag)\, \mu\nu^\dag\, \Phi(A)\,\xi.
\]
Extending to finite rank operators by linearity and to arbitrary $Z\in\Bo(\calH)$ using the limit, we get
\begin{equation}\label{VwYVd}
\bbraket{B\otimes\eta}{VZV^\dag(A\otimes\xi)}
=\eta^\dag\,\Phi(B^\dag)\, Z\, \Phi(A)\,\xi.
\end{equation}
It remains to substitute $w(Y)$ for $Z$ and see that the right-hand sides of \eqref{uJYJd} and \eqref{VwYVd} are equal.
\end{proof}

\subsection{Some other properties of UCP maps}

The following inequality holds for any UCP map $\Phi\colon\calA\to\calB$ between $C^*$ algebras:
\begin{equation}\label{PhiXdX}
\Phi(X^\dag)\,\Phi(X)\le \Phi(X^\dag X).
\end{equation}
(It means that the right-hand side minus the left-hand side is a positive element of $\calB$.) Indeed, consider the  matrix
\[
Z=\begin{pmatrix} X^\dag X & X^\dag\\ X & I_\calA \end{pmatrix}
=\begin{pmatrix} X^\dag\\ I_\calA \end{pmatrix}
\begin{pmatrix} X & I_\calA \end{pmatrix}
\ge 0,
\]
which may be regarded as an element of $\Ma{2}\otimes\calA$. (Recall that $\Ma{n}=\Bo(\CC^n)$ denotes the algebra of matrices with complex entries.) Since $\Phi$ is completely positive, we have $(1_{\Ma{2}}\otimes\Phi)(Z)\ge 0$, and hence,
\[
\begin{aligned}
0 &\le\begin{pmatrix} -I_\calB & \Phi(X^\dag) \end{pmatrix}\,
(1_{\Ma{2}}\otimes\Phi)(Z)\,
\begin{pmatrix} -I_\calB \\ \Phi(X) \end{pmatrix}
=\begin{pmatrix} -I_\calB & \Phi(X^\dag) \end{pmatrix}
\begin{pmatrix}
\Phi(X^\dag X) & \Phi(X^\dag)\\ \Phi(X) & \Phi(I_\calA)
\end{pmatrix}
\begin{pmatrix} -I_\calB \\ \Phi(X) \end{pmatrix}
\\
&=\Phi(X^\dag X)-\Phi(X^\dag)\,\Phi(X).
\end{aligned}
\]
Another general inequality is $\|\Phi(X)\|\le\|X\|$. It follows from \eqref{PhiXdX} and the positivity of $\Phi$, which implies that $\Phi(X^\dag X)\le \Phi(\|X\|^2I_\calA) =\|X\|^2I_\calB$. Replacing $\Phi$ with $1_{\Ma{n}}\otimes\Phi$ for an arbitrary $n$, we conclude that
\begin{equation}
\|\Phi\|_\cb \le 1.
\end{equation}

Let $\calK$ and $\calH$ be finite-dimensional Hilbert spaces, and $\Phi\colon\Bo(\calK)\to\Bo(\calH)$ a UCP map. The \emph{carrier} of $\Phi$ is the support of $\Phi^*(\rho_0)$, where $\rho_0$ is an arbitrary full-rank density matrix on $\calH$.

\begin{Lemma}\label{lem_carrier}
The carrier of a UCP map $\Phi\colon\Bo(\calK)\to\Bo(\calH)$ with the Choi representation $(\calF,V)$ is the smallest subspace $\calM\subseteq\calK$ such that $\calM\otimes\calF$ contains the image of $V$. In other words, an arbitrary subspace $\calM'\subseteq\calK$ contains the carrier if and only if $\calM'\otimes\calF\supseteq\Img V$.
\end{Lemma}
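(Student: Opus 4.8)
The plan is to make the dual map $\Phi^*$ concrete via the Choi representation and then translate the whole statement into an assertion about partial traces of the projection onto $\Img V$. Using the trace pairing (so that $\Bo(\calH)^*\cong\Bo(\calH)$), for any density matrix $\rho$ on $\calH$ and any $X\in\Bo(\calK)$ one has $\Tr\bigl(\Phi^*(\rho)X\bigr)=\Tr\bigl(\rho\,\Phi(X)\bigr)=\Tr\bigl(V\rho V^\dag(X\otimes 1_\calF)\bigr)$, whence $\Phi^*(\rho)=\Tr_\calF\bigl(V\rho V^\dag\bigr)$. If $\rho_0$ has full rank, write $a\,1_\calH\le\rho_0\le b\,1_\calH$ with $a>0$; conjugating by the isometry $V$ and applying the positive linear map $\Tr_\calF$ shows that $\Phi^*(\rho_0)=\Tr_\calF(V\rho_0V^\dag)$ is squeezed between positive multiples of $\Tr_\calF(P)$, where $P=VV^\dag$ is the orthogonal projection onto $\Img V$. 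Hence $\Phi^*(\rho_0)$ and $\Tr_\calF(P)$ have the same support; in particular the carrier is independent of the choice of $\rho_0$, and we may set $\calM_0=\operatorname{supp}\Tr_\calF(P)$.

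Next I would handle the containment criterion. Let $\calM'\subseteq\calK$ have orthogonal projection $Q\in\Bo(\calK)$. Then $\Img V\subseteq\calM'\otimes\calF$ is equivalent to $(Q\otimes 1_\calF)P=P$, i.e.\ to $\bigl((1_\calK-Q)\otimes 1_\calF\bigr)P=0$; since $P\bigl(1_{\calK\otimes\calF}-Q\otimes 1_\calF\bigr)P$ is a positive operator and its norm equals the norm squared of $\bigl((1_\calK-Q)\otimes 1_\calF\bigr)P$, this is equivalent to $\Tr\bigl(P(1_{\calK\otimes\calF}-Q\otimes 1_\calF)\bigr)=0$. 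Using the defining property of the partial trace, $\Tr\bigl(P(Q\otimes 1_\calF)\bigr)=\Tr\bigl((\Tr_\calF P)\,Q\bigr)$, the condition becomes $\Tr\bigl(\Tr_\calF(P)\,(1_\calK-Q)\bigr)=0$, which, again by positivity of the operators involved, holds if and only if $\operatorname{supp}\Tr_\calF(P)\subseteq\calM'$, i.e.\ $\calM_0\subseteq\calM'$. This is precisely the claimed equivalence, and taking $\calM'=\calM_0$ shows that $\calM_0\otimes\calF\supseteq\Img V$, so $\calM_0$ is indeed the smallest such subspace.

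I do not expect a real obstacle here; the one point that deserves care is the fact that the support of the partial trace of a positive operator is determined by the support (range) of that operator — it is this observation that simultaneously gives the independence of the carrier from the choice of $\rho_0$ and powers the final ``if and only if''. The remaining manipulations are routine bookkeeping with the Choi representation and with partial traces.
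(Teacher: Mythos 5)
Your proof is correct and uses the same core positivity/trace-vanishing argument as the paper, merely packaged on the dual (state) side: you compute $\Phi^*(\rho)=\Tr_\calF(V\rho V^\dag)$ and identify the carrier with $\operatorname{supp}\Tr_\calF(VV^\dag)$, whereas the paper stays on the observable side, reducing the trace condition to $\Phi(1_\calK-\Pi_{\calM'})=0$ and recognizing this operator as $Z^\dag Z$ with $Z=((1_\calK-\Pi_{\calM'})\otimes 1_\calF)V$. The two presentations are equivalent; yours has the small bonus of making explicit that the carrier is independent of the choice of full-rank $\rho_0$.
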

\begin{proof}
Let $\calM'\subseteq\calK$, and let us denote the orthogonal projection onto $\calM'$ by $\Pi_{\calM'}$. The subspace $\calM'$ contains the carrier, i.e.\ the support of $\rho_1=\Phi^*(\rho_0)$, if and only if $\Tr((1-\Pi_{\calM'})\rho_1)=0$. Since $\Tr((1-\Pi_{\calM'})\,\Phi^*(\rho_0))=\Tr(\Phi(1-\Pi_{\calM'})\,\rho_0)$, the density matrix $\rho_0$ has full rank, and $\Phi(1-\Pi_{\calM'})\ge0$, the vanishing of the trace is equivalent to the vanishing of the whole operator $\Phi(1-\Pi_{\calM'})$. The latter is equal to $Z^\dag Z$ for $Z=((1-\Pi_{\calM'})\otimes 1_\calF)V$. Thus, $\calM'$ contains the carrier if and only if $Z=0$, which is the same as $\calM'\otimes\calF\supseteq\Img V$.
\end{proof}

\begin{Corollary}\label{cor_carrier}
Let $\calM$ be the carrier of a UCP map $\Phi\colon\Bo(\calK)\to\Bo(\calH)$ with the Choi representation $(\calF,V)$. Then an operator $X\in\Bo(\calK)$ annihilates all vectors in $\calM$ if and only if $(X\otimes 1_\calF)V=0$.
\end{Corollary}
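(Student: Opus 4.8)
The plan is to deduce the statement directly from Lemma~\ref{lem_carrier}, using only the elementary fact that $\Ker(X\otimes 1_\calF)=(\Ker X)\otimes\calF$, which holds because $\calF$ is finite-dimensional: expanding a vector of $\calK\otimes\calF$ over an orthonormal basis $\{f_i\}$ of $\calF$ as $\sum_i m_i\otimes f_i$, one has $(X\otimes 1_\calF)\bigl(\sum_i m_i\otimes f_i\bigr)=\sum_i(Xm_i)\otimes f_i$, which vanishes iff every $m_i\in\Ker X$. Throughout, let $\Pi_\calM$ denote the orthogonal projection of $\calK$ onto the carrier $\calM$; then the assertion ``$X$ annihilates all vectors in $\calM$'' is the same as $\calM\subseteq\Ker X$, i.e.\ $X\Pi_\calM=0$.

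First I would treat the forward implication. By definition of the carrier (equivalently, by Lemma~\ref{lem_carrier} with $\calM'=\calM$), we have $\Img V\subseteq\calM\otimes\calF$, which says precisely that $(\Pi_\calM\otimes 1_\calF)V=V$. Hence, if $X\Pi_\calM=0$, then $(X\otimes 1_\calF)V=(X\otimes 1_\calF)(\Pi_\calM\otimes 1_\calF)V=\bigl((X\Pi_\calM)\otimes 1_\calF\bigr)V=0$.

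For the converse, suppose $(X\otimes 1_\calF)V=0$, i.e.\ $\Img V\subseteq\Ker(X\otimes 1_\calF)$. By the kernel identity above, this reads $\Img V\subseteq(\Ker X)\otimes\calF$, so the subspace $\calM'=\Ker X$ satisfies $\calM'\otimes\calF\supseteq\Img V$. Lemma~\ref{lem_carrier} then yields $\calM\subseteq\Ker X$, i.e.\ $X$ annihilates every vector of $\calM$.

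There is no genuine obstacle here: the corollary is essentially a repackaging of Lemma~\ref{lem_carrier}, with the passage between ``$(X\otimes 1_\calF)V=0$'' and ``$\calM'\otimes\calF\supseteq\Img V$'' supplied by the one-line computation of $\Ker(X\otimes 1_\calF)$. The only point worth spelling out in the write-up is that identification of the kernel in a product basis; everything else is formal manipulation of tensor products and projections.
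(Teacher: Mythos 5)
Your argument is correct and is essentially the same as the paper's: both rewrite the two conditions as $\Ker X\supseteq\calM$ and $(\Ker X)\otimes\calF\supseteq\Img V$ and then invoke Lemma~\ref{lem_carrier}, the only difference being that you split the equivalence into its two directions and make the identity $\Ker(X\otimes 1_\calF)=(\Ker X)\otimes\calF$ explicit, which the paper leaves tacit.
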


\begin{proof}
The two conditions in question can be written as $\Ker X\supseteq\calM$ and $(\Ker X)\otimes\calF\supseteq\Img V$, respectively. By Lemma~\ref{lem_carrier}, these conditions are equivalent.
\end{proof}
For another corollary, $\Phi$ is determined by its action on operators on its carrier via the equation
\begin{equation}\label{PhiX_M}
\Phi(X)=\Phi(\Pi_{\calM}X\Pi_{\calM})\qquad (X\in\Bo(\calK)).
\end{equation}
Indeed, $\Phi(X)=V^\dag(X\otimes 1_\calF)V =V^\dag(\Pi_{\calM}\otimes 1_\calF)(X\otimes 1_\calF)(\Pi_{\calM}\otimes 1_\calF)V =\Phi(\Pi_{\calM}X\Pi_{\calM})$.

\subsection{The structure of finite-dimensional idempotent UCP maps}\label{sec_idemp_structure}

Let $\calH$ be a finite-dimensional Hilbert space and $\Phi\colon\Bo(\calH)\to\Bo(\calH)$ an idempotent UCP map. The equation $\Phi^2(X)=\Phi(X)$ can be expressed diagrammatically by analogy with \eqref{Choi1}:
\begin{equation}
\begin{tikzpicture}
\matrix[Choi]
{
  \coordinate (l); &
  \node[bdot] (Vd1) {}; &
  \node[bdot] (Vd) {}; &[-2mm]
  \node[circ] (X) {$X$}; &[-2mm]
  \node[bdot] (V) {}; &
  \node[bdot] (V1) {}; &
  \coordinate (r); \\
};
\draw (l) -- (Vd1) -- (Vd) -- (X) -- (V) -- (V1) -- (r);
\draw (Vd1) to[bridge=0.6cm] (V1);
\draw (Vd) to[bridge] (V);
\end{tikzpicture}
=\begin{tikzpicture}
\matrix[Choi]
{
  \coordinate (l); &
  \node[bdot] (Vd) {}; &[-2mm]
  \node[circ] (X) {$X$}; &[-2mm]
  \node[bdot] (V) {}; &
  \coordinate (r); \\
};
\draw (l) -- (Vd) -- (X) -- (V) -- (r);
\draw (Vd) to[bridge] (V);
\end{tikzpicture}\:.
\end{equation}
The next equation follows from the previous one:
\begin{equation}\label{eq_idemp}
\begin{tikzpicture}
\matrix[Choi,matrix anchor=V.center]
{
  \coordinate (l2); &[1mm] &[-2mm] &[-2mm] & & &[1mm]; \\[-1mm]
  \coordinate (l1); & & & & & &; \\[2mm]
  \coordinate (l); &
  \node[bdot] (Vd) {}; &
  \node[circ] (X) {$X$}; &
  \node[bdot] (V) {}; &
  \node[bdot] (V1) {}; &
  \node[bdot] (V2) {}; &
  \coordinate (r); \\
};
\draw (l) -- (Vd) -- (X) -- (V) -- (V1) -- (V2) -- (r);
\draw (Vd) to[bridge] (V);
\draw (l1) to[ramp from] (V1);
\draw (l2) to[ramp from] (V2);
\end{tikzpicture}
\,=\,\begin{tikzpicture}
\matrix[Choi,matrix anchor=V.center]
{
  \coordinate (l2); &[4mm] &[-1mm] &[-2mm] &[-2mm] & &[1mm]; \\[-1mm]
  \coordinate (l1); & & & & & &; \\[2mm]
  \coordinate (l); &
  \node[bdot] (V1) {}; &
  \node[bdot] (Vd) {}; &
  \node[circ] (X) {$X$}; &
  \node[bdot] (V) {}; &
  \node[bdot] (V2) {}; &
  \coordinate (r); \\
};
\draw (l) -- (V1) -- (Vd) -- (X) -- (V) -- (V2) -- (r);
\draw (Vd) to[bridge] (V);
\draw (l1) to[ramp from] (V1);
\draw (l2) to[ramp from] (V2);
\end{tikzpicture}\:.
\end{equation}
Indeed, let $Z$ be the difference between the left-hand side and the right-hand side. Then
\[
\begin{aligned}
Z^\dag Z &
=\quad\begin{tikzpicture}
\matrix[Choi,matrix anchor=V.center]
{
  \coordinate (l); &
  \node[bdot] (Vd2) {}; &
  \node[bdot] (Vd1) {}; &
  \node[bdot] (Vda) {}; &[-1.5mm]
  \node[circ] (Xd) {$X^\dag$}; &[-1.5mm]
  \node[bdot] (Va) {}; &
  \node[bdot] (Vdb) {}; &[-2mm]
  \node[circ] (X) {$X$}; &[-2mm]
  \node[bdot] (Vb) {}; &
  \node[bdot] (V1) {}; &
  \node[bdot] (V2) {}; &
  \coordinate (r); \\
};
\draw (l) -- (Vd2) -- (Vd1) -- (Vda) -- (Xd) -- (Va)
  -- (Vdb) -- (X) -- (Vb) -- (V1) -- (V2) -- (r);
\draw (Vdb) to[bridge=0.45cm] (Vb);
\draw (Vda) to[bridge=0.5cm] (Va);
\draw (Vd1) to[bridge=0.7cm] (V1);
\draw (Vd2) to[bridge=0.9cm] (V2);
\end{tikzpicture}\,
-\,\,\begin{tikzpicture}
\matrix[Choi,matrix anchor=V.center]
{
  \coordinate (l); &
  \node[bdot] (Vd2) {}; &
  \node[bdot] (Vd1) {}; &
  \node[bdot] (Vda) {}; &[-1.5mm]
  \node[circ] (Xd) {$X^\dag$}; &[-1.5mm]
  \node[bdot] (Va) {}; &
  \node[bdot] (V1) {}; &
  \node[bdot] (Vdb) {}; &[-2mm]
  \node[circ] (X) {$X$}; &[-2mm]
  \node[bdot] (Vb) {}; &
  \node[bdot] (V2) {}; &
  \coordinate (r); \\[2pt]
};
\draw (l) -- (Vd2) -- (Vd1) -- (Vda) -- (Xd) -- (Va)
  -- (V1) -- (Vdb) -- (X) -- (Vb) -- (V2) -- (r);
\draw (Vdb) to[bridge=0.45cm] (Vb);
\draw (Vda) to[bridge=0.5cm] (Va);
\draw (Vd1) to[bridge=0.7cm] (V1);
\draw (Vd2) to[bridge=0.9cm] (V2);
\end{tikzpicture}
\\[2pt]
&\quad-\,\begin{tikzpicture}
\matrix[Choi,matrix anchor=V.center]
{
  \coordinate (l); &
  \node[bdot] (Vd2) {}; &
  \node[bdot] (Vda) {}; &[-1.5mm]
  \node[circ] (Xd) {$X^\dag$}; &[-1.5mm]
  \node[bdot] (Va) {}; &
  \node[bdot] (Vd1) {}; &
  \node[bdot] (Vdb) {}; &[-2mm]
  \node[circ] (X) {$X$}; &[-2mm]
  \node[bdot] (Vb) {}; &
  \node[bdot] (V1) {}; &
  \node[bdot] (V2) {}; &
  \coordinate (r); \\
};
\draw (l) -- (Vd2) -- (Vda) -- (Xd) -- (Va) -- (Vd1)
  -- (Vdb) -- (X) -- (Vb) -- (V1) -- (V2) -- (r);
\draw (Vdb) to[bridge=0.45cm] (Vb);
\draw (Vda) to[bridge=0.5cm] (Va);
\draw (Vd1) to[bridge=0.7cm] (V1);
\draw (Vd2) to[bridge=0.9cm] (V2);
\end{tikzpicture}\,
+\,\:\begin{tikzpicture}
\matrix[Choi,matrix anchor=V.center]
{
  \coordinate (l); &
  \node[bdot] (Vd2) {}; &
  \node[bdot] (Vda) {}; &[-1.5mm]
  \node[circ] (Xd) {$X^\dag$}; &[-1.5mm]
  \node[bdot] (Va) {}; &
  \node[bdot] (Vd1) {}; &[2mm]
  \node[bdot] (V1) {}; &
  \node[bdot] (Vdb) {}; &[-2mm]
  \node[circ] (X) {$X$}; &[-2mm]
  \node[bdot] (Vb) {}; &
  \node[bdot] (V2) {}; &
  \coordinate (r); \\
};
\draw (l) -- (Vd2) -- (Vda) -- (Xd) -- (Va) -- (Vd1)
  -- (V1) -- (Vdb) -- (X) -- (Vb) -- (V2) -- (r);
\draw (Vdb) to[bridge=0.45cm] (Vb);
\draw (Vda) to[bridge=0.5cm] (Va);
\draw (Vd1) to[bridge=0.45cm] (V1);
\draw (Vd2) to[bridge=0.9cm] (V2);
\end{tikzpicture}
\end{aligned}
\]
is zero because all four terms (without the signs) are equal to $\Phi\bigl(\Phi(X^\dag)\,\Phi(X)\bigr)$.

\begin{Lemma}\label{lem_idemp}
Let $\calH$ be a finite-dimensional Hilbert space and $\Phi\colon\Bo(\calH)\to\Bo(\calH)$ an idempotent UCP map with the image $\calA\subseteq\Bo(\calH)$ and carrier $\calM\subseteq\calH$. Then operators in $\calA$ do not mix $\calM$ and $\calM^\perp$. Furthermore, the Choi-Effros product $\Phi(XY)$ restricted to $\calM$ coincides with the usual operator product:
\begin{equation}
\Phi(XY)|_\calM=(XY)|_\calM=X|_{\calM}Y|_{\calM}\qquad
(X,Y\in\calA),
\end{equation}
where $X|_\calM=J_\calM^\dag XJ_\calM$ and $J_\calM\colon\calM\to\calH$ is the inclusion of Hilbert spaces.
\end{Lemma}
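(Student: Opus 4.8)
Here is a plan I would follow.

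\medskip
\noindent\textbf{Setup.}
I would fix a Choi representation $\Phi(X)=V^\dag(X\otimes 1_\calF)V$ with $V^\dag V=1_\calH$ and $V\colon\calH\to\calH\otimes\calF$, and translate the carrier into statements about $V$ via Lemma~\ref{lem_carrier} and Corollary~\ref{cor_carrier}: the carrier $\calM$ is the marginal of $\Img V$ on the first factor, so $\Img V\subseteq\calM\otimes\calF$, i.e.\ $(\Pi_{\calM^\perp}\otimes 1_\calF)V=0$; and an operator $Y$ annihilates $\calM$ if and only if $(Y\otimes 1_\calF)V=0$. I would also record that for $Z\in\calA=\Img\Phi$ one has $\Phi(Z)=Z$, hence $V^\dag(Z\otimes 1_\calF)V=Z$, so $(Z\otimes 1_\calF)V-VZ\in\Ker V^\dag=(\Img V)^\perp$.

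\medskip
\noindent\textbf{The key auxiliary claim (main obstacle).}
The step I expect to be the real work is the claim that the marginal of the subspace $V(\calM)\subseteq\calM\otimes\calF$ on the first factor is all of $\calM$ --- equivalently, that $\Phi^*(\Pi_\calM)$ has full support $\calM$. I would prove this by contradiction. If the marginal were a proper subspace of $\calM$, let $q\ne 0$ be the orthogonal projection onto its complement inside $\calM$ (so $q\le\Pi_\calM$). Then $\Tr\bigl(\Phi(q)\Pi_\calM\bigr)=\Tr\bigl(q\,\Phi^*(\Pi_\calM)\bigr)=0$ because $q$ is orthogonal to the support of $\Phi^*(\Pi_\calM)$; since $\Phi(q)\ge 0$ this forces $\Pi_\calM\Phi(q)\Pi_\calM=0$, hence $\Phi(q)J_\calM=0$, i.e.\ $\Phi(q)$ annihilates $\calM$, so $(\Phi(q)\otimes 1_\calF)V=0$ by Corollary~\ref{cor_carrier}. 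Now $\Phi(q)\in\calA$, so the relation recorded above gives $V\Phi(q)=-\bigl((\Phi(q)\otimes 1_\calF)V-V\Phi(q)\bigr)\in(\Img V)^\perp\cap\Img V=0$, whence $\Phi(q)=0$ ($V$ is injective). But $\Phi(q)=\bigl((q\otimes 1_\calF)V\bigr)^\dag\bigl((q\otimes 1_\calF)V\bigr)$, so $(q\otimes 1_\calF)V=0$, and Corollary~\ref{cor_carrier} then says $q$ annihilates $\calM$, which is impossible for a nonzero $q\le\Pi_\calM$.

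\medskip
\noindent\textbf{Block-diagonality.}
With the claim available, the first assertion follows from equation~\eqref{eq_idemp}. I would apply $\Pi_{\calM^\perp}\otimes 1_\calF\otimes 1_\calF$ to both sides: on the right-hand side the outermost $V$ feeds the output leg, and $(\Pi_{\calM^\perp}\otimes 1_\calF)V=0$ makes it vanish, so the left-hand side vanishes as well, i.e.\ $\bigl((\Pi_{\calM^\perp}\Phi(X))\otimes 1_\calF\otimes 1_\calF\bigr)(V\otimes 1_\calF)V=0$. Expanding $Vv$ over an orthonormal basis of $\calF$ and using that the first-factor components of the vectors $Vv$ span $\calM$, this collapses to $\bigl(\Pi_{\calM^\perp}\Phi(X)\otimes 1_\calF\bigr)VJ_\calM=0$, i.e.\ $\Pi_{\calM^\perp}\Phi(X)$ annihilates the marginal of $V(\calM)$, which by the claim is all of $\calM$. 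Hence $\Phi(X)(\calM)\subseteq\calM$; applying this to $X^\dag\in\calA$ and taking adjoints gives $\Pi_\calM\Phi(X)\Pi_{\calM^\perp}=0$ too, so every operator in $\calA$ is block-diagonal with respect to $\calH=\calM\oplus\calM^\perp$.

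\medskip
\noindent\textbf{The product formula.}
Finally I would first show $\Phi(XY)-XY$ annihilates $\calM$ from both sides for all $X,Y\in\calA$. For $Z\in\calA$, inequality~\eqref{PhiXdX} gives $R:=\Phi(Z^\dag Z)-\Phi(Z^\dag)\Phi(Z)=\Phi(Z^\dag Z)-Z^\dag Z\ge 0$, while $\Phi(R)=\Phi^2(Z^\dag Z)-\Phi(Z^\dag Z)=0$; writing $R=S^\dag S$, the identity $0=\Phi(R)=\bigl((S\otimes 1_\calF)V\bigr)^\dag\bigl((S\otimes 1_\calF)V\bigr)$ forces $(S\otimes 1_\calF)V=0$, hence $(R\otimes 1_\calF)V=0$, hence (Corollary~\ref{cor_carrier}, and $R=R^\dag$) $RJ_\calM=0=J_\calM^\dag R$. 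Polarizing in $Z=X+\lambda Y$ over $\lambda\in\{1,i\}$ and subtracting the pure terms promotes this to $\Phi(X^\dag Y)-X^\dag Y$, hence to $\Phi(XY)-XY$, so $\Phi(XY)|_\calM=(XY)|_\calM$. Combining with block-diagonality of $X$ (so $J_\calM^\dag X\Pi_{\calM^\perp}=0$), $(XY)|_\calM=(J_\calM^\dag XJ_\calM)(J_\calM^\dag YJ_\calM)=X|_\calM Y|_\calM$, which yields $\Phi(XY)|_\calM=X|_\calM Y|_\calM$. Apart from the auxiliary claim and the careful bookkeeping of the auxiliary $\calF$-legs in \eqref{eq_idemp}, the remaining steps are routine.
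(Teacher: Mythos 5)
Your proof is correct, but it takes a genuinely different route from the paper's in both halves. For block-diagonality, the paper does not need your auxiliary claim that the $\calF$-marginal of $V(\calM)$ is all of $\calM$: it sets $C = ((1-\Pi_\calM)X \otimes 1_\calF)V$ and shows $C^\dag C = 0$ directly, rewriting $C^\dag C = \Phi\bigl(X^\dag(1-\Pi_\calM)X\bigr) = \Phi^2\bigl(\Phi(X^\dag)(1-\Pi_\calM)\Phi(X)\bigr)$ and then using \eqref{eq_idemp} to slide one $V^\dag(\,\cdot\,\otimes 1_\calF)V$ layer inward until it wraps $1-\Pi_\calM$ alone, where $((1-\Pi_\calM)\otimes 1_\calF)V = 0$ kills the whole expression. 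Your auxiliary claim is true --- it even has a one-line proof: since $\Phi^*$ is idempotent and $\rho_1 = \Phi^*(\rho_0)$ has support exactly $\calM$, one can sandwich $c^{-1}\Pi_\calM \le \rho_1 \le c\,\Pi_\calM$ and apply $\Phi^*$ to get $c^{-1}\rho_1 \le \Phi^*(\Pi_\calM) \le c\,\rho_1$ --- but it is an extra step that the paper's route dispenses with, at the cost of a slightly more involved diagram computation. For the product formula, your approach via the Kadison--Schwarz inequality \eqref{PhiXdX} is genuinely different and arguably cleaner: observing that $R = \Phi(Z^\dag Z) - Z^\dag Z \ge 0$ with $\Phi(R) = 0$, factoring $R = S^\dag S$ to get $(R\otimes 1_\calF)V = 0$ and hence $R J_\calM = 0$ by Corollary~\ref{cor_carrier}, and polarizing, avoids \eqref{eq_idemp} for this part altogether and relies only on complete positivity and the carrier characterization; the paper instead shows $(\Phi(XY)\otimes 1_\calF)V = (XY\otimes 1_\calF)V$ by one more diagrammatic application of \eqref{eq_idemp}.
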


\begin{proof}
The statement that operators in $\calA$ do not mix $\calM$ and $\calM^\perp$ means that for any $X\in\calA$, we have $(1-\Pi_\calM)X\Pi_\calM=0$. By Corollary~\ref{cor_carrier}, an equivalent condition is $((1-\Pi_\calM)X\otimes 1_\calF)V=0$. To prove this equation, let us denote its left-hand side by $C$ and calculate $C^\dag C$:
\[
\begin{aligned}
C^\dag C &=V^\dag(X^\dag(1-\Pi_\calM)X\otimes 1_\calF)V
=\,\begin{tikzpicture}
\matrix[Choi,matrix anchor=V.center]
{
  \coordinate (l); &
  \node[bdot] (Vd1) {}; &
  \node[circ] (Xd) {$X^\dag$}; &
  \node[rect] (P) {$1-\Pi_\calM$}; &
  \node[circ] (X) {$X$}; &
  \node[bdot] (V1) {}; &
  \coordinate (r); \\
};
\draw (l) -- (Vd1) -- (Xd) -- (P) -- (X) -- (V1) -- (r);
\draw (Vd1) to[bridge=0.5cm] (V1);
\end{tikzpicture}
\\[3pt]
&=\,\begin{tikzpicture}
\matrix[Choi,matrix anchor=V.center]
{
  \coordinate (l); &
  \node[bdot] (Vd2) {}; &
  \node[bdot] (Vd1) {}; &
  \node[bdot] (Vda) {}; &[-1.5mm]
  \node[circ] (Xd) {$X^\dag$}; &[-1.5mm]
  \node[bdot] (Va) {}; &
  \node[rect] (P) {$1-\Pi_\calM$}; &
  \node[bdot] (Vdb) {}; &[-2mm]
  \node[circ] (X) {$X$}; &[-2mm]
  \node[bdot] (Vb) {}; &
  \node[bdot] (V1) {}; &
  \node[bdot] (V2) {}; &
  \coordinate (r); \\
};
\draw (l) -- (Vd2) -- (Vd1) -- (Vda) -- (Xd) -- (Va)
  -- (P) -- (Vdb) -- (X) -- (Vb) -- (V1) -- (V2) -- (r);
\draw (Vdb) to[bridge=0.45cm] (Vb);
\draw (Vda) to[bridge=0.5cm] (Va);
\draw (Vd1) to[bridge=0.7cm] (V1);
\draw (Vd2) to[bridge=0.9cm] (V2);
\end{tikzpicture}\,
\\[3pt]
&=\,\begin{tikzpicture}
\matrix[Choi,matrix anchor=V.center]
{
  \coordinate (l); &
  \node[bdot] (Vd2) {}; &
  \node[bdot] (Vda) {}; &[-1.5mm]
  \node[circ] (Xd) {$X^\dag$}; &[-1.5mm]
  \node[bdot] (Va) {}; &
  \node[bdot] (Vd1) {}; &
  \node[rect] (P) {$1-\Pi_\calM$}; &
  \node[bdot] (V1) {}; &
  \node[bdot] (Vdb) {}; &[-2mm]
  \node[circ] (X) {$X$}; &[-2mm]
  \node[bdot] (Vb) {}; &
  \node[bdot] (V2) {}; &
  \coordinate (r); \\
};
\draw (l) -- (Vd2) -- (Vda) -- (Xd) -- (Va) -- (Vd1)
  -- (P) -- (V1) -- (Vdb) -- (X) -- (Vb) -- (V2) -- (r);
\draw (Vdb) to[bridge=0.45cm] (Vb);
\draw (Vda) to[bridge=0.5cm] (Va);
\draw (Vd1) to[bridge=0.7cm] (V1);
\draw (Vd2) to[bridge=0.9cm] (V2);
\end{tikzpicture}\,
=0.
\end{aligned}
\]
(In the second line, we have used the equations $\Phi^2=\Phi$ and $X=\Phi(X)$, and in the last line, \eqref{eq_idemp} and the equation $((1-\Pi_\calM)\otimes 1_\calF)V=0$.)

We now prove the relation between the Choi-Effros product and the usual operator product of $X,Y\in\calA$. By the previous argument, $X$ splits as $X|_\calM\oplus X|_{\calM^\perp}$ and $Y$ splits as $Y|_\calM\oplus Y|_{\calM^\perp}$. Therefore, $(XY)|_\calM=X|_\calM Y|_\calM$. To show that $\Phi(XY)|_\calM=(XY)|_\calM$, we apply Corollary~\ref{cor_carrier} again. Thus, the equation in question becomes $(\Phi(XY)\otimes 1_\calF)V=(XY\otimes 1_\calF)V$. In this form, it is derived using \eqref{eq_idemp} as well as the equations $X=\Phi(X)$,\, $Y=\Phi(Y)$, and $\Phi^2=\Phi$:
{\setlength\abovedisplayskip{10pt}\setlength\belowdisplayskip{-5pt}\[
\begin{aligned}
(\Phi(XY)\otimes 1_\calF)V
&=\,\begin{tikzpicture}
\matrix[Choi,matrix anchor=V.center]
{
  \coordinate (l2); & & &[-2mm] &[-2mm] & &[-2mm] &[-2mm] & & &; \\[4mm]
  \coordinate (l); &
  \node[bdot] (Vd1) {}; &
  \node[bdot] (Vda) {}; &
  \node[circ] (X) {$X$}; &
  \node[bdot] (Va) {}; &
  \node[bdot] (Vdb) {}; &
  \node[circ] (Y) {$Y$}; &
  \node[bdot] (Vb) {}; &
  \node[bdot] (V1) {}; &
  \node[bdot] (V2) {}; &
  \coordinate (r); \\
};
\draw (l) -- (Vd1) -- (Vda) -- (X) -- (Va) -- (Vdb) -- (Y) -- (Vb)
  -- (V1) -- (V2) -- (r);
\draw (Vda) to[bridge] (Va);
\draw (Vdb) to[bridge] (Vb);
\draw (Vd1) to[bridge=0.6cm] (V1);
\draw (l2) to[ramp from] (V2);
\end{tikzpicture}\,
=\,\begin{tikzpicture}
\matrix[Choi,matrix anchor=V.center]
{
  \coordinate (l2); & & &[-2mm] &[-2mm] & & &[-2mm] &[-2mm] & &; \\[4mm]
  \coordinate (l); &
  \node[bdot] (Vd1) {}; &
  \node[bdot] (Vda) {}; &
  \node[circ] (X) {$X$}; &
  \node[bdot] (Va) {}; &
  \node[bdot] (V1) {}; &
  \node[bdot] (Vdb) {}; &
  \node[circ] (Y) {$Y$}; &
  \node[bdot] (Vb) {}; &
  \node[bdot] (V2) {}; &
  \coordinate (r); \\
};
\draw (l) -- (Vd1) -- (Vda) -- (X) -- (Va) -- (V1)
  -- (Vdb) -- (Y) -- (Vb) -- (V2) -- (r);
\draw (Vda) to[bridge] (Va);
\draw (Vdb) to[bridge] (Vb);
\draw (Vd1) to[bridge=0.6cm] (V1);
\draw (l2) to[ramp from] (V2);
\end{tikzpicture}
\\[4pt]
&=(XY\otimes 1_\calF)V.
\end{aligned}
\]}%
\end{proof}

\begin{proof}[Proof of Theorem~\ref{th_idemp_structure}.]
Let $\calM$ and $\calA$ be the carrier and the image of $\Phi$, respectively, and let $J_\calM\colon\calM\to\calH$ and $\Delta\colon\calA\to\Bo(\calH)$ be the corresponding inclusion maps. (At this point, $\calA$ is just a vector space, and $\Delta$ is a linear map.) Let us also consider the maps
\[
\begin{array}{c@{\,}l}
\Co_\calM\colon X\mapsto J_\calM^\dag XJ_\calM &
\colon\Bo(\calH)\to\Bo(\calM),\vspace{2pt}\\
\Lambda\colon X\mapsto\Phi\bigl(J_\calM X J_\calM^\dag\bigr) &
\colon\Bo(\calM)\to\Bo(\calH).
\end{array}
\]
It is evident that $\Co_\calM$ is a UCP map and that $\Lambda$ is completely positive; we will soon see that it is also unital. By construction, $\Lambda$ factors through $\calA=\Img\Phi$, that is, $\Lambda=\Delta\Gamma$ for some map $\Gamma\colon\Bo(\calM)\to\calA$. We can use equation \eqref{PhiX_M} to infer that for any $X\in\Bo(\calH)$,
\[
\Phi(X)
=\Phi(\Pi_\calM X\Pi_\calM)
=\Phi\bigl(J_\calM J_\calM^\dag X J_\calM J_\calM^\dag\bigr)
=\Lambda\Co_\calM(X).
\]
Thus, $\Phi=\Lambda\Co_\calM=\Delta\Gamma\Co_\calM$, which also implies that $\Lambda(1_\calM)=\Lambda\Co_\calM(1_\calH)=\Phi(1_\calH)=1_\calH$.

Since $\Phi$ is idempotent, $\Delta(A)=\Phi\Delta(A) =\Delta\Gamma\Co_\calM\Delta(A)$ for all $A\in\calA$; hence, $\Gamma\Co_\calM\Delta=1_\calA$ due to the injectivity of $\Delta$. This is a convenient summary:
\[
\begin{tikzcd}
& \Bo(\calH) \arrow[d,"\Co_\calM"] \\
\calA \arrow[ur,"\Delta"] & \Bo(\calM) \arrow[l,"\Gamma"]
\end{tikzcd}\qquad\quad
\Delta\Gamma=\Lambda,\quad\: \Lambda\Co_\calM=\Phi.\quad\:
\Gamma\Co_\calM\Delta=1_\calA.
\]

Let $w=\Co_\calM\Delta$ so that $\Gamma w=1_\calA$. The last equation implies that $w$ is injective, $\Gamma$ is surjective, and the UCP map $\Psi=\Co_\calM\Lambda=w\Gamma$ is idempotent. Therefore,
\[
\Img w=\Img\Psi=\Ker(1-\Psi)\subseteq\Bo(\calM).
\]
This subspace of $\Bo(\calM)$ is closed, contains the unit operator, and is invariant under the involution $X\mapsto X^\dag$. Furthermore, the second part of Lemma~\ref{lem_idemp} says that $w$ carries the Choi-Effros product on $\calA$ to the to the usual operator product on $\Bo(\calM)$, so $\Img w$ is closed under the operator product. Thus, $\Img w$ is a $C^*$ subalgebra of $\Bo(\calM)$. We may identify it with $\calA$, which makes the latter into a $C^*$ algebra and $w$ into an inclusion of $C^*$ algebras. At the same time, $\Gamma$ becomes a UCP map because $\Psi=w\Gamma$ is UCP map.

The last assertion in the theorem (that operators in the image of $\Delta$ are block-diagonal with respect to the decomposition $\calH=\calM\oplus\calM^\perp$) is just the first part of Lemma~\ref{lem_idemp}.
\end{proof}

Now, we determine the specific form of the maps involved. Since $w\colon\calA\to\Bo(\calM)$ is an inclusion of $C^*$ algebras and $\dim\calM<\infty$, the structure of $w$ is described by Proposition~\ref{prop_hom_structure}:
\begin{equation}\label{Aw}
w(A_1,\dots,A_m)=\sum_{j=1}^{m}W_j^\dag(A_j\otimes 1_{\calE_j})W_j\qquad
\text{for }\, (A_1,\dots,A_m)\in\calA=\bigoplus_{j=1}^{m}\Bo(\calL_j).
\end{equation}
Let us also use the fact that for each $A\in\calA$, the operator $\Delta(A)$ is the direct sum of $\Delta(A)|_\calM=w(A)$ and $\Delta(A)|_{\calM^\perp}$. The latter can be obtained from $A$ by some UCP map $\Sigma\colon\calA\to\Bo(\calM^\perp)$. Thus,
\begin{equation}\label{Delta_structure}
\Delta(A)=\sum_{j=1}^{m}J_\calM W_j^\dag
(A_j\otimes 1_{\calE_j})W_j J_\calM^\dag
+J_{\calM^\perp}\ts \Sigma(A)\ts J_{\calM^\perp}^\dag\qquad
(A\in\calA).
\end{equation}

\begin{Proposition}\label{prop_Gamma}
Let $w\colon\calA\to\Bo(\calM)$ have the form \eqref{Aw}, where $(W_1,\dots,W_m)$ are the components of the unitary map $W\colon\calM\to\bigoplus_{j=1}^{m}\calL_j\otimes\calE_j$. Then any UCP map $\Gamma\colon\Bo(\calM)\to\calA$ satisfying the equation $\Gamma w=1_\calA$ admits the component-wise representation
\begin{equation}\label{Gamma}
\Gamma_j\colon\Bo(\calM)\to\Bo(\calL_j),\qquad
\Gamma_j(X)=\Tr_{\calE_j}\bigl(W_j XW_j^\dag
(1_{\calL_j}\otimes\gamma_j)\bigr)
\end{equation} 
for some density matrices $\gamma_j$ on $\calE_j$.
\end{Proposition}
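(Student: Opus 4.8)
The plan is to pass to the components of $\Gamma$ and reduce everything to the classification of unital completely positive left inverses of the embedding $A\mapsto A\otimes 1$. Writing $\pi_j\colon\calA\to\Bo(\calL_j)$ for the projection onto the $j$-th summand and $\Gamma_j=\pi_j\Gamma$, each $\Gamma_j$ is a UCP map $\Bo(\calM)\to\Bo(\calL_j)$ and $\Gamma$ is recovered from the tuple $(\Gamma_1,\dots,\Gamma_m)$. Set $\Pi_j=W_j^\dag W_j\in\Bo(\calM)$, an orthogonal projection since $W_jW_j^\dag=1_{\calL_j\otimes\calE_j}$. Evaluating $\Gamma w=1_\calA$ on the element $(0,\dots,A,\dots,0)$ of $\calA$ (with $A$ in slot $j$), whose image under $w$ is $W_j^\dag(A\otimes 1_{\calE_j})W_j$, I first obtain
\[
\Gamma_j\bigl(W_j^\dag(A\otimes 1_{\calE_j})W_j\bigr)=A\quad(A\in\Bo(\calL_j)),\qquad
\Gamma_j(\Pi_j)=1_{\calL_j},\qquad \Gamma_k(\Pi_j)=0\ \ (k\neq j);
\]
since moreover $w(I_\calA)=\sum_j\Pi_j=1_\calM$, the equation forces $\Gamma(1_\calM)=I_\calA$, so $\Gamma$ is automatically unital.

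Next I localize each $\Gamma_j$. Because $1_\calM-\Pi_j\ge 0$ and $\Gamma_j(1_\calM-\Pi_j)=1_{\calL_j}-1_{\calL_j}=0$, I apply $1_{\Ma{2}}\otimes\Gamma_j$ to the positive block matrix $\left(\begin{smallmatrix}X^\dag E\\ E\end{smallmatrix}\right)\left(\begin{smallmatrix}EX & E\end{smallmatrix}\right)$ with $E=1_\calM-\Pi_j$ --- the same $2\times 2$ device used to prove \eqref{PhiXdX}. The $(2,2)$ corner of the resulting positive matrix is $\Gamma_j(E)=0$, which forces the off-diagonal entry $\Gamma_j((1_\calM-\Pi_j)X)$ to vanish, and taking adjoints also $\Gamma_j(X(1_\calM-\Pi_j))=0$. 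Hence $\Gamma_j(X)=\Gamma_j(\Pi_j X\Pi_j)$ for all $X$. Since $W_j$ restricts to a unitary from $\Img W_j^\dag$ onto $\calL_j\otimes\calE_j$, the formula $\hat\Gamma_j(Y)=\Gamma_j(W_j^\dag Y W_j)$ defines a UCP map $\hat\Gamma_j\colon\Bo(\calL_j\otimes\calE_j)\to\Bo(\calL_j)$ with
\[
\Gamma_j(X)=\hat\Gamma_j(W_j X W_j^\dag)\quad(X\in\Bo(\calM)),\qquad
\hat\Gamma_j(A\otimes 1_{\calE_j})=A\quad(A\in\Bo(\calL_j)).
\]

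The heart of the argument is the classification of such $\hat\Gamma_j$. Using the Choi representation \eqref{Choi}, write $\hat\Gamma_j(Y)=V^\dag(Y\otimes 1_{\calF_j})V$ with an isometry $V\colon\calL_j\to\calL_j\otimes\calG_j$, where $\calF_j$ is the auxiliary space and $\calG_j=\calE_j\otimes\calF_j$; the left-inverse property reads $V^\dag(A\otimes 1_{\calG_j})V=A$ for all $A\in\Bo(\calL_j)$. Expanding $\bigl((A\otimes 1_{\calG_j})V-VA\bigr)^\dag\bigl((A\otimes 1_{\calG_j})V-VA\bigr)$ and invoking this identity with $A$, with $A^\dag$, and with $A^\dag A$, together with $V^\dag V=1_{\calL_j}$, shows that this positive operator is zero, so $(A\otimes 1_{\calG_j})V=VA$ for all $A$. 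Thus $V$ is an isometric intertwiner of $\Bo(\calL_j)$-modules between the irreducible module $\calL_j$ and the isotypic module $\calL_j\otimes\calG_j$, whence $V=1_{\calL_j}\otimes\psi_j$ for some $\psi_j\in\calG_j$ regarded as a map $\CC\to\calG_j$, and $V^\dag V=1_{\calL_j}$ forces $\|\psi_j\|=1$. Substituting back,
\[
\hat\Gamma_j(Y)=(1_{\calL_j}\otimes\psi_j)^\dag(Y\otimes 1_{\calF_j})(1_{\calL_j}\otimes\psi_j)
=\Tr_{\calE_j}\bigl(Y(1_{\calL_j}\otimes\gamma_j)\bigr),\qquad
\gamma_j=\Tr_{\calF_j}(\psi_j\psi_j^\dag),
\]
a density matrix on $\calE_j$; combining with $\Gamma_j(X)=\hat\Gamma_j(W_j X W_j^\dag)$ produces exactly \eqref{Gamma}.

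I expect the rigidity step $(A\otimes 1_{\calG_j})V=VA$ together with the ensuing identification of the intertwiner space to be the only genuinely non-formal point --- this is where positivity, in the guise of the equality case of Kadison--Schwarz (equivalently, the multiplicative domain), does the real work. The remaining ingredients --- the passage to components, the $2\times 2$-positivity localization, and the partial-trace computation --- are bookkeeping, and finite-dimensionality guarantees that both the Choi representation and the representation-theoretic normal form of $V$ apply without qualification.
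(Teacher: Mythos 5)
Your proof is correct, and it follows a genuinely different (though related) route from the paper's. The paper works directly with the Choi representation $\Gamma_j(X)=L_j^\dag(X\otimes1_{\calF_j})L_j$ of each component on \emph{all} of $\Bo(\calM)$, turns the equation $\Gamma w=1_\calA$ into a trace identity for the operators $M_{kj}=(W_k\otimes1_{\calF_j})L_j$, and then classifies $M_{kj}$ by a linear-algebraic argument with the auxiliary maps $R_{kj}\colon X\mapsto\Tr_{\calL_k}(M_{kj}X)$, arriving at $L_j=(W_j^\dag\otimes1_{\calF_j})(1_{\calL_j}\otimes\xi_j)$ and $\gamma_j=\Tr_{\calF_j}(\xi_j\xi_j^\dag)$. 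You instead factor the problem into two pieces: (i) a localization step --- the $2\times2$ positivity argument showing $\Gamma_j(X)=\Gamma_j(\Pi_jX\Pi_j)$, which reduces each $\Gamma_j$ to a conditional expectation $\hat\Gamma_j\colon\Bo(\calL_j\otimes\calE_j)\to\Bo(\calL_j)$ with $\hat\Gamma_j(A\otimes1)=A$; and (ii) the classification of such conditional expectations, where the Kadison--Schwarz equality/multiplicative-domain argument forces the Choi isometry $V$ to intertwine, $(A\otimes1_{\calG_j})V=VA$, and Schur's lemma pins it down as $V=1_{\calL_j}\otimes\psi_j$. The underlying rigidity --- that an isometry implementing a UCP left inverse of $A\mapsto A\otimes1$ must have product form --- is the same in both arguments, but your factorization makes the conceptual mechanism explicit (positivity forcing the multiplicative domain, then irreducibility of $\Bo(\calL_j)\curvearrowright\calL_j$), whereas the paper achieves the same effect through a self-contained computation with $R_{kj}$. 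A small remark: the hypothesis already has $\Gamma$ UCP (hence unital), so your derivation of unitality from $\Gamma w=1_\calA$ and $\sum_j\Pi_j=1_\calM$ is redundant, though harmless; and the localization step is itself a positivity-rigidity argument of the same flavor as the multiplicative-domain step, so it is not quite accurate to call the latter ``the only genuinely non-formal point.''
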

(Equation \eqref{Gamma} is the general form of a conditional expectation in finite dimensions.)

\begin{proof}
Each component $\Gamma_j\colon\Bo(\calM)\to\Bo(\calL_j)$ can be written in the Choi form,
\[
\Gamma_j(X)=L_j^\dag(X\otimes1_{\calF_j})L_j,\qquad
\text{where}\quad L_j\colon\calL_j\to\calM\otimes\calF_j,\qquad
L_j^\dag L_j=1_{\calL_j}.
\]
Thus, the equation $\Gamma w=1_\calA$ becomes $L_j^\dag\bigl((W_k^\dag(A_k\otimes 1_{\calE_k})W_k)\otimes 1_{\calF_j}\bigr)L_j=A_j$ for all $(A_1,\dots,A_m)$. Equivalently, let
{\setlength\abovedisplayskip{0pt}\[
M_{kj}=(W_k\otimes 1_{\calF_j})L_j
=\,\begin{tikzpicture}
\matrix[Choi,matrix anchor=W.center]
{
  \coordinate[label=left:$\scriptstyle\calF_j$] (l2); &[1mm] &[4mm] &[0.5mm]
  \\[1mm]
  \coordinate[label=left:$\scriptstyle\calE_k$] (l1); & & & \\[1mm]
  \coordinate[label=left:$\scriptstyle\calL_k$] (l); &
  \node[rect] (W) {$W_k$}; &
  \node[rect] (L) {$L_j$}; &
  \coordinate[label=right:$\scriptstyle\calL_j$] (r); \\
};
\draw (l) -- (W) -- node[auto] {$\scriptstyle\calM$} (L) -- (r);
\path
  (l1) edge[ramp from] (W)
  (l2) edge[ramp from] (L);
\end{tikzpicture}\:;
\]}%
then $\Tr\bigr(M_{kj}^\dag(A_k\otimes 1_{\calE_k\otimes\calF_j})M_{kj}B_j\bigr)
=\Tr(A_jB_j)$ for all $A,B\in\calA$. Eliminating $A$ and $B$, we obtain a condition on the product of $M_{kj}^\dag$ and $M_{kj}$ over $\calE_k\otimes\calF_j$. Specifically,
\[
R_{kj}^\dag R_{kj}=\delta_{kj}\ts 1_{\calL_j}\Tr_{\calL_j},\qquad
\text{where}\quad
R_{kj}\colon X\mapsto\Tr_{\calL_k}(M_{kj}X)\,
\colon\,\calL_j\otimes\calL_k^*\to\calE_k\otimes\calF_j.
\]
It follows that $R_{jk}=\delta_{kj}\ts\xi_j\Tr_{\calL_j}$ for some $\xi_j\in\calE_j\otimes\calF_j$, that is, $(W_k\otimes 1_{\calF_j})L_j=\delta_{kj}(1_{\calL_j}\otimes\xi_j)$. Multiplying the last equation by $W_k^\dag\otimes 1_{\calF_j}$ on the left and summing over $k$, we get
\[
L_j=(W_j^\dag\otimes 1_{\calF_j})(1_{\calL_j}\otimes\xi_{j}).
\]
Plugging this formula in the equation $\Gamma_j(X)=L_j^\dag(X\otimes1_{\calF_j})L_j$ gives the required representation with $\gamma_j=\Tr_{\calF_j}(\xi_j\xi_j^\dag)$.
\end{proof}

In conclusion, let us consider consider the dual maps. It follows from Proposition~\ref{prop_Gamma} that $\Enc=(\Gamma\Co_\calM)^*$ has the form \eqref{Enc}. Similarly, equation \eqref{Delta_structure} implies that $\Dec=\Delta^*$ has the form \eqref{Dec}. Finally, $\Dec\,\Enc=(\Gamma\Co_\calM\Delta)^*=1_{\calA^*}$ and $\Enc\,\Dec=(\Delta\Gamma\Co_\calM)^*=\Phi^*$.

\section{Almost-idempotent UCP maps}\label{sec_almost_idemp}

\subsection{The associated \texorpdfstring{$\eps$-$C^*$}{epsilon-C*} algebra}\label{sec_assoc_ecsa}

Let $\calH$ be an arbitrary nonzero Hilbert space, and let us consider a UCP map $\Phi\colon\Bo(\calH)\to\Bo(\calH)$ such that
\begin{equation}
\|\Phi^2-\Phi\|_\cb\le\eta,
\end{equation}
where $\eta$ is a sufficiently small nonnegative number. First, we recall some constructions from Section~\ref{sec_ch_intro}. This definition is based on Proposition~\ref{prop_P} and equation \eqref{abs_sgn}:
\begin{equation}\label{tilde_Phi}
\wt{\Phi}=\theta(2\Phi-1)
=\frac{1}{2}\Bigl(1+\sgn(2\Phi-1)\Bigr)
=\frac{1}{2}\Bigl(1+(2\Phi-1)\bigl(1-4(\Phi-\Phi^2)\bigr)^{-1/2}\Bigr).
\end{equation}
The right-hand side involves a Taylor expansion in $4(\Phi-\Phi^2)$, which converges if $\eta<1/4$. The map $\wt{\Phi}$ has the properties
\begin{equation}
\wt{\Phi}^2=\wt{\Phi},\qquad
\|\wt{\Phi}-\Phi\|_\cb\le O(\eta),\qquad
\wt{\Phi}(1)=1,\qquad
\wt{\Phi}(X^\dag)=\wt{\Phi}(X)^\dag\text{ for all }X\in\Bo(\calH),
\end{equation}
which allow for the definition of $\calA$:
\begin{equation}
\calA=\Img\wt{\Phi}=\Ker(1-\wt{\Phi})\subseteq\Bo(\calH).
\end{equation}
As such, $\calA$ is a closed subspace of $\Bo(\calH)$; it contains the unit operator $I=1_\calH$ and is invariant under the involution $X\mapsto X^\dag$. We define the multiplication (approximate Choi-Effros product) on $\calA$ by the equation
\begin{equation}\label{Choi-Effros}
X\star Y=\wt{\Phi}(XY)\qquad (X,Y\in\calA).
\end{equation}

\begin{Theorem}\label{th_almost_idemp}
The space $\calA$ with the norm, involution, and unit inherited from $\Bo(\calH)$ and the multiplication $(X,Y)\mapsto X\star Y$ is an extended $O(\eta)$-$C^*$ algebra.
\end{Theorem}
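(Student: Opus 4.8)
The plan is to verify the axioms of an $O(\eta)$-$C^*$ algebra for $\calA$ one at a time, and separately to reduce the \emph{extended} statement to the plain one. For that reduction, fix $n$: the map $1_{\Ma{n}}\otimes\Phi$ is UCP on $\Bo(\CC^n\otimes\calH)$ and $\|(1_{\Ma{n}}\otimes\Phi)^2-(1_{\Ma{n}}\otimes\Phi)\|_{\cb}=\|1_{\Ma{n}}\otimes(\Phi^2-\Phi)\|_{\cb}\le\eta$; since $\Lambda\mapsto 1_{\Ma{n}}\otimes\Lambda$ is a continuous unital algebra homomorphism on the completely bounded maps, it commutes with the Taylor series \eqref{tilde_Phi}, so $1_{\Ma{n}}\otimes\wt{\Phi}=\widetilde{1_{\Ma{n}}\otimes\Phi}$. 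Hence $\Ma{n}\otimes\calA=\Img(1_{\Ma{n}}\otimes\wt{\Phi})$, carrying exactly the multiplication assigned to $1_{\Ma{n}}\otimes\Phi$ by \eqref{Choi-Effros}. So it is enough to show that $\calA$ is an $O(\eta)$-$C^*$ algebra with a constant that does not depend on $\calH$, and then apply this to $\CC^n\otimes\calH$.

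Everything except approximate associativity is short. Norm, unit and involution are inherited from $\Bo(\calH)$, so $\|X^\dag\|=\|X\|$, $\|I\|=1$, $I^\dag=I$; the identity $(XY)^\dag=Y^\dag X^\dag$ in $\Bo(\calH)$ together with $\wt{\Phi}(W^\dag)=\wt{\Phi}(W)^\dag$ gives $(X\star Y)^\dag=Y^\dag\star X^\dag$ exactly; and $X\in\calA=\Img\wt{\Phi}$ gives $X\star I=\wt{\Phi}(X)=X=I\star X$, so the unit is exact. Submultiplicativity: $\|X\star Y\|=\|\wt{\Phi}(XY)\|\le\|\wt{\Phi}\|_{\cb}\,\|X\|\,\|Y\|\le(1+O(\eta))\,\|X\|\,\|Y\|$, using $\|\wt{\Phi}\|_{\cb}\le\|\Phi\|_{\cb}+O(\eta)=1+O(\eta)$. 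The $C^*$-inequality uses that $\Phi$ acts as the identity on $\calA$ up to $O(\eta)$, namely $\|\Phi(X)-X\|=\|(\Phi-\wt{\Phi})(X)\|\le O(\eta)\|X\|$ for $X\in\calA$: combined with the Kadison--Schwarz inequality \eqref{PhiXdX} one gets $\Phi(X^\dag X)\ge\Phi(X^\dag)\Phi(X)\ge X^\dag X-O(\eta)\|X\|^2 I$, hence $\|\Phi(X^\dag X)\|\ge\|X\|^2-O(\eta)\|X\|^2$ by monotonicity of the norm on positive elements, and finally $\|X^\dag\star X\|=\|\wt{\Phi}(X^\dag X)\|\ge\|\Phi(X^\dag X)\|-O(\eta)\|X\|^2\ge(1-O(\eta))\|X\|^2$.

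It remains to prove $\|(X\star Y)\star Z-X\star(Y\star Z)\|\le O(\eta)\,\|X\|\,\|Y\|\,\|Z\|$. A short manipulation that only uses $\|\wt{\Phi}-\Phi\|_{\cb}\le O(\eta)$, $\|\wt{\Phi}\|_{\cb},\|\Phi\|_{\cb}\le 1+O(\eta)$ and submultiplicativity of the operator norm reduces this to
\begin{equation*}
\bigl\|\Phi(\Phi(XY)Z)-\Phi(X\Phi(YZ))\bigr\|\le O(\eta)\,\|X\|\,\|Y\|\,\|Z\|\qquad(X,Y,Z\in\calA).
\end{equation*}
When $\Phi^2=\Phi$ this is precisely the associativity of the Choi--Effros product, and the way to see it is the Choi-representation argument of Section~\ref{sec_idemp_structure}: writing $\Phi(A)=V^\dag(A\otimes 1_\calF)V$ (or, for a general $\calH$, a Stinespring representation $\Phi=V^\dag u(\cdot)V$), one expresses the difference of the two sides as a diagram that vanishes because repeated uses of $\Phi^2=\Phi$ collapse it — this is exactly how \eqref{eq_idemp} and Lemma~\ref{lem_idemp} are proved. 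To carry this out in the present setting I would run the same manipulation while tracking errors: each use of $\Phi^2=\Phi$ is replaced by $\|\Phi^2-\Phi\|_{\cb}\le\eta$ and each use of $\Phi(X)=X$ for $X\in\calA$ by $\|\Phi(X)-X\|\le O(\eta)\|X\|$. The delicate point is that, to get a bound \emph{linear} in $\eta$, the error must be extracted directly — organising the diagram so that the discrepancy is a single factor of $(\Phi^2-\Phi)$ (or of $\Phi-\wt{\Phi}$) applied to a matrix amplification of a product of $X,Y,Z$ — rather than through the squaring device $W\mapsto W^\dag W$ used to prove \eqref{eq_idemp} and Lemma~\ref{lem_idemp}, which would only yield a bound of order $\sqrt{\eta}$. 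Because the error is produced by a map that is completely bounded (with cb norm $\le\eta$ or $O(\eta)$) inserted into an otherwise norm-submultiplicative expression, the resulting constant does not depend on $\dim\calH$ or on the size of the Choi/Stinespring representation; this is exactly why the hypothesis is phrased with the cb norm and why the final bound is universal. I expect this error bookkeeping — especially arranging for the error to appear linearly — to be the main obstacle; the rest is routine.
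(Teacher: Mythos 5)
Your outer structure matches the paper: the reduction of the extended statement to the plain one by noting $1_{\Ma{n}}\otimes\wt{\Phi}=\widetilde{1_{\Ma{n}}\otimes\Phi}$ is exactly what the paper does, and your verifications of the unit, involution, submultiplicativity and the $C^*$-inequality \eqref{ax_C*} are all correct and essentially identical to the paper's (which reduces everything to $\|\Phi\|_{\cb}\le1$, $\|\wt{\Phi}-\Phi\|_{\cb}\le O(\eta)$, and the Kadison--Schwarz inequality \eqref{PhiXdX}). Your reduction of the associativity axiom to the single inequality $\|\Phi(\Phi(XY)Z)-\Phi(X\Phi(YZ))\|\le O(\eta)\,\|X\|\,\|Y\|\,\|Z\|$ for $X,Y,Z\in\calA$ is also fine; it is equivalent to the paper's equations \eqref{Phi_assoc1}--\eqref{Phi_assoc2}.

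However, you have not actually proved the core inequality, and your guess about \emph{how} to prove it points in the wrong direction. You write that the squaring device ($W\mapsto W^\dag W$) ``would only yield a bound of order $\sqrt{\eta}$'' and therefore suggest that the error must be isolated as a single linear factor of $\Phi^2-\Phi$. In fact, the paper's proof \emph{does} use the squaring device, and does accept the $O(\sqrt{\eta})$ bound it gives for the basic defect operator \eqref{1-Pi_X}. The resolution is different from what you expect: the diagram $W$ in \eqref{WXYZOeta} is arranged to contain \emph{two} independent insertions of the defect $1-\Pi$ (one attached to $X$, one to $Z$), and the norm estimate is obtained by \emph{multiplying} two $O(\sqrt{\eta})$ bounds, giving $O(\eta)$ directly. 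Expanding each $1-\Pi$ as $1-VV^\dag$ then yields four terms whose pairwise cancellations, using $\Phi^2\approx\Phi$, identify $W$ with the associativity defect $\Phi(\Phi(\Phi(X)\Phi(Y))\Phi(Z))-\Phi(\Phi(X)\Phi(Y)\Phi(Z))$ up to $O(\eta)$. In other words, the key idea you are missing is not to avoid the squaring bound but to \emph{stack two copies} of the defect so that their square-root errors multiply; that structural trick (and the fact that the central factor $\Phi(Y)$ sits between them with cb norm $\le\|Y\|$) is what produces a bound linear in $\eta$ that is independent of dimension.

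One more caution on your reduction step: once you have shown $\Phi(\Phi(XY)Z)\approx\Phi(X\Phi(YZ))$ for $X,Y,Z\in\calA$, you still need to replace the outer and inner $\Phi$'s by $\wt{\Phi}$'s to conclude $(X\star Y)\star Z\approx X\star(Y\star Z)$. This requires knowing that $\Phi(XY)$ is $O(\eta)\|X\|\|Y\|$-close to $\wt{\Phi}(XY)$ even though $XY$ need not lie in $\calA$; this is fine because $\|\wt{\Phi}-\Phi\|_{\cb}\le O(\eta)$ is a bound on all of $\Bo(\calH)$, but it is worth stating explicitly, as the paper does when it notes that one ``may safely replace $\Phi$ with $\wt{\Phi}$'' in \eqref{Phi_assoc1}--\eqref{Phi_assoc2}.
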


The nontrivial part of the proof is concerned with the following equations,
\begin{align}
\label{Phi_assoc1}
\Phi\Bigl(\Phi\bigl(\Phi(X)\,\Phi(Y)\bigr)\,\Phi(Z)\Bigr)
&=\Phi\bigl(\Phi(X)\,\Phi(Y)\,\Phi(Z)\bigr)+O(\eta)\ts\|X\|\ts\|Y\|\ts\|Z\|,
\\[2pt]
\label{Phi_assoc2}
\Phi\Bigl(\Phi(X)\,\Phi\bigl(\Phi(Y)\,\Phi(Z)\bigr)\Bigr)
&=\Phi\bigl(\Phi(X)\,\Phi(Y)\,\Phi(Z)\bigr)+O(\eta)\ts\|X\|\ts\|Y\|\ts\|Z\|,
\end{align}
but we will begin with the easy part.

\begin{proof}[Proof of Theorem~\ref{th_almost_idemp} using equations \eqref{Phi_assoc1} and \eqref{Phi_assoc2}.]
We assume that these equations are true for all $\eta$-idempotent UCP maps, and in particular, for $1_{\Ma{n}}\otimes\Phi$. So the subsequent arguments are applicable not only to the algebra $\calA$ but also to $\Ma{n}\otimes\calA$.

The axioms of an $\eps$-$C^*$ algebra not involving the multiplication are clearly satisfied, and it is also evident that $X\star I=X=I\star X$ and $(X\star Y)^\dag=Y^\dag\star X^\dag$ for all $X,Y\in\calA$.

Let us check the approximate submultiplicativity of the norm. If $X,Y\in\calA$, then
\begin{equation}
\|X\star Y\| =\|\wt{\Phi}(XY)\| \le\|\Phi(XY)\|+O(\eta)\|XY\|
\le (1+O(\eta))\ts\|X\|\ts\|Y\|,
\end{equation}
where we have used the inequalities $\|\wt{\Phi}-\Phi\|_\cb\le O(\eta)$ and $\|\Phi\|_\cb\le 1$.

Using the same inequalities, one may safely replace $\Phi$ with $\wt{\Phi}$ in \eqref{Phi_assoc1} and \eqref{Phi_assoc2}. Subtracting the resulting equations, we get
\[
\Bigl\|\wt{\Phi}\Bigl(\wt{\Phi}\bigl(\wt{\Phi}(X)\,\wt{\Phi}(Y)\bigr)\,
\wt{\Phi}(Z)\Bigr)
-\wt{\Phi}\Bigl(\wt{\Phi}(X)\,\wt{\Phi}\bigl(\wt{\Phi}(Y)\,
\wt{\Phi}(Z)\bigr)\Bigr)\Bigr\|
\le O(\eta)\ts\|X\|\ts\|Y\|\ts\|Z\|.
\]
If $X,Y,Z\in\calA$, then $\wt{\Phi}(X)=X$,\, $\wt{\Phi}(Y)=Y$, and $\wt{\Phi}(Z)=Z$, so that the previous equation becomes the $\eps$-associativity axiom for $\eps=O(\eta)$:
\begin{equation}
\|(X\star Y)\star Z-X\star (Y\star Z)\| \le O(\eta)\ts\|X\|\ts\|Y\|\ts\|Z\|.
\end{equation}

Finally, the operator inequality $\Phi(X^\dag)\,\Phi(X)\le\Phi(X^\dag X)$ (see \eqref{PhiXdX}) implies that $\|\Phi(X^\dag X)\|\ge\|\Phi(X)\|^2$. Replacing $\Phi$ with $\wt{\Phi}$ and assuming that $X\in\calA$, we get
\begin{equation}
\|X^\dag\star X\|\ge (1-O(\eta))\ts\|X\|^2.
\end{equation}
\end{proof}

\begin{proof}[Proof of \eqref{Phi_assoc1} and \eqref{Phi_assoc2} in the finite-dimensional case.]
Let $(\calF,V)$ be the Choi representation of $\Phi$, and let us introduce the notation
\begin{equation}
\Pi=VV^\dag=\,
\begin{tikzpicture}
\matrix[Choi,matrix anchor=l.center]
{
  \coordinate[label=left:$\scriptstyle\calF$] (l1); & & &
  \coordinate[label=right:$\scriptstyle\calF$] (r1); \\
  \coordinate[label=left:$\scriptstyle\calH$] (l); &
  \node[bdot] (V) {}; &
  \node[bdot] (Vd) {}; &
  \coordinate[label=right:$\scriptstyle\calH$] (r); \\
};
\draw (l) -- (V) -- (Vd) -- (r);
\draw (l1) to[ramp from] (V);
\draw (Vd) to[ramp to] (r1);
\end{tikzpicture}\,,\qquad\qquad
\begin{tikzpicture}
\matrix[Choi,matrix anchor=l.center]
{
  \coordinate[label=left:$\scriptstyle\calF$] (l1); & &
  \coordinate[label=right:$\scriptstyle\calF$] (r1); \\
  \coordinate[label=left:$\scriptstyle\calH$] (l); &
  \node[base rect] (P) {\rule{0pt}{4mm}}; &
  \coordinate[label=right:$\scriptstyle\calH$] (r); \\
};
\draw (l) -- (P.base west);
\draw (P.base east) -- (r);
\draw (l1) -- ([yshift=4mm]P.base west);
\draw ([yshift=4mm]P.base east) -- (r1);
\end{tikzpicture}\:
=1-\Pi.
\end{equation}
First, we show that
\begin{equation}\label{1-Pi_X}
\begin{tikzpicture}
\matrix[Choi,matrix anchor=l.center]
{
  \coordinate (l2); & & &[-2mm] &[-2mm] & & & ; \\
  \coordinate (l1); & & & & & & &; \\[2mm]
  \coordinate (l); &
  \node[base rect] (P) {\rule{0pt}{6mm}}; &
  \node[bdot] (Vd) {}; &
  \node[circ] (X) {$X$}; &
  \node[bdot] (V) {}; &
  \node[bdot] (V1) {}; &
  \node[bdot] (V2) {}; &
  \coordinate (r); \\
};
\draw (l) -- (P.base west);
\draw (P.base east) -- (Vd) -- (X) -- (V) -- (V1) -- (V2) -- (r);
\draw (Vd) to[bridge] (V);
\draw (l1) -- ([yshift=6mm]P.base west);
\draw ([yshift=6mm]P.base east) to[ramp from] (V1);
\draw (l2) to[ramp from] (V2);
\end{tikzpicture}\,
=O\bigl(\sqrt{\eta}\bigr)\|X\|.
\end{equation}
Indeed,
\[
\begin{aligned}
\hspace{2cm}&\hspace{-2cm}
\begin{tikzpicture}
\matrix[Choi,matrix anchor=l.center]
{
  \coordinate (l); &
  \node[bdot] (Vd2) {}; &
  \node[bdot] (Vd1) {}; &
  \node[bdot] (Vda) {}; &[-1.5mm]
  \node[circ] (Xd) {$X^\dag$}; &[-1.5mm]
  \node[bdot] (Va) {}; &  
  \node[base rect] (P) {\rule{0pt}{7mm}}; &
  \node[bdot] (Vdb) {}; &[-2mm]
  \node[circ] (X) {$X$}; &[-2mm]
  \node[bdot] (Vb) {}; &
  \node[bdot] (V1) {}; &
  \node[bdot] (V2) {}; &
  \coordinate (r); \\
};
\draw (l) -- (Vd2) -- (Vd1) -- (Vda) -- (Xd) -- (Va) -- (P.base west);
\draw (P.base east) -- (Vdb) -- (X) -- (Vb) -- (V1) -- (V2) -- (r);
\draw (Vda) to[bridge=5mm] (Va);
\draw (Vdb) to[bridge=4.5mm] (Vb);
\draw (Vd1) to[ramp to] ([yshift=7mm]P.base west);
\draw ([yshift=7mm]P.base east) to[ramp from] (V1);
\draw (Vd2) to[bridge=10mm] (V2);
\end{tikzpicture}
\\[3pt]
&=\,\begin{tikzpicture}
\matrix[Choi,matrix anchor=l.center]
{
  \coordinate (l); &
  \node[bdot] (Vd2) {}; &
  \node[bdot] (Vd1) {}; &
  \node[bdot] (Vda) {}; &[-1.5mm]
  \node[circ] (Xd) {$X^\dag$}; &[-1.5mm]
  \node[bdot] (Va) {}; &
  \node[bdot] (Vdb) {}; &[-2mm]
  \node[circ] (X) {$X$}; &[-2mm]
  \node[bdot] (Vb) {}; &
  \node[bdot] (V1) {}; &
  \node[bdot] (V2) {}; &
  \coordinate (r); \\
};
\draw (l) -- (Vd2) -- (Vd1) -- (Vda) -- (Xd) -- (Va)
  -- (Vdb) -- (X) -- (Vb) -- (V1) -- (V2) -- (r);
\draw (Vdb) to[bridge=0.45cm] (Vb);
\draw (Vda) to[bridge=0.5cm] (Va);
\draw (Vd1) to[bridge=0.7cm] (V1);
\draw (Vd2) to[bridge=0.9cm] (V2);
\end{tikzpicture}\,
-\,\begin{tikzpicture}
\matrix[Choi,matrix anchor=l.center]
{
  \coordinate (l); &
  \node[bdot] (Vd2) {}; &
  \node[bdot] (Vd1a) {}; &
  \node[bdot] (Vda) {}; &[-1.5mm]
  \node[circ] (Xd) {$X^\dag$}; &[-1.5mm]
  \node[bdot] (Va) {}; &
  \node[bdot] (V1a) {}; &
  \node[bdot] (Vd1b) {}; &
  \node[bdot] (Vdb) {}; &[-2mm]
  \node[circ] (X) {$X$}; &[-2mm]
  \node[bdot] (Vb) {}; &
  \node[bdot] (V1b) {}; &
  \node[bdot] (V2) {}; &
  \coordinate (r); \\
};
\draw (l) -- (Vd2) -- (Vda) -- (Xd) -- (Va)
  -- (Vdb) -- (X) -- (Vb) -- (V2) -- (r);
\draw (Vdb) to[bridge=0.45cm] (Vb);
\draw (Vda) to[bridge=0.5cm] (Va);
\draw (Vd1a) to[bridge=0.7cm] (V1a);
\draw (Vd1b) to[bridge=0.7cm] (V1b);
\draw (Vd2) to[bridge=0.9cm] (V2);
\end{tikzpicture}
\\[3pt]
&= \Phi^2\bigl(\Phi(X^\dag)\,\Phi(X)\bigr)
-\Phi\bigl(\Phi^2(X^\dag)\,\Phi^2(X)\bigr)
=O(\eta)\|X\|^2.
\end{aligned}
\]

Now, consider a diagram that has two parts similar to the left-hand side of \eqref{1-Pi_X}, one containing $X$ and the other containing $Z$:
\begin{equation}\label{WXYZOeta}
W=\,
\begin{tikzpicture}
\matrix[Choi,matrix anchor=l.center]
{
  \coordinate (l); &
  \node[bdot] (Vd3) {}; &
  \node[bdot] (Vd2) {}; &
  \node[bdot] (Vd1) {}; &
  \node[bdot] (Vda) {}; &[-2mm]
  \node[circ] (X) {$X$}; &[-2mm]
  \node[bdot] (Va) {}; &  
  \node[base rect] (Pa) {\rule{0pt}{6mm}}; &
  \node[bdot] (Vdb) {}; &[-2mm]
  \node[circ] (Y) {$Y$}; &[-2mm]
  \node[bdot] (Vb) {}; &
  \node[bdot] (V1) {}; &
  \node[base rect] (Pc) {\rule{0pt}{9mm}}; &
  \node[bdot] (Vdc) {}; &[-2mm]
  \node[circ] (Z) {$Z$}; &[-2mm]
  \node[bdot] (Vc) {}; &
  \node[bdot] (V2) {}; &
  \node[bdot] (V3) {}; &
  \coordinate (r); \\
};
\draw (l) -- (Vd3) -- (Vd2)-- (Vd1) -- (Vda) -- (X) -- (Va) -- (Pa.base west);
\draw (Pa.base east) -- (Vdb) -- (Y) -- (Vb) -- (V1) -- (Pc.base west);
\draw (Pc.base east) -- (Vdc) -- (Z) -- (Vc) -- (V2) -- (V3) -- (r);
\draw (Vda) to[bridge] (Va);
\draw (Vdb) to[bridge] (Vb);
\draw (Vdc) to[bridge] (Vc);
\draw (Vd1) to[ramp to] ([yshift=6mm]Pa.base west);
\draw ([yshift=6mm]Pa.base east) to[ramp from] (V1);
\draw (Vd2) to[ramp to] ([yshift=9mm]Pc.base west);
\draw ([yshift=9mm]Pc.base east) to[ramp from] (V2);
\draw (Vd3) to[bridge=12mm] (V3);
\end{tikzpicture}\,
=O(\eta)\ts\|X\|\ts\|Y\|\ts\|Z\|.
\end{equation}
Expanding both rectangles as $1-VV^\dag$, we get
\begin{align*}
W &=\begin{aligned}[t]
&\phantom{\hbox{}+\hbox{}}\,
\begin{tikzpicture}[ampersand replacement=\&]
\matrix[Choi,matrix anchor=l.center]
{
  \coordinate (l); \&
  \node[bdot] (Vd3) {}; \&
  \node[bdot] (Vd2) {}; \&
  \node[bdot] (Vd1) {}; \&
  \node[bdot] (Vda) {}; \&[-2mm]
  \node[circ] (X) {$X$}; \&[-2mm]
  \node[bdot] (Va) {}; \&  
  \node[bdot] (Vdb) {}; \&[-2mm]
  \node[circ] (Y) {$Y$}; \&[-2mm]
  \node[bdot] (Vb) {}; \&
  \node[bdot] (V1) {}; \&
  \node[bdot] (Vdc) {}; \&[-2mm]
  \node[circ] (Z) {$Z$}; \&[-2mm]
  \node[bdot] (Vc) {}; \&
  \node[bdot] (V2) {}; \&
  \node[bdot] (V3) {}; \&
  \coordinate (r); \\
};
\draw (l) -- (Vd3) -- (Vd2)-- (Vd1) -- (Vda) -- (X) -- (Va)
  -- (Vdb) -- (Y) -- (Vb) -- (V1)
  -- (Vdc) -- (Z) -- (Vc) -- (V2) -- (V3) -- (r);
\draw (Vda) to[bridge] (Va);
\draw (Vdb) to[bridge] (Vb);
\draw (Vdc) to[bridge] (Vc);
\draw (Vd1) to[bridge=6mm] (V1);
\draw (Vd2) to[bridge=8mm] (V2);
\draw (Vd3) to[bridge=10mm] (V3);
\end{tikzpicture}
\\[3pt]
&-\,
\begin{tikzpicture}[ampersand replacement=\&]
\matrix[Choi,matrix anchor=l.center]
{
  \coordinate (l); \&
  \node[bdot] (Vd3) {}; \&
  \node[bdot] (Vd2) {}; \&
  \node[bdot] (Vd1a) {}; \&
  \node[bdot] (Vda) {}; \&[-2mm]
  \node[circ] (X) {$X$}; \&[-2mm]
  \node[bdot] (Va) {}; \&  
  \node[bdot] (Vdb) {}; \&[-2mm]
  \node[circ] (Y) {$Y$}; \&[-2mm]
  \node[bdot] (Vb) {}; \&
  \node[bdot] (V1a) {}; \&
  \node[bdot] (V2) {}; \&
  \node[bdot] (Vd1c) {}; \&
  \node[bdot] (Vdc) {}; \&[-2mm]
  \node[circ] (Z) {$Z$}; \&[-2mm]
  \node[bdot] (Vc) {}; \&
  \node[bdot] (V1c) {}; \&
  \node[bdot] (V3) {}; \&
  \coordinate (r); \\
};
\draw (l) -- (Vd3) -- (Vd2)-- (Vd1a) -- (Vda) -- (X) -- (Va)
  -- (Vdb) -- (Y) -- (Vb) -- (V1a) -- (V2)
  -- (Vd1c) -- (Vdc) -- (Z) -- (Vc) -- (V1c) -- (V3) -- (r);
\draw (Vda) to[bridge] (Va);
\draw (Vdb) to[bridge] (Vb);
\draw (Vdc) to[bridge] (Vc);
\draw (Vd1a) to[bridge=6mm] (V1a);
\draw (Vd1c) to[bridge=6mm] (V1c);
\draw (Vd2) to[bridge=8mm] (V2);
\draw (Vd3) to[bridge=10mm] (V3);
\end{tikzpicture}
\\[3pt]
&-\,
\begin{tikzpicture}[ampersand replacement=\&]
\matrix[Choi,matrix anchor=l.center]
{
  \coordinate (l); \&
  \node[bdot] (Vd3) {}; \&
  \node[bdot] (Vd2) {}; \&
  \node[bdot] (Vd1a) {}; \&
  \node[bdot] (Vda) {}; \&[-2mm]
  \node[circ] (X) {$X$}; \&[-2mm]
  \node[bdot] (Va) {}; \&  
  \node[bdot] (V1a) {}; \&  
  \node[bdot] (Vd1b) {}; \&  
  \node[bdot] (Vdb) {}; \&[-2mm]
  \node[circ] (Y) {$Y$}; \&[-2mm]
  \node[bdot] (Vb) {}; \&
  \node[bdot] (V1b) {}; \&
  \node[bdot] (Vdc) {}; \&[-2mm]
  \node[circ] (Z) {$Z$}; \&[-2mm]
  \node[bdot] (Vc) {}; \&
  \node[bdot] (V2) {}; \&
  \node[bdot] (V3) {}; \&
  \coordinate (r); \\
};
\draw (l) -- (Vd3) -- (Vd2)-- (Vd1a) -- (Vda) -- (X) -- (Va) -- (V1a)
  -- (Vd1b) -- (Vdb) -- (Y) -- (Vb) -- (V1b)
  -- (Vdc) -- (Z) -- (Vc) -- (V2) -- (V3) -- (r);
\draw (Vda) to[bridge] (Va);
\draw (Vdb) to[bridge] (Vb);
\draw (Vdc) to[bridge] (Vc);
\draw (Vd1a) to[bridge=6mm] (V1a);
\draw (Vd1b) to[bridge=6mm] (V1b);
\draw (Vd2) to[bridge=8mm] (V2);
\draw (Vd3) to[bridge=10mm] (V3);
\end{tikzpicture}
\\[3pt]
&+\,
\begin{tikzpicture}[ampersand replacement=\&]
\matrix[Choi,matrix anchor=l.center]
{
  \coordinate (l); \&
  \node[bdot] (Vd3) {}; \&
  \node[bdot] (Vd2) {}; \&
  \node[bdot] (Vd1a) {}; \&
  \node[bdot] (Vda) {}; \&[-2mm]
  \node[circ] (X) {$X$}; \&[-2mm]
  \node[bdot] (Va) {}; \&  
  \node[bdot] (V1a) {}; \&  
  \node[bdot] (Vd1b) {}; \&  
  \node[bdot] (Vdb) {}; \&[-2mm]
  \node[circ] (Y) {$Y$}; \&[-2mm]
  \node[bdot] (Vb) {}; \&
  \node[bdot] (V1b) {}; \&
  \node[bdot] (V2) {}; \&
  \node[bdot] (Vd1c) {}; \&
  \node[bdot] (Vdc) {}; \&[-2mm]
  \node[circ] (Z) {$Z$}; \&[-2mm]
  \node[bdot] (Vc) {}; \&
  \node[bdot] (V1c) {}; \&
  \node[bdot] (V3) {}; \&
  \coordinate (r); \\
};
\draw (l) -- (Vd3) -- (Vd2)-- (Vd1a) -- (Vda) -- (X) -- (Va) -- (V1a)
  -- (Vd1b) -- (Vdb) -- (Y) -- (Vb) -- (V1b) -- (V2)
  -- (Vd1c) -- (Vdc) -- (Z) -- (Vc) -- (V1c) -- (V3) -- (r);
\draw (Vda) to[bridge] (Va);
\draw (Vdb) to[bridge] (Vb);
\draw (Vdc) to[bridge] (Vc);
\draw (Vd1a) to[bridge=6mm] (V1a);
\draw (Vd1b) to[bridge=6mm] (V1b);
\draw (Vd1c) to[bridge=6mm] (V1c);
\draw (Vd2) to[bridge=8mm] (V2);
\draw (Vd3) to[bridge=10mm] (V3);
\end{tikzpicture}
\end{aligned}
\\[5pt]
&=\begin{aligned}[t]
&\phantom{\hbox{}+\hbox{}}
\Phi^2\Bigl(\Phi\bigl(\Phi(X)\,\Phi(Y)\bigr)\,\Phi(Z)\Bigr)
-\Phi\Bigl(\Phi^2\bigl(\Phi(X)\,\Phi(Y)\bigr)\,\Phi^2(Z)\Bigr)\\
&-\Phi^2\bigl(\Phi^2(X)\,\Phi^2(Y)\,\Phi(Z)\bigr)
+\Phi\Bigl(\Phi\bigl(\Phi^2(X)\,\Phi^2(Y)\bigr)\,\Phi^2(Z)\Bigr)
\end{aligned}
\\[4pt]
&=\Phi\Bigl(\Phi\bigl(\Phi(X)\,\Phi(Y)\bigr)\,\Phi(Z)\Bigr)
-\Phi\bigl(\Phi(X)\,\Phi(Y)\,\Phi(Z)\bigr)
+O(\eta)\ts\|X\|\ts\|Y\|\ts\|Z\|.
\end{align*}
The combination of this result and equation \eqref{WXYZOeta} gives \eqref{Phi_assoc1}. Equation \eqref{Phi_assoc2} is obtained similarly.
\end{proof}

To prove the result in full generality (in finite and infinite dimensions), we need to replace the Choi representation with the Stinespring representation and find suitable generalizations of the Hilbert spaces $\calH_n=\calH\otimes\calF^{\otimes n}$, isometries $V_n=V\otimes 1_\calF^{\otimes(n-1)}\colon\calH_{n-1}\to\calH_{n}$, and \hbox{$*$-homomorphisms} $u_n\colon X\mapsto X\otimes1_\calF\colon \Bo(\calH_{n-1})\to\Bo(\calH_{n})$. Such a ``Stinespring stack''
\begin{equation}
\begin{tikzcd}[column sep=24pt]
\calH_0 \arrow[r,hookrightarrow,"V_1"] &
\calH_1 \arrow[r,hookrightarrow,"V_2"] &
\calH_2 \arrow[r,hookrightarrow,"V_3"] & \cdots
\end{tikzcd}\,,\qquad
\begin{tikzcd}[column sep=22pt]
\Bo(\calH_0) \arrow[r,"u_1"] &
\Bo(\calH_1) \arrow[r,"u_2"] &
\Bo(\calH_2) \arrow[r,"u_3"] & \cdots
\end{tikzcd}
\end{equation}
is constructed inductively, starting from $\calH_0=\calH$ and $\Phi_0=\Phi$. For each $n=1,2,\ldots$, we define $(\calH_n,u_n,V_n)$ to be the canonical Stinespring representation of $\Phi_{n-1}$; then $\Phi_n\colon X\mapsto u_n(V_n^\dag XV_n)$. Thus,
\begin{equation}\label{uVdV}
u_n(V_n^\dag XV_n)=\Phi_n(X)=V_{n+1}^\dag\, u_{n+1}(X)\, V_{n+1}\qquad
(X\in\calH_n).
\end{equation}
This corresponds to the first diagram in Proposition~\ref{prop_KLHG} for $\calL=\calH_{n-1}$,\, $\calK=\calH=\calH_n$,  and $\calG=\calH_{n+1}$. The commutativity of the second diagram is expressed by the equation
\begin{equation}\label{uVVd}
u_{n+1}(V_n YV_n^\dag)=V_{n+1}\, u_n(Y)\, V_{n+1}^\dag\qquad
(Y\in\calH_{n-1}).
\end{equation}
Equations \eqref{uVdV} and \eqref{uVVd} can be applied iteratively. Let
\begin{equation}
V_{n,m}=V_n\cdots V_{m+1}\colon\calH_m\to\calH_n,\qquad
u_{n,m}=u_n\cdots u_{m+1}\colon\Bo(\calH_m)\to\Bo(\calH_n)\quad\:
(n\ge m);
\end{equation}
in particular, $V_{n,n-1}=V_n$ and $u_{n,n-1}=u_n$. Then
\begin{alignat}{2}
\label{uVdV-multi}
u_{m+k,m}(V_{n,m}^\dag XV_{n,m})
&=V_{n+k,m+k}^\dag\,\ts u_{n+k,n}(X)\,V_{n+k,m+k}\qquad && (X\in\calH_n),
\\[2pt]
\label{uVVd-multi}
u_{n+k,n}(V_{n,m}YV_{n,m}^\dag)
&=V_{n+k,m+k}\,\ts u_{m+k,m}(Y)\,V_{n+k,m+k}^\dag\qquad && (Y\in\calH_m).
\end{alignat}
We will use only \eqref{uVdV}, \eqref{uVdV-multi} but not \eqref{uVVd}, \eqref{uVVd-multi}.

\begin{Remark}
Since $V_1,V_2,\ldots$ are isometries, one may regard them as inclusions and define
\begin{equation}
\calH_\infty=\overline{\bigcup_{n}\calH_n},\qquad
u_\infty\colon X\mapsto\lim_{n\to\infty}
V_{\infty,n+1}\,u_{n+1}(V_{\infty,n}^\dag XV_{\infty,n})\,V_{\infty,n+1}^\dag\,
\colon\, \Bo(\calH_\infty)\to\Bo(\calH_\infty),
\end{equation}
where the bar denotes the completion, and the limit is in the weak operator topology.
\end{Remark}

\begin{proof}[General proof of equations \eqref{Phi_assoc1} and \eqref{Phi_assoc2}.]
First, we construct an analogue of the left-hand side of \eqref{1-Pi_X} covered by $k$ additional layers:
\begin{equation}
R_k(X)=u_{2+k,1}\mkern-1mu\Bigl(
(I_1-V_1V_1^\dag)\,u_1(\Phi(X))
\Bigr)\,V_{2+k}V_{1+k},
\end{equation}
where $I_n=1_{\calH_n}$. The norm of $R_k(X)$ is estimated as follows,
\[
\begin{aligned}
R_k(X)^\dag\,R_k(X)
&= V_{1+k}^\dag V_{2+k}^\dag\,\ts u_{2+k,1}\mkern-1mu\Bigl(
u_1(\Phi(X^\dag))\,(I_1-V_1V_1^\dag)\,u_1(\Phi(X))
\Bigr)\, V_{2+k}V_{1+k}
\\
&= u_{k,0}\Bigl(V_1^\dag\,\ts u_1\mkern-1mu\bigl(
V_1^\dag\,u_1(\Phi(X^\dag))\,(I_1-V_1V_1^\dag)\,u_1(\Phi(X))\,V_1
\bigr)\, V_1\Bigr)\\
&= u_{k,0}\Bigl(\Phi^2\bigl(\Phi(X^\dag)\,\Phi(X)\bigr)
-\Phi\bigl(\Phi^2(X^\dag)\,\Phi^2(X)\bigr)\Bigr)
=O(\eta)\|X\|^2,
\end{aligned}
\]
where the second line was obtained using \eqref{uVdV-multi} and the equation $u_{1+k,0}=u_{1+k,1}u_1$:
\[
V_{1+k}^\dag V_{2+k}^\dag\,\ts u_{2+k,1}(Z)\, V_{2+k}V_{1+k}
=V_{1+k}^\dag\,\ts u_{1+k,0}(V_1^\dag ZV_1)\,V_1
= u_{k,0}\mkern-1mu\bigl(V_1^\dag\,u_1(V_1^\dag ZV_1)\,V_1\bigr).
\]

By analogy with equation \eqref{WXYZOeta}, we have
\begin{equation}\label{WXYZOeta-1}
W=V_1^\dag\,R_1(X^\dag)^\dag\,u_{3,0}(\Phi(Y))\,V_3\,R_0(Z)
=O(\eta)\ts\|X\|\ts\|Y\|\ts\|Z\|.
\end{equation}
The expression for $W$ is transformed as follows:
\begin{align*}
W &= V_1^\dag V_{2}^\dag V_{3}^\dag\,\ts u_{3,1}\mkern-1mu\Bigl(
u_1(\Phi(X))\,(I_1-V_1V_1^\dag)\Bigr)\,u_{3,0}(\Phi(Y))\,V_3\,\ts
u_2\mkern-1mu\Bigl((I_1-V_1V_1^\dag)\,u_1(\Phi(Z))\Bigr)\,V_{2}V_{1}
\\
&= V_1^\dag V_{2}^\dag\,\ts u_{2,0}\mkern-1mu\Bigl(
V_1^\dag\,u_1(\Phi(X))\,(I_1-V_1V_1^\dag)\,u_1(\Phi(Y))\,V_1\Bigr)\,
u_2\mkern-1mu\Bigl((I_1-V_1V_1^\dag)\,u_1(\Phi(Z))\Bigr)\,V_{2}V_{1}
\\
&= V_1^\dag V_{2}^\dag\,\ts u_2\mkern-1mu\Bigl(
u_1\mkern-1mu\Bigl(
V_1^\dag\,u_1(\Phi(X))\,(I_1-V_1V_1^\dag)\,u_1(\Phi(Y))\,V_1
\Bigr)\,(I_1-V_1V_1^\dag)\,u_1(\Phi(Z))\Bigr)\,V_{2}V_{1}
=\Phi(Q),
\end{align*}
where
\begin{align*}
Q&=V_1^\dag\,\ts
u_1\mkern-1mu\Bigl(
V_1^\dag\,u_1(\Phi(X))\,(I_1-V_1V_1^\dag)\,u_1(\Phi(Y))\,V_1
\Bigr)\,(I_1-V_1V_1^\dag)\,u_1(\Phi(Z))\,V_1
\\[4pt]
&=\begin{aligned}[t]
&\phantom{\hbox{}+\hbox{}}
V_1^\dag\,\ts u_1\mkern-1mu\bigl(\Phi\bigl(\Phi(X)\,\Phi(Y)\bigr)\bigr)\,
u_1(\Phi(Z))\,V_1
-V_1^\dag\,\ts u_1\mkern-1mu\bigl(\Phi\bigl(\Phi(X)\,\Phi(Y)\bigr)\bigr)\,
V_1V_1^\dag\,u_1(\Phi(Z))\,V_1\\
&-V_1^\dag\,\ts u_1\mkern-1mu\bigl(\Phi^2(X)\,\Phi^2(Y)\bigr)\,
u_1(\Phi(Z))\,V_1
+V_1^\dag\,\ts u_1\mkern-1mu\bigl(\Phi^2(X)\,\Phi^2(Y)\bigr)\,
V_1V_1^\dag\,u_1(\Phi(Z))\,V_1
\end{aligned}
\\[4pt]
&=\begin{aligned}[t]
&\phantom{\hbox{}+\hbox{}}
\Phi\bigl(\Phi\bigl(\Phi(X)\,\Phi(Y)\bigr)\,\Phi(Z)\bigr)
-\Phi^2\bigl(\Phi(X)\,\Phi(Y)\bigr)\,\Phi^2(Z)\\
&-\Phi\bigl(\Phi^2(X)\,\Phi^2(Y)\,\Phi(Z)\bigr)
+\Phi\bigl(\Phi^2(X)\,\Phi^2(Y)\bigr)\,\Phi^2(Z)
\end{aligned}
\\[4pt]
&=\Phi\bigl(\Phi\bigl(\Phi(X)\,\Phi(Y)\bigr)\,\Phi(Z)\bigr)
-\Phi\bigl(\Phi(X)\,\Phi(Y)\,\Phi(Z)\bigr)
+O(\eta)\ts\|X\|\ts\|Y\|\ts\|Z\|.
\end{align*}
Thus, $W=\Phi\bigl(\Phi\bigl(\Phi(X)\,\Phi(Y)\bigr)\,\Phi(Z)\bigr)
-\Phi\bigl(\Phi(X)\,\Phi(Y)\,\Phi(Z)\bigr)
+O(\eta)\ts\|X\|\ts\|Y\|\ts\|Z\|$. This equation together with \eqref{WXYZOeta-1} implies \eqref{Phi_assoc1}. Equation \eqref{Phi_assoc2} is obtained in the same way.
\end{proof}

\subsection{Approximate factorization through a \texorpdfstring{$C^*$}{C*} algebra}\label{sec_approx_fact}

In this section, we denote tensor extensions of maps as $\Phi_n=1_{\Ma{n}}\otimes\Phi$ but do not put such subscripts on $C^*$ norms (on $\Ma{n}\otimes\Bo(\calH)$ or similar algebras) because, unlike in the general discussion of extended $\eps$-$C^*$ algebras, these norms are standard.

\begin{Theorem}\label{th_factorization}
Let $\calH$ be a nonzero finite-dimensional Hilbert space and $\Phi\colon\Bo(\calH)\to\Bo(\calH)$ a UCP map such that $\|\Phi^2-\Phi\|_\cb\le\eta$. Then there are a finite-dimensional $C^*$ algebra $\calB$ and UCP maps $\Delta\colon\calB\to\Bo(\calH)$ and $\Upsilon\colon\Bo(\calH)\to\calB$ such that
\begin{align}
\label{DelUps}
\|\Delta\Upsilon-\Phi\|_\cb&\le O(\eta),\\[2pt]
\label{UpsDel2}
\bigl\|\Upsilon_n\bigl(\Delta_n(X)\Delta_n(Y)\bigr)-XY\bigr\|
&\le O(\eta)\ts\|X\|\ts\|Y\|\qquad (X,Y\in\Ma{n}\otimes\calB).
\end{align}
(In particular, \eqref{UpsDel2} implies that $\|\Upsilon\Delta-1_\calB\|_\cb\le O(\eta)$.)
\end{Theorem}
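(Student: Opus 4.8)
The plan is to read off the abstract structure from the earlier theorems and then upgrade it to completely positive maps. First, by Theorem~\ref{th_almost_idemp} the space $\calA=\Img\wt\Phi\subseteq\Bo(\calH)$ with the product $X\star Y=\wt\Phi(XY)$ is an extended $O(\eta)$-$C^*$ algebra, and it is finite-dimensional because $\calH$ is; hence Theorem~\ref{th_main_ext} yields a finite-dimensional $C^*$ algebra $\calB$ and an extended $O(\eta)$-isomorphism $v\colon\calB\to\calA$, whose inverse $v^{-1}\colon\calA\to\calB$ is again an extended $O(\eta)$-isomorphism. Writing $j\colon\calA\hookrightarrow\Bo(\calH)$ for the completely isometric unital inclusion, I would set $\Delta_0=j\circ v$ and $\Upsilon_0=v^{-1}\circ\wt\Phi$ (with $\wt\Phi$ regarded as a map into $\calA$). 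These maps are $\dagger$-preserving, satisfy $\|\Delta_0(I)-I\|\le O(\eta)$, $\|\Upsilon_0(I)-I\|\le O(\eta)$, and $\|\Delta_0\|_{\cb},\|\Upsilon_0\|_{\cb}\le 1+O(\eta)$, and — using $vv^{-1}=1_\calA$, $\|\wt\Phi-\Phi\|_{\cb}\le O(\eta)$, and the fact that $v$ is an extended $O(\eta)$-homomorphism for $\star$ — they obey $\Delta_0\Upsilon_0=\wt\Phi$ and $\bigl\|(\Upsilon_0)_n\!\bigl((\Delta_0)_n(X)(\Delta_0)_n(Y)\bigr)-XY\bigr\|\le O(\eta)\,\|X\|\,\|Y\|$, hence the analogues of \eqref{DelUps} and \eqref{UpsDel2} with $\Delta_0,\Upsilon_0$ in place of $\Delta,\Upsilon$.

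The hard part is to replace $\Delta_0$ and $\Upsilon_0$ by genuine UCP maps with a perturbation of size $O(\eta)$ that does not depend on $\dim\calH$. For this I would prove a stability lemma: if $\calR$ is a unital $C^*$ algebra, $\calS$ a finite-dimensional $C^*$ algebra, and $\phi\colon\calR\to\calS$ is $\dagger$-preserving with $\|\phi(I)-I\|\le\eta$ and $\|\phi\|_{\cb}\le 1+\eta$, then there is a UCP map $\phi'\colon\calR\to\calS$ with $\|\phi'-\phi\|_{\cb}\le O(\eta)$. The proof uses the Wittstock decomposition $\phi=\phi_+-\phi_-$ into completely positive maps with $\|\phi_++\phi_-\|_{\cb}=\|\phi\|_{\cb}$, available with values in $\calS$ because finite-dimensional $C^*$ algebras are injective \cite{Paulsen}. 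Since $\phi_-$ is CP, $\|\phi_-\|_{\cb}=\|\phi_-(I)\|$; from $\phi_-(I)\ge 0$, $\|\phi(I)-I\|\le\eta$, and $\|\phi(I)+2\phi_-(I)\|=\|(\phi_++\phi_-)(I)\|\le 1+\eta$ one gets $\|\phi_-(I)\|\le O(\eta)$ and $\|\phi_+(I)-I\|\le O(\eta)$; conjugating $\phi_+$ by $\phi_+(I)^{-1/2}$ (legitimate since $\phi_+(I)$ is within $O(\eta)$ of a positive invertible element of $\calS$) turns $\phi_+$ into a UCP map $\phi'$ with $\|\phi'-\phi_+\|_{\cb}\le O(\eta)$, hence $\|\phi'-\phi\|_{\cb}\le O(\eta)$. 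Applying this lemma to $\Delta_0$ (with $\calS=\Bo(\calH)$) and to $\Upsilon_0$ (with $\calS=\calB$) gives UCP maps $\Delta\colon\calB\to\Bo(\calH)$ and $\Upsilon\colon\Bo(\calH)\to\calB$ with $\|\Delta-\Delta_0\|_{\cb}\le O(\eta)$ and $\|\Upsilon-\Upsilon_0\|_{\cb}\le O(\eta)$.

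Finally I would propagate the errors: since cb-closeness controls $\|\Delta_n-(\Delta_0)_n\|$ and $\|\Upsilon_n-(\Upsilon_0)_n\|$ uniformly in $n$, the bounds \eqref{DelUps} and \eqref{UpsDel2} follow from their $\Delta_0,\Upsilon_0$ counterparts by estimating each perturbation against the $(1+O(\eta))$-bounded cb-norms of $\Delta$, $\Upsilon$, and $\wt\Phi$; the parenthetical claim $\|\Upsilon\Delta-1_\calB\|_{\cb}\le O(\eta)$ is the case $Y=I$ of \eqref{UpsDel2} together with $\Delta(I)=I$. Dimension-independence of every constant is inherited from Theorem~\ref{th_main_ext} for the abstract isomorphism and is manifest in the stability lemma, whose proof never mentions the dimension. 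I expect the verification of the stability lemma — in particular securing the Wittstock decomposition into $\calS$-valued completely positive pieces with the stated norm control — to be the only genuinely new ingredient; everything else is bookkeeping with the cb norm.
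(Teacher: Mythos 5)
Your proof is correct, and the first half — extracting $\calB$ and the extended $O(\eta)$-isomorphism $v$ from Theorems~\ref{th_almost_idemp} and~\ref{th_main_ext}, then defining $\Delta_0=j\circ v$ and $\Upsilon_0=v^{-1}\circ\wt\Phi$ and verifying \eqref{DelUps}, \eqref{UpsDel2} for them — is exactly what the paper does in its ``outline of the proof'' (where $\Delta_0,\Upsilon_0$ are called $\wt\Delta,\wt\Upsilon$). The step where you diverge is the promotion to genuine UCP maps. The paper does this \emph{constructively}: it represents the diagonal of $\calB$ as a unitary $1$-design $D=\sum_s p_s U_s^\dag\otimes U_s$ and defines $\Delta'(X)=\sum_s p_s\,\Phi(\wt\Delta(XU_s^\dag)\,\wt\Delta(U_s))$, which is manifestly CP by inspection of the Kraus form; a companion map $\Upsilon'$ is built from the Choi representations of $\Delta$ and $\Phi$ (Lemma~\ref{lem_RC}); both are then unitalized by conjugation with $(\cdot(1))^{-1/2}$. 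You instead invoke an \emph{abstract stability lemma}: any $\dagger$-preserving map $\phi$ with $\|\phi\|_\cb\le1+\eta$ and $\|\phi(I)-I\|\le\eta$ into an injective (here finite-dimensional) $C^*$ algebra is $O(\eta)$-cb-close to a UCP map, proved via the Wittstock/Haagerup decomposition $\phi=\phi_+-\phi_-$ with $\|\phi_++\phi_-\|_\cb\le\|\phi\|_\cb$, the estimate $\phi_-(I)\le\eta I$ from $2\phi_-(I)=(\phi_++\phi_-)(I)-\phi(I)$, and a final conjugation by $\phi_+(I)^{-1/2}$. Your lemma is correct (the $\le$ version of the norm constraint already suffices, and finite-dimensional $C^*$ algebras are injective, so the decomposition lands in $\calS$), and all constants are manifestly dimension-free. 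The trade-off: your argument is shorter and reusable for any near-UCP map, but it imports Wittstock's theorem as a black box; the paper's construction is longer but elementary given the tools it has already set up (diagonals, Choi/Stinespring), and it produces explicit Kraus operators for $\Delta$ and $\Upsilon$, which the paper exploits in the intermediate Lemma~\ref{lem_RC}.
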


\begin{proof}[Discussion and an outline of the proof.]
Let us consider the idempotent map $\wt{\Phi}=\theta(2\Phi-1)$ such that $\|\wt{\Phi}-\Phi\|_\cb\le O(\eta)$. By Theorem~\ref{th_almost_idemp}, the subspace $\calA=\Img\wt{\Phi}$ equipped with the Choi-Effros product $Z\star W=\wt{\Phi}(ZW)$ is an extended $O(\eta)$-$C^*$ algebra. One corollary of Theorem~\ref{th_factorization} is that the map
\[
u\colon\calB\to\calA,\qquad u(X)=\wt{\Phi}(\Delta(X))
\]
is an extended $O(\eta)$-isomorphism. Indeed, $u(X)\approx\Phi(\Delta(X))\approx\Delta(\Upsilon(\Delta(X))\approx\Delta(X)$ with $O(\eta)\ts\|X\|$ accuracy, and hence, $u(X)\star u(Y)\approx\Phi(\Delta(X)\Delta(Y))\approx\Delta(\Upsilon(\Delta(X)\Delta(Y)))\approx\Delta(XY)$. The same relations hold for $u_n$ and $\Delta_n$. The inverse of $u$ is given, with the same accuracy, by the restriction of $\Upsilon$ on $\calA$.

Theorem~\ref{th_factorization} is proved by reversing those arguments. By Theorem~\ref{th_main_ext}, there exist a finite-dimensional $C^*$ algebra $\calB$ and an extended $O(\eta)$-isomorphism $v\colon\calB\to\calA$. Let $\wt{\Delta}$ be $v$ followed by the inclusion $\calA\to\Bo(\calH)$, and let $\wt{\Upsilon}$ be $\wt{\Phi}$ with the target space $\calA$, followed by $v^{-1}$. These maps are not UCP but meet the other requirements. Indeed, it is immediate that
\begin{equation}\label{tilde_DelUps}
\wt{\Delta}\wt{\Upsilon}=\wt{\Phi},\qquad \wt{\Upsilon}\wt{\Delta}=1_\calB,\qquad
\|\wt{\Delta}\|_\cb\le 1+O(\eta),\qquad \|\wt{\Upsilon}\|_\cb\le 1+O(\eta).
\end{equation}
Since $v$ maps $I_\calB$ to $1_\calH$ and carries the $\calB$ product to the Choi-Effros product with $O(\eta)$ accuracy (including tensor extensions), these bounds hold:
\begin{gather}
\label{tilde_Del1}
\|\wt{\Delta}(I_\calB)-1_\calH\|\le O(\eta),\\[2pt]
\label{tilde_Del2}
\bigl\|\wt{\Phi}_n\bigl(\wt{\Delta}_n(X)\wt{\Delta}_n(Y)\bigr)
-\wt{\Delta}_n(XY)\bigr\|
\le O(\eta)\ts\|X\|\ts\|Y\|\quad\: (X,Y\in\Ma{n}\otimes\calB).
\end{gather}
Equations \eqref{tilde_DelUps}--\eqref{tilde_Del2}, as well as the commutation with the involution, fully characterize $\wt{\Delta}$ and $\wt{\Upsilon}$ for our purposes. In particular, these properties imply that
\[
\bigl\|\wt{\Upsilon}_n\bigl(\wt{\Delta}_n(X)\wt{\Delta}_n(Y)\bigr)-XY\bigr\|
\le O(\eta)\ts\|X\|\ts\|Y\|\qquad (X,Y\in\Ma{n}\otimes\calB).
\]
To complete the proof, we will approximate $\wt{\Delta}$ and $\wt{\Upsilon}$ by some UCP maps $\Delta$ and $\Upsilon$ in the completely bounded norm.
\end{proof}

Since $\calB$ is a finite-dimensional $C^*$ algebra, $\calB=\bigoplus_{j=1}^{m}\Bo(\calL_j)$ (up to an isomorphism). Let us represent the diagonal of $\Bo(\calL_j)$ as a unitary $1$-design, i.e. $D_j=\sum_{s}p_{js}\ts U_{js}^\dag\otimes U_{js}\in\Bo(\calL_j)\otimes\Bo(\calL_j)$, where
\begin{gather}
\label{diag_j01}
p_{js}\ge 0,\quad\: \sum_{s}p_{js}=1,\qquad U_{js}^{\dag}U_{js}=1_{\calL_j},\\[2pt]
\label{diag_j2}
\sum_{s}p_{js}\ts XU_{js}^{\dag}\otimes U_{js}
=\sum_{s}p_{js}\ts U_{js}^{\dag}\otimes U_{js}X\quad\:
\text{for all }\, X\in\Bo(\calL_j).
\end{gather}
(See \eqref{Pauli_diag} for an explicit example.) The diagonal of the entire algebra $\calB$ is $D=\sum_{s}p_{s}\ts U_{s}^\dag\otimes U_{s}$, where $s=(s_1,\dots,s_m)$,
\begin{equation}
p_{s_1,\dots,s_m}=p_{1s_1}\cdots p_{ms_m},\qquad
U_{s_1,\dots,s_m}=U_{1s_1}\oplus\cdots\oplus U_{ms_m}.
\end{equation}

Now, we define a map $\Delta'\colon\calB\to\Bo(\calH)$ by the equation
\begin{equation}
\Delta'(X)=\sum_{s}p_{s}\ts\Phi\bigl(\wt{\Delta}(XU_{s}^{\dag})\ts\wt{\Delta}(U_{s})\bigr)
=\sum_{s}p_{s}\ts\Phi\bigl(\wt{\Delta}(U_{s}^{\dag})\ts\wt{\Delta}(U_{s}X)\bigr)\qquad (X\in\calB).
\end{equation}
It is evident that $\Delta'$ commutes with the involution. The complete positivity is shown as follows: if $X\in 1_{\Ma{n}}\otimes\calB$ is positive, it can be represented as $Y^{\dag}Y$, and
\[
\Delta'_n(Y^{\dag}Y)
=\sum_{s}p_{s}\ts\Phi_n\bigl(\wt{\Delta}_n(Y^{\dag}(I_n\otimes U_{s}^{\dag}))\,
\wt{\Delta}_n((I_n\otimes U_{s})Y)\bigr)\ge 0.
\]
Due to equation \eqref{tilde_Del2}, we have $\Delta'_n(X)=\wt{\Delta}_n(X)+O(\eta)\ts\|X\|$ for all $X\in 1_{\Ma{n}}\otimes\calB$, implying that $\|\Delta'-\wt{\Delta}\|_\cb\le O(\eta)$. And using \eqref{tilde_Del1}, we conclude that
\begin{equation}
\Delta\colon X\mapsto (\Delta'(I_\calB))^{-1/2}\ts\Delta'(X)\ts(\Delta'(I_\calB))^{-1/2}
\end{equation}
is a UCP map such that $\|\Delta-\wt{\Delta}\|_\cb\le O(\eta)$.

Before proceeding to the construction of $\Upsilon$, let us list some properties of $\Delta$:
\begin{alignat}{2}
\label{Delta_norm}
\bigl|\|\Delta_n(X)\|-\|X\|\bigr|
&\le O(\eta)\ts\|X\|\qquad& &(X\in\Ma{n}\otimes\calB),\\[2pt]
\label{PhiDelta1}
\|\Phi\Delta-\Delta\|_\cb&\le O(\eta),\qquad& &\\[2pt]
\label{PhiDelta2}
\bigl\|\Phi_n\bigl(\Delta_n(X)\Delta_n(Y)\bigr)-\Delta_n(XY)\bigr\|
&\le O(\eta)\ts\|X\|\ts\|Y\|\qquad& &(X,Y\in\Ma{n}\otimes\calB),\\[2pt]
\label{PhiDelta3}
\bigl\|\Phi_n\bigl(\Delta_n(X)\Delta_n(Y)\Delta_n(Z)\bigr)-\Delta_n(XYZ)\bigr\|
&\le O(\eta)\ts\|X\|\ts\|Y\|\ts\|Z\|\qquad& &(X,Y,Z\in\Ma{n}\otimes\calB).
\end{alignat}
The first three of these follow from \eqref{tilde_DelUps} and \eqref{tilde_Del2}. To prove the last property, we do the following calculation with $O(\eta)\ts\|X\|\ts\|Y\|\ts\|Z\|$ accuracy, where the second step is the application of equation \eqref{Phi_assoc1} to the UCP map $\Phi_n$:
\[
\begin{aligned}
\Phi_n\bigl(\Delta_n(X)\ts\Delta_n(Y)\ts\Delta_n(Z)\bigr)
&\approx\Phi_n\bigl(\Phi_n(\Delta_n(X))\,\Phi_n(\Delta_n(Y))\,
\Phi_n(\Delta_n(Z))\bigr)\\
&\approx\Phi_n\Bigl(\Phi_n\bigl(\Phi_n(\Delta_n(X))\,\Phi_n(\Delta_n(Y))\bigr)\,
\Phi_n(\Delta_n(Z))\Bigr)\\
&\approx\Phi_n\bigl(\Phi_n\bigl(\Delta_n(X)\ts\Delta_n(Y)\bigr)\ts
\Delta_n(Z)\bigr)
\approx\Phi_n\bigl(\Delta_n(XY)\ts\Delta_n(Z)\bigr)
\approx\Delta_n(XYZ).
\end{aligned}
\]
We will also need a Choi representation of $\Delta$,
\begin{gather}
\label{Choi_Delta}
\Delta(X)=\sum_{j=1}^{m}W_j^\dag(X_j\otimes 1_{\calE_j})W_j\qquad
(X=(X_1,\dots,X_m)\in\calB),\\
\text{where}\qquad
W_j\colon\calH\to\calL_j\otimes\calE_j,\qquad \sum_{j=1}^{m}W_j^{\dag}W_j=1_{\calH}.
\end{gather}

\begin{Lemma}\label{lem_RC}
The operator
\begin{equation}\label{R_def}
R_j =\sum_{s}p_{js}(U_{js}^{\dag}\otimes 1_{\calE_j})
W_jW_j^{\dag}(U_{js}\otimes 1_{\calE_j})
\end{equation}
has the form $1_{\calL_j}\otimes C_j$ for some $C_j\in\Bo(\calE_j)$. Furthermore, $1-O(\eta)\le\|C_j\|\le 1$.
\end{Lemma}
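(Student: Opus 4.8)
I would separate the statement into three independent parts: the purely algebraic claim that $R_j$ has the form $1_{\calL_j}\otimes C_j$, which uses only the $1$-design identity \eqref{diag_j2}; the upper bound $\|C_j\|\le 1$, which follows from $\Delta$ being unital; and the lower bound $\|C_j\|\ge 1-O(\eta)$, which is the only place where the almost-idempotence enters (through the already-established estimates \eqref{Delta_norm} and \eqref{PhiDelta2}).

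For the structural claim, the plan is to show that $R_j$ lies in the commutant of $\Bo(\calL_j)\otimes 1_{\calE_j}$ inside $\Bo(\calL_j\otimes\calE_j)$, which equals $1_{\calL_j}\otimes\Bo(\calE_j)$. Set $M_j=W_jW_j^\dag\ge 0$. For $D=\sum_s A_s\otimes B_s\in\Bo(\calL_j)\otimes\Bo(\calL_j)$ define $\mathcal{L}(D)=\sum_s(A_s\otimes 1_{\calE_j})M_j(B_s\otimes 1_{\calE_j})$; this depends only on $D$ since $(A,B)\mapsto (A\otimes 1)M_j(B\otimes 1)$ is bilinear, and $R_j=\mathcal{L}(D_j)$ for $D_j=\sum_s p_{js}U_{js}^\dag\otimes U_{js}$. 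For any $Y\in\Bo(\calL_j)$ one has $(Y\otimes 1_{\calE_j})R_j=\mathcal{L}(YD_j)$ and $R_j(Y\otimes 1_{\calE_j})=\mathcal{L}(D_jY)$, where $YD_j=\sum_s p_{js}(YU_{js}^\dag)\otimes U_{js}$ and $D_jY=\sum_s p_{js}U_{js}^\dag\otimes(U_{js}Y)$; equation \eqref{diag_j2} is precisely $YD_j=D_jY$, so $R_j$ commutes with $\Bo(\calL_j)\otimes 1_{\calE_j}$ and hence $R_j=1_{\calL_j}\otimes C_j$. Since each summand $(U_{js}^\dag\otimes 1)M_j(U_{js}\otimes 1)=\bigl((U_{js}^\dag\otimes 1)W_j\bigr)\bigl((U_{js}^\dag\otimes 1)W_j\bigr)^\dag$ is positive and $p_{js}\ge 0$, we get $R_j\ge 0$ and so $C_j\ge 0$. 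For the upper bound, plugging $X=I_\calB$ into \eqref{Choi_Delta} and using $\Delta(I_\calB)=1_\calH$ gives $\sum_k W_k^\dag W_k=1_\calH$, hence $0\le W_j^\dag W_j\le 1_\calH$, so $\|W_j\|\le 1$ and $M_j\le 1_{\calL_j\otimes\calE_j}$; then $R_j\le\sum_s p_{js}(U_{js}^\dag\otimes 1)(U_{js}\otimes 1)=1_{\calL_j\otimes\calE_j}$, i.e.\ $\|C_j\|=\|R_j\|\le 1$.

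The lower bound is the substantive step. The idea is to feed the approximate multiplicativity of $\Delta$ the right inputs: for each index $s$, regard $U_{js}^\dag$ and $U_{js}$ as elements of $\calB$ supported in the $j$-th summand, and let $e_j\in\Bo(\calL_j)\subseteq\calB$ be the identity of that summand, so that $U_{js}^\dag U_{js}=e_j$ in $\calB$ while in $\Bo(\calH)$ we have $\Delta(U_{js}^\dag)\,\Delta(U_{js})=W_j^\dag(U_{js}^\dag\otimes 1_{\calE_j})\,M_j\,(U_{js}\otimes 1_{\calE_j})W_j$ and $\Delta(e_j)=W_j^\dag W_j$. Averaging the first identity against $p_{js}$ gives $\sum_s p_{js}\Delta(U_{js}^\dag)\Delta(U_{js})=W_j^\dag R_j W_j$, so applying \eqref{PhiDelta2} (with $n=1$) termwise — both factors have norm $1$ — and taking the convex combination yields $\|\Phi(W_j^\dag R_j W_j)-W_j^\dag W_j\|\le O(\eta)$. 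By \eqref{Delta_norm}, $\|W_j^\dag W_j\|=\|\Delta(e_j)\|\ge\|e_j\|-O(\eta)=1-O(\eta)$, hence $\|\Phi(W_j^\dag R_j W_j)\|\ge 1-O(\eta)$. On the other hand $\|\Phi\|_\cb\le 1$ and $\|W_j\|\le 1$ give $\|\Phi(W_j^\dag R_j W_j)\|\le\|W_j^\dag R_j W_j\|\le\|W_j\|^2\,\|R_j\|\le\|C_j\|$, so $\|C_j\|\ge 1-O(\eta)$.

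I expect the only nonroutine point to be the first step: checking that the $1$-design identity \eqref{diag_j2}, exactly as stated (a diagonal condition rather than a conjugation-averaging condition), is enough to land $R_j$ in $1_{\calL_j}\otimes\Bo(\calE_j)$. The bilinear reformulation via $\mathcal{L}$ handles this cleanly. Everything else is elementary estimation with facts already in hand, namely $\|\Phi\|_\cb\le 1$, unitality of $\Delta$, and the bounds \eqref{Delta_norm} and \eqref{PhiDelta2}.
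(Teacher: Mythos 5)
Your proof is correct and follows essentially the same route as the paper's: the commutant argument from the $1$-design identity for the structure of $R_j$, the bound $\|W_j\|\le 1$ for the upper estimate, and for the lower estimate the identity $W_j^\dag R_j W_j=\sum_s p_{js}\,\Delta(U_{js}^\dag)\Delta(U_{js})$ combined with \eqref{PhiDelta2}, \eqref{Delta_norm}, and $\|\Phi\|_\cb\le 1$. The only difference is presentational — you spell out the bilinear reformulation of the commutant step and the positivity $R_j\ge 0$ in more detail than the paper does.
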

\begin{proof}
Due to the property \eqref{diag_j2} of the diagonal, $R_j$ commutes with $X\otimes 1_{\calE_j}$ for all $X\in\Bo(\calL_j)$. Hence, $R_j=1_{\calL_j}\otimes C_j$ for some $C_j$. The upper bound $\|C_j\|=\|R_j\|\le 1$ follows from the fact that $\|W_j\|\le 1$. To prove the lower bound, we note that $\|\Phi(W_j^{\dag}R_jW_j)\|\le\|C_j\|$. The left-hand side can be estimated using \eqref{Choi_Delta} (with a single nonzero $X_j$) and \eqref{PhiDelta2}:
\[
\Phi(W_j^{\dag}R_jW_j)
=\sum_{s}p_{js}\ts\Phi\bigl(\Delta(U_{js}^{\dag})\ts\Delta(U_{js})\bigr)
=\sum_{s}p_{js}\ts\Delta(U_{js}^{\dag}U_{js})+O(\eta)
=\Delta(1_{\calL_j})+O(\eta).
\]
Thus, $\|\Phi(W_j^{\dag}R_jW_j)\|\ge\|\Delta(1_{\calL_j})\|-O(\eta)$, where $\|\Delta(1_{\calL_j})\|\ge 1-O(\eta)$ due to \eqref{Delta_norm}.
\end{proof}

Let $\xi_j\in\calE_j$ be a unit vector such that $\bigl|\|C_j\xi_j\|-1\bigr|\le O(\eta)$. Let us also consider the Choi representation $\Phi(X)=V^\dag(X\otimes 1_\calF)V$ with $V\colon\calH\to\calH\otimes\calF$ and derive new linear maps from $V$:
\begin{equation}
L_j\colon\calL_j\to\calH\otimes\calF,\qquad
L_j=\sum_{s}p_{js}\ts\bigl(\Delta(U_{js}^\dag)\otimes 1_\calF\bigr)
V W_j^\dag(U_{js}\otimes\xi_j).
\end{equation}
We now construct a manifestly completely positive map $\Upsilon'\colon\Bo(\calH)\to\calB$ by components,
\begin{equation}
\Upsilon'=(\Upsilon'_1,\dots,\Upsilon'_m),\qquad \text{where}\quad
\Upsilon'_j\colon\Bo(\calH)\to\Bo(\calL_j),\quad\:
\Upsilon_j'(X)=L_j^\dag\bigl(\Phi(X)\otimes 1_\calF\bigr)L_j.
\end{equation}
Because $\Phi$ is $\eta$-idempotent, $\|\Upsilon'\Phi-\Upsilon'\|_\cb\le O(\eta)$. Let us prove that $\|\Upsilon'\Delta-1_\calB\|_\cb\le O(\eta)$, meaning that if $Y=(Y_1,\dots,Y_m)\in\Ma{n}\otimes\calB$, then $\|(\Upsilon'_j)_n(\Delta_n(Y))-Y_j\|\le O(\eta)\ts\|Y\|$, where $\|Y\|=\max_k\|Y_k\|$. We calculate $(\Upsilon'_j)_n(\Delta_n(Y))$ with $O(\eta)\ts\|Y\|$ accuracy, omitting the subscript $n$:
\[
\begin{alignedat}{2}
\Upsilon'_j(\Delta(Y))
&=\sum_{r,s}p_{jr}p_{js}\ts
(U_{jr}^\dag\otimes\xi_j^\dag)W_j\,
\Phi\bigl(\Delta(U_{jr})\ts\Phi(\Delta(Y))\Delta(U_{js}^\dag)\bigr)\,
W_j^\dag(U_{js}\otimes\xi_j)&&\\
&\approx \sum_{r,s}p_{jr}p_{js}\ts
(U_{jr}^\dag\otimes\xi_j^\dag)W_j\,\Delta(U_{jr}YU_{js}^\dag)\,
W_j^\dag(U_{js}\otimes\xi_j)&\hspace{-2cm}
(\text{due to \eqref{PhiDelta1} and \eqref{PhiDelta3}})&\\
&=\sum_{r,s}p_{jr}p_{js}\ts
(U_{jr}^\dag\otimes\xi_j^\dag)W_jW_j^\dag\ts
\bigl(U_{jr}Y_jU_{js}^\dag\otimes 1_{\calE_j}\bigr)\ts
W_jW_j^\dag(U_{js}\otimes\xi_j)\quad&
(\text{due to \eqref{Choi_Delta}})&\\
&=(1_{\calL_j}\otimes\xi_j^\dag)R_j^\dag\ts
(Y_j\otimes 1_{\calE_j})\ts R_j(1_{\calL_j}\otimes\xi_j)\quad&
(\text{due to \eqref{R_def}})&\\[2pt]
&=Y_j\otimes\bigl(\xi_j^{\dag}C_j^{\dag}C_j\xi_j\bigr)\approx Y_j&\hspace{-6cm}
(\text{due to Lemma~\ref{lem_RC} and the assumption $\|C_j\xi_j\|\approx 1$})&.
\end{alignedat}
\]
Thus, $\Upsilon'\approx \Upsilon'\Phi\approx \Upsilon'\wt{\Phi}= \Upsilon'\wt{\Delta}\wt{\Upsilon}\approx \Upsilon'\Delta\wt{\Upsilon}\approx \wt{\Upsilon}$ with $O(\eta)$ accuracy. Finally, we define the UCP map
\begin{equation}
\Upsilon\colon X\mapsto (\Upsilon'(1_\calH))^{-1/2}\ts\Upsilon'(X)\ts(\Upsilon'(1_\calH))^{-1/2},
\end{equation}
which also has the property $\|\Upsilon-\wt{\Upsilon}\|_\cb\le O(\eta)$.

\section*{Acknowledgments}

I thank Daniel Ranard for the motivation and challenge to work on the almost-idempotent channel problem as well as for multiple discussions and comments, Michael Freedman for nice discussions and comments, and Isaac Kim for pointing to related results~\cite{KoIm01,HJPW03}. This work was supported by the Simons Foundation under grant 376205 and by the Institute of Quantum Information and Matter, a NSF Physics Frontiers Center.

\end{document}